\newcommand{\newword}[1]{\emph{#1}}
\newcommand{\margincolor}{red}      
\definecolor{darkgreen}{rgb}{0,0.7,0}
\newcounter{margincounter}
\newcommand{\marginnum}{
\ifnum\value{margincounter}<10
\textcolor{\margincolor}{\begin{picture}(0,0)\put(2.2,2.4){\circle{9}}\end{picture}\footnotesize\arabic{margincounter}}
\else\ifnum\value{margincounter}<100
\textcolor{\margincolor}{\begin{picture}(0,0)\put(4.256,2.5){\circle{11}}\end{picture}\footnotesize\arabic{margincounter}}
\else
\textcolor{\margincolor}{\begin{picture}(0,0)\put(6.8,2.5){\circle{14}}\end{picture}\footnotesize\arabic{margincounter}}
\fi\fi
}
\newtheorem{theorem}{Theorem}[section]
\newtheorem{corollary}[theorem]{Corollary}
\newtheorem{lemma}[theorem]{Lemma}
\newtheorem{proposition}[theorem]{Proposition}
\newtheorem{prop}[theorem]{Proposition}
\newtheorem{cor}[theorem]{Corollary}
\newtheorem{definition-proposition}[theorem]{Definition-Proposition}
\theoremstyle{definition}
\newtheorem{definition}[theorem]{Definition}
\newtheorem{remark}[theorem]{Remark}
\newtheorem{example}[theorem]{Example}
\newcommand{\DDD}{\mathsf{D}}
\renewcommand{\SS}{\mathscr{S}}
\newcommand{\TT}{\mathscr{T}}
\newcommand{\FF}{\mathscr{F}}
\newcommand{\WW}{\mathscr{W}}
\newcommand{\bo}{\operatorname{b}\nolimits}
\newcommand{\soc}{\operatorname{soc}\nolimits}
\newcommand{\Ext}{\operatorname{Ext}\nolimits}
\newcommand{\Tor}{\operatorname{Tor}\nolimits}
\newcommand{\Hom}{\operatorname{Hom}\nolimits}
\newcommand{\End}{\operatorname{End}\nolimits}
\newcommand{\op}{\operatorname{op}\nolimits}
\newcommand{\RHom}{\mathbf{R}\strut\kern-.2em\operatorname{Hom}\nolimits}
\newcommand{\Lotimes}{\mathop{\stackrel{\mathbf{L}}{\otimes}}\nolimits}
\newcommand{\Image}{\operatorname{Im}\nolimits}
\DeclareMathOperator{\moduleCategory}{\mathsf{mod}}
\renewcommand{\mod}{\moduleCategory}
\DeclareMathOperator{\Sub}{\mathsf{Sub}}
\DeclareMathOperator{\Tors}{\mathsf{T}}
\DeclareMathOperator{\Torf}{\mathsf{F}}
\DeclareMathOperator{\add}{\mathsf{add}}
\DeclareMathOperator{\fd}{\mathsf{fd}}
\DeclareMathOperator{\rad}{\mathsf{rad}}
\DeclareMathOperator{\jirr}{\mathsf{j-Irr}}
\DeclareMathOperator{\mirr}{\mathsf{m-Irr}}
\newcommand{\cut}{\ar@{-}@[|(5)]}
\DeclareMathOperator{\sttilt}{\mathsf{s}\tau\text{-}\mathsf{tilt}}
\DeclareMathOperator{\tors}{\mathsf{tors}}
\DeclareMathOperator{\Fac}{\mathsf{Fac}}
\newcommand{\cyc}{\operatorname{cyc}\nolimits}
\numberwithin{equation}{section}
\DeclareMathOperator{\Con}{\mathsf{Con}}
\DeclareMathOperator{\ftors}{\mathsf{f-tors}}
\DeclareMathOperator{\torf}{\mathsf{torf}}
\DeclareMathOperator{\ftorf}{\mathsf{f-torf}}
\DeclareMathOperator{\lwide}{\mathsf{l-wide}}
\DeclareMathOperator{\brick}{\mathsf{brick}}
\DeclareMathOperator{\Filt}{\mathsf{Filt}}
\newcommand{\set}[1]{{\left\lbrace #1 \right\rbrace}}
\newcommand{\con}{\mathsf{con}}
\newcommand{\ConJI}{\mathsf{Con}_{\mathsf{JI}}}
\newcommand{\Hasse}{\mathsf{Hasse}}
\DeclareMathOperator{\Layers}{\mathsf{layer}}
\DeclareMathOperator{\IndtRig}{\mathsf{i}\tau\text{-}\mathsf{rigid}}
\DeclareMathOperator{\IndtmRig}{\mathsf{i}\tau^{--}\text{-}\mathsf{rigid}}
\newcommand{\FPoly}{\mathsf{FPoly}}
\newcommand{\SFPoly}{\mathsf{SFPoly}}
\newcommand{\join}{\vee}
\newcommand{\meet}{\wedge}
\renewcommand{\Join}{\bigvee}
\newcommand{\Meet}{\bigwedge}
\begin{document}
\title[Lattice structure via representation theory]{Lattice structure of Weyl groups via representation theory of preprojective algebras}

\begin{abstract}
This paper studies the combinatorics of lattice congruences of the weak order on a finite Weyl group $W$, using representation theory of the corresponding preprojective algebra $\Pi$.
Natural bijections are constructed between important objects including join-irreducible congruences, join-irreducible (respectively, meet-irreducible) elements of $W$, indecomposable $\tau$-rigid (respectively, $\tau^-$-rigid) modules and layers of $\Pi$.
The lattice-theoretically natural labelling of the Hasse quiver by join-irreducible elements of $W$ is shown to coincide with the algebraically natural labelling by layers of $\Pi$.
We show that layers of $\Pi$ are nothing but bricks (or equivalently stones, or 2-spherical modules).
The forcing order on join-irreducible elements of $W$ (arising from the study of lattice congruences) is described algebraically in terms of the doubleton extension order.
We give a combinatorial description of indecomposable $\tau^-$-rigid modules for type $A$ and $D$.
\end{abstract}

\author[Iyama]{Osamu Iyama}
\address{O. Iyama: Graduate School of Mathematics, Nagoya University, Chikusa-ku, Nagoya, 464-8602 Japan}
\email{iyama@math.nagoya-u.ac.jp}
\urladdr{http://www.math.nagoya-u.ac.jp/~iyama/}
\author[Reading]{Nathan Reading}
\address{N. Reading:  Department of Mathematics, North Carolina State University, Raleigh, NC 27695-8205, USA}
\email{reading@math.ncsu.edu}
\urladdr{http://www4.ncsu.edu/~nreadin/}
\author[Reiten]{Idun Reiten}
\address{I. Reiten: Department of Mathematical Sciences Norges teknisk-naturvitenskapelige universitet 7491 Trondheim Norway}
\email{idun.reiten@math.ntnu.no}
\urladdr{http://www.ntnu.edu/employees/idun.reiten}
\author[Thomas]{Hugh Thomas}
\address{H. Thomas: D\'epartement de math\'ematiques, Universit\'e du
Qu\'ebec \`a Montr\'eal, CP 8888, Succursale Centre-Ville, Montr\'eal, QC, H3C 3P8, Canada}
\email{hugh.ross.thomas@gmail.com}
\urladdr{http://www.lacim.uqam.ca/~hugh}
\thanks{2010 \emph{Mathematics Subject Classification}. Primary 16G10, Secondary 05E10, 06B10, 18E40, 20F55}
\keywords{preprojective algebra, Weyl group, weak order, $\tau$-tilting theory, brick, join-irreducible element, lattice congruence}
\thanks{Osamu Iyama's work on this project was partially supported by JSPS Grant-in-Aid for Scientific
Research (B) 24340004, (B) 16H03923, (C) 23540045 and (S) 15H05738.
Nathan Reading's work on this project was partially supported by the National Science Foundation under grant numbers DMS-1101568 and DMS-1500949.  
Idun Reiten was supported by the FriNat grants 196600 and 231000 from the 
Research Council of Norway.  
Hugh Thomas's work on this project was partially supported by an NSERC Discovery Grant and the Canada Research Chairs program.  
The authors also gratefully acknowledge the hospitality of 
MSRI, Oberwolfach, Bielefeld University,
and the Mittag-Leffler Institute.}
\maketitle
\tableofcontents

\section{Introduction}

Let $\Delta$ be a simply laced Dynkin diagram and $W$ the corresponding Weyl group.
Once we fix an orientation $Q$ of $\Delta$, then the representation theory of $Q$ categorifies the root system associated with $\Delta$ in the sense that we have Gabriel's bijection between positive roots and indecomposable representations of $Q$.
Throughout this paper, we denote by $\Pi$ the preprojective algebra of $\Delta$ over an arbitrary field $k$.
It unifies the representation theory of different quivers with the same underlying graph $\Delta$, and its various aspects have been studied, e.g. \cite{BKT,BGL,BIRS,CH,DR,GLS,Lu,KS,N}.
Mizuno \cite{Mi} showed that the support $\tau$-tilting theory of $\Pi$ categorifies the Weyl group $W$ with the weak order in the following sense:
There exists a bijection $W\ni w\mapsto I(w)$ from $W$ to the set $\sttilt\Pi$ of support $\tau$-tilting $\Pi$-modules with the property that $v\le w$ in the weak order on $W$ if and only if $I(v)\ge I(w)$ in the generation order on $\sttilt\Pi$.
The ideal $I(w)$ was introduced in \cite{IR,BIRS} and has been studied by several authors, e.g. \cite{AM,A,AIRT,BKT,GLS,K,L,Ma,ORT,SY1}.
In what follows, we will overload the symbol $W$ to denote not only the group $W$, but also the weak order on $W$.

The weak order on $W$ is a lattice \cite{BB}: a partial order such that meets (greatest lower bounds) and joins (least upper bounds) exist.
It is enlightening to take a more algebraic point of view of lattices, viewing a lattice as a set with two binary operations (meet and join). 
Seen in this light, the categorification of $W$ by support $\tau$-tilting theory is the categorification of an algebraic object (a lattice) in terms of another algebraic object (a finite-dimensional algebra).
In both of these algebraic settings, there is an important algebraic quotient operation.
Quotients of the weak order are governed by lattice congruences, while quotients of the preprojective algebra are governed by ideals.
A natural question is whether these two notions of quotient are related.
The answer is yes, and the relationship turns out to be very nice.

This paper and a companion paper~\cite{paperB} concern the relationship between the two notions of quotient.
In the other paper, we observe, for a more general algebra $A$ and an ideal $I$ of $A$, that $\sttilt(A/I)$ is a lattice quotient of $\sttilt A$ and we give necessary conditions for lattice congruences which arise in this way from quotients of $\Pi$.  
We study the combinatorics of such \newword{algebraic quotients} of the weak order in general, and in the special case where $\Pi/I$ is hereditary.
We also work out, in detail, the combinatorics of algebraic quotients in type A.

Whereas~\cite{paperB} starts with algebra quotients and determines what happens to the corresponding lattices, this paper starts from the other direction.
Here, we start with the rich combinatorics of (arbitrary, not necessarily algebraic) lattice congruences of $W$ and find that it appears naturally within the representation theory of $\Pi$.

The set of all lattice congruences of $L$ form a lattice $\Con L$, and the join-irreducible elements of $\Con L$
are called the join-irreducible congruences (see Section 2.1 for details).
The combinatorial approach to congruences of a finite lattice $L$ begins with the connection between arrows in the Hasse quiver of $L$, join-irreducible elements of $L$, and join-irreducible congruences on $L$.
We will overload the symbol $W$, using it to denote the Hasse quiver of the weak order on $W$.

Our first main theorem connects join-irreducible elements of $W$ and join-irreducible congruences on $W$ to layers of $\Pi$.
A $\Pi$-module is called a \emph{layer} if it is isomorphic to $I(w)/I(ws_i)$ for an arrow $ws_i\to w$ in the Hasse quiver of $W$, see \cite{AIRT}.  

\begin{theorem}\label{bijections}
There exist bijections between the following sets.
\begin{itemize}
\item The set $\jirr W$ of join-irreducible elements of $W$.
\item The set $\mirr W$ of meet-irreducible elements of $W$.
\item The set $\ConJI(W)$ of join-irreducible congruences of $W$.
\item The set $\IndtRig\Pi$ of indecomposable $\tau$-rigid $\Pi$-modules.
\item The set $\IndtmRig\Pi$ of indecomposable $\tau^-$-rigid $\Pi$-modules.
\item The set $\Layers\Pi$ of layers of $\Pi$.
\end{itemize}
\end{theorem}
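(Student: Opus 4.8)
The plan is to route all six sets through the Hasse quiver of the weak order on $W$ and, via Mizuno's theorem, through the $\tau$-tilting mutations of $\Pi$. Recall that $w\mapsto I(w)$ is an anti-isomorphism of lattices $W\to\sttilt\Pi$, and that $\Pi$ is $\tau$-tilting finite, so $\tors\Pi\cong\sttilt\Pi\cong W^{\op}$; a cover $u\lessdot v$ in $W$ corresponds to a mutation between $I(u)$ and its lower cover $I(v)$ in $\sttilt\Pi$, i.e.\ to a minimal inclusion of torsion classes $\Fac I(v)\subsetneq\Fac I(u)$, and the layer carried by this arrow is the module $I(u)/I(v)$. First I would dispose of the three lattice-theoretic sets: the weak order on $W$ is congruence uniform (hence semidistributive), and congruence uniformity furnishes the canonical bijections $\jirr W\to\ConJI(W)$, $j\mapsto\con(j)$, and dually $\mirr W\to\ConJI(W)$, while semidistributivity equips each arrow $u\lessdot v$ with a unique join-irreducible and a unique meet-irreducible label, two arrows carrying the same join-label iff the same meet-label iff they generate the same join-irreducible congruence.

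Next I would treat $\IndtRig\Pi$. For an indecomposable $\tau$-rigid module $V$, the support $\tau$-tilting modules having $V$ as a summand form an interval of $\sttilt\Pi$ (Bongartz and co-Bongartz completions, or $\tau$-tilting reduction), so after transport along Mizuno's anti-isomorphism the set $\{w\in W:V\in\add I(w)\}$ is an interval of $W$ with a maximum $m_V$. I claim $m_V\in\mirr W$: if $m_V$ had two distinct upper covers $m_1,m_2$ in $W$, then $I(m_1)$ and $I(m_2)$ would be two distinct lower covers of $I(m_V)$ in $\sttilt\Pi$, and passing from $I(m_V)$ to either of them removes exactly one indecomposable summand, which by maximality of $m_V$ must be $V$ in both cases --- contradicting the uniqueness of the mutation of $I(m_V)$ at the summand $V$. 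Conversely, for $m\in\mirr W$ with unique upper cover $m^*$, the module $I(m)$ has $I(m^*)$ as its unique lower cover in $\sttilt\Pi$, and passing to it removes a single indecomposable summand $X_m$ of $I(m)$; the same maximality argument shows $m=m_{X_m}$, so $V\mapsto m_V$ and $m\mapsto X_m$ are mutually inverse bijections $\IndtRig\Pi\leftrightarrow\mirr W$. The set $\IndtmRig\Pi$ is handled by the same argument applied to $\Pi^{\op}\cong\Pi$, using the standard duality that interchanges $\tau^-$-rigid $\Pi$-modules with $\tau$-rigid $\Pi^{\op}$-modules; this yields $\IndtmRig\Pi\leftrightarrow\mirr W$, and hence, by the first paragraph, a bijection with $\jirr W$.

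Finally I would match $\Layers\Pi$ with $\jirr W$. The assignment $(u\lessdot v)\mapsto I(u)/I(v)$ is a surjection from the arrows of $W$ onto $\Layers\Pi$, and it suffices to show that its fibres coincide with those of the join-irreducible labeling of the first paragraph. For this I would identify the layer of an arrow with the brick labeling the corresponding minimal inclusion of torsion classes --- equivalently, use that every layer of $\Pi$ is a brick and, since $\Pi$ is $\tau$-tilting finite, that every brick arises this way --- and then invoke that the brick labeling of the (congruence uniform) lattice $\tors\Pi\cong W^{\op}$ realizes its join-irreducible congruence labeling. Consequently two arrows carry isomorphic layers precisely when they generate the same join-irreducible congruence, i.e.\ precisely when they carry the same join-irreducible label, so the induced map $\jirr W\to\Layers\Pi$ is a bijection. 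Together with the earlier steps, this puts all six sets in bijection.

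I expect this last step to be the main obstacle. It rests on two inputs that lie genuinely deeper than the formal bijections of the other steps: the identification of layers of $\Pi$ with the brick labels of $\tors\Pi$ (the assertion that layers are bricks), and the compatibility of that brick labeling with the forcing, or congruence, structure of the lattice $W$. A secondary technical point, in the $\tau$-rigid step, is to verify that $\{T\in\sttilt\Pi:V\in\add T\}$ really is an interval of $\sttilt\Pi$ and that this passes cleanly through Mizuno's anti-isomorphism.
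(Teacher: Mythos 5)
Your overall architecture---routing all six sets through $\Hasse_1(W)$ and its labellings---matches the paper's, and the first two thirds of the argument are sound. The bijections $\jirr W\leftrightarrow\ConJI(W)\leftrightarrow\mirr W$ are exactly the paper's appeal to congruence uniformity. Your interval argument identifying $\IndtRig\Pi$ with $\mirr W$ is a legitimate variant of the paper's route, which instead characterizes the join-irreducible elements of $\tors\Pi$ as the classes $\Fac M$ for $M$ indecomposable $\tau$-rigid (Theorem~\ref{join irreducible in sttilt}) and composes with Mizuno's anti-isomorphism; your version rests on the fact that $\set{T\in\sttilt\Pi: V\in\add T}$ is an interval ($\tau$-tilting reduction), which is available, and the duality trick disposes of $\IndtmRig\Pi$ for the purpose of mere existence of a bijection.

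The genuine gap is in the final step, and you have correctly located it but not filled it. The assertion that two Hasse arrows of $W$ carry isomorphic layers if and only if they define the same join-irreducible congruence is the main content of the theorem; it cannot be obtained by ``invoking that the brick labeling of $\tors\Pi$ realizes its join-irreducible congruence labeling.'' No such general theorem is available here---the general compatibility of brick labellings of $\tors A$ with lattice congruences is a later development that generalizes precisely this result---and even granting it you would still need to prove that the layer $I(w)/I(ws_i)$ \emph{is} the brick attached to the corresponding arrow of $\tors\Pi$, and indeed that it is a brick at all; the latter is part of Theorem~\ref{main of layer} and requires the reflection-functor analysis of Section~3 (which, note, is not needed for Theorem~\ref{bijections} itself). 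The paper instead proves the two implications concretely in Proposition~\ref{concrete}: (i) an explicit computation of the layer labelling on squares and hexagons (Proposition~\ref{layer poly}) shows that opposite sides of every polygon carry the same layer, so by polygonality of $W$ and Corollary~\ref{strong force in poly}, arrows defining the same join-irreducible congruence have the same layer; (ii) an inductive argument (Theorem~\ref{canon m j thm}) shows that $\Tors(w)$ is generated as a torsion class by the layers on the arrows $ws_i\to w$, so two meet-irreducible elements whose defining arrows carry the same layer have equal torsion classes and hence coincide. Some substitute for these two arguments is required; as written, your last paragraph describes what remains to be proved rather than proving it.
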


We prove Theorem~\ref{bijections} and give explicit bijections as part of Theorem~\ref{big diagram}.
The fact that join-irreducible elements, meet-irreducible elements, and join-irreducible congruences of $W$ are all in bijection is known, and this property of a lattice is called \newword{congruence uniformity} (in the sense of Day~\cite{Day}).
This was proved in~\cite{bounded} (where in fact an equivalent property called \newword{boundedness} was established).

The main content of Theorem \ref{bijections} is the unexpectedly deep link between the representation theory of the preprojective algebra and the lattice theory of weak order on the corresponding Weyl group.  As part of establishing this link, we have also proved some new results within the representation theory of preprojective algebras which we believe to be of independent interest.
Our second main theorem gives two additional algebraic descriptions of layers of~$\Pi$.
We say that a $\Pi$-module $L$ is a \newword{brick} if $\End_\Pi(L)$ is a division algebra, and a \newword{stone} if $L$ is a brick satisfying $\Ext^1_\Pi(L,L)=0$ \cite{HHKU,KL}.
Let $\widehat{\Pi}$ be the preprojective algebra of the extended Dynkin type corresponding to $\Pi$.
Let $\DDD^{\bo}(\fd\widehat{\Pi})$ be the bounded derived category of finite-dimensional $\widehat{\Pi}$-modules.
An object $L\in\DDD^{\bo}(\fd\widehat{\Pi})$ is called \emph{2-spherical} if
$\dim_k\Hom_{\DDD^{\bo}(\fd\widehat{\Pi})}(L,L[i])$ is $1$ for $i=0,2$ and $0$ otherwise. (See~\cite{ST}.)

\begin{theorem}\label{main of layer}
The following classes of $\Pi$-modules are the same.
\begin{itemize}
\item Layers of $\Pi$.
\item Bricks of $\Pi$.
\item Stones of $\Pi$.
\item $\Pi$-modules which are 2-spherical as $\widehat{\Pi}$-modules,
\end{itemize}
Moreover any module $L$ in these classes satisfies $\End_\Pi(L)=k$.
\end{theorem}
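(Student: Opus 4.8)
The plan is to prove the four classes coincide by establishing a cycle of inclusions, using the stated bijection (Theorem~\ref{bijections}) between layers and indecomposable $\tau$-rigid modules as the anchor, and exploiting the self-injectivity and special geometry of $\Pi$. First I would recall that $\Pi$ is a finite-dimensional self-injective algebra which is $2$-Calabi--Yau-like in a graded or stable sense; concretely, for finite-length $\Pi$-modules $M, N$ there is a functorial duality $\Ext^1_\Pi(M,N) \iso D\Ext^1_\Pi(N,M)$, and more is true after passing to $\widehat{\Pi}$, where the bounded derived category of finite-dimensional modules carries a Serre functor $[2]$ (this is the origin of the $2$-spherical terminology). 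The cheapest implication is \emph{stone} $\Rightarrow$ \emph{brick}, which is immediate from the definition. For \emph{brick} $\Rightarrow$ \emph{stone}: if $L$ is a brick then $\Ext^1_\Pi(L,L) \iso D\Ext^1_\Pi(L,L)$ by the $2$-CY property, so it suffices to rule out self-extensions, and here I would invoke that a brick over $\Pi$ has a well-defined ``dimension vector'' which is a real (in fact positive) root of the associated root system — bricks correspond to real roots — and real roots $\alpha$ satisfy $\langle \alpha, \alpha\rangle = 2$, forcing $\dim\Hom_\Pi(L,L) - \dim\Ext^1_\Pi(L,L) + \dim\Ext^1_\Pi(L,L)$ (using the $2$-CY symmetry of the Euler form) $= 2 - \dim\Ext^1_\Pi(L,L)$... more carefully, the Euler/Tits form evaluated on $\underline{\dim}\,L$ equals $\dim\End_\Pi(L) - \dim\Ext^1_\Pi(L,L) = 1 - \dim\Ext^1_\Pi(L,L)$ (again using $\Ext^{\ge 2}$ controlled by the $2$-CY structure), and this equals $1$ exactly when $\underline{\dim}\,L$ is a real root; so a brick is automatically a stone.

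Next, \emph{stone} $\Rightarrow$ \emph{layer}. A stone $L$ is a brick with no self-extension; by Derksen--Weyman--Zelevinsky / $\tau$-tilting reduction style arguments, or more directly by the theory in \cite{AIRT}, an indecomposable module with $\Ext^1_\Pi(L,L)=0$ over a $2$-CY algebra is rigid in the appropriate sense, hence $\tau$-rigid; then Theorem~\ref{bijections} (equivalently the bijection of Theorem~\ref{big diagram} referenced there) identifies indecomposable $\tau$-rigid modules with layers, so $L$ is a layer. Conversely \emph{layer} $\Rightarrow$ \emph{brick}: a layer is $I(w)/I(ws_i)$ for an arrow $ws_i \to w$, and one knows from \cite{AIRT} that these modules are bricks — alternatively, the bijection of Theorem~\ref{bijections} sends a layer to an indecomposable $\tau$-rigid module, and over $\Pi$ one shows (again via the $2$-CY property and the fact that $\tau M \iso \Omega^2 M$ with $\Ext^1_\Pi(M, \tau M)=0$ forcing $\End_\Pi(M)$ to have no radical) that an indecomposable $\tau$-rigid $\Pi$-module is a brick. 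This closes the loop layer $\Rightarrow$ brick $\Rightarrow$ stone $\Rightarrow$ layer, so the first three classes agree.

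Finally I would treat the fourth characterization: a $\Pi$-module $L$, viewed as a $\widehat{\Pi}$-module via the quotient $\widehat{\Pi} \twoheadrightarrow \Pi$, is $2$-spherical in $\DDD^{\bo}(\fd\widehat{\Pi})$ iff it is a brick of $\Pi$. The key input is that $\DDD^{\bo}(\fd\widehat{\Pi})$ has Serre functor $[2]$ (the extended Dynkin preprojective algebra is $2$-CY in the relevant graded/derived sense), so $\Hom(L, L[2]) \iso D\Hom(L,L)$ automatically, and $\Hom(L,L[i])=0$ for $i\neq 0,1,2$ because $\fd\widehat{\Pi}$ has global-dimension-like bound $2$ on $\Ext$ between finite-length modules pushed forward from $\Pi$... one must check $\Hom_{\widehat{\Pi}}(L,L[1])$: this is $\Ext^1_{\widehat{\Pi}}(L,L)$, and using the exact sequence relating $\widehat{\Pi}$- and $\Pi$-module extensions (the extra vertex contributes nothing since $L$ is supported on the Dynkin part), $\Ext^1_{\widehat{\Pi}}(L,L) \iso \Ext^1_\Pi(L,L)$. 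Hence $L$ is $2$-spherical iff $\End(L)$ is $1$-dimensional (a division algebra, and over the base field that forces dimension $1$ after the brick/stone analysis) and $\Ext^1_\Pi(L,L)=0$ — i.e. iff $L$ is a stone, hence a layer.

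\textbf{Main obstacle.} The delicate point is the precise bookkeeping relating $\Ext$-groups over $\Pi$ and over $\widehat{\Pi}$ and, relatedly, making rigorous the ``$2$-Calabi--Yau'' statements in the non-stable (honest finite-dimensional) setting — $\Pi$ itself is only $2$-CY in a stable or graded sense, so the cleanest route is to do the Serre-duality argument genuinely in $\DDD^{\bo}(\fd\widehat{\Pi})$ and transport back. Establishing that a brick of $\Pi$ has dimension vector a real root (equivalently has a trivial-radical one-dimensional endomorphism ring and no self-extension) is the technical heart; everything else is then formal.
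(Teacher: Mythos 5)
Your outline handles the easy equivalences correctly --- stone $\Rightarrow$ brick is definitional, and the identification of stones with $2$-spherical $\widehat{\Pi}$-modules via $\Ext^1_{\widehat{\Pi}}(L,L)=\Ext^1_{\Pi}(L,L)$ and $\Ext^2_{\widehat{\Pi}}(L,L)=D\End_\Pi(L)$ is exactly what the paper does --- but both substantive implications have gaps. For brick $\Rightarrow$ stone you route through the claim that a brick of $\Pi$ has dimension vector a real root, which you then defer as ``the technical heart''; this cannot simply be invoked, and it is not needed. The paper's argument uses only that the Euler form on $K_0(\fd\widehat{\Pi})$ restricted to $K_0(\mod\Pi)$ is positive definite and even-valued, so that for a brick $L$ one has $2-\dim_k\Ext^1_\Pi(L,L)=\langle\underline{\dim}\,L,\underline{\dim}\,L\rangle\ge2$, forcing $\Ext^1_\Pi(L,L)=0$ outright. (Your intermediate formula $\dim\End-\dim\Ext^1=1-\dim\Ext^1$ is the hereditary Euler characteristic; for $\widehat{\Pi}$ the alternating sum includes the $\Ext^2$ term and equals $2-\dim_k\Ext^1_\Pi(L,L)$ for a brick.)

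The serious gap is stone $\Rightarrow$ layer. Your chain ``$\Ext^1_\Pi(L,L)=0$ implies $L$ is $\tau$-rigid, hence a layer by Theorem~\ref{bijections}'' fails twice. First, $\Ext^1_\Pi(L,L)=0$ does not give $\Hom_\Pi(L,\tau L)=0$ over a non-hereditary algebra (the Auslander--Reiten formula identifies $\Ext^1(L,L)$ only with a quotient of $\Hom(L,\tau L)$), and stones of $\Pi$ genuinely need not be $\tau$-rigid: the sets $\Layers\Pi$ and $\IndtRig\Pi$ are equinumerous but distinct (e.g.\ in type $A_3$ the projective $P_2$ is indecomposable $\tau$-rigid but not a brick, so conversely some layer is not $\tau$-rigid). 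Second, the bijection between $\IndtRig\Pi$ and $\Layers\Pi$ in Theorem~\ref{bijections} is established in Theorem~\ref{big diagram} by way of the identity $\Layers\Pi=\brick\Pi$, i.e.\ by way of Theorem~\ref{main of layer} itself, so the appeal is circular. The paper's actual proof of stone $\Rightarrow$ layer is an induction with the derived autoequivalences $F_i=\widehat{I}_i\Lotimes_{\widehat{\Pi}}-$: Proposition~\ref{new spherical} together with Lemmas~\ref{inductive step2} and~\ref{repeat inductive step} shows that some $F(v)$, $v\in W$, carries a given stone to a simple module $S_i$, whence $L\simeq D(I(v)/I(s_iv))$, which is a layer by the duality of Lemma~\ref{dual-layer}. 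Nothing in your proposal substitutes for this reduction.
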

It was shown by Bolten \cite{Bol} that layers and bricks are the same. Also a related observation was given by Sekiya and Yamaura \cite{SY2}.

Our third main result concerns the interplay between arrows in the Hasse quiver of $W$, join-irreducible elements of $W$, and join-irreducible congruences on $W$.
We refer to Section 2.1 for details about the following notions.
Given any arrow $x\to y$ in the Hasse quiver of an arbitrary finite lattice $L$, define $\con(x,y)$ to be the smallest congruence on $L$ such that $x\equiv y$.
This is a join-irreducible congruence. 
If $j$ is a join-irreducible element of $L$, we write $j_*$ for the unique element covered by $j$ in $L$.
The congruence $\con(j,j_*)$ is thus join-irreducible, and it turns out that every join-irreducible element is $\con(j,j_*)$ for some $j$.
When $L$ is the weak order on $W$, the map $j\mapsto\con(j,j_*)$ is the bijection from join-irreducible elements of $W$ to join-irreducible congruences from Theorem~\ref{bijections}.
Since each Hasse arrow of $W$ specifies a join-irreducible congruence, and since join-irreducible congruences are in bijection with join-irreducible elements, we obtain a labelling
of the Hasse arrows of $W$ by join-irreducible elements. 
We call this the \newword{join-irreducible labelling} of $W$.
Besides this labelling coming from lattice theory, there is a labelling of the Hasse quiver coming from representation theory, namely the \newword{layer labelling}.
This labels a Hasse arrow $ws_i\to w$ by the layer $I(w)/I(ws_i)$.
The layer labellings for type $A_2$ and $A_3$ are given in Figures \ref{layer labelling A2} and \ref{layer labelling A3}.
\begin{figure}
\[\begin{xy}
(0,0) *+{0}="A",
(18,-9) *+{\left[\begin{smallmatrix}
1
\end{smallmatrix}\right]}="B",
(18,-24) *+{\left[\begin{smallmatrix}
1\\ &2
\end{smallmatrix}\middle|
\begin{smallmatrix}
1
\end{smallmatrix}\right]}="C",
(-18,-9) *+{\left[\begin{smallmatrix}
2
\end{smallmatrix}\right]}="D",
(-18,-24) *+{\left[\begin{smallmatrix}
2
\end{smallmatrix}\middle|
\begin{smallmatrix}
&2\\ 1
\end{smallmatrix}\right]}="E",
(0,-33) *+{\left[\begin{smallmatrix}
1\\ &2
\end{smallmatrix}\middle|
\begin{smallmatrix}
&2\\ 1
\end{smallmatrix}\right]}="F",
\ar@{<-}"A";"B",|{1}
\ar@{<-}"B";"C",|{\begin{smallmatrix}
1\\ &2
\end{smallmatrix}}
\ar@{<-}"C";"F",|{2}
\ar@{<-}"A";"D",|{2}
\ar@{<-}"D";"E",|{\begin{smallmatrix}
&2\\ 1
\end{smallmatrix}}
\ar@{<-}"E";"F",|{1}
\end{xy}\]
\caption{Layer labelling for $A_2$}
\label{layer labelling A2}
\end{figure}
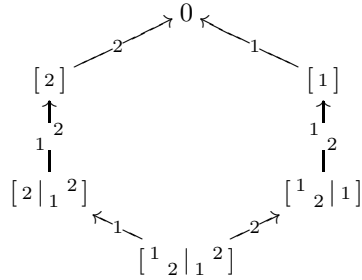
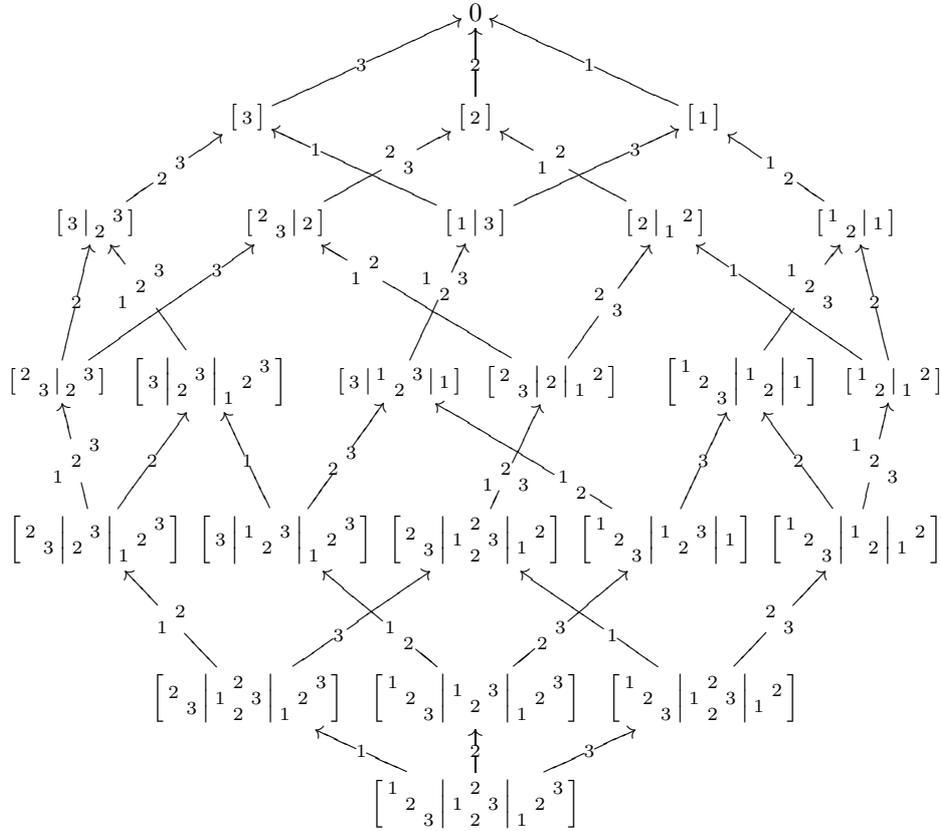
\begin{figure}
\[\begin{xy}
(0,-84) *+{
\left[\begin{smallmatrix}
1\\ &2\\ &&3\end{smallmatrix}\middle|
\begin{smallmatrix}
&2\\ 1&&3\\ &2\end{smallmatrix}\middle|
\begin{smallmatrix}
&&3\\ &2\\ 1\end{smallmatrix}\right]}="1",
(-30,-70) *+{\left[\begin{smallmatrix}
\\ 2\\ &3\end{smallmatrix}\middle|
\begin{smallmatrix}
&2\\ 1&&3\\ &2\end{smallmatrix}\middle|
\begin{smallmatrix}
&&3\\ &2\\ 1\end{smallmatrix}\right]}="2",
(0,-70) *+{\left[\begin{smallmatrix}
1\\ &2\\ &&3\end{smallmatrix}\middle|
\begin{smallmatrix}
1&&3\\ &2\end{smallmatrix}\middle|
\begin{smallmatrix}
&&3\\ &2\\ 1\end{smallmatrix}\right]}="3",
(30,-70) *+{\left[\begin{smallmatrix}
1\\ &2\\ &&3\end{smallmatrix}\middle|
\begin{smallmatrix}
&2\\ 1&&3\\ &2\end{smallmatrix}\middle|
\begin{smallmatrix}
&2\\ 1\end{smallmatrix}\right]}="4",
(-50,-49) *+{\left[\begin{smallmatrix}
2\\ &3\end{smallmatrix}\middle|
\begin{smallmatrix}
&3\\ 2\end{smallmatrix}\middle|
\begin{smallmatrix}
&&3\\ &2\\ 1\end{smallmatrix}\right]}="5",
(-25,-49) *+{\left[\begin{smallmatrix}
3\end{smallmatrix}\middle|
\begin{smallmatrix}
1&&3\\ &2\end{smallmatrix}\middle|
\begin{smallmatrix}
&&3\\ &2\\ 1\end{smallmatrix}\right]}="6",
(0,-49) *+{\left[\begin{smallmatrix}
\\ 2\\ &3\end{smallmatrix}\middle|
\begin{smallmatrix}
&2\\ 1&&3\\ &2\end{smallmatrix}\middle|
\begin{smallmatrix}
&2\\ 1\end{smallmatrix}\right]}="7",
(25,-49) *+{\left[\begin{smallmatrix}
1\\ &2\\ &&3\end{smallmatrix}\middle|
\begin{smallmatrix}
1&&3\\ &2\end{smallmatrix}\middle|
\begin{smallmatrix}
1\end{smallmatrix}\right]}="8",
(50,-49) *+{\left[\begin{smallmatrix}
1\\ &2\\ &&3\end{smallmatrix}\middle|
\begin{smallmatrix}
1\\ &2\end{smallmatrix}\middle|
\begin{smallmatrix}
&2\\ 1\end{smallmatrix}\right]}="9",
(-55,-28) *+{\left[\begin{smallmatrix}
2\\ &3\end{smallmatrix}\middle|
\begin{smallmatrix}
&3\\ 2\end{smallmatrix}\right]}="10",
(-35,-28) *+{\left[\begin{smallmatrix}
3\end{smallmatrix}\middle|
\begin{smallmatrix}
&3\\ 2\end{smallmatrix}\middle|
\begin{smallmatrix}
&&3\\ &2\\ 1\end{smallmatrix}\right]}="11",
(-10,-28) *+{\left[\begin{smallmatrix}
3\end{smallmatrix}\middle|
\begin{smallmatrix}
1&&3\\ &2\end{smallmatrix}\middle|
\begin{smallmatrix}
1\end{smallmatrix}\right]}="12",
(10,-28) *+{\left[\begin{smallmatrix}
\\ 2\\ &3\end{smallmatrix}\middle|
\begin{smallmatrix}
2\end{smallmatrix}\middle|
\begin{smallmatrix}
&2\\ 1\end{smallmatrix}\right]}="13",
(35,-28) *+{\left[\begin{smallmatrix}
1\\ &2\\ &&3\end{smallmatrix}\middle|
\begin{smallmatrix}
1\\ &2\end{smallmatrix}\middle|
\begin{smallmatrix}
1\end{smallmatrix}\right]}="14",
(55,-28) *+{\left[\begin{smallmatrix}
1\\ &2\end{smallmatrix}\middle|
\begin{smallmatrix}
&2\\ 1\end{smallmatrix}\right]}="15",
(-50,-7) *+{\left[\begin{smallmatrix}
3\end{smallmatrix}\middle|
\begin{smallmatrix}
&3\\ 2\end{smallmatrix}\right]}="16",
(-25,-7) *+{\left[\begin{smallmatrix}
2\\ &3\end{smallmatrix}\middle|
\begin{smallmatrix}
2\end{smallmatrix}\right]}="17",
(-0,-7) *+{\left[\begin{smallmatrix}
1\end{smallmatrix}\middle|
\begin{smallmatrix}
3\end{smallmatrix}\right]}="18",
(25,-7) *+{\left[\begin{smallmatrix}
2\end{smallmatrix}\middle|
\begin{smallmatrix}
&2\\ 1\end{smallmatrix}\right]}="19",
(50,-7) *+{\left[\begin{smallmatrix}
1\\ &2\end{smallmatrix}\middle|
\begin{smallmatrix}
1\end{smallmatrix}\right]}="20",
(-30,7) *+{\left[\begin{smallmatrix}
3\end{smallmatrix}\right]}="21",
(0,7) *+{\left[\begin{smallmatrix}
2\end{smallmatrix}\right]}="22",
(30,7) *+{\left[\begin{smallmatrix}
1\end{smallmatrix}\right]}="23",
(0,21) *+{0}="24",
\ar"1";"2",|{1}
\ar"1";"3",|{2}
\ar"1";"4",|{3}
\ar"2";"5",|{\begin{smallmatrix}
&2\\ 1\end{smallmatrix}}
\ar"2";"7",|(.4){3}
\ar"3";"6",|(.4){\begin{smallmatrix}
1\\ &2\end{smallmatrix}}
\ar"3";"8",|(.4){\begin{smallmatrix}
&3\\ 2\end{smallmatrix}}
\ar"4";"7",|(.4){1}
\ar"4";"9",|{\begin{smallmatrix}
2\\ &3\end{smallmatrix}}
\ar"5";"10",|{\begin{smallmatrix}
&&3\\ &2\\ 1\end{smallmatrix}}
\ar"5";"11",|{2}
\ar"6";"11",|{1}
\ar"6";"12",|{\begin{smallmatrix}
&3\\ 2\end{smallmatrix}}
\ar"7";"13",|(.4){\begin{smallmatrix}
&2\\ 1&&3\end{smallmatrix}}
\ar"8";"12",|(.35){\begin{smallmatrix}
1\\ &2\end{smallmatrix}}
\ar"8";"14",|{3}
\ar"9";"14",|{2}
\ar"9";"15",|{\begin{smallmatrix}
1\\ &2\\ &&3\end{smallmatrix}}
\ar"10";"16",|{2}
\ar"10";"17",|(.7){3}
\ar"11";"16",|(.6){\begin{smallmatrix}
&&3\\ &2\\ 1\end{smallmatrix}}
\ar"12";"18",|(.6){\begin{smallmatrix}
1&&3\\ &2\end{smallmatrix}}
\ar"13";"17",|(.7){\begin{smallmatrix}
&2\\ 1\end{smallmatrix}}
\ar"13";"19",|{\begin{smallmatrix}
2\\ &3\end{smallmatrix}}
\ar"14";"20",|(.6){\begin{smallmatrix}
1\\ &2\\ &&3\end{smallmatrix}}
\ar"15";"19",|(.7){1}
\ar"15";"20",|{2}
\ar"16";"21",|{\begin{smallmatrix}
&3\\ 2\end{smallmatrix}}
\ar"17";"22",|(.6){\begin{smallmatrix}
2\\ &3\end{smallmatrix}}
\ar"18";"21",|(.7){1}
\ar"18";"23",|(.7){3}
\ar"19";"22",|(.6){\begin{smallmatrix}
&2\\ 1\end{smallmatrix}}
\ar"20";"23",|{\begin{smallmatrix}
1\\ &2\end{smallmatrix}}
\ar"21";"24",|{3}
\ar"22";"24",|{2}
\ar"23";"24",|{1}
\end{xy}\]
\caption{Layer labelling for $A_3$}
\label{layer labelling A3}
\end{figure}

\begin{theorem}\label{labellings}
The map $j\mapsto I(j_*)/I(j)$ takes the join-irreducible labelling of $W$ to the layer labelling on $W$.
That is, given a Hasse arrow $ws_i\to w$ labelled by the join-irreducible element $j$, which covers the element $j_*$, the layer label on $ws_i\to w$ is $I(j_*)/I(j)$. 
\end{theorem}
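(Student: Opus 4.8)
The plan is to reduce the statement to a purely local identification of two bijections, both between Hasse arrows of $W$ and join-irreducible elements of $W$, by showing that each is characterized by the same property relative to the congruence lattice $\Con W$. On the lattice side, recall that for any arrow $x\to y$ in a finite lattice $L$, $\con(x,y)$ is join-irreducible in $\Con L$, that every join-irreducible congruence arises this way, and that the assignment $j\mapsto\con(j,j_*)$ is a bijection from $\jirr L$ onto $\ConJI(L)$ (these are exactly the facts of congruence uniformity invoked after Theorem~\ref{bijections}). Thus the join-irreducible labelling sends the arrow $ws_i\to w$ to the unique $j\in\jirr W$ with $\con(ws_i,w)=\con(j,j_*)$. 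So Theorem~\ref{labellings} is equivalent to the assertion: the layer $I(w)/I(ws_i)$, regarded as an element of $\Layers\Pi$, corresponds under the bijections of Theorem~\ref{bijections} (or rather Theorem~\ref{big diagram}) to that same join-irreducible element $j$. In other words, I must check that the square formed by (i) Hasse-arrow $\mapsto$ layer, (ii) Hasse-arrow $\mapsto$ join-irreducible congruence via $\con$, (iii) the bijection $\Layers\Pi\to\ConJI(W)$, and (iv) the bijection $\ConJI(W)\to\jirr W$, commutes.

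The key step is therefore to understand, for a fixed layer $L = I(w)/I(ws_i)$, \emph{all} the Hasse arrows of $W$ whose layer label is (isomorphic to) $L$, and to match this set with the set of arrows forced to be contracted by $\con(j,j_*)$, where $j$ is the join-irreducible attached to $L$. The natural tool is Mizuno's order isomorphism $w\mapsto I(w)$ between $W$ and $\sttilt\Pi$: an arrow $ws_i\to w$ in $W$ corresponds to a mutation of support $\tau$-tilting pairs, and the layer $I(w)/I(ws_i)$ is the ``brick'' attached to that mutation arrow (by the discussion preceding Theorem~\ref{main of layer}, layers are exactly bricks, and by Theorem~\ref{big diagram} each indecomposable $\tau$-rigid module, equivalently each layer, determines a distinguished wall-crossing). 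I would use the brick/torsion-class dictionary: a layer $L$ labels the arrow $ws_i\to w$ precisely when $L$ is the unique brick lying in $\Fac I(w)$ but not in $\Fac I(ws_i)$; the set of such arrows, as $w$ ranges over $W$, is exactly the set of arrows crossing the ``wall'' determined by $L$. On the lattice side, the arrows contracted by a join-irreducible congruence $\con(j,j_*)$ form precisely an analogous ``wall'': by the theory of congruence-uniform (bounded) lattices, $\con(x,y)=\con(j,j_*)$ iff the arrow $x\to y$ is in the congruence class of the edge $j\to j_*$, and these edges are governed by the polygon/forcing combinatorics of the weak order. So the content is: the wall of Hasse arrows cut out by a brick $L$ coincides with the wall cut out by the corresponding join-irreducible $j$.

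To nail the correspondence $L\leftrightarrow j$ itself, I would track it through the explicit chain of bijections in Theorem~\ref{big diagram}: the layer $L=I(w)/I(ws_i)$ sits atop a unique indecomposable $\tau$-rigid module, which is the $i$-th indecomposable summand of (a suitable quotient of) $I(ws_i)$ relative to $I(w)$; following \cite{AIRT}, this data is equivalent to the minimal element $j$ of $W$ among all $w'$ with $L\in\Fac I(w')\setminus(\text{layers below})$, and one checks directly that this $j$ is join-irreducible with $j\to j_*$ an arrow carrying layer label $L$. Then $\con(j,j_*)$ is by definition the join-irreducible congruence generated by contracting that bottom arrow, and the wall-coincidence of the previous paragraph shows it contracts $ws_i\to w$ as well. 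The main obstacle I anticipate is precisely this wall-coincidence: showing that ``forcing'' of Hasse arrows in the weak order (the combinatorial relation $\con(x,y)\ge\con(x',y')$) is matched, arrow by arrow, with the containment relation among the torsion classes $\Fac I(\,\cdot\,)$ produced by crossing the brick-wall of $L$. This is essentially the statement that the shard/polygon structure of the weak order is reproduced by the exchange combinatorics of $\sttilt\Pi$; I expect to extract it from the minimal-inclusion / mutation description of layers in \cite{AIRT} together with Mizuno's isomorphism, possibly after first proving it for rank-two intervals (the hexagons and squares visible in Figures~\ref{layer labelling A2} and~\ref{layer labelling A3}) and then propagating by the interval-decomposition property of $\con$ in a bounded lattice.
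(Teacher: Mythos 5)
There is a genuine gap, and in fact one of your key claims is false. You assert that the layer $L$ labels the arrow $ws_i\to w$ ``precisely when $L$ is the unique brick lying in $\Fac I(w)$ but not in $\Fac I(ws_i)$,'' and you use this ``wall'' description to transfer the forcing combinatorics to the module side. But uniqueness fails: already in type $A_2$, taking $w=e$ and $s_i=s_1$, both layer modules $S_1$ and $P_1=\left[\begin{smallmatrix}1\\ &2\end{smallmatrix}\right]$ are bricks lying in $\mod\Pi\setminus\Fac I_1$ (this is exactly the counterexample recorded in the Remark at the end of Section~\ref{order}). So the set $\Tors(w)\setminus\Tors(ws_i)$ does not single out the layer label, and the ``brick-wall'' you build on it is not well defined. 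The correct and much harder substitute, which the paper needs for the converse direction (same layer label implies same congruence, hence injectivity of $j\mapsto I(j_*)/I(j)$), is Theorem~\ref{canon m j thm}: $\Tors(w)$ is the torsion class generated by the layers on the arrows out of $w$; applied at a meet-irreducible $m$, where there is only one such arrow, this shows the single layer $L(m^*\to m)$ determines $\Tors(m)$ and hence $m$. Your proposal contains no argument that could replace this.

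For the statement of Theorem~\ref{labellings} itself only the forward direction is needed: if $\con(ws_i,w)=\con(j,j_*)$ then the two arrows carry isomorphic layers. Your closing sentence correctly identifies the right skeleton for this --- verify it on squares and hexagons and propagate via the polygon forcing machinery (Corollary~\ref{strong force in poly}) --- but you do not carry out the polygon verification, and that is where all the representation theory lives. One must show that opposite edges of a square $[u,us_is_j]$, respectively hexagon $[u,us_is_js_i]$, carry isomorphic layer modules; the paper does this (Proposition~\ref{layer poly}) by an explicit computation for the rank-two preprojective algebra $\Pi/I_iI_jI_i$ followed by applying the exact functor $I(u)\otimes_\Pi-$ (a derived autoequivalence upstairs over $\widehat\Pi$), using Lemma~\ref{additivity} to identify $I(u)\otimes_\Pi(\Pi/I_i)$ with $I(u)/I(u)I_i$, etc. Saying you ``expect to extract it from the minimal-inclusion / mutation description of layers'' leaves the essential lemma unproved. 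In short: the architecture (fibers of the two labellings coincide; reduce to polygons) matches the paper's, but the one concrete mechanism you supply is refuted by the paper's own counterexample, and the computation that actually drives the proof is missing.
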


This is also proved as part of Theorem~\ref{big diagram}, which gives a commutative diagram shown in Figure~\ref{lat alg diagram fig} between important objects. We include this diagram here, although some elements of it have not yet been explained, as a road map to the major results of the paper. The maps are bijections or surjections as marked with tildes ``$\sim$'' or double-headed arrows.
\begin{figure}
\[\scalebox{1.25}{\begin{xy}
(35,55)*+{\jirr(W)}="J",
(37,52)="J+",
(40,52)="J++",
(-35,55)*+{\mirr(W)}="M",
(-37,52)="M-",
(-40,52)="M--",
(0,55)*+{\ConJI(W)}="C",
(0,15)*+{\Hasse_1(W)}="H",
(0,-33)*+{\Layers\Pi}="L",
(0,-40)*+{\brick\Pi}="S",
(3,-30)="L+",
(-3,-30)="L-",
(-35,-40)*+{{\IndtRig\Pi}}="It",
(35,-40)*+{\IndtmRig\Pi}="ltm",
(-37,-75)*+{\jirr(\tors\Pi)}="jtP",
(-40,-72)="jtP-",
(35,-75)*+{\jirr(\torf\Pi)}="Q",
(0,-75)*+{\lwide\Pi}="W",
(-37,-37)="It-",
(-40,-37)="It--",
(-40,-43)="It--low",
(40,-37)="Itm++",
(40,-43)="Itm++low",
(40,-72)="Q+",
(40,-72)="Q++",
\ar@{->>}|(.65){(x\to y)\mapsto\con(x,y)}"H";"C",
\ar@{->}_{j\mapsto\con(j,j_*)\,\,\,}^{\sim}"J";"C",
\ar@{->}^{\,\,\,m\mapsto\con(m^*,m)}_{\sim}"M";"C",
\ar@{->>}|(.4){\parbox{38pt}{\scriptsize\hspace*{-15pt}\mbox{meet-irreducible}\\\mbox{labelling}}}"H";"M",
\ar@{->>}|(.4){\parbox{38pt}{\scriptsize\hspace*{0pt}\mbox{join-irreducible}\\\hspace*{15pt}\mbox{labelling}}}"H";"J",
\ar@{->>}|(.3){\parbox{38pt}{\scriptsize\hspace*{-8pt}\mbox{layer labelling}}}"H";"L",
\ar@{->}^{\sim}_{\parbox{38pt}{\scriptsize\centering $X\mapsto$\\ $X/\rad X_{\End_{\Pi}(X)}$}}"It";"S",
\ar@{->}_{\sim}^{\parbox{38pt}{\scriptsize\centering $X\mapsto$\\ $\soc X_{\End_{\Pi}(X)}$}}"ltm";"S",
\ar@{->}_(.88){\sim}^(.88){\parbox{100 pt}{\scriptsize$\hspace*{5pt}m\mapsto M(m)\\\hspace*{5pt}:=I(m)e_i$\\\hspace*{-0pt}for $m^*=ms_i$}}"M--";"It--".
\ar@{->}^(.88){\sim}_(.88){\parbox{50pt}{\scriptsize$j\mapsto J(j)\\\hspace*{-5pt}:=(\Pi/I(j))e_i$\\\hspace*{0pt}for $j_*=js_i$}}"J++";"Itm++".
\ar@/_27pt/@{->}|(.75){m\mapsto I(m)/I(m^*)}^{\sim}"M-";"L-",
\ar@/^27pt/@{->}|(.75){j\mapsto I(j_*)/I(j)}^{\sim}"J+";"L+",
\ar@{->}^{M\mapsto\Fac M}_{\sim}"It--low";"jtP-",
\ar@{->}_{M\mapsto\Sub M}^{\sim}"Itm++low";"Q+",
\ar@{->}^{L\mapsto\Filt L}_{\sim}"S";"W",
\ar@{->}^{\WW\mapsto\Tors(\WW)}_{\sim}"W";"jtP",
\ar@{->}_{\WW\mapsto\Torf(\WW)}^{\sim}"W";"Q",
\ar@{=}"L";"S",
\end{xy}}\]
\caption{The correspondences established in Theorem~\ref{big diagram}}
\label{lat alg diagram fig}
\end{figure}
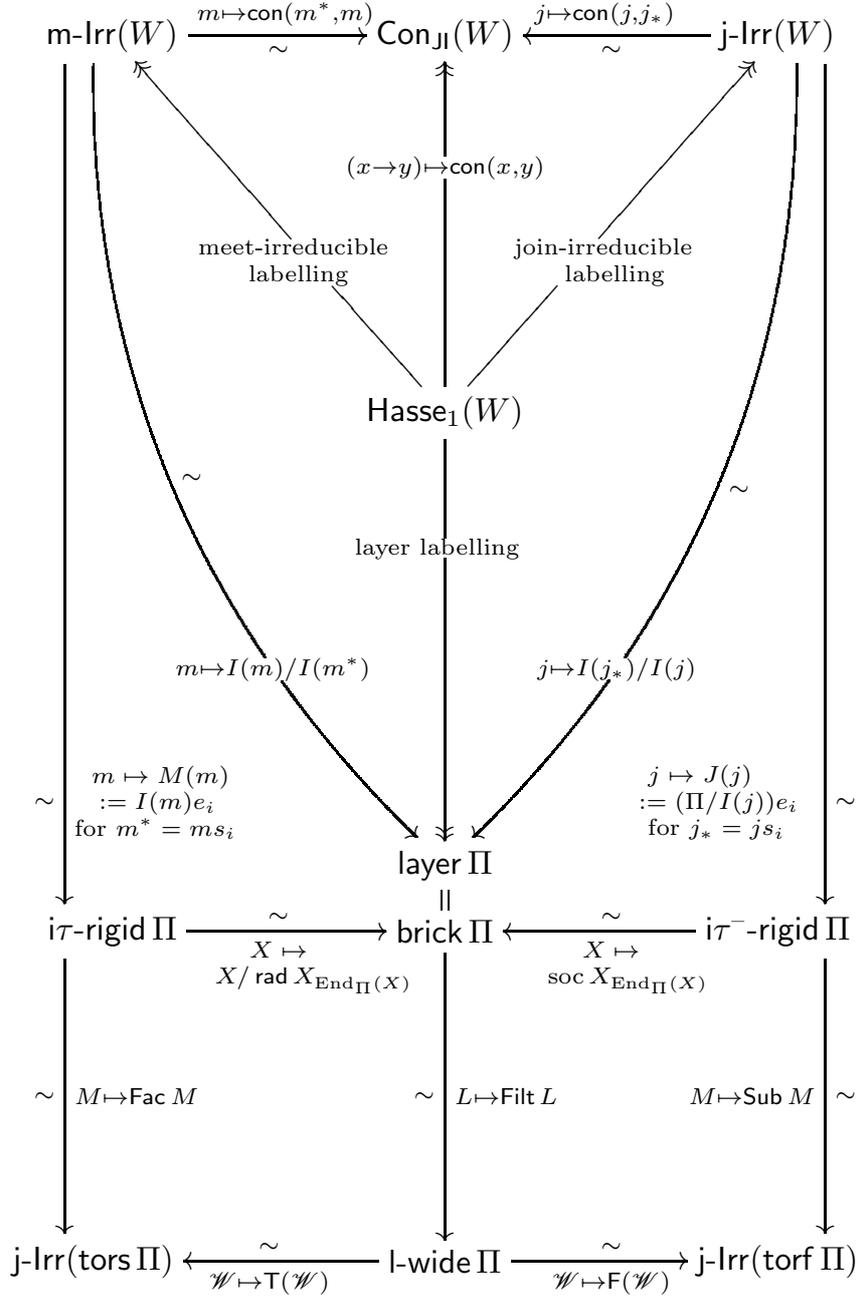

Given two Hasse arrows $x\to y$ and $x'\to y'$ of a lattice $L$, we say that $x\to y$ \newword{forces} $x'\to y'$ if $\con(x,y)\ge\con(x',y')$ in $\Con(L)$.
In other words, $x\to y$ forces $x'\to y'$ if every congruence setting $x\equiv y$ also sets $x'\equiv y'$.
In the weak order on $W$, the forcing order on Hasse arrows restricts to a partial order on Hasse arrows of the form $j\to j_*$ such that $j$ is join-irreducible.
We think of this as a partial order on join-irreducible elements and call it the \newword{forcing order} on join-irreducible elements of $W$.

We say that a pair of layer modules $X,Y$ form a \newword{doubleton} if $\Ext_\Pi^1(Y,X)$ 
and $\Ext_\Pi^1(X,Y)$ are one-dimensional, and the corresponding extensions
are again layer modules.  
We define the \newword{doubleton extension order} on layer modules to be the transitive closure of the relation with $A>B$ if there exists a doubleton $A,C$ such that $B$ is the extension of $A$ by $C$ or of $C$ by $A$.
 Our fourth main result is the following.

\begin{theorem} \label{isom} 
The map $j\mapsto I(j_*)/I(j)$ is an isomorphism from the forcing order on join-irreducible elements of $W$ to the doubleton extension order on layer modules of $\Pi$.
\end{theorem}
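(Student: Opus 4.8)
The plan is to transport both orders through the bijection $j\mapsto I(j_*)/I(j)$ established in Theorem~\ref{big diagram}, and then verify that a single Hasse-arrow "forcing step" on the lattice side corresponds precisely to a doubleton step on the algebra side. Since the doubleton extension order is \emph{defined} as the transitive closure of its elementary relation, and the forcing order is the transitive closure of the relation "$j\to j_*$ forces $j'\to j'_*$", it suffices to show that the map matches these two generating relations, and in fact it is enough to match a suitable set of generators for each. So the first step is to recall, from the theory of congruence-uniform (bounded) lattices, that the forcing order on join-irreducible elements of $W$ is generated by the following local moves: whenever $j\to j_*$ and $j'\to j'_*$ are two Hasse arrows lying in a common \emph{polygon} of the weak order (i.e.\ an interval that is a face lattice of a polygon, equivalently $[j_*\meet j'_*,\,j\join j']$ is such a polygon), then each of $j,j'$ forces the arrows on the "other side" of the polygon. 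For the weak order, these polygons are squares and pentagons (they come from rank-$2$ parabolic subgroups), so the forcing relation is generated by square-moves and pentagon-moves.

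The second step is the translation of polygons into representation theory. Under Mizuno's bijection $w\mapsto I(w)$ and the lattice isomorphism of~\cite{Mi}, an interval of the weak order that is a polygon corresponds to an interval in $\sttilt\Pi$ which is a polygon of support $\tau$-tilting modules; by $\tau$-tilting theory the covers in such an interval correspond to mutations, and the layer modules labelling the arrows of the polygon are exactly two bricks $X,Y$ together with the extension(s) between them occurring along the polygon. Concretely: a square of Hasse arrows corresponds to $\Ext^1_\Pi(X,Y)=\Ext^1_\Pi(Y,X)=0$, while a pentagon corresponds to $\Ext^1_\Pi(Y,X)$ and $\Ext^1_\Pi(X,Y)$ both one-dimensional with the two non-split extensions again being layer modules (these are precisely the arrows going "the long way" around the pentagon). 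Thus the pentagons in the weak order are exactly the doubletons of Theorem~\ref{isom}, and the two arrows that a given arrow $j\to j_*$ forces within a pentagon are exactly the two layer modules arising as extensions in the doubleton containing $I(j_*)/I(j)$. Here I would lean on Theorem~\ref{bijections}, Theorem~\ref{main of layer} (layers $=$ bricks), and the layer-labelling description of Hasse arrows from Theorem~\ref{labellings} to identify the labels explicitly; the key input is that a rank-$2$ interval $[v,w]$ in $W$ with $w=v s_i s_j s_i\cdots$ has its four or five arrows labelled by the bricks supported on the two relevant vertices, with the "short" arrows labelled by the simple-type bricks and the "long" arrows by their extensions.

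The third step assembles these: the elementary forcing moves (square and pentagon) correspond under $j\mapsto I(j_*)/I(j)$ to, respectively, a trivial relation (nothing is forced across a square, since in a square each arrow forces only the opposite arrow, which carries the \emph{same} layer label — one checks the two parallel arrows of a square get equal labels) and to a doubleton step. Hence the transitive closure of the forcing moves maps onto the transitive closure of the doubleton moves, i.e.\ the map is order-preserving; and since it is a bijection and the same analysis run in reverse shows every doubleton step comes from a pentagon in $W$, it is an order \emph{isomorphism}.

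The main obstacle will be Step~2: proving cleanly that the pentagons of the weak order on $W$ are \emph{exactly} the doubletons of layer modules, with the extension-closure condition in the definition of doubleton matching the lattice-theoretic requirement that the polygon interval be a genuine face lattice of a pentagon (no degeneration). This requires a careful analysis of rank-$2$ $\tau$-tilting mutation in $\Pi$ and of which pairs of bricks $X,Y$ with $\dim\Ext^1_\Pi(X,Y)=\dim\Ext^1_\Pi(Y,X)=1$ have both middle terms again bricks; one should expect to use the $2$-Calabi--Yau property of $\fd\widehat\Pi$ (via the $2$-spherical description in Theorem~\ref{main of layer}) to control the extension groups and to see that the relevant middle terms are indecomposable and rigid. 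A secondary, more bookkeeping, difficulty is checking the square case — that opposite arrows of a square share a layer label — but this follows from the commutativity of the diagram in Figure~\ref{lat alg diagram fig} together with the explicit mutation formulas, so it should not pose real trouble.
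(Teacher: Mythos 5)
Your overall strategy is the same as the paper's: reduce both orders to their generating relations, use polygonality and congruence uniformity of the weak order (Theorem~\ref{W cong unif}, Theorem~\ref{force in poly}, Corollary~\ref{force in poly to force}) to see that forcing is generated by moves inside polygons, and match polygon moves with doubleton moves under the layer labelling. Two points need repair, one small and one substantive.

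First, the polygons of the weak order on a simply-laced Weyl group are squares and \emph{hexagons}, not pentagons: a polygon is an interval over a coset of a rank-two parabolic subgroup, which has $2m(s_i,s_j)$ elements, hence $4$ or $6$. This is not merely terminology; it affects your description of where the two extensions appear. In the hexagon each of the two chains has a middle (side) arrow, and these two side arrows are labelled by the two non-split extensions $E$ (of $Y$ by $X$) and $F$ (of $X$ by $Y$) respectively, while the top and bottom arrows carry $X$ and $Y$ --- this is Proposition~\ref{layer poly}, proved by reduction to the type $A_2$ preprojective algebra and applying the autoequivalence $\widehat{I}(u)\Lotimes_{\widehat{\Pi}}-$. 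A pentagon would have only one side arrow and hence no room for both extensions.

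Second, and more seriously, the converse direction --- that every doubleton extension is realized by some hexagon in the weak order --- is not ``the same analysis run in reverse.'' The forward direction is a local computation inside a hexagon you are handed; the converse requires \emph{producing} a hexagon from an abstract doubleton $A,C$ with extension $B$. The paper does this in Proposition~\ref{doubleton hex converse} by first proving $\Hom_\Pi(A,C)=\Hom_\Pi(C,A)=0$ via the Euler form on $K_0(\fd\widehat{\Pi})$ (Lemma~\ref{mystery lemma}), then repeatedly applying the functors $F_i$ to transport the doubleton to a doubleton of simple modules (Lemmas~\ref{double-two-six} and~\ref{double-two-seven}), and finally dualizing with Lemma~\ref{dual-layer} to exhibit a hexagon whose two top arrows are labelled by $A$ and $C$. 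Without this existence step you have only shown the map is order-preserving, and an order-preserving bijection of finite posets need not be an order isomorphism. Your instinct to invoke the 2-Calabi--Yau/2-spherical structure is exactly right --- it is what powers the Euler-form and twist arguments --- but the reduction-to-simples mechanism is the missing idea.
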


In addition to these general results, we show that in type $A_n$, the doubleton extension order coincides with the reverse of the subfactor order (see Theorem \ref{typeA}).
  In Section 6, we give an explicit combinatorial description of the indecomposable $\tau$-rigid
  modules in types $A_n$ and $D_n$. For these cases, elements of the Weyl group $W$ are given by
  (signed) permutations, and join-irreducible elements in $W$ have simple characterizations.
  We prove in Theorems \ref{define X(w) for A}, \ref{define X(w) for D2} and \ref{define X(w) for D1}
 that the indecomposable $\tau$-rigid $\Pi$-modules
  have direct combinatorial descriptions in terms of the corresponding (signed) permutations.
  These concrete examples complement the general theory worked out in the previous sections.

\section{Preliminaries}\label{prelim}
In this section, we review the necessary background on lattices, the weak order, and finite-dimensional algebras.

\subsection{Lattice-theoretic preliminaries}\label{l-prelim} 
Proofs and additional details for the material reviewed here can be found in \cite[Sections~9-5 and~9-6]{regions}.  

For any poset $P$, we say that $x$ \newword{covers} $y$, and we write
$x\gtrdot y$, if $x>y$, and there is no $z\in P$ such that $x>z>y$.  
We represent $P$ by its Hasse quiver $\Hasse(P)=(P,\Hasse_1(P))$, whose vertex set is $P$, and whose arrow set $\Hasse_1(P)$ consists of all arrows $v\rightarrow w$ where $v$ covers $w$.

Given a subset $S$ of $P$, if there is a unique smallest element which is greater than or equal to all elements in $S$,
then this least upper bound is called the \newword{join} of $S$ and denoted $\Join S$.
Similarly, if there is a unique largest element in $P$ that is less than or equal to all elements in $S$, then this element is
called the \newword{meet} of $S$ and denoted $\Meet S$.
A \newword{lattice} $L$ is a poset in which every pair $a,b$ of elements in $L$ has both a meet $a\meet b$ and a join
$a\join b$, and a \newword{complete lattice} $L$ is a poset in which every subset $S$ of $L$ has both a meet and a join.
(Every finite subset of a lattice $L$ has both a meet and a join, but an infinite lattice fails to be complete if it has some infinite subset without a meet or without a join.)

We restrict our attention to finite lattices in this paper.
Some of the assertions made here for finite lattices hold for infinite lattice as well, but some do not.  

An element $j$ of a finite lattice $L$ is called \newword{join-irreducible}, whenever $j=a\join b$ for some $a,b\in L$, either $a=j$ or $b=j$ or both, and $j$ is not the minimum element of $L$.
Equivalently, $j$ is join-irreducible if and only if it covers exactly one element of $L$.
We write $j_*$ for the unique element covered by a join-irreducible element $j$.
Dually, a \newword{meet-irreducible} element of $L$ is an element $m$ that is covered by a unique element $m^*$.
The set of join-irreducible (respectively, meet-irreducible) elements of $L$ is denoted $\jirr L$ (respectively, $\mirr L$).

A \newword{(lattice) congruence} on a lattice $L$ is an equivalence relation $\Theta$ having the property that the $\Theta$-class of $a\join b$ depends only on the $\Theta$-class of $a$ and the $\Theta$-class of $b$, and having the same property for meets.
Given a congruence $\Theta$ on $L$, the set $L/\Theta$ of $\Theta$-classes has a well-defined meet and join operation, making $L/\Theta$ a lattice called the \newword{quotient} of $L$ modulo $\Theta$.

The set of all equivalence relations on a given set $L$ forms a lattice, where the meet of two relations is given
by the intersection of relations and the join of two relations is given by the transitive closure of union of relations.
When $L$ is a lattice, the set $\Con(L)$ consisting of congruences of $L$ is a sublattice of the lattice of equivalence relations.
Furthermore, $\Con(L)$ is a distributive lattice.
We denote by $\ConJI(L)$ the set of all join-irreducible congruences.
As mentioned in the introduction, we have a surjective map $\Hasse_1(L)\to\ConJI(L)$
sending an arrow $x\to y$ to $\con(x,y)$.
Here $\con(x,y)$ is the meet, in $\Con(L)$, of all congruences with $x\equiv y$.
A congruence $\Theta$ on a finite lattice $L$ is determined completely by the set of cover relations $x\gtrdot y$ in $L$ such that $x\equiv y$ modulo $\Theta$.
It is also determined uniquely by the set of join-irreducible elements $j$ in $L$ such that $j\equiv j_*$ modulo $\Theta$, and thus we have an injective map
$\Con(L)\to 2^{\jirr(L)}$.

The map from cover relations $x\gtrdot y$ to join-irreducible congruences is typically not one-to-one.
The restriction of the map to cover relations of the form $j\gtrdot j_*$ is also surjective onto join-irreducible congruences, but may still fail to be one-to-one.
A lattice is called \newword{congruence uniform} if the map $j\mapsto\con(j,j_*)$ is injective (and thus a bijection) from join-irreducible elements to join-irreducible congruences and the map $m\to\con(m^*,m)$ is injective (and thus a bijection) from meet-irreducible elements to join-irreducible congruences.
A finite congruence uniform lattice is always \newword{semidistributive}.
This means that if $x\join y=x\join z$ then $x\join(y\meet z)=x\join y$ and if $x\meet y=x\meet z$ then $x\meet(y\join z)=x\meet y$.

Since $\Con(L)$ is a finite distributive lattice, the Fundamental Theorem of Finite Distributive Lattices says that its elements are naturally identified with order ideals in the subposet $\ConJI(L)$ of $\Con(L)$.
When $L$ is congruence uniform, the subposet $\ConJI(L)$ induces a partial order on the join-irreducible elements of $L$, which we call the \newword{forcing order}.
A congruence $\Theta\in\Con(L)$ corresponds to the order ideal in $\ConJI(L)$ consisting of those join-irreducible congruences below $\Theta$ in $\Con(L)$ (i.e.\ finer than $\Theta$ as equivalence relations).
These are the join-irreducible congruences $\con(j,j_*)$ such that $j\equiv j_*$ modulo $\Theta$.
The forcing order on join-irreducible elements sets $j\le j'$ if and only if $j\equiv j_*$ modulo $\con(j',j'_*)$.

As mentioned above, each cover relation $x\gtrdot y$ in a finite lattice defines a join-irreducible congruence of $L$.
In a finite congruence uniform lattice $L$, each join-irreducible congruence is $\con(j,j_*)$ for a unique join-irreducible element $j$ of $L$.
The map $\Hasse_1(L)\to\jirr(L)$ sending the arrow $x\to y$ to the unique $j$ with $\con(j,j_*)=\con(x,y)$ is called the \newword{join-irreducible labelling} of $L$.
Each join-irreducible congruence is also $\con(m^*,m)$ for a unique meet-irreducible element $m$, and the map $\Hasse_1(L)\to\mirr(L)$ sending $x\to y$ to the unique $m$ with $\con(m^*,m)=\con(x,y)$ is called the \newword{meet-irreducible labelling} of $L$.
These labellings are described explicitly as follows.

The following proposition is \cite[Proposition~9-5.20]{regions}. 
Since that proposition's proof is left to an exercise, we give a proof here.

\begin{proposition}\label{expl label}
Let $L$ be a finite congruence uniform lattice and let $x\to y$ be an arrow in $\Hasse(L)$.
\begin{enumerate}[\rm(a)]
\item The join-irreducible label on $x\to y$ is $j=\Meet\set{z\in L:z\le x,\,z\not\le y}$.
Furthermore, $j\le x$ but $j\not\le y$.
\item The meet-irreducible label on $x\to y$ is $m=\Join\set{z\in L:z\ge y,\,z\not\ge x}$.
Furthermore, $m\ge y$ but $m\not\ge x$.
\end{enumerate}
In particular, if $j$ is a join-irreducible element and $m$ is a meet-irreducible element with $\con(j,j_*)=\con(m^*,m)$, then $j=\Meet\set{z\in L:z\le m^*,\,z\not\le m}$ and $m=\Join\set{z\in L:z\ge j_*,\,z\not\ge j}$.
\end{proposition}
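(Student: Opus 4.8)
The plan is to prove part (a) directly, obtain part (b) by duality, and then deduce the final assertion by specializing (a) and (b) to particular Hasse arrows. For (a), set $S=\set{z\in L:z\le x,\ z\not\le y}$ and observe first that $S$ is nonempty (it contains $x$) and closed under meets: if $z_1,z_2\in S$ then each of $z_1\join y$ and $z_2\join y$ lies strictly above $y$ and weakly below $x$, hence equals $x$ since $x$ covers $y$, and join-semidistributivity (valid since a congruence uniform lattice is semidistributive) gives $(z_1\meet z_2)\join y=z_1\join y=x\ne y$, so $z_1\meet z_2\not\le y$ while plainly $z_1\meet z_2\le x$. As $L$ is finite, $S$ thus has a minimum element $j=\Meet S$, which in particular satisfies $j\le x$ and $j\not\le y$ (the ``furthermore'' clause of (a)). Next I would check $j\in\jirr L$: if $j=a\join b$ with $a,b<j$, then $a,b\le x$ but $a,b\notin S$ because they lie below $\min S$, forcing $a,b\le y$ and hence $j=a\join b\le y$, a contradiction; also $j$ is not the minimum of $L$ since $j\not\le y$. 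The same ``below $\min S$'' reasoning applied to $j_*<j$ shows $j_*\le y$.

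It then remains to show $\con(j,j_*)=\con(x,y)$, which identifies $j$ as the join-irreducible label; for this it suffices to check, for an arbitrary congruence $\Theta$, that $x\equiv y$ if and only if $j\equiv j_*$ (each of the two congruences being the least one realizing the corresponding identification). If $x\equiv y$, then $j=j\meet x\equiv j\meet y$, and $j\meet y$ is strictly below $j$ (as $j\not\le y$), hence below $j_*$, while $j_*\le j\meet y$ since $j_*\le j$ and $j_*\le y$; thus $j\meet y=j_*$ and $j\equiv j_*$. Conversely, if $j\equiv j_*$, then $y\join j\equiv y\join j_*=y$, and $y\join j=x$ because $y<y\join j\le x$ and $x$ covers $y$; thus $x\equiv y$. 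This proves (a). Part (b) follows by applying (a) in the opposite lattice $L^{\op}$: congruence uniformity and semidistributivity are self-dual, the Hasse arrow $x\to y$ of $L$ becomes the Hasse arrow $y\to x$ of $L^{\op}$, the roles of $\jirr$ and $\mirr$ interchange, and $\con(x,y)$ denotes one and the same congruence whether computed in $L$ or in $L^{\op}$.

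For the final claim, fix a join-irreducible $j$ and a meet-irreducible $m$ with $\con(j,j_*)=\con(m^*,m)$, and consider the Hasse arrows $m^*\to m$ and $j\to j_*$. The join-irreducible label of $m^*\to m$ is the unique join-irreducible $j'$ with $\con(j',j'_*)=\con(m^*,m)$, and since $\con(j,j_*)=\con(m^*,m)$, congruence uniformity forces $j'=j$; so (a), applied to $m^*\to m$, gives $j=\Meet\set{z\in L:z\le m^*,\ z\not\le m}$. Dually, the meet-irreducible label of $j\to j_*$ is $m$, so (b), applied to $j\to j_*$, gives $m=\Join\set{z\in L:z\ge j_*,\ z\not\ge j}$.

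The one step that genuinely requires care is the closure of $S$ under meets: this is precisely where semidistributivity enters, and it is the crux of the whole argument. Everything after that — the join-irreducibility of $j$, the identity $\con(j,j_*)=\con(x,y)$, the dualization to (b), and the final specialization — is routine bookkeeping with cover relations, the only mild subtlety being the observation, used for (b), that passing to $L^{\op}$ leaves the hypotheses and the congruences $\con(x,y)$ unchanged.
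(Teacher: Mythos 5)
Your proof is correct and follows essentially the same route as the paper's: semidistributivity is used to show $j=\Meet S$ still lies in $S$ (the paper phrases this as $j\join y=x$ rather than as meet-closure of $S$, but it is the same application of the SD-join law), join-irreducibility of $j$ is derived from the fact that any two elements strictly below $j$ land below $y$, the identity $\con(j,j_*)=\con(x,y)$ is obtained from $j\join y=x$ and $j\meet y=j_*$, part (b) follows by duality, and the last claim is the specialization of (a) and (b) to the arrows $m^*\to m$ and $j\to j_*$. No gaps.
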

\begin{proof}
The last statements are special cases of assertions (a) and (b).
Assertions (a) and (b) are dual to each other, so by symmetry it is enough to prove (a).
To do so, it is enough to show that $j$ is join-irreducible and that $\con(j,j_*)=\con(x,y)$.

Recall that a congruence uniform finite lattice is also semidistributive.
Every element $z$ of $\set{z\in L:z\le x,\,z\not\le y}$ has $z\join y=x$, so applying semidistributivity several times, we see that $j\join y=x$, so in particular $j\not\le y$.
It is immediate that $j\le x$.
If $j$ covers elements $a$ and $b$, then $a\le y$ and $b\le y$.
But if $a\neq b$, then $j$ is a minimal upper bound for $a$ and $b$, so it must equal $a\join b$.
Since $y$ is another upper bound for $a$ and $b$, we reach the contradiction $j\le y$.
We conclude that $j$ covers at most one element.
If $j$ covers no element, then $j$ is the minimal element of $L$, contradicting again the fact that $j\not\le y$.
We see that $j$ is join-irreducible.

We saw that $j\join y=x$ and we also verify easily that $j\meet y=j_*$.
If $\Theta$ is a congruence with $j\equiv j_*$, then $j\join y\equiv j_*\join y$, or in other words $x\equiv y$.
Conversely, if $\Theta$ has $x\equiv y$, then $j\meet x\equiv j\meet y$, or in other words $j\equiv j_*$.
We see that $\Theta$ has $j\equiv j_*$ if and only if $x\equiv y$, so that $\con(j,j_*)=\con(x,y)$.
\end{proof}

We summarize some of what we know about join-irreducible elements, meet-irreducible elements, and congruences in a finite congruence uniform lattice in Figure~\ref{cong unif diagram fig}. 
If $L$ is a finite congruence uniform lattice with Hasse quiver $\Hasse(L)$, join-irreducible elements $\jirr(L)$, meet-irreducible elements $\mirr(L)$, and join-irreducible congruences $\ConJI(L)$, then the diagram in Figure~\ref{cong unif diagram fig} commutes and the maps are bijections or surjections as marked with tildes ``$\sim$'' or double-headed arrows.
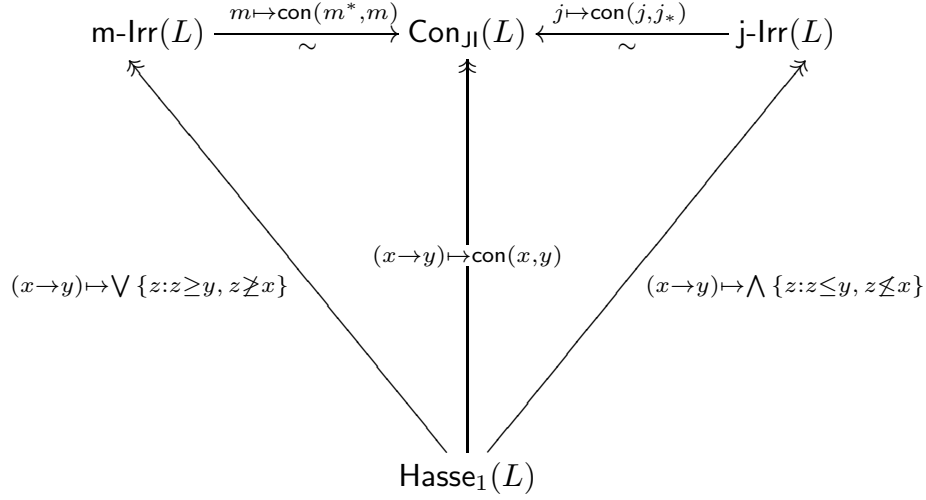
\begin{figure}
\[\scalebox{1.2}{\begin{xy}
(35,0)*+{\jirr(L)}="J",
(37,-3)="J+",
(-35,0)*+{\mirr(L)}="M",
(-37,-3)="M+",
(0,0)*+{\ConJI(L)}="C",
(0,-49)*+{\Hasse_1(L)}="H",
\ar@{->>}|{(x\to y)\mapsto\con(x,y)}"H";"C",
\ar@{->}_{j\mapsto\con(j,j_*)\,\,\,}^{\sim}"J";"C",
\ar@{->}^{\,\,\,m\mapsto\con(m^*,m)}_{\sim}"M";"C",
\ar@{->>}^{(x\to y)\mapsto\Join\set{z:z\ge y,\,z\not\ge x}}"H";"M+",
\ar@{->>}_{(x\to y)\mapsto\Meet\set{z:z\le x,\,z\not\le y}}"H";"J+",
\end{xy}}\]
\caption{Join-irreducible elements, meet-irreducible elements, and congruences in a finite congruence uniform lattice}
\label{cong unif diagram fig}
\end{figure}

Let $x$ be an element of a finite lattice $L$.
The expression $x=\Join S$ is the \newword{canonical join representation} of $x$ if no proper subset of $S$ joins to $x$ and if every join-representation $x=\Join T$ has the property that for all $s\in S$, there exists $t\in T$ with $s\le t$.
An element $x$ may fail to have a canonical join representation, but the canonical join representation of $x$ is unique if it exists.
The \newword{canonical meet representation} is defined dually.
The semidistributive property of a finite lattice $L$, described above, is equivalent to the property that every element of $L$ has a canonical join representation and a canonical meet representation.
A finite congruence uniform lattice is in particular semidistributive, and canonical join and meet representations can be described in terms of the join-irreducible labelling and meet-irreducible labelling as follows.

The following proposition is \cite[Proposition~9-5.30]{regions}. 
Since that proposition's proof is also left to an exercise, we give a proof here. 

\begin{prop}\label{cong unif canon}
If $L$ is a finite congruence uniform lattice, then the canonical join representation of $x\in L$ is $x=\Join J$, where $J$ is the set of join-irreducible labels on arrows starting at $x$ in $\Hasse(L)$.
The canonical meet representation is $x=\Meet M$, where $M$ is the set of meet-irreducible labels on arrows ending at $x$ in $\Hasse(L)$.
\end{prop}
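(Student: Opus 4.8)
The plan is to prove the statement about canonical join representations and then obtain the statement about canonical meet representations by the order-reversing duality of Proposition~\ref{expl label} (which interchanges parts (a) and (b), hence join-irreducible and meet-irreducible labels). Fix $x\in L$ and write $J=\set{j_{x,y}:y\lessdot x}$, where for each element $y$ covered by $x$ we set $j_{x,y}=\Meet\set{z\in L:z\le x,\ z\not\le y}$; by Proposition~\ref{expl label}(a) this is the join-irreducible label on the arrow $x\to y$, and it satisfies $j_{x,y}\le x$ and $j_{x,y}\not\le y$. Since a canonical join representation is unique when it exists, it suffices to check that $x=\Join J$ satisfies the two defining conditions: that no proper subset of $J$ joins to $x$, and that every join-representation $x=\Join T$ refines it, in the sense that each element of $J$ lies below some element of $T$.

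The single observation that drives everything is that distinct elements covered by $x$ are incomparable: if $y,y'\lessdot x$ with $y\ne y'$, then $y'\le x$ and $y'\not\le y$ (since $y'\le y<x$ together with $y\lessdot x$ would force $y'=y$), so $y'$ belongs to the set $\set{z\in L:z\le x,\ z\not\le y}$ defining $j_{x,y}$, whence $j_{x,y}\le y'$. First I would use this to check $x=\Join J$: clearly $\Join J\le x$, and if $\Join J<x$ then $\Join J\le y_0$ for some $y_0\lessdot x$, giving $j_{x,y_0}\le\Join J\le y_0$, contradicting $j_{x,y_0}\not\le y_0$. Next, irredundancy: if $J'\subsetneq J$, pick $j_{x,y}\in J\setminus J'$; every element of $J'$ has the form $j_{x,y'}$ with $y'\ne y$, hence is $\le y$ by the observation, so $\Join J'\le y<x$ and $J'$ does not join to $x$.

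Finally I would check the minimal-refinement condition. Let $x=\Join T$ be any join-representation and let $j_{x,y}\in J$. Since $\Join T=x\not\le y$, some $t\in T$ satisfies $t\not\le y$; and $t\le\Join T=x$, so $t$ lies in $\set{z\in L:z\le x,\ z\not\le y}$ and therefore $j_{x,y}\le t$. This is exactly the required property. Having verified both conditions, $x=\Join J$ is a canonical join representation, hence \emph{the} canonical one by uniqueness. The canonical meet statement then follows by the word-for-word dual argument, replacing $\Meet$, $\le$, and ``covered by $x$'' by $\Join$, $\ge$, and ``covers $x$'', and invoking Proposition~\ref{expl label}(b).

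I do not expect a serious obstacle: once Proposition~\ref{expl label} is available, the argument is a short formal manipulation with joins and meets. The only point requiring genuine care is tracking how the label $j_{x,y}$ relates to the covers of $x$ other than $y$ — this is precisely the incomparability observation above, and it is what simultaneously yields $x=\Join J$ and irredundancy.
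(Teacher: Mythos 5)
Your proposal is correct and follows essentially the same route as the paper's proof: it verifies the three defining conditions of the canonical join representation directly from the description of the labels in Proposition~\ref{expl label}, with the key point in both arguments being that for distinct covers $y\neq y'$ of $x$ one has $j_{x,y}\le y'$, and the meet statement obtained by duality. No gaps.
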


\begin{proof}
We prove the statement for canonical join representations, using Proposition~\ref{expl label} throughout.
The other statement is dual.

First, we check that $x=\Join J$.
On the one hand, each $j\in J$ is below $x$ by Proposition~\ref{expl label}, so $\Join J\le x$.
If $\Join J<x$, then there exists $y$ with $x\gtrdot y\ge\Join J$.
But the label of $x\to y$ is in $J$ and is not below $y$, which is
a contradiction.

Next, we show that no proper subset of $J$ joins to $x$.  
Let $x\to y$ be an arrow with join-irreducible label $j\in J$. For any other arrow $x\to y'$ with join-irreducible label $j'\in J$, we have $y'\not\le y$ and $y'\le x$, from which it follows that $j\le y'$ by Proposition~\ref{expl label}. By symmetry, $j'\le y$ holds, and hence $\Join(J\setminus\{j\})\le y<x$ as desired.

Finally, we show that, if $x=\Join T$ for some $T\subseteq L$, then for all $j\in J$, there exists $t\in T$ with $j\le t$.
Let $y$ have $x\to y$ and $j=\Meet\set{z\in L:z\le x,\,z\not\le y}$.
Every element of $T$ is $\le x$, and if every element of $T$ is $\le y$, then $\Join T\le y$, contradicting the assumption that $x=\Join T$.
Thus there exists some element $t$ of $T$ with $t\le x$ and $t\not\le y$, and this element is above $j$ by definition.
\end{proof}

A \emph{polygon} in a finite lattice $L$ is an interval $[x,y]$ such that $\set{z\in L:x<z<y}$ consists of two disjoint non-empty chains.
(Thus the Hasse quiver of $[x,y]$ is a cycle with one source and one sink.)
The lattice $L$ is \newword{polygonal} if the following two conditions hold:
First, if distinct elements $y_1$ and $y_2$ both cover an element $x$, then $[x,y_1\join y_2]$ is a polygon; 
and second, if an element $y$ covers distinct elements $x_1$ and $x_2$, then $[x_1\meet x_2,y]$ is a polygon.

If $L$ is a polygonal lattice, then we define a quiver $\FPoly(L)$ whose set of vertices is $\Hasse_1(L)$, with arrows defined in every polygon $P$ of $L$ as follows.
The two arrows into the bottom element of $P$ are called \newword{bottom arrows}, while the two arrows from the top element of $P$ are called \newword{top arrows}, and all other arrows of $P$ are called \newword{side arrows}.
Every bottom arrow of $P$ has an arrow (in $\FPoly(L)$) to the opposite top arrow in $P$ (i.e.\ the top arrow in the opposite chain) and has an arrow to every side arrow in $P$.
Every top arrow of $P$ has an arrow to the opposite bottom arrow and every side arrow in~$P$.
For example, a square and hexagon in $L$ would contribute to $\FPoly(L)$ as indicated below.
\[\raisebox{-8pt}{$\xymatrix@R.8em@C1em{&\bullet\ar@{->}[dl]_a\ar@{->}[dr]^c\\ \bullet\ar@{->}[dr]_b&&\bullet\ar@{->}[dl]^d\\  &\bullet}$}\ \ \ \ \ 
\raisebox{-26pt}{$\begin{array}{c}
a\leftrightarrow d\\
b\leftrightarrow c\\
\end{array}$}\qquad\qquad
\xymatrix@R.8em@C1em{&\bullet\ar@{->}[dl]_e\ar@{->}[dr]^h\\ \bullet\ar@{->}[d]_f&&\bullet\ar@{->}[d]^i\\ \bullet\ar@{->}[dr]_g&&\bullet\ar@{->}[dl]^j\\ &\bullet}\qquad
\raisebox{-26pt}{
$\begin{array}{ccc}
e\leftrightarrow j&e\to f&e\to i\\
g\leftrightarrow h&g\to f&g\to i\\
&h\to f&h\to i\\
&j\to f&j\to i\\
\end{array}$}
\]
The following is \cite[Theorem~9-6.5]{regions}.

\begin{theorem}\label{force in poly}
If $L$ is a finite polygonal lattice, and $x\to y$ and $x'\to y'$ are arrows in $\Hasse(L)$, then $\con(x',y')\le\con(x,y)$ if and only if there is a directed path from $x\to y$ to $x'\to y'$ in $\FPoly(L)$.
\end{theorem}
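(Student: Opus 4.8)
\emph{Overview.} The plan is to prove the two directions separately, reducing in each case to computations inside a single polygon, which is exactly where the polygonality hypothesis does its work.

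\emph{The ``if'' direction.} Assume there is a directed path from $x\to y$ to $x'\to y'$ in $\FPoly(L)$. Since the order $\le$ on $\Con(L)$ is transitive, it suffices to treat a single arrow $e\to e'$ of $\FPoly(L)$ and show $\con(e')\le\con(e)$. Such an arrow occurs inside a polygon $P=[\hat 0,\hat 1]$, with maximal chains $\hat 0=v_0\lessdot v_1\lessdot\cdots\lessdot v_m=\hat 1$ and $\hat 0=w_0\lessdot w_1\lessdot\cdots\lessdot w_n=\hat 1$, and $e$ is a top or a bottom arrow of $P$. The basic input is that any two elements $v_i,w_j$ with $0<i<m$ and $0<j<n$ are incomparable, with $v_i\join w_j=\hat 1$ and $v_i\meet w_j=\hat 0$. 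A short ``diamond chasing'' argument then shows: if $\Theta$ is any congruence with $v_1\equiv\hat 0$ (a bottom arrow collapsed), then $\Theta$ also has $\hat 1\equiv w_{n-1}$ (the opposite top arrow) and, iterating joins and meets inside $P$, collapses every side arrow of $P$; the case of a top arrow is dual, and a side arrow of $P$ imposes no such further collapse inside $P$. These are exactly the arrows $e'$ with $e\to e'$ in $\FPoly(L)$, so $\con(e')\le\con(e)$. I expect this direction to be routine.

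\emph{The ``only if'' direction.} Fix $a=(x\to y)$, let $R$ be the set of arrows of $\Hasse(L)$ reachable from $a$ by a directed path in $\FPoly(L)$ (including $a$ itself), and let $\Theta$ be the equivalence relation on $L$ generated by the pairs $(u,v)$ with $u\gtrdot v$ and $(u\to v)\in R$. The heart of the proof is the claim that $\Theta$ is a lattice congruence \emph{whose set of collapsed cover relations is exactly $R$}. Granting this, $\Theta$ collapses $a$, so $\con(x,y)\le\Theta$ by minimality of $\con(x,y)$; hence $\con(x',y')\le\con(x,y)$ forces $x'\equiv y'$ modulo $\Theta$, and since $x'\gtrdot y'$ is a cover relation we conclude $(x'\to y')\in R$, i.e.\ there is a directed path in $\FPoly(L)$ from $x\to y$ to $x'\to y'$. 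To establish the claim I would first prove two lemmas: (i) the classification of congruences of a single polygon $P$ --- the collapsed cover relations of such a congruence form an arbitrary subset of the side arrows of $P$ together with a set of top and bottom arrows that is closed under $P$'s own forcing (collapsing one extreme arrow forces the opposite extreme arrow and all side arrows of $P$); and (ii) a local-to-global principle for polygonal lattices --- a subset $C\subseteq\Hasse_1(L)$ is the collapsed set of a congruence of $L$ if and only if, for every polygon $P$, $C\cap\Hasse_1(P)$ is the collapsed set of a congruence of $P$. Since $R$ is by construction closed under all $\FPoly(L)$-arrows, its restriction to each polygon $P$ satisfies (i), so (ii) yields that $\Theta$ is a congruence with collapsed set $R$.

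\emph{Main obstacle.} The crux is lemma (ii), the local-to-global principle: one must show that \emph{every} instance of the congruence test ``$u\join w\equiv v\join w$ and $u\meet w\equiv v\meet w$'' for a collapsed cover $u\gtrdot v$ is genuinely governed by a single polygon of $L$. This is precisely where polygonality is indispensable --- without it, transposing a cover relation $u\gtrdot v$ against an element $w$ can produce cover relations that no polygon sees, and the forcing order need not be generated by the polygon-local rule. Concretely, when $u\join w\ne v\join w$ one invokes the two polygonality axioms to identify the relevant interval as a polygon $P'$ containing $u\to v$ among its arrows; if $u\to v$ is an extreme arrow of $P'$ the needed cover relation is $\FPoly(L)$-forced and hence lies in $R$, while if $u\to v$ is a side arrow of $P'$ one checks, using $v_i\join w_j=\hat 1$ and $v_i\meet w_j=\hat 0$ inside $P'$, that $u\join w=v\join w$ already, so no new pair arises. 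Carrying out this case analysis carefully, together with the dual for meets, is the technical content that completes the argument.
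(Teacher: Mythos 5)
First, a point of reference: the paper does not prove this statement; it is quoted verbatim from \cite[Theorem~9-6.5]{regions}, so there is no internal proof to compare against. Judged on its own terms, your ``if'' direction is correct and essentially complete: the single-polygon computation (collapsing a bottom arrow $v_1\to\hat 0$ forces $\hat 1\equiv w_{n-1}$ via $v_1\join w_{n-1}=\hat 1$, then forces every side arrow via joins and meets with elements of the opposite chain, and dually for top arrows), combined with transitivity of $\le$ in $\Con(L)$, does yield $\con(x',y')\le\con(x,y)$ along any directed path in $\FPoly(L)$. Your Lemma (i), the classification of congruences of a single polygon, is also correct, and your reduction of the hard direction to the claim that the reachable set $R$ is the collapsed set of a congruence is the right strategy.

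The genuine gap is in Lemma (ii) and in the sketch you give for it. That lemma is not a reduction to something simpler: since a set of covers is the collapsed set of a congruence exactly when it is forcing-closed, Lemma (ii) is equivalent to the hard direction of the theorem itself, so everything rides on its proof. The specific step that fails is the assertion that each instance of the test ``$u\join w\equiv v\join w$'' for a generator $u\gtrdot v$ of $\Theta$ is ``governed by a single polygon.'' Setting $z=v\join w$ with $u\not\le z$, the polygonality axioms produce a polygon only from two distinct covers of a common element (or dually from two elements covered by a common element); since $z$ need not cover $v$, the interval $[v,\,u\join w]$ need not be a polygon, and no single polygon containing the arrow $u\to v$ controls the computation. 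What actually has to happen is: choose an atom $z_1$ of $[v,z]$, apply polygonality to the two covers $u,z_1$ of $v$ to get the polygon $[v,\,u\join z_1]$, deduce $u\join z_1\equiv z_1$ from $\FPoly$-closure of $R$, and then iterate up a maximal chain of $[v,z]$. But that iteration invokes congruence-type properties of $\Theta$ (order-convexity of the classes built so far, monotonicity of the partial projections) which are part of what is being proven, so it must be organized as a genuine induction (say on the height of $v\join w$ over $v$) rather than the one-step case analysis you describe. Until that induction is set up and closed, the argument is a correct plan rather than a proof; this is precisely the nontrivial content of \cite[Theorem~9-6.5]{regions}.
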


When $L$ is also congruence uniform, the quiver $\FPoly(L)$ is closely related to the forcing order on join-irreducible elements of $L$, as described in the following corollary, which is immediate by combining Theorem~\ref{force in poly} with facts about finite congruence uniform lattices already given in this section.
We say that a quiver $Q$ is \newword{strongly connected} if given any two vertices $x$ and $y$ of the quiver, there exists a path from $x$ to $y$ and a path from $y$ to $x$.
A \newword{strongly connected component} of $Q$ is a set of vertices of $Q$ that is maximal with respect to inducing a strongly connected subquiver.

\begin{cor}\label{force in poly to force}
Suppose $L$ is a finite, polygonal, congruence uniform lattice.
Then each strongly connected component of $\FPoly(L)$ contains exactly one arrow in $\Hasse(L)$ of the form $j\to j_*$ where $j$ is join-irreducible.
This bijection between join-irreducible elements of $L$ and strongly connected components of $\FPoly(L)$ is an isomorphism from the forcing order on join-irreducible elements of $L$ to the partial order induced by $\FPoly(L)$ on its strongly connected components.
\end{cor}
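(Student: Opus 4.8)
The plan is to derive the corollary directly from Theorem~\ref{force in poly} together with the facts about finite congruence uniform lattices recorded above. First I would observe that two arrows $a\to b$ and $a'\to b'$ of $\Hasse(L)$ lie in a common strongly connected component of $\FPoly(L)$ exactly when there are directed paths in $\FPoly(L)$ in both directions between them; by Theorem~\ref{force in poly} this happens if and only if $\con(a,b)\le\con(a',b')$ and $\con(a',b')\le\con(a,b)$, i.e.\ if and only if $\con(a,b)=\con(a',b')$. Hence the strongly connected components of $\FPoly(L)$ are precisely the nonempty fibers of the surjection $\Hasse_1(L)\to\ConJI(L)$, $(a\to b)\mapsto\con(a,b)$, and so the set of components is in natural bijection with $\ConJI(L)$.

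Next I would invoke congruence uniformity twice. The restriction of $(a\to b)\mapsto\con(a,b)$ to arrows of the form $j\to j_*$ with $j\in\jirr(L)$ is still surjective onto $\ConJI(L)$, so every component contains at least one such arrow; and since $j\mapsto\con(j,j_*)$ is injective, two such arrows $j\to j_*$ and $j'\to j'_*$ in one component would force $\con(j,j_*)=\con(j',j'_*)$, hence $j=j'$. Therefore each component contains exactly one arrow of the form $j\to j_*$, giving the asserted bijection from $\jirr(L)$ to the set of strongly connected components of $\FPoly(L)$, $j\mapsto(\text{the component of }j\to j_*)$.

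For the order statement, recall that the forcing order on $\jirr(L)$ is defined by $j\le j'$ if and only if $\con(j,j_*)\le\con(j',j'_*)$ in $\Con(L)$, while the partial order induced by $\FPoly(L)$ on its strongly connected components puts a component $C$ below a component $C'$ exactly when there is a directed path in $\FPoly(L)$ from a vertex of $C'$ to a vertex of $C$. By Theorem~\ref{force in poly}, $\con(j,j_*)\le\con(j',j'_*)$ holds if and only if there is a directed path in $\FPoly(L)$ from $j'\to j'_*$ to $j\to j_*$, i.e.\ if and only if the component of $j\to j_*$ lies below the component of $j'\to j'_*$. So the bijection constructed above carries the forcing order precisely onto the induced order on components, which is what the corollary claims.

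The only thing requiring care --- hardly an obstacle, given the preparatory results --- is tracking the direction conventions: Theorem~\ref{force in poly} relates $\con(x',y')\le\con(x,y)$ to the existence of a path \emph{from} $x\to y$ \emph{to} $x'\to y'$, which reverses the direction one might naively expect, and this reversal must be matched consistently against the definitions of the forcing order and of the order induced on strongly connected components. Once these are pinned down, every step is immediate.
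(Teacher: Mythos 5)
Your proposal is correct and follows exactly the route the paper intends: the paper declares the corollary ``immediate by combining Theorem~\ref{force in poly} with facts about finite congruence uniform lattices already given in this section,'' and your argument is precisely that combination spelled out (strongly connected components are the fibers of $(x\to y)\mapsto\con(x,y)$ by Theorem~\ref{force in poly}, and congruence uniformity makes $j\mapsto\con(j,j_*)$ a bijection onto $\ConJI(L)$). Your closing remark about matching the direction conventions is well taken, since the paper never explicitly fixes the orientation of the induced order on components, and your choice is the one that makes the statement true.
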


Informally, the corollary says that all forcing in a finite polygonal, congruence uniform lattice comes from forcing in polygons.
Indeed, the notation $\FPoly(L)$ suggests the phrase ``forcing in polygons.''

We define another quiver $\SFPoly(L)$, again with the set of vertices given by $\Hasse_1(L)$, but with strictly fewer arrows than $\FPoly(L)$.
The notation $\SFPoly(L)$ suggests the phrase ``strong forcing in polygons.''
In $\SFPoly(L)$, every bottom arrow of a polygon $P$ has an arrow only to the opposite top arrow in $P$ and every top arrow of $P$ has an arrow only to the opposite bottom arrow in~$P$.
Thus from the square and hexagon as labelled above, $\SFPoly(L)$ gets arrows as indicated below.
\[
\begin{array}{ccccc}
a\leftrightarrow d&b\leftrightarrow c&
\qquad&
e\leftrightarrow j&g\leftrightarrow h
\end{array}
\]
Strong forcing in polygons controls whether two Hasse arrows in a finite polygonal, congruence uniform lattice determine the same congruence, as described in the following corollary.

\begin{cor}\label{strong force in poly}
Suppose $L$ is a finite polygonal, congruence uniform lattice and let $x\to y$ and $x'\to y'$ be arrows of $\Hasse(L)$.
Then $\con(x,y)=\con(x',y')$ if and only if there is a directed path (or equivalently a path) in $\SFPoly(L)$ from $x\to y$ to $x'\to y'$.
\end{cor}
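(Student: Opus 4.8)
\textit{Proof proposal.} The plan is to deduce the statement from Theorem~\ref{force in poly} together with a local analysis of polygons.

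First I would record two easy observations. By Theorem~\ref{force in poly}, $\con(x,y)=\con(x',y')$ holds if and only if there are directed paths in $\FPoly(L)$ both from $x\to y$ to $x'\to y'$ and back, i.e.\ if and only if $x\to y$ and $x'\to y'$ lie in one strongly connected component of $\FPoly(L)$. Secondly, $\SFPoly(L)$ is a subquiver of $\FPoly(L)$ on the same vertex set $\Hasse_1(L)$, and its arrows occur in opposite pairs: a polygon $P$ contributes an arrow from a bottom arrow of $P$ to the opposite top arrow of $P$ precisely when it contributes the reverse arrow. Consequently, in $\SFPoly(L)$ the notions of path and directed path coincide, and any path in $\SFPoly(L)$ yields directed paths in $\FPoly(L)$ in both directions. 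These two observations give the ``if'' direction and the parenthetical equivalence at once: a path in $\SFPoly(L)$ between $x\to y$ and $x'\to y'$ places them in one strongly connected component of $\FPoly(L)$, hence $\con(x,y)=\con(x',y')$.

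For the ``only if'' direction, suppose $\con(x,y)=\con(x',y')$. By Theorem~\ref{force in poly} there are directed paths in $\FPoly(L)$ in both directions between the two arrows, so they lie in one strongly connected component $K$; fix a directed path $x\to y=\gamma_0\to\gamma_1\to\cdots\to\gamma_n=x'\to y'$ in $\FPoly(L)$. Each $\gamma_i$ is reached from $\gamma_0$ along the path and reaches $\gamma_0$ by going along the path to $\gamma_n$ and then within $K$, so $\gamma_i\in K$; hence all the $\gamma_i$ carry the common value of $\con$. It therefore suffices to prove that every arrow $\gamma\to\gamma'$ of $\FPoly(L)$ with $\con(\gamma)=\con(\gamma')$ already lies in $\SFPoly(L)$, for then the fixed path lies in $\SFPoly(L)$ and is the path we want. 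Now such an arrow $\gamma\to\gamma'$ comes from some polygon $P$, and by the construction of $\FPoly$ the tail $\gamma$ is a bottom or top arrow of $P$, while $\gamma'$ is either a side arrow of $P$ or the opposite top (respectively bottom) arrow of $P$; in the latter case $\gamma\to\gamma'$ already lies in $\SFPoly(L)$, so it remains to exclude the former. This is the content of the following claim: \emph{in any polygon $P$ of $L$, a side arrow and a bottom or top arrow of $P$ never carry the same congruence.} I expect proving this claim to be the main obstacle; the rest is formal.

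To prove the claim, note first that by congruence uniformity two Hasse arrows carry the same congruence exactly when they receive the same join-irreducible label, so it is equivalent to show that a side arrow and a bottom or top arrow of $P$ get different labels. By Proposition~\ref{expl label}, the join-irreducible label $j$ of an arrow $u\to u'$ satisfies $j\le u$ and $j\not\le u'$. Write $P=[b,t]$, with its two maximal chains meeting only in $b$ and $t$; by lattice duality (which interchanges bottom and top arrows and fixes side arrows and polygons) it suffices to treat a bottom arrow $\alpha=(w_1\to b)$, whose label $j$ then satisfies $j\le w_1$ and $j\not\le b$, together with a side arrow $\sigma=(u\to u')$, whose label $j'$ satisfies $j'\le u$ and $j'\not\le u'$, and to derive a contradiction from $j=j'$. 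If $\sigma$ lies in the same chain as $\alpha$, then $w_1\le u'$ (both are elements of that chain and $w_1$ is its smallest element above $b$, while $u'>b$), whence $j\le w_1\le u'$, contradicting $j=j'\not\le u'$. If $\sigma$ lies in the other chain, then from $j\le w_1$ and $j\le u$ we get $j\le w_1\meet u$; since the two chains of $P$ meet only at $b$ and $t$ and $u<t$, the meet $w_1\meet u$ is $b$, so $j\le b$, contradicting $j\not\le b$. This establishes the claim, and with it the corollary.
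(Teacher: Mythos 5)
Your argument is correct, and the ``if'' direction together with the parenthetical equivalence is handled exactly as in the paper (via Theorem~\ref{force in poly} and the fact that $\SFPoly(L)$-arrows come in opposite pairs). For the converse, however, you take a genuinely different route. The paper fixes the unique join-irreducible $j$ with $\con(j,j_*)=\con(x,y)$ and argues by induction on the length of a longest maximal chain from $j$ to $x$: choosing $z$ with $x\gtrdot z\ge j$, it forms the polygon $[y\meet z,x]$ and explicitly builds a directed path in $\SFPoly(L)$ from $x\to y$ down to $j\to j_*$; two such paths are then spliced using the reversibility of $\SFPoly(L)$-arrows. You instead take an arbitrary directed path in $\FPoly(L)$ between the two given arrows and show it automatically lies in $\SFPoly(L)$, by proving the polygon-local claim that a side arrow never shares its congruence with a bottom or top arrow of the same polygon. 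Your verification of that claim is sound: it uses Proposition~\ref{expl label} (the label $j'$ of $u\to u'$ satisfies $j'\le u$, $j'\not\le u'$), the incomparability of the two open chains of a polygon, and a duality reduction from top to bottom arrows, which is legitimate because polygonality and congruence uniformity are self-dual notions. Your approach yields a slightly stronger structural fact---$\SFPoly(L)$ is exactly the set of arrows of $\FPoly(L)$ whose endpoints lie in a common strongly connected component---at the price of the extra local lemma; the paper's induction avoids that lemma but only exhibits one particular path. The single step worth an explicit sentence is the assertion $w_1\meet u=b$: it rests on elements of the two open chains of a polygon being pairwise incomparable, which is built into the definition of polygon (the Hasse quiver of $[b,t]$ is a cycle with one source and one sink) but is used silently in your write-up.
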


\begin{proof}
The ``if'' direction is immediate by Theorem~\ref{force in poly}, the fact that every arrow is $\SFPoly(L)$ is also an arrow in $\FPoly(L)$, and the fact that arrows in $\SFPoly(L)$ come in opposite pairs.
We prove the converse by proving that for every Hasse arrow $x\to y$, there is a directed path in $\SFPoly(L)$ from $x\to y$ to $j\to j_*$, where $j$ is the unique join-irreducible element with $\con(j,j_*)=\con(x,y)$.
Proposition~\ref{expl label} says in particular that $x\ge j$, so we can argue by induction on the length of a longest maximal chain from $j$ to $x$.
Choose $z$ with $x\gtrdot z\ge j$ and set $y'=y\meet z$.
Since $L$ is polygonal, $[y',x]$ is a polygon $P$, and $x\to y$ is a top arrow in $P$.
Choosing $x'$ so that $x'\to y'$ is the bottom arrow opposite $x\to y$ (i.e.\ in the other chain of $P$), we have $\con(x'\to y')=\con(x\to y)$ by Theorem~\ref{force in poly}.
Thus $\con(j,j_*)=\con(x',y')$, and thus $x'\ge j$.
By induction, there is a directed path in $\SFPoly(L)$ from $x'\to y'$ to $j\to j_*$, and using an arrow in $P$, we obtain a directed path in $\SFPoly(L)$ from $x\to y$ to $j\to j_*$.
\end{proof}

\subsection{Weak order preliminaries}\label{w-o-prelim} 
Fix a simply-laced Dynkin type (i.e., one of $A_n$ for $n\geq 1$, $D_n$ for $n\geq 4$, $E_6$, $E_7$, or $E_8$).
Let $W$ be the finite Weyl group of that type.  
For background on Weyl groups, see \cite{BB}.  

Let $S=\{s_1,\dots,s_n\}$ be the set of simple reflections of $W$.  
By definition, any element $w$ of $W$ can be written as a product of the simple reflections.  
Such an expression for $w$ of minimal length is called \newword{reduced}.  
The length of a reduced expression is the \newword{length} of $w$, denoted $\ell(w)$.   

The \newword{(right) weak order} on $W$ is the partial order with $u \geq v$ if $\ell(u)=\ell(v)+\ell(v^{-1}u)$. 
We write $\Hasse(W)$ for the Hasse quiver of the weak order on $W$.
The arrows of $\Hasse(W)$ are all arrows $ws_i \rightarrow w$ such that $w\in W$, $s_i\in S$, and $\ell(ws_i)>\ell(w)$, or equivalently $\ell(ws_i)=\ell(w)+1$.
For $w\in W$, we denote by $w^+$ (respectively, $w^-$) the set of 
arrows in $\Hasse(W)$ starting (respectively, ending) at $w$.

The weak order on $W$ is a finite lattice.
In particular, it has a maximal element, denoted $w_0$ and often called the \newword{longest element} of $W$.
For our purposes, the most important properties of the weak order are the following.

\begin{theorem}\label{W cong unif}
$W$ is congruence uniform \cite{bounded} and polygonal \cite[Theorem~10-3.7]{regions2}.  
 \end{theorem}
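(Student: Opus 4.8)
The plan is to prove the two assertions separately, both times using the classical order-theoretic model of the weak order: $W$ is isomorphic to the poset of \emph{inversion sets} $\operatorname{inv}(w)=\{\alpha\in\Phi^+:w^{-1}\alpha\in\Phi^-\}$, $w\in W$, ordered by inclusion --- equivalently, the poset of regions of the Coxeter arrangement based at the fundamental chamber (see \cite{BB,regions}). Here $v\vee w$ is the element whose inversion set is the smallest biconvex subset of $\Phi^+$ containing $\operatorname{inv}(v)\cup\operatorname{inv}(w)$, where ``biconvex'' means that both the set and its complement in $\Phi^+$ are closed under positive root sums; meets are dual. That this is a lattice I take as known \cite{BB}.

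For \emph{polygonality}, suppose $y_1=xs_i\ne xs_j=y_2$ both cover $x$. Then $x$ has no right descent in $J=\{s_i,s_j\}$, so $x$ is the minimal-length representative of $xW_J$ and $\ell(xw)=\ell(x)+\ell(w)$ for all $w\in W_J$; hence left multiplication by $x$ carries the weak order on the dihedral group $W_J$ isomorphically onto the interval $\{xw:w\in W_J\}$, which in a simply-laced type is a square or a hexagon, in particular a polygon. A short inversion-set computation identifies this interval with $[x,y_1\vee y_2]$: the set $\operatorname{inv}(xw_J)=\operatorname{inv}(x)\sqcup x(\Phi^+_J)$ is biconvex and contains $\operatorname{inv}(y_1)\cup\operatorname{inv}(y_2)$, while any biconvex set containing the latter equals $\operatorname{inv}(z)$ for some $z\ge y_1,y_2$ and hence (since it must then contain $x(\Phi^+_J)$) satisfies $z\ge xw_J$. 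The dual axiom of polygonality, about $[x_1\wedge x_2,y]$ when $y$ covers $x_1$ and $x_2$, follows by running this argument in the dual lattice, which for the weak order is again isomorphic to $W$ via $w\mapsto w_0w$.

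For \emph{congruence uniformity}, recall that a finite lattice is congruence uniform exactly when it is bounded in Day's sense \cite{Day,bounded}, and that a finite lattice is bounded exactly when it is both semidistributive and congruence normal. Semidistributivity of $W$ is inherited from its realization as the poset of regions of the simplicial Coxeter arrangement \cite{regions}, and is also visible in the inversion-set model, where each $w$ has a canonical join representation whose summands are the natural join-irreducibles attached to the arrows out of $w$. For congruence normality the plan is to use the reflection edge-labelling of $\Hasse(W)$ sending an arrow $ws_i\to w$ (with $\ell(ws_i)>\ell(w)$) to the reflection $t=ws_iw^{-1}$: it is well defined, the arrows into (resp.\ out of) any fixed vertex get pairwise distinct labels, and --- by the dihedral computation of the previous paragraph performed inside each polygon of $W$ --- two opposite arrows of a polygon always get the same label. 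A labelling this rigid forces congruence normality. Together, semidistributivity and congruence normality give that $W$ is bounded, hence congruence uniform.

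Polygonality is the routine half: it is a reduction to the rank-two parabolics $W_J$, inside which the weak order is literally a square or a hexagon. The real work is in the congruence-uniformity half, and it lives in two places: a careful proof of semidistributivity through the combinatorics of biconvex subsets of $\Phi^+$, and the two general lattice-theoretic implications --- that a sufficiently rigid edge-labelling yields congruence normality and that semidistributive plus congruence normal yields bounded. Promoting the purely local good behaviour inside polygons to that global structural conclusion is precisely what \cite{bounded} accomplishes, and I expect it to be the main obstacle.
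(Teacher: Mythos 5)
First, a point of reference: the paper does not prove this theorem --- it is stated as a pair of citations, congruence uniformity to \cite{bounded} and polygonality to \cite[Theorem~10-3.7]{regions2} --- so your proposal is really being measured against the cited literature. Your polygonality half is correct and is essentially the standard argument: two covers $xs_i\neq xs_j$ of $x$ force $x$ to be the minimal-length representative of $xW_J$ for $J=\{s_i,s_j\}$, left translation by $x$ carries the (dihedral, hence polygonal) weak order on $W_J$ onto the interval $[x,xw_J]$, the biconvexity computation identifies $xw_J$ with $xs_i\join xs_j$, and the dual axiom follows from the antiautomorphism $w\mapsto w_0w$. The reduction ``semidistributive $+$ congruence normal $\Rightarrow$ congruence uniform'' for finite lattices is also a correct appeal to known lattice theory \cite{Day,regions}, and semidistributivity is correctly sourced to the poset-of-regions realization.

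The genuine gap is the congruence-normality step. The three properties you establish for the reflection labelling $ws_i\to w\,\mapsto\, ws_iw^{-1}$ --- well-definedness, distinctness of the labels at each fixed vertex, and equality of the labels on opposite edges of each polygon --- are true but do not imply congruence normality; ``a labelling this rigid forces congruence normality'' is not a valid lattice-theoretic principle. The actual criterion (a CN-labelling in the sense of \cite{congruence,regions}) requires, in addition to the crossing of top and bottom labels in each polygon, a \emph{partial order} on the label set with respect to which the side labels of every polygon are strictly greater than its two bottom labels, together with the dual condition; it is the global acyclicity packaged in that partial order that makes the sequence of doublings possible, and nothing in your three local properties rules out a cycle. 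For the weak order this missing ingredient can in fact be supplied --- the side edges of the hexagon $[x,xw_J]$ are labelled by the reflection in $x\alpha_i+x\alpha_j$, the sum of the two bottom roots, so ordering reflections by the height of the associated positive root gives the required dominance --- but you neither state the criterion, nor produce the partial order, nor verify the dominance condition. As written, the congruence-normality claim is exactly the content you defer to \cite{bounded} in your closing sentence, so the proposal is an outline whose central step is outsourced rather than a complete proof.
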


\subsection{Algebraic preliminaries}
Fix a base field $k$, and let $A$ be a finite-dimensional $k$-algebra.
We write $\mod A$ for the finite-dimensional left $A$-modules.  
We denote by $\tau$ and $\tau^-$ the Auslander-Reiten translations of $A$.
They give mutually inverse bijections between isomorphism classes of indecomposable non-projective $A$-modules and those of indecomposable non-injective $A$-modules.

We say that a full subcategory $\TT$ of $\mod A$ is a \newword{torsion class}
if it is closed under factors, isomorphisms and extensions. 
\newword{Torsion-free classes} are defined dually.  We write 
$\tors A$ for the torsion classes of $A$, and $\torf A$ for its 
torsion-free classes.  We view $\tors A$ and $\torf A$ as posets under the inclusion order.  
Then we have an anti-isomorphism given by $\TT\mapsto\TT^\perp=\{X\in\mod A\mid\Hom_A(\TT,X)=0\}$
\[\tors A\to\torf A,\]
whose inverse is given by $\FF\mapsto{}^\perp\FF=\{X\in\mod A\mid\Hom_A(X,\FF)=0\}$.

We recall that a torsion class $\TT$ of $\mod A$ is \newword{functorially finite}
if there exists $M\in\mod A$ such that $\TT=\Fac M$, where $\Fac M$ is the full
subcategory of $\mod A$ consisting of factor modules of finite direct sums of
copies of $M$ \cite{AS}.
We denote by $\ftors A$ (respectively, $\ftorf A$) the set of all functorially finite
torsion (respectively, torsionfree) classes in $\mod A$.
We view $\ftors A$ also as a poset under inclusion. 
The above anti-isomorphism restricts to an anti-isomorphism $\ftors A\to\ftorf A$ \cite{S}.

There is a bijection between $\ftors A$ and a certain class of $A$-modules.
Recall that a module $M\in\mod A$ is called \newword{$\tau$-rigid} if $\Hom_A(M,\tau M)=0$, and \newword{$\tau^-$-rigid} if $\Hom_A(\tau^-M,M)=0$.
A module $M\in\mod A$ is called \newword{$\tau$-tilting} if it is $\tau$-rigid and $|M|=|A|$
holds, where $|M|$ is the number of non-isomorphic indecomposable direct summands of $M$.
A module  $M\in\mod A$ is called \newword{support $\tau$-tilting} if there exists an idempotent 
$e$ of $A$ such that $M$ is a $\tau$-tilting $(A/\langle e\rangle)$-module.
We denote by $\sttilt A$ the set of isomorphism classes of basic support $\tau$-tilting $A$-modules,
and by $\IndtRig A$ (respectively, $\IndtmRig A$) the set of isomorphism classes of indecomposable
$\tau$-rigid (respectively, $\tau^-$-rigid) $A$-modules.
  (See \cite{AIR} for more background on these notions.)
By \cite[2.7]{AIR}, we have a surjection $\{\mbox{$\tau$-rigid $A$-modules}\}\to\ftors A$
given by $M\mapsto\Fac M$, which induces a bijection
\[\sttilt A\xrightarrow{\sim}\ftors A.\]

Recall that $A$ is \newword{$\tau$-tilting finite} if $\sttilt A$ is a finite set, or equivalently, $\IndtRig A$ is a finite set.  
It is shown in \cite{DIJ} and \cite[1.2]{IRTT} that the following conditions are equivalent.
\begin{itemize}
\item $A$ is $\tau$-tilting finite
\item $\ftors A$ is a finite set.
\item $\ftors A$ (respectively, $\ftorf A$) forms a complete lattice.
\item $\ftors A=\tors A$.
\end{itemize}
Via the bijection between $\ftors A$ and $\sttilt A$, we obtain a partial
order on $\sttilt A$. We refer to the poset on $\sttilt A$ as \newword{generation order}. 
The arrows of the Hasse diagram of this poset are 
\newword{mutations} (see \cite[Theorem 0.6]{AIR}).  

Using $\tau$-tilting theory, we have the following description of join-irreducible elements in $\tors A$.

\begin{theorem}\label{join irreducible in sttilt}
Let $A$ be a finite-dimensional $k$-algebra which is $\tau$-tilting finite.
Then we have a bijection given by  $M\mapsto\Fac M$
\[\IndtRig A\to\jirr(\tors A).\]
The inverse map is given by $\TT\mapsto N$, where $M\in\sttilt A$ satisfies $\Fac M=\TT$ and $N$ is a unique indecomposable direct summand of $M$ satisfying $\Fac N=\Fac M$.
\end{theorem}

\begin{proof}
Since $A$ is $\tau$-tilting finite, we have $\sttilt A\simeq\tors A$.
For $\TT\in\tors A$, we take $M\in\sttilt A$ such that $\TT=\Fac M$, and an idempotent $e\in A$ such that $M$ is a $\tau$-tilting $(A/(e))$-module. 
Take a basic module $P\in\mod A$ such that $\add P=\add Ae$. Then we can write $M=M_1\oplus\cdots\oplus M_m$ and $P=P_{m+1}\oplus\cdots\oplus P_n$, where $n=|A|$ and each $M_k$ and $P_k$ is indecomposable.
It is shown in \cite{AIR} that adjacent vertices to $\TT$ in the Hasse quiver of $\tors A$ are given by $\TT_k:=\Fac\mu_k(M)$ for $1\le k\le n$,
where $\mu_k(M)$ is the mutation of $M$ at $M_k$ for $1\le k\le m$, and at $P_k$ for $m<k\le n$. In particular, either $\TT_k\subset\TT$ or $\TT_k\supset\TT$ holds.

We claim that $\TT_k\subset\TT$ holds if and only if $1\le k\le m$ and $M_k\notin\Fac(M/M_k)$. If $m<k\le n$, then $M\in\add\mu_k(M)$, and hence $\TT_k\supset\TT$.
Assume $1\le k\le m$. Then $\mu_k(M)=(M/M_k)\oplus M_k^*$ holds for some $M_k^*$ which is either indecomposable or zero.
In this case, precisely one of $M_k\in\Fac(M/M_k)$ and $M_k^*\in\Fac(M/M_k)$ holds, and the former condition holds if and only if $\TT_k\supset\TT$. Thus the claim follows.

Therefore $\TT$ is join-irreducible if and only if there exists a unique $1\le k\le m$ satisfying $M_k\notin\Fac(M/M_k)$. This holds if and only if $\TT=\Fac M_k$ for some $k$. In fact, the `if' part is clear, and the `only if' part follows from the general fact that all minimal direct summands $N$ of $M$ satisfying $\Fac N=\Fac M$ are isomorphic.
Since $M_k$ is an indecomposable $\tau$-rigid $A$-module, we have the assertion.
\end{proof}

For a complete lattice $L$, we denote by $\mathsf{j\text{-}Irr^c} L$ the set of \newword{completely join-irreducible elements}, that is, elements $a\in L$ such that $a=\Join S$ for a subset $S$ of $L$ implies $a\in S$.
If we drop the $\tau$-tilting finiteness assumption on $A$, then we still have a bijection $\IndtRig A\to\ftors A\cap\mathsf{j\text{-}Irr^c}(\tors A)$ given by $M\mapsto\Fac M$. The proof is the same, we only need to use \cite[Theorem 3.1]{DIJ}.
Note that completely join-irreducible elements in $\tors A$ are not necessarily functorially finite.
For example, consider $\mod kQ$ for a Kronecker quiver $Q$. Then all preinjective modules together with one tube form such a torsion class.

\medskip
We denote by $\brick A$ the set of isomorphism classes of bricks of $A$.
A full subcategory $\WW$ of $\mod A$ is called \newword{wide} if it is closed under kernels, cokernels and extensions.
In this case, $\WW$ forms an abelian category and the inclusion functor $\WW\to\mod A$ is exact.
A wide subcategory $\WW$ is called \newword{local} if it contains a unique simple object up to isomorphism. 
We denote by $\lwide A$ the set of local wide subcategories of $\mod A$.
We have the following easy observation.

\begin{proposition}\label{stone and wide}  
Let $A$ be a finite-dimensional $k$-algebra. Then we have a bijection
\[\brick A\to\lwide A\ \mbox{ given by }\ S\mapsto\Filt S,\]
where $\Filt S$ consists of $A$-modules $X$ which have a filtration $X=X_0\supset X_1\supset\cdots\supset X_{\ell-1}\supset X_\ell=0$ with $\ell\ge0$ such that $X_i/X_{i+1}\simeq S$ for any $0\le i<\ell$.
\end{proposition}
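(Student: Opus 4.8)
The plan is to show that the two maps $S\mapsto\Filt S$ and $\WW\mapsto(\text{the unique simple object of }\WW)$ are mutually inverse bijections between $\brick A$ and $\lwide A$. First I would verify that if $S$ is a brick then $\Filt S$ is a local wide subcategory. That $\Filt S$ is closed under extensions is immediate from the definition of a filtration. To see that $\Filt S$ is closed under kernels and cokernels, I would proceed by induction on filtration length: given a morphism $f\colon X\to Y$ in $\Filt S$, one uses that $S$ is a brick (so $\End_A(S)$ is a division ring and every nonzero endomorphism of $S$ is an isomorphism) to show, via the snake lemma applied to short exact sequences refining the filtrations of $X$ and $Y$, that $\Kernel f$, $\Image f$, and $\Cokernel f$ all again admit filtrations with subquotients isomorphic to $S$. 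The key local statement is the case where $X\simeq S$ and $Y\simeq S$: then $f$ is either $0$ or an isomorphism. The general case follows by peeling off the top $S$-subquotient of $X$ (or bottom sub-$S$ of $Y$) and invoking the inductive hypothesis. Since every nonzero object of $\Filt S$ has $S$ as a subquotient and $S\in\Filt S$ is clearly simple in the abelian category $\Filt S$ (a proper nonzero subobject would have to lie in $\Filt S$ and thus have $S$ as a subquotient, forcing its length to equal that of $S$), $\Filt S$ is local.

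Next I would check that the map is injective and surjective. Injectivity: if $\Filt S\simeq\Filt S'$ then $S$ and $S'$ are both the unique simple object of this common category, hence $S\simeq S'$. Surjectivity is the substantive direction: given a local wide subcategory $\WW$ with unique simple object $S$, I must show $\WW=\Filt S$ and that $S$ is a brick. That $S$ is a brick: $\WW$ is an abelian category in which the inclusion $\WW\hookrightarrow\mod A$ is exact, and $S$ is simple in $\WW$, so $\End_{\WW}(S)=\End_A(S)$ is a division ring by Schur's lemma applied inside $\WW$. For $\WW\subseteq\Filt S$: since $\WW$ is abelian with finite-length objects and only one simple object $S$, every object of $\WW$ has a composition series in $\WW$ all of whose factors are $S$; because the inclusion is exact, this composition series is also a filtration in $\mod A$ with subquotients $S$, so the object lies in $\Filt S$. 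For $\Filt S\subseteq\WW$: this is immediate since $S\in\WW$ and $\WW$ is closed under extensions.

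The main obstacle I anticipate is the closure of $\Filt S$ under kernels and cokernels — equivalently, showing $\Filt S$ is an abelian subcategory. This is where the brick hypothesis on $S$ is essential, and the induction on filtration length needs to be set up carefully so that the snake lemma can be applied compatibly with both filtrations. Once that is in hand, everything else is formal bookkeeping about simple objects in length-finite abelian categories and Schur's lemma. I would also remark that this result, while elementary, is the $A$-module analogue of a standard fact and can be found (or its essential content can) in work on wide subcategories; but the self-contained proof above suffices.
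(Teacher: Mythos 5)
Your proposal is correct and follows essentially the same route as the paper: the paper's proof is a four-line argument that cites Ringel [Ri, 1.2] for the one substantive point --- that $\Filt S$ is a wide subcategory when $S$ is a brick --- and treats the remaining verifications (that $\Filt S$ is local, that the simple object of a local wide $\WW$ is a brick, and that $\WW=\Filt S$) as clear. Your inductive snake-lemma argument for closure under kernels and cokernels is precisely the content of the cited lemma, and the rest of your bookkeeping fills in exactly what the paper leaves implicit.
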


\begin{proof}
Let $S$ be a brick of $A$. Then $\Filt S$ is a wide subcategory by \cite[1.2]{Ri}. Clearly $\Filt S$ has a unique simple object $S$. Conversely, let $\WW$ be a local wide subcategory of $\mod A$ with a simple object $S$. For any endomorphism $f:S\to S$ of $S$, its image belongs to $\WW$. Therefore it is either $0$ or $S$, and $f$ is an isomorphism in the latter case. Therefore $S$ is a brick and we have $\WW=\Filt S$.
\end{proof}

\subsection{Preliminaries on preprojective algebras}
Let $\Pi=\Pi(W)$ be a preprojective algebra of the same Dynkin type as $W$.  
To construct $\Pi$, take the Dynkin diagram and replace each edge by a pair of opposite arrows $a, a^*$ to obtain the quiver $\overline Q$.  
Then $\Pi(W)$ is the path algebra of $\overline Q$ modulo the ideal generated by $\sum_a (aa^*-a^*a)$.
It is
a finite-dimensional self-injective algebra.  
We write $S_i$ for the simple
module corresponding to vertex $i$.  

We let $e_i$ be the idempotent corresponding to the vertex $i$.  Let
$I_i$ be the two-sided ideal $\Pi(1-e_i)\Pi$.  It is maximal as a left
ideal and as a right ideal.  For each $w \in W$, we take a reduced
word $w=s_{i_1}\dots s_{i_k}$, and we define   
\[I(w)=I_{i_1}\dots I_{i_k},\ \Tors(w):=\Fac I(w)\ \mbox{ and }\ \Torf(w):=\Sub(\Pi/I(w)).\]
Here $\Sub X$ refers to the subcategory consisting of subobjects of direct sums of copies of $X$.

The following result due to Mizuno is fundamental.

\begin{theorem}\label{Mizuno's theorem}\cite[2.14, 2.21]{Mi}
\begin{enumerate}[\rm(a)]
\item $I(w)$ does not depend on the choice of the reduced word for $w$.
\item We have bijections given by $w\mapsto I(w)\mapsto\Tors(w)$:
\begin{equation}\label{Mizuno}
W\xrightarrow{\sim}\sttilt\Pi\xrightarrow{\sim}\tors\Pi.
\end{equation}
\item The bijection above from $W$ to $\tors\Pi$ is an anti-isomorphism from weak order on $W$ to inclusion order on torsion classes.   
\end{enumerate}
\end{theorem}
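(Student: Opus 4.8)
The plan is to follow Mizuno's proof \cite{Mi}. For part~(a), I would reduce to braid relations for the ideals $I_i$. Since the Dynkin type is simply laced, any two reduced words for $w$ are connected by commutation moves $s_is_j=s_js_i$ ($i,j$ non-adjacent in $\Delta$) and braid moves $s_is_js_i=s_js_is_j$ ($i,j$ adjacent) --- the word property of Coxeter groups, see \cite{BB} --- so it suffices to verify $I_iI_j=I_jI_i$ and $I_iI_jI_i=I_jI_iI_j$ in $\Pi$, which is done in \cite{BIRS}. Along the way I would record two further facts, also from \cite{BIRS}: if $\ell(ww')=\ell(w)+\ell(w')$ then $I(ww')=I(w)I(w')$; and $I(w)I_i\subsetneq I(w)$ when $\ell(ws_i)>\ell(w)$, while $I(w)I_i=I(w)$ when $\ell(ws_i)<\ell(w)$ (the latter because then $w=us_i$ with $\ell(u)<\ell(w)$, so $I(w)=I(u)I_i$ and $I_i^2=I_i$, since $(1-e_i)^2=1-e_i$).

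For parts~(b) and~(c) the plan splits into a technical core and a graph-covering argument. The \emph{technical core}, to be proved by induction on $\ell(w)$, is that each $I(w)$ is a basic support $\tau$-tilting $\Pi$-module and that for every $i$ the module $I(ws_i)=I(w)I_i$ is the left mutation of $I(w)$ in ``direction $i$'' in the sense of \cite{AIR}; since mutations in distinct directions are distinct, the $n$ torsion classes $\Tors(ws_1),\dots,\Tors(ws_n)$ are then precisely the $n$ neighbours of $\Tors(w)$ in $\Hasse(\tors\Pi)$, with $\Tors(w)$ covering $\Tors(ws_i)$ exactly when $\ell(ws_i)>\ell(w)$. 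This step uses the self-injectivity of $\Pi$ and the structure of the idempotent ideal $I_i$, handled via the exact functor $I_i\otimes_\Pi-$ on $\mod\Pi$. Granting it, the assignment $\phi\colon w\mapsto\Tors(w)$ carries each arrow $ws_i\to w$ of $\Hasse(W)$ to the arrow $\Tors(w)\to\Tors(ws_i)$ of $\Hasse(\tors\Pi)$, so $\phi$ reverses arrows and restricts to a bijection from the $n$ edges at $w$ to the $n$ edges at $\Tors(w)$ (here I use that $\Hasse(W)$ is $n$-regular, as $\ell(ws_i)=\ell(w)\pm1$ for each $i$).

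Thus $\phi$ is a covering map of connected graphs: $\Hasse(W)$ is connected because the weak order is a lattice, and $\Hasse(\tors\Pi)$ is connected by \cite{AIR} (using $\tau$-tilting finiteness of $\Pi$). Hence all fibres of $\phi$ have the same cardinality, and the fibre over the maximum torsion class $\mod\Pi$ is a single point: $\Tors(w)=\mod\Pi$ forces $I(w)=\Pi$, which forces $w=e$, because $I(w)$ is a product of the proper two-sided ideals $I_{i_1},\dots,I_{i_k}$ and is therefore proper as soon as $k\ge1$. Consequently $\phi$ is a bijection on vertices, and being locally bijective on edges it is an isomorphism of directed graphs $\Hasse(W)\xrightarrow{\sim}\Hasse(\tors\Pi)^{\op}$. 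Since a finite poset is the transitive closure of its Hasse quiver, $\phi$ is an anti-isomorphism from the weak order on $W$ to the inclusion order on $\tors\Pi$, which is part~(c); composing with the standard bijection $\sttilt\Pi\simeq\tors\Pi$ produces the bijections $W\xrightarrow{\sim}\sttilt\Pi\xrightarrow{\sim}\tors\Pi$ of part~(b), given by $w\mapsto I(w)\mapsto\Tors(w)$.

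I expect the main obstacle to be the technical core: verifying that each $I(w)$ is support $\tau$-tilting and that $I(w)I_i$ is genuinely its left mutation in direction $i$. The cleanest route I would take realizes $I_i\otimes_\Pi-$ as a reflection functor compatible with the $W$-action on $K_0(\proj\Pi)$, so that the $g$-vector of $I(w)$ is the Weyl chamber $wC$; besides yielding the mutation statement, this would prove the injectivity of $\phi$ directly from the freeness of the $W$-action on chambers, making the covering-map argument of the third paragraph an alternative rather than a necessity. The remaining inputs --- connectedness and $n$-regularity of the exchange graph of support $\tau$-tilting modules --- come from \cite{AIR}, and $\tau$-tilting finiteness of $\Pi$ is itself essentially a consequence of the theorem (or can be cited).
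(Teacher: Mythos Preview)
The paper does not prove this theorem at all: it is stated with the citation \cite[2.14, 2.21]{Mi} and no proof is given, only a remark about left versus right module conventions. So there is no ``paper's own proof'' to compare your proposal against.

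That said, your sketch is a reasonable outline of Mizuno's argument. A couple of comments. First, the covering-map argument in your third paragraph is a clean way to package surjectivity and injectivity simultaneously, but note the potential circularity you flag at the end: connectedness of $\Hasse(\sttilt\Pi)$ in general requires $\tau$-tilting finiteness, which is itself a consequence of the theorem. The standard escape is the result from \cite{AIR} that if a connected component of the mutation graph of $\sttilt A$ is \emph{finite}, then it must be all of $\sttilt A$ (since a finite component contains both a maximum, necessarily $A$, and a minimum, necessarily $0$). Since the image of $W$ under $w\mapsto I(w)$ is finite and closed under mutation by your technical core, this gives surjectivity without assuming $\tau$-tilting finiteness in advance. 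Second, your alternative route via $g$-vectors and Weyl chambers is indeed how injectivity is most transparently seen, and is close in spirit to later treatments of this result.
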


We remark that Mizuno uses right modules, while we use left modules, and
therefore his results need to be suitably translated to account for this
difference.  (In particular, this has the effect that he uses left weak order
while we use right weak order.)  We also remark here that \cite{AIR} uses right
modules, but writes $I_w$ for the ideal which we would refer to as 
$I(w^{-1})$.

Thus $I(e)=\Pi$ gives the maximum torsion class $\Tors(e)=\mod\Pi$ and $I(w_0)=0$ gives the minimum
torsion class $\Tors(w_0)=\{0\}$, where $e\in W$ is the identity and $w_0\in W$ is the longest element.

The following is an easy consequence of \eqref{Mizuno}.

\begin{lemma}\label{additivity}
Let $w,v\in W$. Then $\ell(wv)=\ell(w)+\ell(v)$ holds if and only if
$I(wv)=I(w)I(v)$ holds.
If this holds, then we have $I(wv)=I(w)I(v)=I(w)\otimes_\Pi I(v)$.
\end{lemma}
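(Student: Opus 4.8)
The plan is to reduce everything to the multiplicativity of the ideals $I(w)$ along reduced words, which is the content of Theorem~\ref{Mizuno's theorem}(a), together with the length-additivity characterization of the weak order. First I would prove the equivalence of the two conditions. If $\ell(wv)=\ell(w)+\ell(v)$, concatenate a reduced word $w=s_{i_1}\cdots s_{i_p}$ with a reduced word $v=s_{j_1}\cdots s_{j_q}$; length-additivity says the concatenation is a reduced word for $wv$, so by definition $I(wv)=I_{i_1}\cdots I_{i_p}I_{j_1}\cdots I_{j_q}=I(w)I(v)$, where the grouping is justified by Theorem~\ref{Mizuno's theorem}(a) (the product does not depend on the word, in particular not on how we bracket it). Conversely, suppose $I(wv)=I(w)I(v)$. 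Here I would argue by comparing $k$-dimensions, using that each $I_i$ has codimension one in $\Pi$ as a left ideal and, more to the point, that each Hasse step $I(x)\supsetneq I(xs_i)$ is strict when $\ell(xs_i)>\ell(x)$. The cleanest route is: always $\ell(wv)\le\ell(w)+\ell(v)$, and if the inequality were strict, write a reduced word for $wv$ of length $<\ell(w)+\ell(v)$; then on the one hand $I(wv)$ is the product of the $I_{i_t}$ over that short word, while $I(w)I(v)$ is a product over a longer (non-reduced) word $s_{i_1}\cdots s_{i_p}s_{j_1}\cdots s_{j_q}$. Any such longer word for $wv$ can be reduced by deleting letters, and each deletion can only shrink the ideal or keep it the same; but combined with the fact (from Mizuno's bijection $W\xrightarrow{\sim}\sttilt\Pi$ and strictness of Hasse covers in $\tors\Pi$) that products over distinct reduced words give the \emph{same} ideal while a genuinely non-reduced word gives a strictly smaller one, we would get $I(w)I(v)\subsetneq I(wv)$, a contradiction. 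So $\ell(wv)=\ell(w)+\ell(v)$.

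Alternatively, and perhaps more transparently, I would phrase the converse via torsion classes: $\Tors(wv)=\Fac I(wv)$ and, since $\Fac$ of a product of the $I_i$ corresponds under Theorem~\ref{Mizuno's theorem}(b)–(c) to the weak-order element obtained by left-multiplying, the equality $I(wv)=I(w)I(v)$ forces $wv$ to lie at weak-order distance exactly $\ell(w)+\ell(v)$ below $e$; unwinding Theorem~\ref{Mizuno's theorem}(c) gives $\ell(wv)=\ell(w)+\ell(v)$.

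For the last assertion, assuming $\ell(wv)=\ell(w)+\ell(v)$ I must upgrade $I(w)I(v)=I(wv)$ to $I(w)\otimes_\Pi I(v)\xrightarrow{\sim}I(w)I(v)$, i.e.\ that the multiplication map has no kernel. The plan here is to reduce to the one-letter case: since $I(v)=I_{j_1}I(v')$ with $\ell(v)=1+\ell(v')$ and similarly peel letters off $w$, it suffices to treat $I(x)\otimes_\Pi I_i\to I(x)I_i=I(xs_i)$ when $\ell(xs_i)>\ell(x)$, and then iterate/associate. For the one-step claim I would invoke the standard structure of the ideals $I_i=\Pi(1-e_i)\Pi$ over a preprojective algebra: multiplication by $I_i$ on the right is, up to the relevant identifications, given by a reflection functor, and tensoring a module $I(x)$ that is a "tilting-like" ideal with $I_i$ is exact on the relevant subcategory; concretely $I(x)$ is a reflexive (even two-sided tilting) $\Pi$-module, so $-\otimes_\Pi I_i$ applied to it is torsion-free and the surjection $I(x)\otimes_\Pi I_i\twoheadrightarrow I(xs_i)$ is an isomorphism by comparing dimensions (both have dimension $\dim_k I(x)-\dim_k(I(x)/I(x)I_i)$, and $\dim_k(I(x)\otimes_\Pi I_i)=\dim_k I(x)-\dim_k(I(x)\otimes_\Pi S_i)$; length-additivity guarantees these Tor/relation terms match). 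Associativity of $\otimes_\Pi$ then assembles the general statement.

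The main obstacle I anticipate is precisely this last step — showing the multiplication map $I(w)\otimes_\Pi I(v)\to I(w)I(v)$ is injective, equivalently that $\operatorname{Tor}_1^\Pi(I(w),\Pi/I(v))$ vanishes, or in the peeled-off form that $I(w)\otimes_\Pi I_i\to I(w)I_i$ is an isomorphism when $\ell(ws_i)>\ell(w)$. This needs genuine input about preprojective algebras (the tilting/reflexivity properties of the $I(w)$ from \cite{IR,BIRS}, or equivalently that $I_i$ is a partial tilting bimodule and the $I(w)$ are built by iterated such tensor products), not just the lattice combinatorics; everything else is bookkeeping with reduced words and Theorem~\ref{Mizuno's theorem}.
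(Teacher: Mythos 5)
The paper gives no written proof of this lemma: it is stated as ``an easy consequence of \eqref{Mizuno}'', with the tensor-product claim imported from \cite{Mi,IR,BIRS}. Your forward direction and your treatment of the last assertion match that intent. Concatenating reduced words and invoking Theorem~\ref{Mizuno's theorem}(a) is exactly right, and you correctly isolate the genuinely non-combinatorial point, namely that $I(w)\otimes_\Pi I(v)\to I(w)I(v)$ is injective, which reduces to a Tor-vanishing statement for one-letter steps and requires the tilting/idempotent-ideal technology of \cite{IR,BIRS} (in the Dynkin case this is proved in \cite{Mi}, essentially by passing to $\widehat{\Pi}$ as the paper does in Section~3).

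The converse direction as you have written it does not work. First, the inclusions go the wrong way: deleting a factor $I_{i_t}$ from a product $I_{i_1}\cdots I_{i_k}$ \emph{enlarges} the ideal (since $I_{i_t}\subseteq\Pi$ and these are two-sided ideals, replacing $I_{i_t}$ by $\Pi$ can only make the product bigger); it does not shrink it. Second, the assertion that ``a genuinely non-reduced word gives a strictly smaller ideal'' is neither precise nor what you need, and the containment $I(w)I(v)\subsetneq I(wv)$ you aim for is not obviously true in general. The clean argument, using only the inputs you already cite, is monoidal: the one-step rule $I(x)I_i=I(xs_i)$ if $\ell(xs_i)>\ell(x)$ and $I(x)I_i=I(x)$ otherwise (the content behind Theorem~\ref{Mizuno's theorem}, proved in \cite{Mi}) shows that $I(w)I(v)=I(w\star v)$, where $w\star v$ is the Demazure-type product obtained by right-multiplying $w$ by the letters of a reduced word for $v$ and skipping any letter that would decrease length. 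If $\ell(wv)<\ell(w)+\ell(v)$, some letter is skipped, so $w\star v=wv''$ with $\ell(v'')<\ell(v)$, whence $w\star v\neq wv$; injectivity of $u\mapsto I(u)$ from Theorem~\ref{Mizuno's theorem}(b) then gives $I(w)I(v)\neq I(wv)$ directly, with no containment needed.
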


We prepare the following results on indecomposable $\tau$-rigid $\Pi$-modules.

\begin{corollary}\label{indec tau rigid}
\begin{enumerate}[\rm(a)]\item 
We have a surjection given by $(w,i)\mapsto I(w)e_i$
\[W\times Q_0\to(\IndtRig\Pi)\sqcup\{0\}.\]
\item We have a bijection
\[\mirr W\xrightarrow{\sim}\IndtRig\Pi\]
given by $w\mapsto I(w)e_k$, where $k$ is the unique vertex satisfying $\ell(ws_k)=\ell(w)+1$.
\end{enumerate}
\end{corollary}

\begin{proof}
(a) Since $I(w)e_i$ is a direct summand of a support $\tau$-tilting module $I(w)$, it is $\tau$-rigid. It is either indecomposable or zero since it is a submodule of $\Pi e_i$ and hence its socle is either simple or zero.
The map is surjective since any $\tau$-rigid module is a direct summand of a $\tau$-tilting module.

(b) By Theorem \ref{Mizuno's theorem}, there is a bijection $\mirr W\simeq\jirr(\tors\Pi)$ given by $w\mapsto\Fac I(w)$.
Applying Theorem \ref{join irreducible in sttilt} to $\Pi$, we have a bijection $\jirr(\tors\Pi)\to\IndtRig\Pi$ sending $\Fac I(w)$ to $I(w)e_k$, where $k$ is the unique vertex satisfying $\Fac(I(w)e_k)=\Fac I(w)$.
This equality is equivalent to $\Fac I(ws_k)\subset\Fac I(w)$, and also to $\ell(ws_k)=\ell(w)+1$. Thus the assertion follows.
\end{proof}

We show a few examples.
Following the usual convention, we describe $\Pi$-modules $X$ in terms of their composition factors. Thus the numbers $i$ give a basis of the $k$-vector space $e_iX$, and the action of arrows is shown by the relative position of these numbers. For example,
$\begin{smallmatrix}1&&\\&2&\\&&3\end{smallmatrix}$ refers to the indecomposable $\Pi$-module with 
composition factors $S_1,S_2,S_3$ from top to bottom.  

\begin{example}  \label{A2 ex}
The left picture in Figure~\ref{A2 fig} shows the weak order on permutations in $S_3$ (the Weyl group of type $A_2$).
\begin{figure}
\[\begin{xy}
(0,0) *+{321}="A",
(18,-9) *+{{\color{red}312}}="B",
(18,-18) *+{{\color{red}132}}="C",
(-18,-9) *+{{\color{red}231}}="D",
(-18,-18) *+{{\color{red}213}}="E",
(0,-27) *+{123}="F",
\ar"A";"B",
\ar"B";"C",
\ar"C";"F",
\ar"A";"D",
\ar"D";"E",
\ar"E";"F"
\end{xy}\qquad
\begin{xy}
(0,0) *+{0}="A",
(18,-9) *+{\color{red}\left[\begin{smallmatrix}
1
\end{smallmatrix}\right]}="B",
(18,-18) *+{\color{red}\left[\begin{smallmatrix}
1\\ &2
\end{smallmatrix}\middle|
\begin{smallmatrix}
1
\end{smallmatrix}\right]}="C",
(-18,-9) *+{\color{red}\left[\begin{smallmatrix}
2
\end{smallmatrix}\right]}="D",
(-18,-18) *+{\color{red}\left[\begin{smallmatrix}
2
\end{smallmatrix}\middle|
\begin{smallmatrix}
&2\\ 1
\end{smallmatrix}\right]}="E",
(0,-27) *+{\left[\begin{smallmatrix}
1\\ &2
\end{smallmatrix}\middle|
\begin{smallmatrix}
&2\\ 1
\end{smallmatrix}\right]}="F",
\ar@{<-}"A";"B",
\ar@{<-}"B";"C",
\ar@{<-}"C";"F",
\ar@{<-}"A";"D",
\ar@{<-}"D";"E",
\ar@{<-}"E";"F"
\end{xy}\]
\caption{The weak order and $\sttilt\Pi$ in type $A_2$}
\label{A2 fig}
\end{figure}
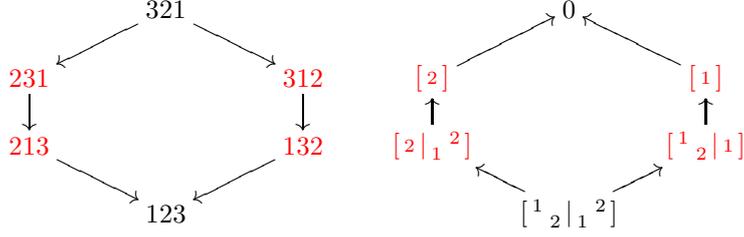
The right picture in Figure~\ref{A2 fig} shows the set $\sttilt\Pi$ for $\Pi$ of type $A_2$, arranged in generation order, where bars mean direct sums.
We show in red the meet-irreducible elements of $W$ and the join-irreducible elements of $\sttilt\Pi$.
The two pictures are arranged so that the map $w\to I(w)$ is accomplished by a translation of the page.
\end{example}

\begin{example}
Figure~\ref{A3 fig} shows the weak order on permutations in $S_4$ (the Weyl group of type $A_3$) and $\sttilt\Pi$ for $\Pi$ of type $A_3$, with the same conventions as described in Example~\ref{A2 ex}. 
\begin{figure}
\[\begin{xy}
(0,0) *+{4321}="1",
(30,-14) *+{\color{red}{4312}}="2",
(0,-14) *+{\color{red}{4231}}="3",
(-30,-14) *+{\color{red}{3421}}="4",
(50,-28) *+{\color{red}{4132}}="5",
(25,-28) *+{\color{red}{4213}}="6",
(0,-28) *+{3412}="7",
(-25,-28) *+{\color{red}{2431}}="8",
(-50,-28) *+{\color{red}{3241}}="9",
(35,-42) *+{\color{red}{1432}}="10",
(55,-42) *+{4123}="11",
(10,-42) *+{2413}="12",
(-10,-42) *+{\color{red}{3142}}="13",
(-55,-42) *+{2341}="14",
(-35,-42) *+{\color{red}{3214}}="15",
(50,-56) *+{1423}="16",
(25,-56) *+{1342}="17",
(-0,-56) *+{\color{red}{2143}}="18",
(-25,-56) *+{3124}="19",
(-50,-56) *+{2314}="20",
(30,-70) *+{1243}="21",
(0,-70) *+{1324}="22",
(-30,-70) *+{2134}="23",
(0,-84) *+{1234}="24",
\ar"1";"2",
\ar"1";"3",
\ar"1";"4",
\ar"2";"5",
\ar"2";"7",
\ar"3";"6",
\ar"3";"8",
\ar"4";"7",
\ar"4";"9",
\ar"5";"10",
\ar"5";"11",
\ar"6";"11",
\ar"6";"12",
\ar"7";"13",
\ar"8";"12",
\ar"8";"14",
\ar"9";"14",
\ar"9";"15",
\ar"10";"16",
\ar"10";"17",
\ar"11";"16",
\ar"12";"18",
\ar"13";"17",
\ar"13";"19",
\ar"14";"20",
\ar"15";"19",
\ar"15";"20",
\ar"16";"21",
\ar"17";"22",
\ar"18";"21",
\ar"18";"23",
\ar"19";"22",
\ar"20";"23",
\ar"21";"24",
\ar"22";"24",
\ar"23";"24",
\end{xy}\]
\vspace{10pt}
\[\begin{xy}
(0,-84) *+{
\left[\begin{smallmatrix}
1\\ &2\\ &&3\end{smallmatrix}\middle|
\begin{smallmatrix}
&2\\ 1&&3\\ &2\end{smallmatrix}\middle|
\begin{smallmatrix}
&&3\\ &2\\ 1\end{smallmatrix}\right]}="1",
(-30,-70) *+{\left[\begin{smallmatrix}
\\ 2\\ &3\end{smallmatrix}\middle|
\begin{smallmatrix}
&2\\ 1&&3\\ &2\end{smallmatrix}\middle|
\begin{smallmatrix}
&&3\\ &2\\ 1\end{smallmatrix}\right]}="2",
(0,-70) *+{\left[\begin{smallmatrix}
1\\ &2\\ &&3\end{smallmatrix}\middle|
\begin{smallmatrix}
1&&3\\ &2\end{smallmatrix}\middle|
\begin{smallmatrix}
&&3\\ &2\\ 1\end{smallmatrix}\right]}="3",
(30,-70) *+{\left[\begin{smallmatrix}
1\\ &2\\ &&3\end{smallmatrix}\middle|
\begin{smallmatrix}
&2\\ 1&&3\\ &2\end{smallmatrix}\middle|
\begin{smallmatrix}
&2\\ 1\end{smallmatrix}\right]}="4",
(-50,-56) *+{\left[\begin{smallmatrix}
2\\ &3\end{smallmatrix}\middle|
\begin{smallmatrix}
&3\\ 2\end{smallmatrix}\middle|
\begin{smallmatrix}
&&3\\ &2\\ 1\end{smallmatrix}\right]}="5",
(-25,-56) *+{\left[\begin{smallmatrix}
3\end{smallmatrix}\middle|
\begin{smallmatrix}
1&&3\\ &2\end{smallmatrix}\middle|
\begin{smallmatrix}
&&3\\ &2\\ 1\end{smallmatrix}\right]}="6",
(0,-56) *+{\color{red}\left[\begin{smallmatrix}
\\ 2\\ &3\end{smallmatrix}\middle|
\begin{smallmatrix}
&2\\ 1&&3\\ &2\end{smallmatrix}\middle|
\begin{smallmatrix}
&2\\ 1\end{smallmatrix}\right]}="7",
(25,-56) *+{\left[\begin{smallmatrix}
1\\ &2\\ &&3\end{smallmatrix}\middle|
\begin{smallmatrix}
1&&3\\ &2\end{smallmatrix}\middle|
\begin{smallmatrix}
1\end{smallmatrix}\right]}="8",
(50,-56) *+{\left[\begin{smallmatrix}
1\\ &2\\ &&3\end{smallmatrix}\middle|
\begin{smallmatrix}
1\\ &2\end{smallmatrix}\middle|
\begin{smallmatrix}
&2\\ 1\end{smallmatrix}\right]}="9",
(-55,-42) *+{\left[\begin{smallmatrix}
2\\ &3\end{smallmatrix}\middle|
\begin{smallmatrix}
&3\\ 2\end{smallmatrix}\right]}="10",
(-35,-42) *+{\color{red}\left[\begin{smallmatrix}
3\end{smallmatrix}\middle|
\begin{smallmatrix}
&3\\ 2\end{smallmatrix}\middle|
\begin{smallmatrix}
&&3\\ &2\\ 1\end{smallmatrix}\right]}="11",
(-10,-42) *+{\color{red}\left[\begin{smallmatrix}
3\end{smallmatrix}\middle|
\begin{smallmatrix}
1&&3\\ &2\end{smallmatrix}\middle|
\begin{smallmatrix}
1\end{smallmatrix}\right]}="12",
(10,-42) *+{\left[\begin{smallmatrix}
\\ 2\\ &3\end{smallmatrix}\middle|
\begin{smallmatrix}
2\end{smallmatrix}\middle|
\begin{smallmatrix}
&2\\ 1\end{smallmatrix}\right]}="13",
(35,-42) *+{\color{red}\left[\begin{smallmatrix}
1\\ &2\\ &&3\end{smallmatrix}\middle|
\begin{smallmatrix}
1\\ &2\end{smallmatrix}\middle|
\begin{smallmatrix}
1\end{smallmatrix}\right]}="14",
(55,-42) *+{\left[\begin{smallmatrix}
1\\ &2\end{smallmatrix}\middle|
\begin{smallmatrix}
&2\\ 1\end{smallmatrix}\right]}="15",
(-50,-28) *+{\color{red}\left[\begin{smallmatrix}
3\end{smallmatrix}\middle|
\begin{smallmatrix}
&3\\ 2\end{smallmatrix}\right]}="16",
(-25,-28) *+{\color{red}\left[\begin{smallmatrix}
2\\ &3\end{smallmatrix}\middle|
\begin{smallmatrix}
2\end{smallmatrix}\right]}="17",
(-0,-28) *+{\left[\begin{smallmatrix}
1\end{smallmatrix}\middle|
\begin{smallmatrix}
3\end{smallmatrix}\right]}="18",
(25,-28) *+{\color{red}\left[\begin{smallmatrix}
2\end{smallmatrix}\middle|
\begin{smallmatrix}
&2\\ 1\end{smallmatrix}\right]}="19",
(50,-28) *+{\color{red}\left[\begin{smallmatrix}
1\\ &2\end{smallmatrix}\middle|
\begin{smallmatrix}
1\end{smallmatrix}\right]}="20",
(-30,-14) *+{\color{red}\left[\begin{smallmatrix}
3\end{smallmatrix}\right]}="21",
(0,-14) *+{\color{red}\left[\begin{smallmatrix}
2\end{smallmatrix}\right]}="22",
(30,-14) *+{\color{red}\left[\begin{smallmatrix}
1\end{smallmatrix}\right]}="23",
(0,0) *+{0}="24",
\ar"1";"2",
\ar"1";"3",
\ar"1";"4",
\ar"2";"5",
\ar"2";"7",
\ar"3";"6",
\ar"3";"8",
\ar"4";"7",
\ar"4";"9",
\ar"5";"10",
\ar"5";"11",
\ar"6";"11",
\ar"6";"12",
\ar"7";"13",
\ar"8";"12",
\ar"8";"14",
\ar"9";"14",
\ar"9";"15",
\ar"10";"16",
\ar"10";"17",
\ar"11";"16",
\ar"12";"18",
\ar"13";"17",
\ar"13";"19",
\ar"14";"20",
\ar"15";"19",
\ar"15";"20",
\ar"16";"21",
\ar"17";"22",
\ar"18";"21",
\ar"18";"23",
\ar"19";"22",
\ar"20";"23",
\ar"21";"24",
\ar"22";"24",
\ar"23";"24",
\end{xy}\]
\caption{The weak order and $\sttilt\Pi$ in type $A_3$}\label{A3 fig}
\end{figure}
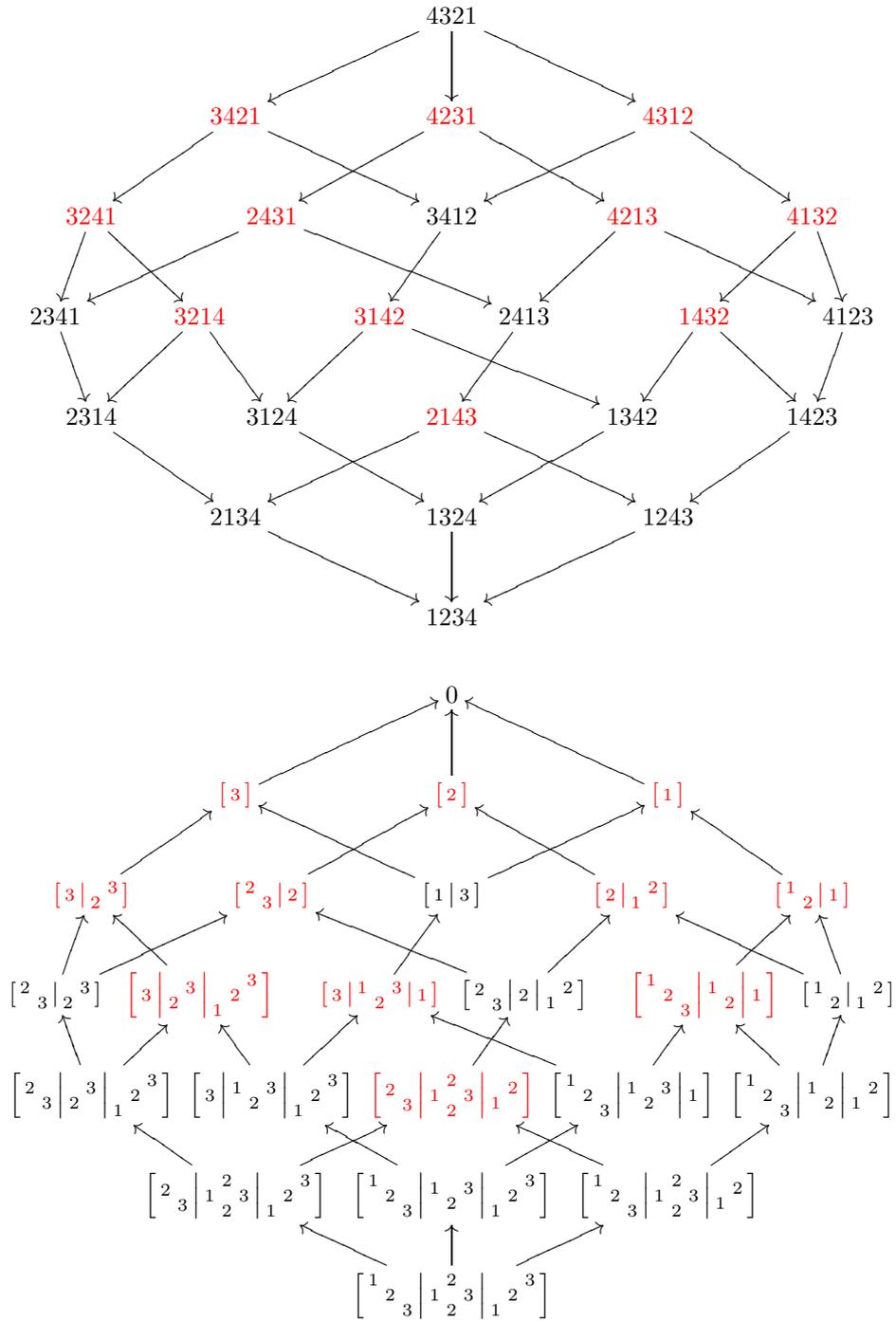
\end{example}

Let $\widehat{\Pi}$ be the preprojective algebra of the extended Dynkin type corresponding to $\Pi$.
We identify $\Pi$-modules with $\widehat{\Pi}$-modules annihilated by
$e_0$, where $e_0$ is an idempotent of $\widehat{\Pi}$ satisfying
$\Pi=\widehat{\Pi}/\langle e_0\rangle$.
Then the category $\mod\Pi$ can be regarded as a full subcategory of $\fd\widehat{\Pi}$ which is closed under extensions. Therefore we have
\begin{equation}\label{hom ext}
\Hom_\Pi(X,Y)=\Hom_{\widehat{\Pi}}(X,Y)\ \mbox{ and }\ \Ext^1_\Pi(X,Y)=\Ext^1_{\widehat{\Pi}}(X,Y)\end{equation}
for any $X,Y\in\mod\Pi$. On the other hand, $\widehat{\Pi}$ is a 2-Calabi--Yau algebra \cite[Section 4.2]{Ke1} and hence there exists a functorial isomorphism
\begin{equation}\label{2CY}
\Hom_{\DDD^{\bo}(\fd\widehat{\Pi})}(X,Y)\simeq D\Hom_{\DDD^{\bo}(\fd\widehat{\Pi})}(Y,X[2])
\end{equation}
for any $X,Y\in\DDD^{\bo}(\fd\widehat{\Pi})$ \cite[4.1]{Ke1}, where $D$ is the $k$-dual.

\section{Homological characterizations of layers}\label{stone}
In this section we prove Theorem~\ref{main of layer}, the homological characterization of layer modules of the preprojective algebra $\Pi$.
The assertion that every layer module $L$ is a stone satisfying $\End_\Pi(L)=k$ was shown in \cite[2.3]{AIRT}.
(Although $k$ was assumed to be algebraically closed in \cite{AIRT}, this assumption was never used.)
It is also easy to show that the stones $L$ of $\Pi$ satisfying $\End_{\Pi}(L)=k$ are exactly the $\Pi$-modules which are $2$-spherical as $\widehat{\Pi}$-modules.
Indeed, for any $\Pi$-module $L$, we have $\Ext^1_{\widehat{\Pi}}(L,L)=\Ext^1_\Pi(L,L)$ and $\Ext^2_{\widehat{\Pi}}(L,L)=D\End_\Pi(L)$ by \eqref{hom ext} and \eqref{2CY}, and the assertion follows.

To complete the proof, we will show that every brick is a layer.
As before, $\DDD^{\bo}(\fd\widehat{\Pi})$ is the bounded derived category of finite-dimensional $\widehat{\Pi}$-modules.
For a vertex $i\in Q_0$, let $\widehat{I}_i:=\widehat{\Pi}(1-e_i)\widehat{\Pi}$ be an ideal of
$\widehat{\Pi}$. Then we have an exact sequence
\begin{equation}\label{I Pi S}
0\to \widehat{I}_i\to\widehat{\Pi}\to S_i\to0
\end{equation}
of $\widehat{\Pi}$-bimodules. Moreover we have an autoequivalence
\[F_i:=\widehat{I}_i\Lotimes_{\widehat{\Pi}}-:\DDD^{\bo}(\fd\widehat{\Pi})\to\DDD^{\bo}(\fd\widehat{\Pi})\]
with quasi-inverse
\[F_i^{-1}:=\RHom_{\widehat{\Pi}}(\widehat{I}_i,-):
\DDD^{\bo}(\fd\widehat{\Pi})\to\DDD^{\bo}(\fd\widehat{\Pi}).\]

The following assertions are easy to check.

\begin{lemma}\label{idempotent quotient} Let $L$ be a finite-dimensional
$\Pi$-module.
\begin{enumerate}[\rm(a)]
\item We have $I_i=\widehat{I}_i\otimes_{\widehat{\Pi}}\Pi
=\Pi\otimes_{\widehat{\Pi}}\widehat{I}_i$.
\item We have  
$I_i\otimes_{\Pi}L=\widehat{I}_i\otimes_{\widehat{\Pi}}L$
and $\Hom_{\Pi}(I_i,L)=\Hom_{\widehat{\Pi}}(\widehat{I}_i,L)$.
\end{enumerate}
\end{lemma}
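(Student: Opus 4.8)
The plan is to reduce part (b) to part (a) by formal change-of-rings identities, and to prove part (a) by an elementary computation with the two-sided ideals, resting on a single containment. Since $i\neq 0$, the primitive idempotents $e_i$ and $e_0$ of $\widehat{\Pi}$ are orthogonal, so $e_0=e_0(1-e_i)=(1-e_i)e_0$; hence
\[
\widehat{\Pi}e_0=\widehat{\Pi}e_0(1-e_i)\subseteq\widehat{\Pi}(1-e_i)\widehat{\Pi}=\widehat{I}_i,
\]
and likewise $e_0\widehat{\Pi}\subseteq\widehat{I}_i$, so the two-sided ideal $\langle e_0\rangle=\widehat{\Pi}e_0\widehat{\Pi}$ satisfies $\langle e_0\rangle\subseteq\widehat{I}_i$. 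Since $\langle e_0\rangle$ is idempotent (being generated by the idempotent $e_0$), it follows that $\langle e_0\rangle=\langle e_0\rangle^2\subseteq\widehat{I}_i\langle e_0\rangle\subseteq\langle e_0\rangle$, so $\widehat{I}_i\langle e_0\rangle=\langle e_0\rangle$, and symmetrically $\langle e_0\rangle\widehat{I}_i=\langle e_0\rangle$. These two facts about $\langle e_0\rangle$ are essentially all that the argument uses.

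For part (a), write $\Pi=\widehat{\Pi}/\langle e_0\rangle$ and apply the standard identification $M\otimes_R(R/J)=M/MJ$ (valid for a right $R$-module $M$ and a two-sided ideal $J$) to get
\[
\widehat{I}_i\otimes_{\widehat{\Pi}}\Pi=\widehat{I}_i/\widehat{I}_i\langle e_0\rangle=\widehat{I}_i/\langle e_0\rangle.
\]
Since $\langle e_0\rangle\subseteq\widehat{I}_i$, the right-hand side is canonically a sub-bimodule of $\widehat{\Pi}/\langle e_0\rangle=\Pi$, namely the image of $\widehat{I}_i$ under the projection $\widehat{\Pi}\twoheadrightarrow\Pi$. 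That image is the two-sided ideal of $\Pi$ generated by the image of $1-e_i$, and since $1-e_i$ maps to $1_\Pi-e_i$ it equals $\Pi(1-e_i)\Pi=I_i$. Hence $\widehat{I}_i\otimes_{\widehat{\Pi}}\Pi=I_i$ as $(\Pi,\Pi)$-bimodules, and $\Pi\otimes_{\widehat{\Pi}}\widehat{I}_i=I_i$ follows by the analogous computation on the other side, using $(R/J)\otimes_R N=N/JN$ and $\langle e_0\rangle\widehat{I}_i=\langle e_0\rangle$.

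Part (b) is then formal. For the first identity, part (a) and associativity of the tensor product give
\[
I_i\otimes_{\Pi}L=(\widehat{I}_i\otimes_{\widehat{\Pi}}\Pi)\otimes_{\Pi}L=\widehat{I}_i\otimes_{\widehat{\Pi}}(\Pi\otimes_{\Pi}L)=\widehat{I}_i\otimes_{\widehat{\Pi}}L,
\]
where $L$ is regarded as a $\widehat{\Pi}$-module by restriction along $\widehat{\Pi}\twoheadrightarrow\Pi$; for the second identity, part (a) and the adjunction between $\Pi\otimes_{\widehat{\Pi}}-\colon\mod\widehat{\Pi}\to\mod\Pi$ and restriction of scalars give
\[
\Hom_{\Pi}(I_i,L)=\Hom_{\Pi}(\Pi\otimes_{\widehat{\Pi}}\widehat{I}_i,L)=\Hom_{\widehat{\Pi}}(\widehat{I}_i,L).
\]
The computation is routine, matching the paper's claim that it is ``easy to check''; the one step that warrants a second look is verifying that $\widehat{I}_i\otimes_{\widehat{\Pi}}\Pi$ really is $I_i$, rather than merely a module surjecting onto it — equivalently, that this change of rings kills nothing beyond $\langle e_0\rangle$. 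That is precisely what the equality $\widehat{I}_i\langle e_0\rangle=\langle e_0\rangle$, and hence ultimately the containment $\langle e_0\rangle\subseteq\widehat{I}_i$, guarantees, so I would be careful to establish that containment at the very start.
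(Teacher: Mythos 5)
Your argument is correct and supplies exactly the verification the paper omits with the phrase ``easy to check'': the whole lemma reduces to the containment $\langle e_0\rangle\subseteq\widehat{I}_i$ (valid because $i\neq 0$, so $e_0=(1-e_i)e_0\in\widehat{I}_i$), which gives $\widehat{I}_i\langle e_0\rangle=\langle e_0\rangle=\langle e_0\rangle\widehat{I}_i$ and hence $\widehat{I}_i\otimes_{\widehat{\Pi}}\Pi=\widehat{I}_i/\langle e_0\rangle=I_i$ on the nose, with (b) following formally by associativity of the tensor product and the extension/restriction-of-scalars adjunction. You are also right to flag the injectivity of $\widehat{I}_i\otimes_{\widehat{\Pi}}\Pi\to\Pi$ as the one non-vacuous point; your identification $M\otimes_R(R/J)=M/MJ$ together with $\widehat{I}_i\langle e_0\rangle=\langle e_0\rangle$ handles it cleanly.
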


The following observation plays a key role.

\begin{proposition}\label{new spherical}
Let $L$ be a brick of $\Pi$ and $i$ a vertex of $\overline{Q}$.
\begin{enumerate}[\rm(a)]
\item If $L\not\simeq S_i$, then at least one of $\Hom_\Pi(S_i,L)=0$
or $\Hom_\Pi(L,S_i)=0$ holds.
\item If $\Hom_\Pi(S_i,L)=0$, then $F_i(L)\simeq I_i\otimes_\Pi L$
is a brick of $\Pi$.
\item If $\Hom_\Pi(L,S_i)=0$, then $F_i^{-1}(L)\simeq\Hom_\Pi(I_i,L)$
is a brick of $\Pi$ .
\end{enumerate}
\end{proposition}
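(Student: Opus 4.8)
The plan is to exploit the two-dimensionality of the space of morphisms in the derived category between a stone and its translates. For part (a), suppose toward a contradiction that both $\Hom_\Pi(S_i,L)\neq 0$ and $\Hom_\Pi(L,S_i)\neq 0$ while $L\not\simeq S_i$. Regarding everything in $\DDD^{\bo}(\fd\widehat{\Pi})$, recall that $\widehat{\Pi}$ is $2$-Calabi--Yau, so $\Ext^2_{\widehat{\Pi}}(S_i,L)\simeq D\Hom_{\widehat{\Pi}}(L,S_i)$, and this is nonzero by hypothesis. On the other hand one computes the Euler pairing $\langle\dimvec S_i,\dimvec L\rangle$ on $K_0(\fd\widehat{\Pi})$: since $L$ is a $\Pi$-module (killed by $e_0$), the pairing only involves the finite-type root lattice coordinates, where the symmetric Euler form is positive definite. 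Writing $\langle\dimvec S_i,\dimvec L\rangle=\hom_{\widehat{\Pi}}(S_i,L)-\ext^1_{\widehat{\Pi}}(S_i,L)+\ext^2_{\widehat{\Pi}}(S_i,L)$, and using that the bilinear form equals $\tfrac12(\langle\dimvec S_i,\dimvec S_i\rangle+\langle\dimvec L,\dimvec L\rangle-\langle\dimvec S_i-\dimvec L,\dimvec S_i-\dimvec L\rangle)$ which is at most $1$ when $L\neq S_i$ (it is $\le 2-\tfrac{1}{2}\cdot 2 =1$ by positive-definiteness, the extreme case $=1$ forced by $\dimvec L-\dimvec S_i$ being a root), I would conclude that $\hom+\ext^2\le 1+\ext^1$. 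One then needs $\ext^1_{\widehat{\Pi}}(S_i,L)$ to be small; here I would use that $L$ is a stone, i.e.\ $\Ext^1_\Pi(L,L)=0$ and $\End_\Pi(L)=k$, together with the long exact sequences coming from a short exact sequence realizing $S_i$ as a sub or quotient of $L$, to force a contradiction. The cleanest route is probably: if $S_i$ is both a submodule and a quotient of $L$, build the nonsplit self-extension of $L$ or a nonzero element of $\End_\Pi(L)\setminus k$ that this structure provides, contradicting stoneness.

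For parts (b) and (c), which are dual, I would argue with the autoequivalence $F_i=\widehat{I}_i\Lotimes_{\widehat{\Pi}}-$. Assume $\Hom_\Pi(S_i,L)=0$. The standard triangle $\widehat{I}_i\Lotimes L\to L\to \RHom(\widehat{I}_i,S_i)^{\oplus ?}\dots$ — more precisely, the triangle associated to the exact sequence $0\to \widehat{I}_i\to\widehat{\Pi}\to S_i\to 0$ of $\widehat{\Pi}$-bimodules gives $F_i(L)=\widehat{I}_i\Lotimes_{\widehat{\Pi}}L$ fitting in a triangle $F_i(L)\to L\to S_i\otimes_k\Hom_{\widehat{\Pi}}(?,L)$ — I would use the well-known fact (from the theory of these reflection functors, as in the cited \cite{BIRS}) that when $\Hom_\Pi(S_i,L)=0$ the derived tensor $\widehat{I}_i\Lotimes_{\widehat{\Pi}}L$ is concentrated in degree $0$ and equals $I_i\otimes_\Pi L = I_iL$, and moreover this is again a $\Pi$-module. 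Then, since $F_i$ is an autoequivalence, it preserves all $\Hom_{\DDD}(-,-[j])$ spaces; in particular $F_i(L)$ is again $2$-spherical as a $\widehat{\Pi}$-module. By Lemma~\ref{idempotent quotient} and the fact (already established earlier in this section) that stones of $\Pi$ are exactly the $\Pi$-modules that are $2$-spherical as $\widehat{\Pi}$-modules, $F_i(L)$ is a stone of $\Pi$. Part (c) is the same argument applied to $F_i^{-1}=\RHom_{\widehat{\Pi}}(\widehat{I}_i,-)$, using $\Hom_\Pi(L,S_i)=0$ to see the complex is concentrated in degree $0$ and equals $\Hom_\Pi(I_i,L)$.

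The main obstacle I anticipate is part (a): checking that $F_i(L)$ (resp.\ $F_i^{-1}(L)$) lands in $\mod\Pi$ concentrated in a single homological degree, and the careful bookkeeping with the Euler form to rule out the simultaneous nonvanishing of $\Hom_\Pi(S_i,L)$ and $\Hom_\Pi(L,S_i)$. For the latter I would lean on the observation that such a simultaneous nonvanishing, combined with $S_i$ being simple, yields either a nonzero nilpotent endomorphism of $L$ (composing $L\twoheadrightarrow S_i\hookrightarrow L$) — immediately contradicting $\End_\Pi(L)=k$ unless this composite is an isomorphism, which would force $L\simeq S_i$. This is in fact the quickest proof of (a) and avoids the Euler-form estimate entirely, so I would present that as the main line and keep the homological computation only as motivation. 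For (b) and (c) the degree-concentration claim is essentially the content of the classical results on the functors $I_i\otimes_\Pi-$ studied in \cite{BIRS} (the "$\Hom_\Pi(S_i,L)=0$ implies $I_iL\simeq I_i\Lotimes L$" statement), so I would cite that and then invoke the autoequivalence property of $F_i$ together with the $2$-spherical characterization of stones to finish.
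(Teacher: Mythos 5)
Your proposal is correct and follows essentially the same route as the paper: part (a) is exactly the composition argument $L\twoheadrightarrow S_i\hookrightarrow L$ giving a nonzero non-invertible element of $\End_\Pi(L)=k$, and parts (b),(c) use that $F_i$ is an autoequivalence preserving $2$-sphericality, together with degree concentration of $\widehat{I}_i\Lotimes_{\widehat{\Pi}}L$ and the identification of stones with $2$-spherical $\Pi$-modules. The only difference is that where you defer the degree-concentration to a citation, the paper proves it in two lines: $\widehat{I}_i$ has projective dimension at most one, and $\Tor^{\widehat{\Pi}}_1(\widehat{I}_i,L)\simeq\Tor^{\widehat{\Pi}}_2(S_i,L)\simeq D\Ext^2_{\widehat{\Pi}}(L,S_i)\simeq\Hom_{\widehat{\Pi}}(S_i,L)=0$.
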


\begin{proof}
(a) If both $\Hom_\Pi(S_i,L)$ and $\Hom_\Pi(L,S_i)$ are non-zero,
the composition $L\to S_i\to L$ is a non-zero non-isomorphic
endomorphism of $L$. This is a contradiction since $\End_{\Pi}(L)$
is a division algebra.

(b) We show that $F_i(L)$ is a $\widehat{\Pi}$-module, that is,
$\Tor^{\widehat{\Pi}}_j(\widehat{I}_i,L)=0$ holds for any $j\neq0$.
Since the $\widehat{\Pi}$-module $\widehat{I}_i$ has projective 
dimension at most one \cite[Proposition III.1.4]{BIRS}, we only have to check the case $j=1$.
We have
\[\Tor^{\widehat{\Pi}}_1(\widehat{I}_i,L)\stackrel{\eqref{I Pi S}}{\simeq}
\Tor^{\widehat{\Pi}}_2(S_i,L)\simeq D\Ext^2_{\widehat{\Pi}}(L,S_i)
\stackrel{\eqref{2CY}}{\simeq}\Hom_{\widehat{\Pi}}(S_i,L)=0\]
by our assumption, where the second isomorphism follows from a functorial isomorphism $-\otimes_{\widehat{\Pi}}L\simeq D\Hom_{\widehat{\Pi}}(L,D(-))$.
Thus we have $F_i(L)\simeq\widehat{I}_i\otimes_{\widehat{\Pi}}L$, which is 
isomorphic to $I_i\otimes_\Pi L$ by Lemma \ref{idempotent quotient}.
Since $F_i$ is an autoequivalence of $\DDD^{\bo}(\fd\widehat{\Pi})$,
$F_i(L)$ is a brick of $\Pi$.

(c) Similar to (b).
\end{proof}

We need the following observation.

\begin{lemma}\label{inductive step2}
Let $w\in W$ and let $L$ be a brick of $\Pi$ which belongs to $\Tors(w)$.
For any $i$ satisfying $\Hom_\Pi(L,S_i)\neq0$,
the following assertions hold.
\begin{enumerate}[\rm(a)]
\item $\ell(s_iw)>\ell(w)$ and $I(s_iw)=I_iI(w)=I_i\otimes_\Pi I(w)$.
\item If $L\not\simeq S_i$, then $\Hom_\Pi(S_i,L)=0$ and $F_i(L)$ is a 
brick of $\Pi$ which belongs to $\Tors(s_iw)$.
\end{enumerate}
\end{lemma}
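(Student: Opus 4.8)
The plan is to treat the two parts in order, with part (a) as a purely combinatorial statement about reduced words and ideals, and part (b) as the homological heart of the lemma. For part (a), I would argue that $\Hom_\Pi(L,S_i)\neq 0$ forces $S_i$ to be a quotient of some module in $\Tors(w)=\Fac I(w)$, hence $S_i\in\Tors(w)$. By Mizuno's theorem (Theorem~\ref{Mizuno's theorem}) and Lemma~\ref{additivity}, the condition $\ell(s_iw)=\ell(w)+1$ is equivalent to $I(s_iw)=I_iI(w)=I_i\otimes_\Pi I(w)$; so it suffices to rule out $\ell(s_iw)<\ell(w)$. If instead $\ell(s_iw)<\ell(w)$, then $w=s_iw'$ with $\ell(w)=\ell(w')+1$, so $I(w)=I_iI(w')$, which means $I(w)\subseteq I_i$ and hence (since $I_i$ is maximal as a left ideal and $\Pi/I_i\cong S_i$) every module in $\Fac I(w)$ has no $S_i$ in its top, i.e.\ $\Hom_\Pi(X,S_i)=0$ for all $X\in\Tors(w)$. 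This contradicts $\Hom_\Pi(L,S_i)\neq 0$. I expect the only subtlety here is being careful about the left/right module conventions flagged after Theorem~\ref{Mizuno's theorem}.

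For part (b), assume $L\not\simeq S_i$. Since $L$ is a stone, Proposition~\ref{new spherical}(a) gives $\Hom_\Pi(S_i,L)=0$ or $\Hom_\Pi(L,S_i)=0$; the latter is excluded by hypothesis, so $\Hom_\Pi(S_i,L)=0$. Then Proposition~\ref{new spherical}(b) immediately tells us $F_i(L)\simeq I_i\otimes_\Pi L$ is a stone of $\Pi$. The remaining task is the containment $F_i(L)\in\Tors(s_iw)$. Here I would use that $\Tors(s_iw)=\Fac I(s_iw)=\Fac(I_i\otimes_\Pi I(w))$ by part (a), together with $L\in\Tors(w)=\Fac I(w)$: choose a surjection $I(w)^{\oplus m}\twoheadrightarrow L$ and apply the right-exact functor $I_i\otimes_\Pi-$ to get $I_i\otimes_\Pi I(w)^{\oplus m}\twoheadrightarrow I_i\otimes_\Pi L\simeq F_i(L)$, exhibiting $F_i(L)$ as a factor of a sum of copies of $I(s_iw)$, hence $F_i(L)\in\Tors(s_iw)$.

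The step I expect to be the main obstacle is verifying that applying $I_i\otimes_\Pi-$ to the surjection $I(w)^{\oplus m}\twoheadrightarrow L$ really lands inside $\Fac I(s_iw)$ on the nose — in particular, that $I_i\otimes_\Pi I(w)\simeq I(s_iw)$ and not merely a module admitting $I(s_iw)$ as a summand or factor. This is exactly the content of part (a)'s identity $I(s_iw)=I_i\otimes_\Pi I(w)$ (via Lemma~\ref{additivity}), so once (a) is in hand the argument for (b) is short; the real work is in (a), and there the crux is the equivalence between $\Hom_\Pi(L,S_i)\neq 0$ and $I(w)\not\subseteq I_i$, which rests on $I_i$ being a maximal left ideal with $\Pi/I_i\cong S_i$ and on $\Tors(w)=\Fac I(w)$. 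An alternative, cleaner route to (a) that I would consider: use the known description of layer modules/labels of Hasse arrows — $\Hom_\Pi(L,S_i)\neq 0$ with $L$ a stone in $\Tors(w)$ should force the arrow at $w$ in direction $i$ to point "upward" in weak order, i.e.\ $\ell(s_iw)>\ell(w)$ — but this risks circularity depending on what has been established, so I would fall back on the direct ideal-theoretic argument above.
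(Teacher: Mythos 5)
Your overall strategy coincides with the paper's: part (b) is essentially word for word the paper's argument (Proposition~\ref{new spherical} gives $\Hom_\Pi(S_i,L)=0$ and the stone property of $F_i(L)$, and then one applies the right-exact functor $I_i\otimes_\Pi-$ to a surjection $I(w)^{\oplus m}\twoheadrightarrow L$ and uses the identity $I(s_iw)=I_i\otimes_\Pi I(w)$ from (a)), and part (a) is the same reduction to the statement $I_iI(w)\neq I(w)$, presented contrapositively.

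There is, however, one genuinely faulty step in your part (a). You assert that $I(w)\subseteq I_i$, together with the maximality of $I_i$ as a left ideal and $\Pi/I_i\cong S_i$, implies that no module in $\Fac I(w)$ has $S_i$ in its top. That inference is not valid: maximality of $I_i$ controls the quotient $\Pi/I_i$, not the tops of its submodules, and a left ideal contained in $I_i$ can perfectly well map onto $S_i$ (for rank at least two, $\soc\Pi$ is contained in $\rad\Pi\subseteq I_i$ and contains a copy of $S_i$ as a direct summand). The statement you actually need is the equivalence $\Hom_\Pi(M,S_i)\neq0$ if and only if $I_iM\neq M$ (which holds because $I_iS_i=0$ and $M/I_iM$ is a direct sum of copies of $S_i$). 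Applied to $M=I(w)$ with $\ell(s_iw)<\ell(w)$, one writes $I(w)=I_iI(w')$ and computes $I_iI(w)=I_i^2I(w')=I_iI(w')=I(w)$, using the idempotency $I_i^2=I_i$ (immediate from $(1-e_i)=(1-e_i)^3$); this yields $\Hom_\Pi(I(w),S_i)=0$, hence $\Hom_\Pi(X,S_i)=0$ for all $X\in\Fac I(w)$, and your contradiction goes through. Equivalently, and more directly as in the paper, pass from $\Hom_\Pi(L,S_i)\neq0$ and $L\in\Fac I(w)$ to $\Hom_\Pi(I(w),S_i)\neq0$, hence $I_iI(w)\neq I(w)$, which by the same dichotomy forces $\ell(s_iw)>\ell(w)$. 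Once this step is repaired, your argument is correct.
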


\begin{proof}
(a) Since $L\in\Tors(w)$ and $\Hom_\Pi(L,S_i)\neq0$, we have
$\Hom_\Pi(I(w),S_i)\neq0$. Thus $I_iI(w)\neq I(w)$ holds.
Thus we have the assertions by Lemma \ref{additivity}.

(b) By Proposition \ref{new spherical}, $\Hom_\Pi(S_i,L)=0$ holds, and
$F_i(L)$ is a brick of $\Pi$.
Since we have a surjection $I(w)^{\oplus m}\to L$, we have a surjection
$I(s_iw)^{\oplus m}=I_i\otimes_\Pi I(w)^{\oplus m}\to 
I_i\otimes_\Pi L=F_i(L)$. Thus $F_i(L)$ belongs to $\Tors(s_iw)$.
\end{proof}

For $w\in W$, we have an ideal $\widehat{I}(w)$ of $\widehat{\Pi}$
satisfying $\widehat{I}(w)\supseteq\langle e_0\rangle$ and
$\widehat{I}(w)/\langle e_0\rangle=I(w)$. Let
\[F(w):=\widehat{I}(w)\Lotimes_{\widehat{\Pi}}-:
\DDD^{\bo}(\fd\widehat{\Pi})\to\DDD^{\bo}(\fd\widehat{\Pi}).\]
For any reduced expression $w=s_{i_1}\cdots s_{i_\ell}$, we have
\[
\widehat{I}(w)=\widehat{I}_{i_1}\cdots\widehat{I}_{i_\ell}\ \mbox{ and }\ 
F(w)=F_{i_1}\circ\cdots\circ F_{i_\ell}.
\]

\begin{lemma}\label{repeat inductive step}
Let $L$ be a brick of $\Pi$. Then there exists $v\in W$ such that 
$F(v)(L)$ is a simple $\Pi$-module $S_i$.
\end{lemma}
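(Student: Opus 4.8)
The plan is to argue by induction on $\dim_k L$, using Lemma~\ref{inductive step2} to reduce the dimension at each step. If $L$ is already simple, there is nothing to prove (take $v=e$). So suppose $L$ is a stone with $\dim_k L>1$. Since $\Pi$ is finite-dimensional and $L\neq0$, there is some vertex $i$ with $\Hom_\Pi(L,S_i)\neq0$ (indeed $S_i$ appears in the top of $L$), and since $\dim_k L>1$ we have $L\not\simeq S_i$ for that $i$. Now I want to place $L$ inside some torsion class $\Tors(w)$ so that Lemma~\ref{inductive step2} applies; the natural choice is to take $w$ minimal in weak order (equivalently, $\Tors(w)$ maximal) such that $L\in\Tors(w)$, or simply to start with $w=e$, where $\Tors(e)=\mod\Pi$ contains everything. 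With $w=e$ and $\Hom_\Pi(L,S_i)\neq0$ and $L\not\simeq S_i$, part (b) of Lemma~\ref{inductive step2} gives that $F_i(L)$ is again a stone of $\Pi$, belonging to $\Tors(s_i)$.

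The key point is then that $F_i(L)\simeq I_i\otimes_\Pi L$ has strictly smaller dimension than $L$. This should follow because $\Hom_\Pi(L,S_i)\neq0$ forces $L$ to have a copy of $S_i$ at the top, and $I_i\otimes_\Pi L$ is (up to the $\Tor$-vanishing established in Proposition~\ref{new spherical}(b)) the image of a projective cover computation that kills exactly the $S_i$-part of the top; more precisely, $I_iL\subsetneq L$ because $\Hom_\Pi(L,S_i)\neq0$ means $L$ is not generated by $I_iL$, and since $I_i\otimes_\Pi L\simeq I_iL$ when $\Tor_1^\Pi(I_i,L)=0$ (which we have), we get $\dim_k F_i(L)=\dim_k I_iL<\dim_k L$. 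So by induction there is $v'\in W$ with $F(v')(F_i(L))$ a simple $\Pi$-module; then $v:=v's_i$ works, provided $F(v)=F(v')\circ F_i$, which holds as soon as $\ell(v's_i)=\ell(v')+1$. The length additivity should come for free from the construction: $F_i(L)\in\Tors(s_i)$, and at each later step the inductive hypothesis applied to the current module $M\in\Tors(w)$ with $\Hom_\Pi(M,S_j)\neq0$ yields $\ell(s_jw)>\ell(w)$ by Lemma~\ref{inductive step2}(a), so the word $s_{i_\ell}\cdots s_{i_1}$ we build up is reduced and $F(v)=F_{i_\ell}\circ\cdots\circ F_{i_1}$.

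To make the induction run cleanly it is better to prove the slightly stronger statement: \emph{for every $w\in W$ and every stone $L\in\Tors(w)$, there exists $v\in W$ with $\ell(vw)=\ell(v)+\ell(w)$ and $F(v)(L)$ simple}; the lemma is the case $w=e$. The inductive step is exactly Lemma~\ref{inductive step2}: pick $i$ with $\Hom_\Pi(L,S_i)\neq0$; if $L\simeq S_i$ we are done with $v=e$; otherwise $F_i(L)$ is a stone in $\Tors(s_iw)$ of smaller dimension with $\ell(s_iw)=\ell(w)+1$, apply induction to get $v'$ with $\ell(v's_iw)=\ell(v')+\ell(s_iw)$ and $F(v')(F_i(L))$ simple, and set $v=v's_i$, noting $F(v)=F(v')\circ F_i$ and $\ell(vw)=\ell(v's_iw)=\ell(v')+1+\ell(w)=\ell(v)+\ell(w)$.

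The main obstacle I anticipate is verifying the strict dimension drop $\dim_k F_i(L)<\dim_k L$ rigorously — one must be careful that $F_i(L)\simeq I_i\otimes_\Pi L$ really is $I_iL$ as a submodule of $L$ (using the $\Tor$-vanishing from Proposition~\ref{new spherical}(b), which needs $\Hom_\Pi(S_i,L)=0$, itself supplied by Proposition~\ref{new spherical}(a) together with $\Hom_\Pi(L,S_i)\neq0$ and $L\not\simeq S_i$) and that $I_iL$ is a \emph{proper} submodule, which is precisely the statement that $L/I_iL\neq0$, equivalent to $L$ having $S_i$ in its top, equivalent to $\Hom_\Pi(L,S_i)\neq0$. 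Everything else is bookkeeping with reduced words and the functors $F_i$.
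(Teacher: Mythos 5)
Your overall strategy (pick $i$ with $\Hom_\Pi(L,S_i)\neq0$, apply Lemma~\ref{inductive step2}, iterate) is the paper's, but the quantity you induct on does not work: it is \emph{not} true that $\dim_kF_i(L)<\dim_kL$. The error is in identifying $F_i(L)=I_i\otimes_\Pi L$ with the submodule $I_iL\subsetneq L$. Tensoring $0\to\widehat{I}_i\to\widehat{\Pi}\to S_i\to0$ with $L$ gives
\[
0\to\Tor_1^{\widehat{\Pi}}(S_i,L)\to \widehat{I}_i\otimes_{\widehat{\Pi}}L\to L\to S_i\otimes_{\widehat{\Pi}}L\to0 ,
\]
and the vanishing established in Proposition~\ref{new spherical}(b) is that of $\Tor_1^{\widehat{\Pi}}(\widehat{I}_i,L)\simeq\Tor_2^{\widehat{\Pi}}(S_i,L)\simeq D\Hom_\Pi(S_i,L)$, which makes $F_i(L)$ a module; it does not give the vanishing of $\Tor_1^{\widehat{\Pi}}(S_i,L)\simeq D\Ext^1_\Pi(L,S_i)$, which is what injectivity of $F_i(L)\to L$ would require. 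On the level of $K_0$ one has $[F_i(L)]=s_i[L]$, so the total dimension changes by $\dim_k\Ext^1_\Pi(L,S_i)-\dim_k\Hom_\Pi(L,S_i)$, which can be zero. Concretely, in type $A_3$ take $L=\begin{smallmatrix}&2\\1&&3\end{smallmatrix}$ and $i=2$: then $\Hom_\Pi(L,S_2)\neq0$ and $L\not\simeq S_2$, but $F_2(L)\simeq\begin{smallmatrix}1&&3\\ &2\end{smallmatrix}$ again has dimension $3$, whereas $I_2L=S_1\oplus S_3$ has dimension $2$; in particular $F_2(L)\not\simeq I_2L$. So your induction on $\dim_kL$ breaks down at the very first step of this example.

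The repair is precisely the fact you relegate to ``bookkeeping'': by Lemma~\ref{inductive step2}(a), each step replaces $w$ by $s_iw$ with $\ell(s_iw)=\ell(w)+1$, and lengths in $W$ are bounded by $\ell(w_0)$. So in your strengthened statement, induct downward on $\ell(w_0)-\ell(w)$ instead of on $\dim_kL$ (equivalently: the process must stop after at most $\ell(w_0)$ steps, and it can only stop when the current module has no non-simple stage left, i.e.\ is simple, since any nonzero non-simple module admits some nonzero map to some $S_i$). This is exactly the termination argument in the paper's proof; with that substitution the rest of your write-up is correct.
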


\begin{proof}
Assume that $L$ is not simple. Taking $i_1$ such that 
$\Hom_\Pi(L,S_{i_1})\neq0$ and applying Lemma 
\ref{inductive step2} to the brick $L\in\mod\Pi=\Tors(e)$,
we have that $F_{i_1}(L)$ is a brick in $\Fac I_{i_1}$.

Assume that $F_{i_1}(L)$ is not simple. Taking $i_2$ such that $\Hom_\Pi(F_{i_1}(L),S_{i_2})\neq0$ and applying Lemma 
\ref{inductive step2} to the brick $F_{i_1}(L)\in\Fac I_{i_1}$,
we have that $F_{i_2}F_{i_1}(L)$ is a brick in $\Fac(I_{i_2}I_{i_1})$
and $\ell(s_{i_2}s_{i_1})=2$ holds.

We repeat this process. Since the lengths of elements in $W$ are 
bounded by $\ell(w_0)$, the process must stop, that is, there exists 
$v\in W$ such that $F(v)(L)$ is a simple $\Pi$-module $S_i$.
\end{proof}

We need the following general observation.

\begin{lemma}\label{dual-layer}
For any $v,w\in W$ satisfying $I(v)\supset I(w)$, we have an isomorphism
$D(I(v)/I(w))\simeq  I(w^{-1}w_0)/I(v^{-1}w_0)$ of $\Pi$-modules.
\end{lemma}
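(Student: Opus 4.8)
The plan is to exploit the standard duality on $\Pi$-modules together with the known symmetry of the ideals $I(w)$ under the maps $w \mapsto w^{-1}$ and $w \mapsto w_0 w$. First I would recall that $\Pi$ is a self-injective algebra, so the $k$-dual $D = \Hom_k(-,k)$ sends a left $\Pi$-module to a right $\Pi$-module, and composing with the natural contravariant equivalence $\Hom_\Pi(-,\Pi)$ (or directly using that $D(\Pi)$ is a progenerator) we obtain that $D$ interchanges left and right $\Pi$-modules; more to the point, for a two-sided ideal $J$ of $\Pi$ the quotient $\Pi/J$ as a left module dualizes to a right module, and one has a canonical identification relating right ideals to left ideals. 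The key structural fact is that $I(w)$ is a two-sided ideal which, viewed as a right ideal, is the ``same'' ideal for the element $w^{-1}$ in an appropriate sense: concretely, applying the $k$-linear antiautomorphism of $\Pi$ that fixes each $e_i$ and reverses arrows sends the left ideal $I_i = \Pi(1-e_i)\Pi$ to itself, and sends a product $I_{i_1}\cdots I_{i_k}$ to $I_{i_k}\cdots I_{i_1}$, hence $I(s_{i_1}\cdots s_{i_k})$ to $I(s_{i_k}\cdots s_{i_1}) = I(w^{-1})$. This yields $D(I(v)/I(w)) \cong$ a quotient built from $I(w^{-1})$ and $I(v^{-1})$ as right modules.

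Next I would translate the ``quotient of right ideals'' back into the ``quotient of left ideals'' shape appearing in the statement. The mechanism here is multiplication by $w_0$: since $w_0$ is the longest element, $\ell(w_0) = \ell(w) + \ell(w^{-1}w_0)$ for every $w$, and by Lemma~\ref{additivity} (or rather Lemma~\ref{additivity} applied carefully, together with $I(w_0) = 0$) one gets relations such as $I(w)\,I(w^{-1}w_0) = 0$ and, dually, that right-multiplication structures on $\Pi/I(w)$ correspond under $D$ to the left modules $I(w_0 w^{-1})$ up to the appropriate shift. More precisely, I expect to establish an identity of the form $D(\Pi/I(w)) \cong I(w_0 w^{-1})$ as left $\Pi$-modules (this is essentially the statement that the left ideals $I(w_0w^{-1})$ are the injective envelopes / duals of the cyclic modules $\Pi/I(w)$, which is implicit in Mizuno's work and in \cite{IR, BIRS}); granting this, the short exact sequence $0 \to I(v)/I(w) \to \Pi/I(w) \to \Pi/I(v) \to 0$ dualizes, using exactness of $D$, to $0 \to D(\Pi/I(v)) \to D(\Pi/I(w)) \to D(I(v)/I(w)) \to 0$, i.e.\ $0 \to I(w_0v^{-1}) \to I(w_0 w^{-1}) \to D(I(v)/I(w)) \to 0$. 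Since $I(v) \supset I(w)$ forces $I(w_0 v^{-1}) \subset I(w_0 w^{-1})$ (this inclusion needs to be checked, e.g.\ via the anti-isomorphism of weak order with $\tors\Pi$ of Theorem~\ref{Mizuno's theorem}(c) and the order-reversing nature of $w \mapsto w_0 w^{-1}$ on the weak order), the cokernel is exactly $I(w_0 w^{-1})/I(w_0 v^{-1})$, giving the claim.

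The step I expect to be the main obstacle is pinning down the identification $D(\Pi/I(w)) \cong I(w_0 w^{-1})$ (equivalently, the precise interaction of the bar-duality with the ideals $I(w)$ as two-sided versus one-sided ideals, and the bookkeeping of which element indexes the dual). One has to be careful that $D$ produces a right module, so one must either transport along the involution $a \mapsto a^*$ on the quiver $\overline{Q}$ — which realizes $\Pi^{\op} \cong \Pi$ and accounts for the inverse $w^{-1}$ — or work consistently on one side throughout; conflating the two is the natural source of error. I would handle this by first proving the lemma in the special case $w = w_0$, $v = e$ (where it reads $D(\Pi) \cong \Pi$, the self-injectivity, trivially true) and then, rather than a direct computation, by induction on $\ell(w_0 v^{-1}) - \ell(w_0 w^{-1})$ or on $\ell(w) - \ell(v)$ using the mutation/covering structure: if $w = v s_i$ with $\ell(w) = \ell(v)+1$, then $I(v)/I(w)$ is a layer module $M$, its dual $D M$ should be the corresponding $\tau^-$-side layer, and the covering relations on both sides of the asserted isomorphism match up via Lemma~\ref{additivity}; the general case follows by composing short exact sequences along a saturated chain from $v$ to $w$ and applying the exact functor $D$. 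The routine verifications — exactness of $D$, that the various length-additivity hypotheses hold because $w_0$ is longest, and the order-reversal under $w\mapsto w_0w^{-1}$ — I would state and cite rather than belabor.
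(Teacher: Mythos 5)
Your proposal is correct and follows essentially the same route as the paper: the paper dualizes the short exact sequence $0\to I(w)\to I(v)\to I(v)/I(w)\to 0$ viewed as a sequence of $\Pi^{\op}$-modules, invokes the isomorphism $D(I(w))\simeq\Pi/I(w_0w^{-1})$ of left $\Pi$-modules cited from \cite{ORT}, and compares the result with $0\to I(w_0w^{-1})/I(w_0v^{-1})\to\Pi/I(w_0v^{-1})\to\Pi/I(w_0w^{-1})\to0$. The left/right bookkeeping you flag as the main obstacle is resolved simply by taking the cited duality in this form (right ideal mapped to left cyclic quotient) and dualizing the ideal sequence rather than the quotient sequence, so neither the induction on length nor a separate verification of $D(\Pi/I(w))\simeq I(w_0w^{-1})$ is needed.
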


\begin{proof}
Applying $D$ to the exact sequence $0\to I(w)\to I(v)\to I(v)/I(w)
\to 0$ of $\Pi^{\op}$-modules, we have an exact sequence
\begin{equation}\label{1st}
0\to D(I(v)/I(w))\to D(I(v))\to D(I(w))\to0
\end{equation}
of $\Pi$-modules. On the other hand, we have an exact sequence
\begin{equation}\label{2nd}
0\to I(w^{-1}w_0)/I(v^{-1}w_0)\to\Pi/I(v^{-1}w_0)\to
\Pi/I(w^{-1}w_0)\to0
\end{equation}
of $\Pi$-modules. Using an isomorphism $D(I(w))\simeq \Pi/I(w^{-1}w_0)$
of $\Pi$-modules for any $w\in W$ \cite{ORT}
and comparing \eqref{1st} and \eqref{2nd}, we have the assertion.
\end{proof}

Now we are ready to finish the proof of Theorem \ref{main of layer}.

\begin{proof}[Proof of Theorem \ref{main of layer}]
As shown in the first paragraph of this section, we can complete the proof by showing that every brick is a layer.
Let $L$ be a brick of $\Pi$.
By Lemma \ref{repeat inductive step}, there exists $v\in W$ such that 
$F(v)(L)=\widehat{I}(v)\Lotimes_{\widehat{\Pi}}L$ is a simple $\Pi$-module $S_i$.
Thus $L$ equals 
\begin{eqnarray*}
\RHom_{\widehat{\Pi}}(\widehat{I}(v),S_i)&\!\!=\!\!&\Hom_{\widehat{\Pi}}(\widehat{I}(v),S_i)
\stackrel{{\rm Lem. \ref{idempotent quotient}}}{=}\Hom_\Pi(I(v),S_i)=D(DS_i\otimes_\Pi I(v))\\
&\!\!=\!\!&D((\Pi/I_i)\otimes_\Pi I(v))=D(I(v)/I_iI(v))
\stackrel{{\rm Lem. \ref{additivity}}}{=}D(I(v)/I(s_iv)).
\end{eqnarray*}
This is a layer of $\Pi$ by Lemma \ref{dual-layer}.
\end{proof}

\section{Bijections: Theorems \ref{bijections}\ and \ref{labellings}}\label{layer} 
Throughout this section, let $\Pi$ be a preprojective algebra of Dynkin type and $W$ the corresponding Weyl group.
The main result of this section is that Figure \ref{lat alg diagram fig} is commutative.
Commutativity of that diagram includes Theorems~\ref{bijections} and~\ref{labellings}.

For an arrow $a:ws_i\to w$ in $H$, we have a natural inclusion $I(w)\supset I(ws_i)$ of ideals of $\Pi$, and we associate to $a$ the $\Pi$-module
$$L(a):=I(w)/I(ws_i)=(I(w)e_i)/(I(w)I_ie_i),$$
where the right equality follows from $I(w)=I(w)e_i\oplus I(w)(1-e_i)$, $I(ws_i)=I(w)I_ie_i\oplus I(w)I_i(1-e_i)$ and $I_i(1-e_i)=\Pi(1-e_i)$.
We also define maps $L:\mirr W\to\Layers\Pi$ and $M:\mirr W\to\IndtRig\Pi$ as
\[L(m):=L(a)\ \mbox{ and }\ M(m):=I(m)e_i\]
for $m\in\mirr W$ and $a:m^*=ms_i\to m$ in $H$ (see Proposition \ref{indec tau rigid} for $M(m)$).

\begin{theorem}\label{big diagram}  
Let $\Pi$ be a preprojective algebra of Dynkin type and $W$ the corresponding Weyl group. Then we have the commutative diagram shown in Figure~\ref{lat alg diagram fig}. 
The maps are bijections or surjections as marked with tildes ``$\sim$'' or double-headed arrows.
\end{theorem}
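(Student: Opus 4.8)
The plan is to split the diagram of Figure~\ref{lat alg diagram fig} into a \emph{lattice layer} (the maps among $\jirr(W)$, $\mirr(W)$, $\ConJI(W)$, $\Hasse_1(W)$), an \emph{algebraic layer} (the maps among $\Layers\Pi=\brick\Pi$, $\IndtRig\Pi$, $\IndtmRig\Pi$, $\jirr(\tors\Pi)$, $\jirr(\torf\Pi)$, $\lwide\Pi$), and a \emph{bridge} (the maps $m\mapsto M(m)$, $j\mapsto J(j)$, $m\mapsto L(m)=I(m)/I(m^{*})$, $j\mapsto I(j_{*})/I(j)$), and then to check that the three pieces glue. The lattice layer is immediate: $W$ is congruence uniform by Theorem~\ref{W cong unif}, so this part is the commutative diagram of Figure~\ref{cong unif diagram fig} with $L=W$, its labellings described by Proposition~\ref{expl label}.

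For the algebraic layer, $\sttilt\Pi$ is finite by Theorem~\ref{Mizuno's theorem}, so $\Pi$ is $\tau$-tilting finite, and Theorem~\ref{join irreducible in sttilt} gives $\IndtRig\Pi\xrightarrow{\sim}\jirr(\tors\Pi)$, $M\mapsto\Fac M$; applying the duality $D\colon\mod\Pi\to\mod\Pi^{\op}$ (which identifies $\torf\Pi$ with $\tors\Pi^{\op}$ and $\tau^{-}$-rigid $\Pi$-modules with $\tau$-rigid $\Pi^{\op}$-modules) to $\Pi^{\op}$ gives $\IndtmRig\Pi\xrightarrow{\sim}\jirr(\torf\Pi)$, $M\mapsto\Sub M$. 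The equality $\Layers\Pi=\brick\Pi$ is Theorem~\ref{main of layer}, and $\brick\Pi\xrightarrow{\sim}\lwide\Pi$, $S\mapsto\Filt S$, is Proposition~\ref{stone and wide}. The remaining maps of this layer — $\IndtRig\Pi\xrightarrow{\sim}\brick\Pi$, $X\mapsto X/\rad X_{\End_\Pi(X)}$, its $\tau^{-}$-analogue $X\mapsto\soc X_{\End_\Pi(X)}$, and $\lwide\Pi\xrightarrow{\sim}\jirr(\tors\Pi)$ (respectively $\jirr(\torf\Pi)$) sending a local wide subcategory to the smallest torsion (respectively torsion-free) class containing it — are instances, for the $\tau$-tilting finite algebra $\Pi$, of the brick/$\tau$-rigid/local-wide/join-irreducible-torsion-class correspondences (cf.~\cite{DIJ}); what must be recorded alongside them is the compatibility that makes the lower squares commute, namely that the simple object of the local wide subcategory attached to $\Fac M$ is the brick $M/\rad M_{\End_\Pi(M)}$, and $\Fac M$ is the smallest torsion class containing $\Filt(M/\rad M_{\End_\Pi(M)})$.

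For the bridge, the anti-isomorphism $W\xrightarrow{\sim}\tors\Pi$, $w\mapsto\Tors(w)$, of Theorem~\ref{Mizuno's theorem} carries $\mirr(W)$ onto $\jirr(\tors\Pi)$ and $\jirr(W)$ onto $\mirr(\tors\Pi)$, hence (via $\tors\Pi\leftrightarrow\torf\Pi$) onto $\jirr(\torf\Pi)$; composing with the inverses of the bijections just described produces bijections $\mirr(W)\xrightarrow{\sim}\IndtRig\Pi$ and $\jirr(W)\xrightarrow{\sim}\IndtmRig\Pi$, which I would then identify with $m\mapsto M(m)=I(m)e_i$ ($m^{*}=ms_i$) and $j\mapsto J(j)=(\Pi/I(j))e_i$ ($j_{*}=js_i$). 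Indeed, the Hasse arrow $m^{*}=ms_i\to m$ corresponds to the cover $\Tors(m)\gtrdot\Tors(m^{*})$ in $\tors\Pi$, a mutation; since $I(m^{*})=I(m)I_i$ by Lemma~\ref{additivity} and $I(m)I_ie_k=I(m)e_k$ for $k\ne i$ (because $e_k\in I_i$), this mutation takes place at vertex $i$ (in particular $I(m)e_i\ne0$), so the argument proving Theorem~\ref{join irreducible in sttilt} identifies the indecomposable $\tau$-rigid module over $\Tors(m)$ with the $i$-th summand $I(m)e_i=M(m)$ of $I(m)$; the case of $J(j)$ is dual. Finally $L(m)=I(m)/I(m^{*})=I(m)e_i/I(m)I_ie_i=M(m)/M(m)I_i$, and the identification $M(m)I_i=\rad M(m)_{\End_\Pi(M(m))}$ — essentially the structure of the ideals $I(w)e_i$ from~\cite{AIRT} — matches the bridge map $m\mapsto L(m)$ with $M(m)\mapsto M(m)/\rad M(m)_{\End_\Pi(M(m))}$, and dually for $j$.

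It then remains to verify the layer-labelling square (Theorem~\ref{labellings}): that $(ws_i\to w)\mapsto I(w)/I(ws_i)$ equals the join-irreducible labelling followed by $j\mapsto I(j_{*})/I(j)$. Once this is established, every remaining face commutes automatically: for a Hasse arrow with join-irreducible label $j$ and meet-irreducible label $m$, the common value $I(j_{*})/I(j)=I(m)/I(m^{*})$ of the layer label forces the matching between the $\IndtRig$- and $\IndtmRig$-descriptions of each brick. For the layer-labelling square I would use polygonality of $W$ (Theorem~\ref{W cong unif}): by Corollary~\ref{strong force in poly}, two Hasse arrows carry the same join-irreducible congruence exactly when they are joined by a path in $\SFPoly(W)$, i.e.\ by a chain of passages from a bottom arrow of a polygon to the opposite top arrow, so it suffices to show that opposite arrows of each polygon of $W$ have isomorphic layers. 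The polygons of $W$ arise from rank-two parabolic subgroups and are squares (type $A_1\times A_1$) or hexagons (type $A_2$); the square case is a short computation with the ideals $I_i$, $I_j$ using $I_i\otimes_\Pi(\Pi/I_j)\cong\Pi/I_j$ for non-adjacent $i,j$, and the hexagon case is the explicit type-$A_2$ computation displayed in Figure~\ref{layer labelling A2}. The main obstacle I expect is the identification $M(m)I_i=\rad M(m)_{\End_\Pi(M(m))}$ — equivalently, that the layer attached to a $\tau$-tilting mutation arrow is the brick quotient $M/\rad M_{\End_\Pi(M)}$ — hand in hand with assembling the algebraic-layer correspondences so that all the lower squares commute on the nose; the hexagon computation, though routine, must be carried out with care.
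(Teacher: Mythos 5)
Your overall architecture is sound and the lattice-theoretic part, the polygon computation, and the identification $M(m)=I(m)e_i$ via Theorem~\ref{join irreducible in sttilt} all match the paper. The genuine divergence is in how you handle the ``algebraic layer'' and the injectivity of the layer labelling. You import the brick/$\tau$-rigid/wide-subcategory correspondences wholesale from \cite{DIJ} and then deduce the commutativity of the bottom squares from them. The paper goes the other way: it proves, internally, that $\Tors(w)=\Tors(L(a)\mid a\in w^-)$ for every $w$ (Theorem~\ref{canon m j thm}, by downward induction on the weak order using the polygon identity of Lemma~\ref{inductive step}), and uses this both to get the left bottom square and, crucially, to prove the converse direction of Proposition~\ref{concrete} (same layer label $\Rightarrow$ same congruence, hence injectivity of $m\mapsto L(m)$ on $\mirr(W)$); the bijectivity of $\IndtRig\Pi\to\brick\Pi$ then falls out of the commutative diagram rather than being an input. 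Your route is viable provided the precise compatibilities you list (that the brick attached to $\Fac M$ is $M/\rad M_{\End_\Pi(M)}$ and that $\Fac M$ is the smallest torsion class containing it) really are available in the cited form; the paper's route buys self-containedness and, as a by-product, reproves those correspondences for $\Pi$.

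The one step you flag but do not supply, $M(m)I_i=\rad M(m)_{\End_\Pi(M(m))}$, is exactly Lemma~\ref{from M to L} of the paper, and it is not merely ``the structure of $I(w)e_i$ from \cite{AIRT}.'' The paper's argument uses two specific inputs: that $\Tors(m)=\Fac I(m)$ being join-irreducible forces $I(m)\in\Fac(I(m)e_i)$, so that $(I(m)e_i)\rad\End_\Pi(I(m)e_i)=\sum_{g\in\rad_\Pi(I(m),I(m)e_i)}\Image g$; and Mizuno's surjection $\Pi\twoheadrightarrow\End_\Pi(I(m))$, which induces a surjection $I_ie_i\twoheadrightarrow\rad_\Pi(I(m),I(m)e_i)$ and hence identifies that sum of images with $I(m)I_ie_i$. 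Without an argument of this kind your bridge between $\mirr(W)\to\IndtRig\Pi$ and $\mirr(W)\to\Layers\Pi$ remains unestablished, so you should either supply this computation or locate a precise external statement that yields it.
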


We start with the following simple observation, where $\rad M(m)_{\End_{\Pi}(M(m))}$ is the radical of the $\End_{\Pi}(M(m))^{\op}$-module $M(m)$.

\begin{lemma}\label{from M to L}
$L(m)=M(m)/\rad M(m)_{\End_{\Pi}(M(m))}$ holds for any $m\in\mirr W$.
\end{lemma}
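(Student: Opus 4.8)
Write $M:=M(m)=I(m)e_i$, $E:=\End_\Pi(M)$, and $N:=I(m^*)e_i$. The plan is to prove that $\rad M_E=N$; since (as I record below) $M/N=L(m)$, this is exactly the statement. First I would record the elementary reductions. Because $m^*=ms_i$ with $\ell(m^*)=\ell(m)+1$, Lemma~\ref{additivity} gives $I(m^*)=I(m)I_i$. Since $e_j=e_j(1-e_i)e_j\in I_i$ for $j\neq i$, we get $I_ie_j=\Pi e_j$, hence $I(m^*)e_j=I(m)I_ie_j=I(m)\Pi e_j=I(m)e_j$ for $j\neq i$, while $I(m^*)e_i=I(m)I_ie_i=N$. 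Thus $I(m^*)$ is obtained from the basic support $\tau$-tilting module $I(m)=\bigoplus_jI(m)e_j$ by replacing the summand $M$ with $N$; in particular $N$ is a direct summand of $I(m^*)$, so $N\in\add I(m^*)\subseteq\Fac I(m^*)=\Tors(m^*)$. From the displayed description of $L(a)$ applied to $a\colon m^*=ms_i\to m$, we get $L(m)=(I(m)e_i)/(I(m)I_ie_i)=M/N$, and $L(m)\neq0$ since it is a layer; hence $M\neq0$, so $M$ is an indecomposable $\tau$-rigid module (a nonzero indecomposable direct summand of a support $\tau$-tilting module; indecomposability of $I(w)e_i$ is standard, see \cite{IR,Mi}), and therefore $E$ is a local ring.

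\textbf{Identifying $\Fac M$.} Next I would show $\Fac M=\Tors(m)$. Since $m$ is meet-irreducible, the anti-isomorphism $w\mapsto\Tors(w)$ of Theorem~\ref{Mizuno's theorem} shows that $\Tors(m)$ has a unique lower cover in $\tors\Pi$, namely $\Tors(m^*)$; in a finite lattice this forces every torsion class strictly below $\Tors(m)$ to be contained in $\Tors(m^*)$ (take a maximal chain up to $\Tors(m)$). Now $\Fac M\subseteq\Fac I(m)=\Tors(m)$. If this inclusion were strict, then $\Fac M\subseteq\Tors(m^*)$, so $M\in\Tors(m^*)$ and hence $L(m)=M/N\in\Tors(m^*)$; but $L(m)=I(m)/I(m^*)$ embeds into $\Pi/I(m^*)$, so $L(m)\in\Sub(\Pi/I(m^*))=\Torf(m^*)=\Tors(m^*)^{\perp}$, forcing $L(m)\in\Tors(m^*)\cap\Tors(m^*)^{\perp}=0$, a contradiction. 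Hence $\Fac M=\Tors(m)$; in particular $N\in\Tors(m^*)\subseteq\Tors(m)=\Fac M$.

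\textbf{The two inclusions.} For $\rad M_E\subseteq N$: since $N\in\Tors(m^*)$ and $M/N=L(m)\in\Tors(m^*)^{\perp}$, every $f\in E$ maps $N$ into $N$, because the composite $N\hookrightarrow M\xrightarrow{f}M\twoheadrightarrow M/N$ is a morphism from an object of $\Tors(m^*)$ to an object of $\Tors(m^*)^{\perp}$, hence zero. Thus $E$ acts on $L(m)$, and the resulting $k$-algebra homomorphism $E\to\End_\Pi(L(m))$ sends $\mathrm{id}_M$ to $\mathrm{id}_{L(m)}\neq0$, hence is surjective; as $L(m)$ is a layer it is a brick (Theorem~\ref{main of layer}, or \cite[2.3]{AIRT}), so $\End_\Pi(L(m))=k$ has trivial radical, and therefore $\rad E$ acts as $0$ on $L(m)$, i.e. $\rad M_E=(\rad E)M\subseteq N$. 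For $N\subseteq\rad M_E$: since $N\in\Fac M$ there is a surjection $M^{\oplus r}\twoheadrightarrow N$; composing with $N\hookrightarrow M$ gives a map $M^{\oplus r}\to M$ with image $N$ whose components $h_1,\dots,h_r\in E$ all have image inside the proper submodule $N$, hence are non-isomorphisms, hence lie in $\rad E$ (as $E$ is local). Thus $N=\sum_{l}\Image h_l\subseteq(\rad E)M=\rad M_E$. Combining, $\rad M(m)_E=N=I(m^*)e_i$, so $M(m)/\rad M(m)_{\End_\Pi(M(m))}=M/N=L(m)$.

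\textbf{Where the work is.} After the trivial reductions in the first paragraph, the only genuinely global input is the identification $\Fac M(m)=\Tors(m)$; everything after that is a short diagram chase around the torsion pair $(\Tors(m^*),\Torf(m^*))$. The argument above obtains this identification using only meet-irreducibility of $m$ and the fact that $L(m)$ is a nonzero layer, routing around the mutation-theoretic description of Theorem~\ref{join irreducible in sttilt}; alternatively one could invoke that result directly, observing that $I(m^*)$ is the mutation of $I(m)$ at the summand $M(m)$. The other implicit ingredient, used to make $E$ local, is the standard fact that $I(w)e_i$ is indecomposable whenever nonzero.
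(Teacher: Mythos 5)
Your proof is correct, and no step fails; but it takes a genuinely different route from the paper for the core computation. Both arguments rest on the same essential structural input — join-irreducibility of $\Tors(m)$ in $\tors\Pi$, which each uses to conclude $I(m)\in\Fac M(m)$, equivalently $\Fac M(m)=\Tors(m)$. After that they diverge. The paper rewrites $\rad M(m)_E=\sum_{g\in\rad_\Pi(I(m),\,I(m)e_i)}\Image g$ and evaluates this sum in one stroke via \cite[Lemma 2.7]{Mi}: the surjection $\Pi\to\End_\Pi(I(m))$ given by right multiplication induces a surjection $I_ie_i\to\rad_\Pi(I(m),I(m)e_i)$, so the sum of images is exactly $I(m)I_ie_i$. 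You instead prove the two inclusions abstractly: $N\subseteq\rad M(m)_E$ from $N\in\Fac M(m)$ plus locality of $E$, and $\rad M(m)_E\subseteq N$ from the torsion pair at $m^*$ together with the brick property of $L(m)$, which forces $\rad E$ to annihilate $M/N$. Your route avoids Mizuno's explicit description of $\End_\Pi(I(w))$ but uses two extra inputs that you should cite: that $(\Fac I(m^*),\Sub(\Pi/I(m^*)))$ is a torsion pair, i.e.\ $\Torf(m^*)=\Tors(m^*)^\perp$ (a fact from \cite{Mi} which the paper uses implicitly elsewhere), and that layers are bricks (\cite[2.3]{AIRT}, reproved in Section~\ref{stone}, so available here without circularity). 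One cosmetic point: $\End_\Pi(L(m))$ is a priori only a division algebra, but your argument survives since the image of the (nilpotent) radical of the local algebra $E$ in a division algebra is zero. Overall the paper's proof is shorter once Mizuno's lemma is granted; yours is more conceptual and makes visible that the statement is really a property of the torsion pair at $m^*$.
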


\begin{proof}
We have $M(m)=I(m)e_i$ and $L(m)=I(m)e_i/I(m)I_ie_i$.
Thus we only have to show $I(m)I_ie_i=(I(m)e_i)\rad\End_\Pi(I(m)e_i)$.

Since $\Tors(m)=\Fac I(m)$ is join-irreducible, we have $I(m)\in\Fac(I(m)e_i)$. Thus 
\begin{eqnarray*}
(I(m)e_i)\rad\End_\Pi(I(m)e_i)&=&\sum_{f\in \rad\End_\Pi(I(m)e_i)}\Image(f:I(m)e_i\to I(m)e_i)\\
&=&\sum_{g\in\rad_\Pi(I(m),I(m)e_i)}\Image(g:I(m)\to I(m)e_i),
\end{eqnarray*}
where $\rad_{\Pi}$ is the radical of the category $\mod\Pi$ and hence $\rad_\Pi(I(m),I(m)e_i)$ consists of morphisms which are not split epimorphisms.

By \cite[Lemma 2.7]{Mi}, we have a surjection $\Pi\to\End_\Pi(I(m))$ given by $x\mapsto(y\mapsto yx)$. This induces
surjections $\Pi e_i\to\Hom_\Pi(I(m),I(m)e_i)$ and $I_i e_i\to\rad_\Pi(I(m),I(m)e_i)$.
Therefore we have
\[\sum_{g\in\rad_\Pi(I(m),I(m)e_i)}\Image(g:I(m)\to I(m)e_i)=I(m)I_ie_i,\]
which completes the proof.
\end{proof}

In Section~\ref{l-prelim} (particularly Theorem~\ref{force in poly} and Corollary~\ref{force in poly to force}), we described how the forcing order on join-irreducible elements interacts with the polygons (squares and rectangles) in $W$.
One ingredient in the proof of Theorem~\ref{big diagram} is a similar description of how the layer labelling interacts with polygons.

Recall from the introduction that a pair of layer modules $X,Y$ form a \newword{doubleton} if $\Ext_\Pi^1(Y,X)$ and $\Ext_\Pi^1(X,Y)$ are one-dimensional, and the corresponding extensions
are again layer modules.  
Recall also that the \newword{doubleton extension order} on layer modules is the transitive closure of the relation with $A>B$ if there exists a doubleton $A,C$ such that $B$ is the extension of $A$ by $C$ or of $C$ by $A$.  

\begin{proposition}\label{layer poly} 
For a polygon in $W$ (necessarily a square or hexagon), the layer labelling has the following configuration:
\[
\raisebox{-11pt}{$\xymatrix@R1em@C.5em{&us_is_j\\ us_i\ar@{<-}[ur]^Y&&us_j\ar@{<-}[ul]_X\\ &u\ar@{<-}[ul]^{X}\ar@{<-}[ur]_Y}$}\ \ \ \ \quad
\xymatrix@R1em@C.5em{&us_is_js_i\\ 
us_is_j\ar@{<-}[ur]^Y&&us_js_i\ar@{<-}[ul]_X\\ 
us_i\ar@{<-}[u]^E&&us_j\ar@{<-}[u]_F\\ 
&u\ar@{<-}[ul]^X\ar@{<-}[ur]_Y}\ \ \ 
\]
Moreover, in the hexagon case the layers $X$ and $Y$ form a doubleton and there exist short exact sequences of $\Pi$-modules:
\[0\to X\to E\to Y\to0\ \mbox{ and }\ 0\to Y\to F\to X\to0.\]
Thus $X\ge E$, $X\ge F$, $Y\ge E$, and $Y\ge F$ in the doubleton extension order.
\end{proposition}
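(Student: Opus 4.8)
Throughout write $W_{\{i,j\}}$ for the standard parabolic generated by $s_i,s_j$. The plan is to reduce the whole statement to a computation inside this rank-two parabolic, and then to translate module-theoretic facts about the tiny preprojective algebras $\Pi(A_1\times A_1)$ and $\Pi(A_2)$ back to $\Pi$ by tensoring.

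First I would pin down the polygon. Since $W$ is polygonal (Theorem~\ref{W cong unif}), the interval $[u,\,us_i\vee us_j]$ is a polygon; by elementary Coxeter combinatorics it equals $[u,uw_{ij}]$, where $w_{ij}$ is the longest element of $W_{\{i,j\}}$, and $\ell$ is additive along every chain of this interval. In the simply-laced case $W_{\{i,j\}}$ is $A_1\times A_1$ (so $s_is_j=s_js_i$ and the polygon is a square) or $A_2$ (so $s_is_js_i=s_js_is_j$ and the polygon is a hexagon). By Lemma~\ref{additivity} the relevant ideals factor, e.g.\ $I(us_i)=I(u)I_i$, $I(us_is_j)=I(u)I_iI_j$, $I(us_is_js_i)=I(u)I_iI_jI_i$, and by reduced-word independence (Theorem~\ref{Mizuno's theorem}(a)) the braid relation becomes the ideal identity $I_iI_jI_i=I_jI_iI_j$ (resp.\ $I_iI_j=I_jI_i$) of two-sided ideals of $\Pi$.

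Next I would rewrite every layer label as a tensor product. Using $S_k\cong\Pi e_k/\rad\Pi e_k$ and right-exactness of $-\otimes_\Pi S_k$, the layer on a Hasse arrow $ws_k\to w$ is $I(w)e_k/I(w)I_ke_k=I(w)\otimes_\Pi S_k$. Combined with the factorizations above, the two top labels are $X=I(u)\otimes_\Pi S_i$ and $Y=I(u)\otimes_\Pi S_j$; in the hexagon the two side labels are $E=I(u)\otimes_\Pi(I_i\otimes_\Pi S_j)$ and $F=I(u)\otimes_\Pi(I_j\otimes_\Pi S_i)$; and the two labels on the arrows emanating downward from the top $us_is_j$ (resp.\ $us_is_js_i$) of the polygon are $I(u)\otimes_\Pi(I_i\otimes_\Pi S_j)$ and $I(u)\otimes_\Pi(I_j\otimes_\Pi S_i)$ in the square, resp.\ $I(u)\otimes_\Pi(I_iI_j\otimes_\Pi S_i)$ and $I(u)\otimes_\Pi(I_jI_i\otimes_\Pi S_j)$ in the hexagon (here $I_i\otimes_\Pi I_j=I_iI_j$ again by Lemma~\ref{additivity}). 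The heart of the proof is now to identify the small modules. Examining the minimal projective presentation $\bigoplus_{k\sim j}\Pi e_k\to\Pi e_j\to S_j\to 0$ and applying $I_i\otimes_\Pi-$ (note $I_ie_k=\Pi e_k$ for every $k\ne i$) gives $I_i\otimes_\Pi S_j\cong S_j$ when $i\not\sim j$, and when $i\sim j$ it gives that $E_0:=I_i\otimes_\Pi S_j$ is the indecomposable module sitting in a non-split sequence $0\to S_i\to E_0\to S_j\to 0$; a similar computation — which only involves the vertices $i,j$ and the edge between them, so effectively takes place over $\Pi(A_2)$ — gives $I_iI_j\otimes_\Pi S_i\cong S_j$ for $i\sim j$. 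Applying the functor $I(u)\otimes_\Pi-$ to the isomorphisms $I_i\otimes_\Pi S_j\cong S_j$ (square) and $I_iI_j\otimes_\Pi S_i\cong S_j$ (hexagon), and the mirror statements, immediately yields that the two bottom labels of the polygon are $Y$ and $X$ as asserted. Applying $I(u)\otimes_\Pi-$ to $0\to S_i\to E_0\to S_j\to 0$ and to $0\to S_j\to F_0\to S_i\to 0$ (where $F_0:=I_j\otimes_\Pi S_i$) yields $0\to X\to E\to Y\to 0$ and $0\to Y\to F\to X\to 0$ — provided these sequences stay exact after tensoring. The exactness amounts to the vanishing of $\Tor_1^\Pi(I(u),S_j)$ against the relevant map, which I would justify from the good homological behaviour of the ideals over the affine preprojective algebra $\widehat\Pi$ ($\widehat I(u)$ has projective dimension at most one) together with Lemma~\ref{idempotent quotient}. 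If that step turns out to be awkward, an alternative is to exhibit the submodule $X\subseteq E$ directly, as the image inside $I(u)I_i/I(u)I_iI_j$ of right multiplication on $I(u)$ by the ``braid element'' lying in $I_i\cap I_j$.

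Finally, the doubleton. By Theorem~\ref{main of layer} every layer, in particular $E$ and $F$, is a brick; since the two short exact sequences above are non-split (their middle terms are indecomposable), $\Ext^1_\Pi(Y,X)$ and $\Ext^1_\Pi(X,Y)$ are both nonzero, and the standard argument — factoring an element of $\Hom_\Pi(Y,X)$ through the brick $E$, and one of $\Hom_\Pi(X,Y)$ through the brick $F$ — shows $\Hom_\Pi(X,Y)=\Hom_\Pi(Y,X)=0$. To pin down the Ext-dimensions I would invoke the form $\langle-,-\rangle$ on $K_0(\fd\widehat\Pi)$ exactly as in the proof of Theorem~\ref{main of layer}: it is positive definite on $K_0(\mod\Pi)$ with $\langle\underline{\dim}\,L,\underline{\dim}\,L\rangle=2$ for every stone $L$, so from $\underline{\dim}\,E=\underline{\dim}\,X+\underline{\dim}\,Y$ one gets $\langle\underline{\dim}\,X,\underline{\dim}\,Y\rangle=-1$; and since $\langle\underline{\dim}\,X,\underline{\dim}\,Y\rangle=\dim_k\Hom_\Pi(X,Y)+\dim_k\Hom_\Pi(Y,X)-\dim_k\Ext^1_\Pi(X,Y)$ by the $2$-Calabi--Yau property of $\fd\widehat\Pi$, the Hom-vanishing forces $\dim_k\Ext^1_\Pi(X,Y)=\dim_k\Ext^1_\Pi(Y,X)=1$. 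Hence $X,Y$ form a doubleton whose two extension modules are precisely $E$ and $F$, and the inequalities $X\ge E$, $X\ge F$, $Y\ge E$, $Y\ge F$ are then immediate from the definition of the doubleton extension order. I expect the main obstacle to be the middle step: carrying out the rank-two module identifications cleanly and, above all, verifying the $\Tor$-vanishing that lets $I(u)\otimes_\Pi-$ transport the two short exact sequences.
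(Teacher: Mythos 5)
Your proposal is correct and its core is the same as the paper's: reduce everything to the rank-two parabolic, compute the relevant subquotients $\Pi/I_i$, $I_i/I_iI_j$, $I_iI_j/I_iI_jI_i$ over the $A_2$ (or $A_1\times A_1$) preprojective algebra, and transport the resulting isomorphisms and short exact sequences back via $I(u)\otimes_\Pi-$. You diverge from the paper in two places, both acceptably. First, the paper verifies the doubleton property only for the simple labels $S_i,S_j$ and then transfers it using the derived autoequivalence $\widehat{I}(u)\Lotimes_{\widehat{\Pi}}-$ (via Lemma~\ref{idempotent quotient}); you instead argue intrinsically, using that $E$ and $F$ are bricks (Theorem~\ref{main of layer}, whose proof is independent of this proposition, so there is no circularity) to get $\Hom_\Pi(X,Y)=\Hom_\Pi(Y,X)=0$, and then the Euler form on $K_0(\fd\widehat{\Pi})$ to pin $\dim_k\Ext^1_\Pi(X,Y)=\dim_k\Ext^1_\Pi(Y,X)=1$. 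This is essentially the computation of Lemma~\ref{mystery lemma} run in reverse, and it buys a proof that does not need to know in advance that every hexagon is the image of the ``simple'' hexagon under an autoequivalence. Second, you flag and justify the exactness of the tensored sequences, which the paper passes over silently; your instinct is right, and the cleanest justification is exactly the derived one you mention: for $v\le v'$ in $W_{\{i,j\}}$ one has $\widehat{I}(u)\Lotimes_{\widehat{\Pi}}(I(v)/I(v'))\simeq I(uv)/I(uv')$ concentrated in degree $0$ (it is the cone of the inclusion $\widehat{I}(uv')\hookrightarrow\widehat{I}(uv)$), so the triangle obtained from the $A_2$ short exact sequence is again a short exact sequence of modules.
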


\begin{proof}
Of the two diagrams in the statement of the theorem, the hexagon occurs if and only if $i$ and $j$ are neighbouring in~$\overline{Q}$.
We argue the hexagon case.  The square case is similar but simpler.
What we need to show is the following.
\begin{enumerate}[(i)]
\item There are isomorphisms of $\Pi$-modules:
\[I(u)/I(u)I_i\simeq I(u)I_jI_i/I(u)I_iI_jI_i\ \mbox{ and }\ I(u)/I(u)I_j\simeq I(u)I_iI_j/I(u)I_iI_jI_i.\]
\item There are short exact sequences of $\Pi$-modules:
\begin{eqnarray*}
&0\to I(u)/I(u)I_i\to I(u)I_i/I(u)I_iI_j\to I(u)I_iI_j/I(u)I_iI_jI_i\to0,&\\
&0\to I(u)/I(u)I_j\to I(u)I_j/I(u)I_jI_i\to I(u)I_jI_i/I(u)I_iI_jI_i\to0.&
\end{eqnarray*}
\end{enumerate}
We first show (i). The partial order tells us, by Lemma \ref{additivity},
that we have
\[I(u)/I(u)I_i=I(u)\otimes_\Pi(\Pi/I_i)\ \mbox{ and }\ 
I(u)I_jI_i/I(u)I_iI_jI_i=I(u)\otimes_\Pi(I_jI_i/I_iI_jI_i).\]
Since $\Pi/I_i\simeq S_i\simeq I_jI_i/I_iI_jI_i$ holds by an easy calculation
for the preprojective algebra $\Pi/I_iI_jI_i$ of type $A_2$ (see Figure \ref{A2 fig}), we have
\[I(u)/I(u)I_i=I(u)\otimes_\Pi(\Pi/I_i)\simeq I(u)\otimes_\Pi(I_jI_i/I_iI_jI_i)=I(u)I_jI_i/I(u)I_iI_jI_i.\]
The other isomorphism follows by interchanging $i$ and $j$.

Now we show (ii). Again by an easy calculation
for the preprojective algebra $\Pi/I_iI_jI_i$ of type $A_2$ (see Figure \ref{A2 fig}),
we have $\Pi/I_i\simeq S_i$, $I_i/I_iI_j\simeq{S_j\choose S_i}$ and
$I_iI_j/I_iI_jI_i\simeq S_j$. Thus there exists an exact sequence
\begin{eqnarray*}
0\to\Pi/I_i\to I_i/I_iI_j\to I_iI_j/I_iI_jI_i\to0
\end{eqnarray*}
of $\Pi$-modules. Applying $I(u)\otimes_\Pi-$ and using $\Tor^\Pi_1(I(u),S_j)=0$ (which follows from $\ell(us_j)=\ell(u)+1$ and Lemma \ref{additivity}), we obtain the first 
sequence. The second one follows by interchanging $i$ and $j$.

We verify that $X$ and $Y$ form a doubleton in the hexagon case. 
If the labels are simple, then the extension groups are certainly 
one-dimensional, and the extensions are layers by ${S_j\choose S_i}\simeq I_i/I_iI_j$.
Any hexagon is obtained by applying $I(u) \otimes_\Pi -$ to such a hexagon, as above.  
This implies the desired result once we note that, by Lemma \ref{idempotent quotient}, we can instead consider applying $\widehat{I}(u)\Lotimes_{\widehat{\Pi}}-$, which is an auto-equivalence of $\DDD^{\bo}(\fd\widehat{\Pi})$ by \cite{BIRS}.

Now $X\ge E$, $X\ge F$, $Y\ge E$, and $Y\ge F$ because there exist short exact sequences $0\to X\to E\to Y\to0$ and $0\to Y\to F\to X\to0$.
\end{proof}

The ideas in the proof above also lead to the following lemma.
For vertices $i\neq j$ in $\overline{Q}$, let $W_{i,j}:=\langle s_i,s_j\rangle\subset W$ be a parabolic subgroup of $W$.
For $w\in W$, the coset $wW_{i,j}$ is an interval in the weak order on $W$.
We write $H|_{wW_{i,j}}$ for the restriction of $\Hasse(W)$ to $wW_{i,j}$.
Define $w^+$ (respectively, $w^-$) to be the set of arrows in $\Hasse(W)$ starting (respectively, ending) at $w$.
For a set $S$ of $\Pi$-modules, we denote by $\Tors(S)$ (respectively, $\Torf(S)$) the smallest torsion (respectively, torsionfree) class in $\mod\Pi$ containing $S$.
For convenience and brevity, we will omit set braces inside the operator $\Tors(\bullet)$, so that, for example $\Tors(L_0,M_i\mid i\in I)$ would mean $\Tors(\set{L_0}\cup\set{M_i\mid i\in I})$.
Recall that the layer labelling of $\Hasse(W)$ maps each Hasse arrow $x\to y$ to the isomorphism class of the corresponding concrete layer $L(x\to y):=I(y)/I(x)$.

\begin{lemma}\label{inductive step}
If $a:ws_i\to w$ is an arrow in $\Hasse(W)$, and $j\neq i$ is a vertex in $\overline{Q}$, then
\[
\Tors(L(b)\mid b\in w^-\cap H|_{w W_{i,j}})
=\Tors(L(a),L(b)\mid b\in (ws_i)^-\cap H|_{wW_{i,j}}).\]
\end{lemma}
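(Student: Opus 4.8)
The plan is to deduce the lemma from Proposition~\ref{layer poly} by a short case analysis on the position of the arrow $a\colon ws_i\to w$ inside the polygon $wW_{i,j}$.

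Write $u$ for the minimal-length element of the coset $wW_{i,j}$; it is the minimum of $wW_{i,j}$ in the weak order, and left multiplication by $u$ is an isomorphism from the weak order on $W_{i,j}$ onto the subposet $P:=wW_{i,j}$ of $W$. Since length is additive on $uW_{i,j}$, every cover inside $P$ is a cover in $W$, so $H|_{wW_{i,j}}$ is precisely the Hasse quiver of $P$, and $P$ is an interval whose proper interior is two disjoint chains joining its bottom $u$ to its top, i.e.\ a polygon in the sense of Section~\ref{l-prelim}---a square when $s_i$ and $s_j$ commute and a hexagon otherwise. In either case Proposition~\ref{layer poly} describes the layer labelling of $P$ in the notation $u,us_i,us_j,\dots$, and in the hexagon case also provides short exact sequences $0\to X\to E\to Y\to0$ and $0\to Y\to F\to X\to0$ of layer modules.

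Next I would locate $a$ in $P$. As an ``$i$-coloured'' arrow, $a$ has head $w\in\{u,us_j,us_is_j\}$ in the hexagon case and $w\in\{u,us_j\}$ in the square case; in particular $w$ is never the top of $P$, so $w^-\cap H|_{wW_{i,j}}$, the set of arrows of $P$ with head $w$, is nonempty: it is $\{a\}$ when $w$ is internal in $P$, and it consists of $a$ together with the one other arrow into $u$ when $w=u$. The set $(ws_i)^-\cap H|_{wW_{i,j}}$ is empty when $ws_i$ is the top of $P$, and is otherwise the single arrow $a''$ of $P$ into $ws_i$. Comparing the layer labels supplied by Proposition~\ref{layer poly} in these finitely many configurations finishes the proof. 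If $ws_i$ is the top of $P$ (the cases $w=us_is_j$ in the hexagon and $w=us_j$ in the square) then both sides equal $\Tors(L(a))$. If $w=u$ then the left side is $\Tors(X,Y)$, while the right side is $\Tors(L(a),L(a''))$, which equals $\Tors(X,Y)$ directly in the square case since there $\{L(a),L(a'')\}=\{X,Y\}$, and equals $\Tors(X,E)$ in the hexagon case, where $E\in\Tors(X,Y)$ by closure of torsion classes under extensions (applied to $0\to X\to E\to Y\to0$) and $Y\in\Tors(X,E)$ by closure under quotients. The only remaining case is the middle $i$-coloured arrow $a\colon us_js_i\to us_j$ of the hexagon: here the left side is $\Tors(F)$, the right side is $\Tors(F,X)$, and $X\in\Tors(F)$ because $X$ is a quotient of $F$ via $0\to Y\to F\to X\to0$.

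The only real difficulty is the bookkeeping: matching the data $(i,j,w,a)$ with the notation and the short exact sequences of Proposition~\ref{layer poly}, and tracking which of $w$ and $ws_i$ is the bottom or the top of the polygon $P$. Once Proposition~\ref{layer poly} is in hand, nothing beyond the closure of torsion classes under quotients and extensions is required.
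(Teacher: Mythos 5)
Your proof is correct and follows essentially the same route as the paper's: a case analysis on the position of $w$ in the polygon $wW_{i,j}$, reducing each case to an identity among $\Tors(X,Y)$, $\Tors(X,E)$, $\Tors(F)$, $\Tors(F,X)$ via the short exact sequences of Proposition~\ref{layer poly} and closure of torsion classes under extensions and quotients. The only difference is that you spell out the preliminary identification of the coset with a polygon and the enumeration of the $i$-coloured arrows, which the paper leaves implicit.
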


\begin{proof}
We have either the square or the hexagon in Proposition~\ref{layer poly}.
We argue the hexagon case. The square case is similar but simpler.
If $w$ coincides with $u$ in Proposition \ref{layer poly}, the desired equality reduces to an identity $\Tors(X,Y)=\Tors(X,E)$,
which follows from the exact sequence $0\to X\to E\to Y\to0$.
If $w$ coincides with $us_j$ in Proposition \ref{layer poly}, the desired equality reduces to an identity $\Tors(F)=\Tors(F,X)$,
which follows from the exact sequence $0\to Y\to F\to X\to0$.
If $w$ coincides with $us_is_j$ in Proposition \ref{layer poly}, the desired equality reduces to an identity $\Tors(Y)=\Tors(Y)$,
which clearly holds.
\end{proof}

Another ingredient in the proof of Theorem~\ref{big diagram} is a precise connection between torsion classes and
layer modules.

\begin{theorem}\label{canon m j thm}  
For any $w\in W$, we have
\[\Tors(w)=\Tors(L(a)\mid a\in w^-)\ \mbox{ and }\ 
\Torf(w)=\Torf(L(a)\mid a\in w^+).\]
\end{theorem}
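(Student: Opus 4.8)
The plan is to prove the first equality $\Tors(w)=\Tors(L(a)\mid a\in w^-)$ by induction on $\ell(w)$; the second equality will then follow by the duality of Lemma~\ref{dual-layer} together with the anti-isomorphism $\tors\Pi\to\torf\Pi$, $\TT\mapsto\TT^\perp$, which intertwines $\Tors(w)$ with $\Torf(w_0w^{-1})$ (or the appropriate bookkeeping with $w_0$). For the base case $\ell(w)=0$ we have $w=e$, $\Tors(e)=\mod\Pi$, and there are no arrows in $w^-$... wait, $\Tors(e)=\mod\Pi$ is the maximal torsion class, so actually I should organize the induction the other way, increasing from $w=e$ means decreasing the torsion class; better to induct on $\ell(w_0)-\ell(w)$, i.e.\ build up from $w_0$ where $\Tors(w_0)=\{0\}=\Tors(\emptyset)$ and $w_0^-=\emptyset$. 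So the base case is immediate.

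For the inductive step, suppose the statement holds for all elements of length greater than $\ell(w)$, and let $a:ws_i\to w$ be any arrow in $w^-$; then $ws_i$ has larger length, so $\Tors(ws_i)=\Tors(L(b)\mid b\in (ws_i)^-)$ by induction. We know $\Tors(ws_i)\subsetneq\Tors(w)$, and $\Tors(w)$ is obtained from $\Tors(ws_i)$ by a single mutation; concretely, $\Tors(w)$ is the smallest torsion class containing $\Tors(ws_i)$ together with the layer module $L(a)=I(w)/I(ws_i)$ (this is essentially the content of the mutation picture in $\tau$-tilting theory, e.g.\ via \cite{AIR}, together with Mizuno's Theorem~\ref{Mizuno's theorem} and Lemma~\ref{additivity}). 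Hence
\[\Tors(w)=\Tors\big(L(a),\ L(b)\mid b\in(ws_i)^-\big).\]
The remaining task is to replace the set $\{L(b)\mid b\in(ws_i)^-\}$, indexed by arrows ending at $ws_i$, by the set $\{L(b)\mid b\in w^-\}$, indexed by arrows ending at $w$. Here is where Lemma~\ref{inductive step} does the work: for each vertex $j\neq i$ neighbouring (or not) $i$, the polygon of Proposition~\ref{layer poly} in the coset $wW_{i,j}$ lets us trade the arrows of $(ws_i)^-$ lying in $H|_{wW_{i,j}}$ for the arrows of $w^-$ lying in that coset, at the cost of including $L(a)$, without changing the generated torsion class. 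Applying this across all $j$, and using that every arrow ending at $w$ (other than $a$ itself) and every arrow ending at $ws_i$ (other than $a$) lies in exactly one such coset $wW_{i,j}$, we obtain $\Tors(L(a),L(b)\mid b\in(ws_i)^-)=\Tors(L(b)\mid b\in w^-)$, completing the induction.

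The main obstacle I anticipate is the combinatorial bookkeeping in the last paragraph: verifying that each Hasse arrow incident to $w$ or to $ws_i$ is accounted for by exactly one rank-two parabolic coset $wW_{i,j}$, and that iterating Lemma~\ref{inductive step} over the various $j$ really does convert the full generating set $\{L(a)\}\cup\{L(b):b\in(ws_i)^-\}$ into $\{L(b):b\in w^-\}$ with no leftover or double-counted modules. One must be careful that the coset $wW_{i,j}$ is genuinely an interval (a square or hexagon) — this is standard for rank-two parabolics in the weak order — and that the arrow $a$ plays the role of a side/top arrow consistently in each polygon. A secondary point requiring care is the precise identification, in the inductive step, of $\Tors(w)$ as $\Tors(\Tors(ws_i),L(a))$; this should be extracted cleanly from the mutation theory together with the fact (Lemma~\ref{from M to L}, and the join-irreducibility discussion around Theorem~\ref{join irreducible in sttilt}) that passing up one cover in $\tors\Pi$ adds exactly the corresponding layer module as a generator.
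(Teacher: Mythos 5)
Your proposal is correct and follows essentially the same route as the paper: decreasing induction from $w_0$, the identity $\Tors(w)=\Tors(L(a),\Tors(ws_i))$, and Lemma~\ref{inductive step} applied coset by coset via the decomposition $w^-=\bigcup_{j\neq i}(w^-\cap H|_{wW_{i,j}})$. The only cosmetic difference is that the paper justifies the key identity $\Tors(w)=\Tors(L(a),\Tors(ws_i))$ directly from the exact sequence $0\to I(ws_i)\to I(w)\to L(a)\to 0$ together with $I(ws_i)\in\Tors(w)$, rather than appealing to the mutation picture.
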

\begin{proof}
We only prove the first equality since the second one is proved similarly.
We use decreasing induction on $W$.
The statement is clear for the longest element $w_0$ since both sides are $\{0\}$ in this case.

Let $a:ws_i\to w$ be an arrow in $\Hasse(W)$.
Assume that the assertion holds for $ws_i$, that is,
\begin{equation}\label{induction}
\Tors(ws_i)=\Tors(L(b)\mid b\in(ws_i)^-).
\end{equation}
Using obvious decompositions
\begin{equation}\label{decompose}
w^-=\bigcup_{j\neq i} \left(w^-\cap H|_{wW_{i,j}}\right)
\mbox{ and }\ 
(ws_i)^-=\bigcup_{j\neq i}\left((ws_i)^-\cap H|_{wW_{i,j}}\right),
\end{equation}
we have
\begin{eqnarray*}
\Tors(L(b)\mid b\in w^-)
&\stackrel{\eqref{decompose}}{=}&\Tors(\Tors(L(b)\mid b\in w^-\cap H|_{wW_{i,j}})\mid j\neq i)\\
&\stackrel{{\rm Lem. \ref{inductive step}}}{=}&
\Tors(\Tors(L(a),L(b)\mid b\in (ws_i)^-\cap H|_{wW_{i,j}})\mid j\neq i)\\
&\stackrel{\eqref{decompose}}{=}&
\Tors(L(a),\Tors(L(b)\mid b\in (ws_i)^-))\\
&\stackrel{\eqref{induction}}{=}&\Tors(L(a),\Tors(ws_i))\\
&=&\Tors(w),
\end{eqnarray*}
where the last equality follows from having an exact sequence
$0\to I(ws_i)\to I(w)\to L(a)\to0$ and $I(ws_i)\in\Tors(w)$.
\end{proof}

\begin{prop}\label{concrete}
Two arrows $x\to y$ and $x'\to y'$ in $\Hasse(W)$ have $\con(x,y)=\con(x',y')$ if and only if they have the same layer label.
\end{prop}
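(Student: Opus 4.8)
The plan is to reduce the statement to the combinatorics of polygons, using the machinery already developed for polygonal lattices in Section~\ref{l-prelim}. By Theorem~\ref{W cong unif}, $W$ is polygonal and congruence uniform, so Corollary~\ref{strong force in poly} applies: two Hasse arrows $x\to y$ and $x'\to y'$ satisfy $\con(x,y)=\con(x',y')$ if and only if there is a path between them in the quiver $\SFPoly(W)$. Recall that the arrows of $\SFPoly(W)$ are exactly the ``opposite'' pairs within each polygon of $W$: in a square or hexagon, each bottom arrow is joined to the opposite top arrow (and in the square that is all, while in the hexagon there are no side-arrow relations in $\SFPoly$). So it suffices to show: \textbf{(1)} if $b_0$ and $b_1$ are opposite bottom/top arrows of a polygon in $W$, then $L(b_0)\simeq L(b_1)$; and \textbf{(2)} conversely, if two Hasse arrows have isomorphic layer labels, then they are connected by a path in $\SFPoly(W)$.

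Statement \textbf{(1)} is essentially Proposition~\ref{layer poly}: inspecting the two diagrams there, in the square the two arrows labelled $X$ are $u\to us_j$ and $us_i\to us_is_j$, which are precisely an opposite top/bottom pair, and similarly for $Y$; in the hexagon the opposite top/bottom pairs are again the two arrows labelled $X$ and the two labelled $Y$. Since Proposition~\ref{layer poly} asserts these share the same layer label (as isomorphism classes of $\Pi$-modules), \textbf{(1)} follows immediately. Thus the ``if'' direction of Proposition~\ref{concrete} is the easy half: a path in $\SFPoly(W)$ preserves the isomorphism class of the layer label, so $\con(x,y)=\con(x',y')$ (via Corollary~\ref{strong force in poly}) implies equal layer labels — wait, this gives the implication in the direction we want for ``if''. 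Precisely: if the layer labels are \emph{not} the same we must show the congruences differ, equivalently if $\con(x,y)=\con(x',y')$ then by Corollary~\ref{strong force in poly} there is an $\SFPoly(W)$-path, and by \textbf{(1)} the layer label is an invariant along such a path, hence the labels agree. That settles one direction.

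For the converse — \textbf{(2)}, the genuine content — I would argue as follows. Fix a layer module $L$ and consider the set of all Hasse arrows of $W$ carrying label (isomorphic to) $L$; I want to show this set is a single strongly connected component of $\SFPoly(W)$, equivalently a single $\con$-fiber. By the ``if'' direction we already know each $\con$-fiber carries a constant label, so the fibers refine the label-classes; I must show there is exactly one fiber per label. Equivalently, since $\con$-fibers are indexed by $\ConJI(W)$, and $\ConJI(W)$ is in bijection with $\jirr W$ (congruence uniformity, Theorem~\ref{W cong unif}), it is enough to show the layer labelling, as a map $\jirr W \to \Layers\Pi$ obtained by $j\mapsto I(j_*)/I(j)$, is injective. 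This is exactly the content that the map $j\mapsto I(j_*)/I(j)$ in Figure~\ref{lat alg diagram fig} is a bijection $\jirr W\to\Layers\Pi$, which is part of Theorem~\ref{big diagram}; but since Proposition~\ref{concrete} is being used \emph{inside} the proof of Theorem~\ref{big diagram}, I cannot simply cite it. Instead I would establish injectivity directly and independently: using Theorem~\ref{canon m j thm}, the torsion class $\Tors(w)=\Tors(L(a)\mid a\in w^-)$ is recovered from the multiset of layer labels on arrows out of $w$, and dually from arrows into $w$; combined with Proposition~\ref{cong unif canon} (canonical join representation via join-irreducible labels on arrows starting at $w$), one sees that the layer label on $j\to j_*$ determines $j$ — concretely, $\Tors(j_*)=\Tors(L(j\to j_*))$ so $L(j\to j_*)$ determines $\Tors(j_*)$ and hence $j_*$, hence $j$. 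That is the step I expect to be the main obstacle: making the passage ``from the layer module back to the join-irreducible element'' rigorous without circularly invoking Theorem~\ref{big diagram}. Once injectivity of $j\mapsto L(j\to j_*)$ is in hand, two Hasse arrows with the same layer label both strongly force the same arrow $j\to j_*$ (via Corollary~\ref{force in poly to force} and Proposition~\ref{expl label}, which put every arrow in the fiber of a unique join-irreducible), so they lie in the same $\con$-fiber, giving $\con(x,y)=\con(x',y')$ and completing the proof.
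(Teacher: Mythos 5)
Your overall strategy is the same as the paper's: the forward direction via Proposition~\ref{layer poly} and Corollary~\ref{strong force in poly} is exactly the paper's argument and is correct, and your reduction of the converse to the injectivity of the layer labelling on canonical representative arrows is also the right move. But the step you yourself flag as ``the main obstacle'' contains a genuine error. You claim $\Tors(j_*)=\Tors(L(j\to j_*))$. This is false in general: Theorem~\ref{canon m j thm} expresses $\Tors(w)$ as the torsion class generated by the layers on \emph{all} Hasse arrows ending at $w$, and join-irreducibility of $j$ says nothing about how many elements cover $j_*$ — typically $j_*$ is covered by several elements, so $\Tors(j_*)$ is generated by several layers, of which $L(j\to j_*)$ is only one. (For instance $j_*=e$ for every atom $j=s_i$, and $\Tors(e)=\mod\Pi$ is certainly not generated by a single layer $S_i$.) Moreover, even granting that the label determined $j_*$, the final inference ``hence $j$'' fails for the same reason: $j_*$ may be covered by several join-irreducible elements. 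The appeal to Proposition~\ref{cong unif canon} does not repair this, since that proposition concerns the join-irreducible labelling, whose identification with the layer labelling is precisely what is at stake.

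The fix is to apply Theorem~\ref{canon m j thm} to an element with a \emph{unique} relevant Hasse arrow. Either use the dual half of that theorem at the join-irreducible $j$ itself: since $j$ covers only $j_*$, there is a unique Hasse arrow starting at $j$, so $\Torf(j)=\Torf(L(j\to j_*))$, and Mizuno's bijection $W\to\torf\Pi$ then recovers $j$ from the label. Or do what the paper does: pass to the unique meet-irreducible $m$ with $\con(m^*,m)=\con(x,y)$ (which has the same layer label by your forward direction); since $m$ is covered only by $m^*$, Theorem~\ref{canon m j thm} gives $\Tors(m)=\Tors(L(m^*\to m))$, and the bijection $W\to\tors\Pi$ recovers $m$, hence the congruence. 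With either correction your proof closes and coincides in substance with the paper's.
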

\begin{proof}
Proposition~\ref{layer poly} implies in particular that the map $(x\to y)\mapsto I(y)/I(x)$ is constant on components of $\SFPoly(W)$.
Thus by Corollary~\ref{strong force in poly}, if $x\to y$ and $x'\to y'$ have $\con(x,y)=\con(x',y')$, then they have the same layer labelling.

If $x\to y$ and $x'\to y'$ have the same layer labelling, let $m$ be the unique meet-irreducible element with $\con(m^*,m)=\con(x,y)$, and similarly let $m'$ be the meet-irreducible corresponding to $x'\to y'$.
Then $L(m^*\to m)=L((m')^*\to m')$, so by Theorem \ref{canon m j thm}, 
$\Tors(m)=\Tors(L(m^*\to m))=\Tors(L((m')^*\to m'))=\Tors(m')$.
Since $\Tors:W\to\tors\Pi$ is a bijection, we see that $m=m'$, so that $\con(x,y)=\con(x',y')$.
\end{proof}

\begin{proof}[Proof of Theorem \ref{big diagram}]
Commutativity of the two triangles involving $\mirr(W)$, $\jirr(W)$, $\ConJI(W)$, and $\Hasse_1(W)$, and correctness of the markings of the arrows as bijections or surjections was established in Section~\ref{l-prelim} in the more general context of congruence uniform lattices.
(See especially Figure~\ref{cong unif diagram fig}.)

The map from $\Hasse_1(W)$ to $\Layers\Pi$ is surjective by definition.
Proposition~\ref{concrete}, together with the commutativity of the triangles containing $\ConJI(W)$, then implies the commutativity of the two triangles containing the map from $\Hasse_1(W)$ to $\Layers\Pi$ and also implies that the map from $\mirr(W)$ to $\Layers\Pi$ is a bijection.
The triangle containing $\mirr(W)$, $\IndtRig\Pi$ and $\Layers\Pi$ commutes by Lemma \ref{from M to L},
and the left bottom square commutes by Theorem \ref{canon m j thm}. The remaining part of the diagram commutes dually.
The maps $M:\mirr W\to\IndtRig\Pi$, $\Fac:\IndtRig\Pi\to\jirr(\tors\Pi)$ and $\Filt:\brick\Pi\to\lwide\Pi$
are bijections by Corollary \ref{indec tau rigid}, Theorem \ref{join irreducible in sttilt} and Proposition \ref{stone and wide}.
The bijectivity of the other maps follows from commutativity.
\end{proof}

The bijections $\jirr W\simeq\Layers\Pi\simeq\mirr W$ given in Theorem \ref{big diagram},
combined with Proposition \ref{cong unif canon}, imply the following corollary.

\begin{corollary} $\Tors(w)$ is the smallest subcategory of $\Pi$-mod 
which is closed under extensions and quotients and contains the layers
corresponding to the canonical meet representation of $w$.  $\Torf(w)$ is 
the smallest subcategory of $\Pi$-mod which is closed under extensions and
subobjects and contains the layers corresponding to the canonical join
representation of $w$.  
\end{corollary}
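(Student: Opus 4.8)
The plan is to unwind the statement as a direct combination of Theorem~\ref{canon m j thm}, the commutativity of the diagram of Theorem~\ref{big diagram}, and Proposition~\ref{cong unif canon}; no substantially new argument is needed. I would prove the assertion about $\Tors(w)$ in detail and obtain the one about $\Torf(w)$ by the dual argument.

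First, recall that, by definition, $\Tors(L(a)\mid a\in w^-)$ is the smallest torsion class of $\mod\Pi$ containing the modules $L(a)$ with $a$ an arrow of $\Hasse(W)$ ending at $w$, and that a torsion class is exactly a full subcategory closed under extensions and factor modules (closure under factor modules forces closure under isomorphisms for a full subcategory, since an isomorphic copy of an object $X$ is a quotient of $X$). So it suffices to identify the set $\set{L(a)\mid a\in w^-}$ with the set of layers corresponding to the canonical meet representation of $w$, and then quote Theorem~\ref{canon m j thm}.

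For that identification I would invoke Proposition~\ref{cong unif canon}: the canonical meet representation of $w$ is $w=\Meet M$, where $M\subseteq\mirr W$ is the set of meet-irreducible labels on the arrows of $\Hasse(W)$ ending at $w$. Under the bijection $\mirr W\xrightarrow{\sim}\Layers\Pi$ of Theorem~\ref{big diagram}, $m\mapsto L(m)$, so by definition the layers corresponding to the canonical meet representation of $w$ are the modules $L(m)$ with $m\in M$. On the other hand, the triangle in Figure~\ref{lat alg diagram fig} with vertices $\Hasse_1(W)$, $\mirr W$, and $\Layers\Pi$ commutes, so for each arrow $a$ ending at $w$ the layer label $L(a)$ equals $L(m)$, where $m$ is the meet-irreducible label of $a$; as $a$ runs over the arrows ending at $w$, these $m$ run precisely over $M$. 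Hence $\set{L(a)\mid a\in w^-}=\set{L(m)\mid m\in M}$, and Theorem~\ref{canon m j thm} gives $\Tors(w)=\Tors(L(a)\mid a\in w^-)=\Tors(L(m)\mid m\in M)$, which is the claim.

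The statement for $\Torf(w)$ follows by the same reasoning with ``meet'' replaced by ``join'' throughout: one uses the part of Proposition~\ref{cong unif canon} on join-irreducible labels of arrows starting at $w$, the bijection $\jirr W\xrightarrow{\sim}\Layers\Pi$, and the equality $\Torf(w)=\Torf(L(a)\mid a\in w^+)$ of Theorem~\ref{canon m j thm}. There is no real obstacle: the genuine content has already been established in Theorem~\ref{canon m j thm} (which itself rests on Proposition~\ref{layer poly} and Lemma~\ref{inductive step}), and the only step requiring a moment's care is the bookkeeping matching the layer labels on the cover arrows at $w$ with the layers of the canonical meet (respectively, join) representation — exactly what commutativity of the diagram in Theorem~\ref{big diagram} provides.
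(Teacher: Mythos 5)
Your proposal is correct and follows exactly the route the paper intends: the paper states this corollary without proof, saying only that it follows from the bijections $\jirr W\simeq\Layers\Pi\simeq\mirr W$ of Theorem~\ref{big diagram} together with Proposition~\ref{cong unif canon}, and your argument is precisely the careful unwinding of that remark (Theorem~\ref{canon m j thm} plus the commuting triangles matching layer labels on the cover arrows at $w$ with the canonical meet, respectively join, representation). No gaps.
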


\begin{remark}
Let $a:ws_i\to w$ be an arrow in $H$.
One might wonder if $L(a)$ is a unique brick of $\Pi$
which belongs to $\Tors(w)\backslash\Tors(ws_i)$.

This has an easy counterexample: let $\Pi$ be of type $A_2$, and
let $w:=e$ and $s_i:=s_1$. Then both layer modules $S_1$ and $P_1$
belong to $(\mod\Pi)\backslash(\Fac I_1)$.

On the other hand, $L(a)$ is a unique brick in $\Tors(w)\cap\Torf(ws_i)$ \cite{paperB}.
\end{remark}

\section{Doubleton extension order on layer modules}\label{order}
In this section, we prove Theorem~\ref{isom} and a characterization of the doubleton extension order on layer modules.
The last ingredient needed is the following proposition.

\begin{prop}\label{doubleton hex converse}
Suppose $A$, $B$, and $C$ are layers of $\Pi$ such that $A,C$ is a doubleton and $B$ is the extension of $C$ by $A$.  
Then there exists a hexagon in weak order such that the layer ordering of one of its chains is $(A,B,C)$, read either from bottom to top or from top to bottom.
\end{prop}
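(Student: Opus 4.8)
The plan is to run the argument of Proposition~\ref{layer poly} in reverse, using the autoequivalence $F(u)$ and the classification of layers. First I would reduce to the case of a ``simple'' doubleton by exploiting Lemma~\ref{repeat inductive step}: given the stone $A$ (which is a layer, hence a stone by Theorem~\ref{main of layer}), there exists $v\in W$ such that $F(v)(A)$ is a simple $\Pi$-module $S_i$. Since $F(v)=\widehat{I}(v)\Lotimes_{\widehat\Pi}-$ is an autoequivalence of $\DDD^{\bo}(\fd\widehat\Pi)$, it preserves $\Hom$-spaces in all degrees, so $\Ext^1_\Pi(A,C)\cong\Ext^1_{\widehat\Pi}(F(v)(A),F(v)(C))$ and similarly with the roles swapped. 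One would need to check that $F(v)$ sends each of $A,B,C$ to an honest $\Pi$-module; this should follow by iterating the vanishing-of-Tor arguments of Lemma~\ref{inductive step2} and Proposition~\ref{new spherical}, using that $B$ is an extension of $A$ and $C$ so $\Hom_\Pi(B,S_k)\neq 0$ is controlled by the corresponding $\Hom$'s for $A$ and $C$. After applying $F(v)$ we are reduced to the situation where $A=S_i$ is simple, $C=S_j$ is some layer (necessarily a brick/stone), and $\Ext^1_\Pi(S_i,S_j)$ and $\Ext^1_\Pi(S_j,S_i)$ are both one-dimensional with the extensions again being layers.

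Next I would identify $j$: since $\Ext^1_\Pi(S_i,S_j)$ is one-dimensional and $\Pi$ has the quiver $\overline Q$ with arrows $a,a^*$ for each edge, having both $\Ext^1_\Pi(S_i,S_j)$ and $\Ext^1_\Pi(S_j,S_i)$ nonzero forces $j$ to be a neighbour of $i$ in $\overline Q$ and forces $C=S_j$ (using that $C$ is a brick, a simple one is the only option producing one-dimensional Ext in both directions with layer extensions). At this point the doubleton $(S_i,S_j)$ with $i,j$ adjacent is exactly the ``simple'' doubleton appearing at the bottom of a hexagon in Proposition~\ref{layer poly}: take $u=e$ (or more precisely any $u$ with $\ell(us_i)=\ell(us_j)=\ell(u)+1$ and $i,j$ adjacent), so that the hexagon $u\gtrdot us_i\gtrdot us_is_j\gtrdot us_is_js_i$ and its other chain $u\gtrdot us_j\gtrdot us_js_i\gtrdot us_is_js_i$ has layer labels $X=S_i$, $Y=S_j$ along the top/bottom arrows, and $E=I_i/I_iI_j\cong{S_j\choose S_i}$, $F=I_j/I_jI_i\cong{S_i\choose S_j}$ along the side arrows, by the type-$A_2$ computation already used in the proof of Proposition~\ref{layer poly}. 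One of the two chains then reads $(S_i, E, S_j)=(A,B,C)$ from top to bottom (with $0\to S_i\to E\to S_j\to0$, i.e. $B=E$ the extension of $C=S_j$ by $A=S_i$), which is precisely the asserted configuration.

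Finally I would transport this hexagon back through $F(v)^{-1}$. Because $F(v)^{-1}=F(v^{-1}w_0\cdots)$ —more directly, $F(v)^{-1}=\RHom_{\widehat\Pi}(\widehat I(v),-)$— is again an autoequivalence commuting with the $F_k$'s, applying it to the hexagon above (equivalently, translating the whole hexagon by $v^{-1}$ in weak order, replacing $u$ by $v^{-1}u$ using Lemma~\ref{additivity} to keep lengths additive) yields a hexagon in the weak order on $W$ whose layer labels are $F(v)^{-1}(S_i)=A$, $F(v)^{-1}(S_j)=C$, $F(v)^{-1}(E)=B$, as desired; the short exact sequence $0\to A\to B\to C\to0$ is the image of $0\to S_i\to E\to S_j\to 0$. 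The main obstacle I anticipate is the bookkeeping in the first paragraph: verifying that $F(v)$ (chosen to simplify $A$) simultaneously sends $B$ and $C$ to genuine $\Pi$-modules and not to complexes, and that it preserves the doubleton structure. This requires care with the inductive $\Hom$-vanishing conditions $\Hom_\Pi(-,S_k)=0$ at each step of Lemma~\ref{repeat inductive step}, applied not just to $A$ but compatibly to the exact sequence relating $A$, $B$, $C$; I would handle it by noting that in the hexagon picture these vanishings are automatic once $A$ is simple, and running the reduction so that $B$ and $C$ stay inside $\Tors$ of the relevant word, as in Lemma~\ref{inductive step2}(b).
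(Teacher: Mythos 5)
Your overall strategy (apply the reflection functors $F_i$ to reduce to a doubleton of simples, exhibit the hexagon there, and transport back) is the same as the paper's, but there is a genuine gap at the crucial reduction step. You choose $v$ so that $F(v)(A)$ is simple (Lemma~\ref{repeat inductive step} applied to $A$ alone) and then claim that the doubleton condition forces the image of $C$ to be simple as well, ``using that $C$ is a brick, a simple one is the only option producing one-dimensional Ext in both directions with layer extensions.'' This claim is false: a simple module can form a doubleton with a non-simple layer. For instance, in type $A_3$ the modules $S_3$ and $\begin{smallmatrix}1\\ &2\end{smallmatrix}$ form a doubleton (the symmetrized Euler form gives $\langle[S_3],[\begin{smallmatrix}1\\ &2\end{smallmatrix}]\rangle=-1$, both $\Ext^1$'s are one-dimensional, and the two extensions $\begin{smallmatrix}1&&3\\ &2\end{smallmatrix}$ and $\begin{smallmatrix}1\\ &2\\ &&3\end{smallmatrix}$ are layers --- this is exactly the hexagon in Figure~\ref{layer labelling A3} whose top arrows carry $\begin{smallmatrix}1\\ &2\end{smallmatrix}$ and $3$). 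So after simplifying $A$ you cannot conclude anything about the shape of $C$, and your identification of the hexagon collapses.

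The paper repairs exactly this point by reducing the \emph{pair} $(A,C)$ simultaneously: Lemma~\ref{mystery lemma} shows $\Hom_\Pi(A,C)=\Hom_\Pi(C,A)=0$ for a doubleton, which is the extra input needed in Lemma~\ref{double-two-six} to guarantee that each reflection step $F_i$ (chosen to shrink whichever member is not yet simple) keeps \emph{both} members as stones and preserves the doubleton; iterating (Lemma~\ref{double-two-seven}) makes both simple. Your worry about tracking $B$ through the functors is also unnecessary: once $A$ and $C$ are identified, via Lemma~\ref{dual-layer}, as the labels of the two top arrows $w_0v^{-1}\to w_0v^{-1}s_i$ and $w_0v^{-1}\to w_0v^{-1}s_j$ of a hexagon, the middle label of the appropriate chain is an extension of $C$ by $A$ by Proposition~\ref{layer poly}, and one-dimensionality of $\Ext^1_\Pi(C,A)$ forces it to be $B$; no transport of $B$ is needed. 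Note also that the transport back is cleanest via the duality of Lemma~\ref{dual-layer} rather than the weak-order ``translation by $v^{-1}$'' you sketch, which would require separate length-additivity checks.
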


Before proving Proposition~\ref{doubleton hex converse}, we show how it completes the proof of Theorem~\ref{isom}.
\begin{proof}[Proof of Theorem~\ref{isom}]
Theorem~\ref{big diagram} already states that the map $j\mapsto I(j_*)/I(j)$ is a bijection from the set of join-irreducible elements of $W$ to the set of layer modules of $\Pi$.
Proposition~\ref{layer poly} implies that every arrow in the quiver $\FPoly(W)$ (defined in Section~\ref{l-prelim}) gives rise to an order relation in the doubleton extension order.
Proposition~\ref{doubleton hex converse} shows that each doubleton extension comes from some arrow in $\FPoly(W)$.
Corollary~\ref{force in poly to force} (which applies in light of Theorem~\ref{W cong unif}) thus implies that $j\mapsto I(j_*)/I(j)$ is an isomorphism from the forcing order on join-irreducible elements of $W$ to the doubleton extension order on layers.
\end{proof}

We now prepare to prove Proposition~\ref{doubleton hex converse}.

\begin{lemma} \label{mystery lemma}
If $X,Y$ is a doubleton, then  $\Hom_\Pi(X,Y)=\Hom_\Pi(Y,X)=0$.
\end{lemma}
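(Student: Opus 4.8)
The plan is to use that $\Pi$ is self-injective together with the fact (established in Section~\ref{stone}, specifically via the Euler-form computation in the proof of Theorem~\ref{main of layer}) that every layer module is a stone, so in particular $\Ext^1_\Pi(X,X) = \Ext^1_\Pi(Y,Y) = 0$ and $\End_\Pi(X) = \End_\Pi(Y) = k$. Suppose for contradiction that there is a nonzero morphism $f\colon X\to Y$ (the case of a nonzero $Y\to X$ is symmetric, interchanging the roles of $X$ and $Y$ in the doubleton). Since $X$ and $Y$ are bricks, $f$ is neither mono nor epi (a nonzero mono or epi between bricks of the same dimension vector would be an isomorphism, and an isomorphism is excluded because bricks have no self-extensions while the doubleton hypothesis gives $\Ext^1_\Pi(X,Y)\neq 0$; if the dimension vectors differ, a nonzero mono or epi is impossible for a different reason that I will spell out below). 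So $0\neq\Image f$ is a proper submodule of $Y$ and a proper quotient of $X$.

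First I would exploit the symmetry of the doubleton. Because $A,C$ play symmetric roles—$\Ext^1_\Pi(X,Y)$ and $\Ext^1_\Pi(Y,X)$ are both one-dimensional with both extensions again layers—I may reduce to analyzing what a nonzero noninvertible $f\colon X\to Y$ forces. The key step is to build a short exact sequence out of $f$ and its image and to compare it against the nonsplit extensions that define the doubleton. Concretely, from $0\to K\to X\to \Image f\to 0$ and $0\to \Image f\to Y\to Q\to 0$ one gets, using $\Ext^1_\Pi(X,X)=0$ and $\End_\Pi(X)=k$ and the corresponding vanishing for $Y$, strong constraints: applying $\Hom_\Pi(X,-)$ and $\Hom_\Pi(-,Y)$ to these sequences and using $\Hom_\Pi(X,Y)\ni f$ together with the one-dimensionality of $\Ext^1_\Pi(X,Y)$ should pin down the composition factors of $\Image f$, $K$ and $Q$ in terms of those of $X$ and $Y$. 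The self-injectivity of $\Pi$ enters because it makes $D = \Hom_k(-,k)$ an exact duality sending $\Pi$-modules to $\Pi^{\mathrm{op}}$-modules and (via the Nakayama permutation) allows one to convert a nonzero $X\to Y$ into a nonzero $Y\to X$ through the socle/top structure; this, combined with the one-dimensionality of both $\Ext^1$ groups, is what should produce the contradiction—either a self-extension of $X$ or of $Y$, or a second linearly independent element of $\Ext^1_\Pi(X,Y)$, or a violation of $\End_\Pi=k$.

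The main obstacle I anticipate is ruling out the case where $f$ is injective but not surjective (or dually surjective but not injective). When the dimension vectors of $X$ and $Y$ genuinely differ this is where the real work lies: one has $0\to X\to Y\to Q\to 0$ with $Q\neq 0$, and one must derive a contradiction purely from the doubleton data. I would attack this by applying $\Hom_\Pi(Y,-)$ to this sequence to get $0\to\Hom_\Pi(Y,X)\to\Hom_\Pi(Y,Y)\to\Hom_\Pi(Y,Q)\to\Ext^1_\Pi(Y,X)\to\Ext^1_\Pi(Y,Y)=0$, so that $\Ext^1_\Pi(Y,X)$ is a quotient of $\Hom_\Pi(Y,Q)$, and dually $\Hom_\Pi(-,X)$ applied to $0\to X\to Y\to Q\to 0$ to relate $\Ext^1_\Pi(Q,X)$ to $\End_\Pi(X)=k$; the nonsplit extension $0\to X\to B\to Y$—wait, the doubleton gives $0\to A\to B\to C$ type sequences, i.e.\ with $X$ and $Y$ as sub and quotient in both orders—should then be shown incompatible with the inclusion $X\hookrightarrow Y$, because an extension of $Y$ by $X$ that is a layer (hence a brick) cannot contain $Y$ as a quotient of a submodule isomorphic to $Y$ sitting over $X$. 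I expect that carefully tracking the one-dimensionality of $\Ext^1_\Pi(X,Y)$ and $\Ext^1_\Pi(Y,X)$ through these long exact sequences, and using that the two doubleton extensions are themselves bricks, closes the argument; the self-injectivity of $\Pi$ provides the symmetry between the ``$f$ injective'' and ``$f$ surjective'' sub-cases so that only one of them needs to be treated in full detail.
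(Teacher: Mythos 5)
Your proposal is a sketch rather than a proof, and the gap is precisely at the point where you yourself flag it: the contradiction is never actually derived. Every decisive step is phrased as ``should pin down,'' ``should produce the contradiction,'' ``I expect that \dots closes the argument.'' The long exact sequences you propose to write down from $0\to K\to X\to \Image f\to 0$ and $0\to \Image f\to Y\to Q\to 0$ do constrain things, but without some additional numerical input they do not by themselves force $\Hom_\Pi(X,Y)=0$; you never exhibit the self-extension, the second independent element of $\Ext^1_\Pi(X,Y)$, or the violation of $\End_\Pi(X)=k$ that you promise. The hard subcase you identify ($f$ injective but not surjective) is left entirely open. In addition, your appeal to self-injectivity of $\Pi$ to ``convert a nonzero $X\to Y$ into a nonzero $Y\to X$'' is unjustified as stated: self-injectivity gives no such direct relation between $\Hom_\Pi(X,Y)$ and $\Hom_\Pi(Y,X)$. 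The correct duality in this setting is the $2$-Calabi--Yau property of $\widehat{\Pi}$, namely $\Ext^2_{\widehat{\Pi}}(X,Y)\simeq D\Hom_{\widehat{\Pi}}(Y,X)$, and that is an essential ingredient, not a convenience.

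The missing idea is that the statement is really a numerical one, proved by an Euler-form computation on $K_0(\fd\widehat{\Pi})$ rather than by diagram chasing. Since the extension $E$ in the doubleton is a layer (hence a stone, hence $2$-spherical over $\widehat{\Pi}$), one has $\langle [X]+[Y],[X]+[Y]\rangle=\langle [E],[E]\rangle=2$, and since $\langle [X],[X]\rangle=\langle [Y],[Y]\rangle=2$ as well, symmetry of the form gives $\langle [X],[Y]\rangle=-1$. Expanding $\langle [X],[Y]\rangle=\dim\Hom(X,Y)-\dim\Ext^1(X,Y)+\dim\Ext^2(X,Y)$ with $\dim\Ext^1_{\widehat{\Pi}}(X,Y)=\dim\Ext^1_\Pi(X,Y)=1$ and $\Ext^2_{\widehat{\Pi}}(X,Y)\simeq D\Hom(Y,X)$ yields $\dim\Hom(X,Y)+\dim\Hom(Y,X)=0$, which is the whole lemma. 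Your homological bookkeeping, even if completed, would be replacing a two-line computation with a case analysis whose termination is not guaranteed; I recommend you rebuild the argument around the Euler form.
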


\begin{proof}
Let $\langle-,-\rangle$ be the Euler form on $K_0(\fd\widehat{\Pi})$.
Since the extensions $E$ of $X,Y$ are layers by assumption, we have 
$\langle[X]+[Y],[X]+[Y]\rangle=\langle[E],[E]\rangle=2$. Thus
\[2\langle[X],[Y]\rangle=\langle[X]+[Y],[X]+[Y]\rangle-\langle[X],[X]\rangle-\langle[Y],[Y]\rangle=-2.\]
Since $\Ext_{\widehat{\Pi}}^1(X,Y)=\Ext_\Pi^1(X,Y)$ is one-dimensional by assumption, it follows that
$\Hom_\Pi(X,Y)=\Hom_{\widehat{\Pi}}(X,Y)=0$ and $\Hom_\Pi(Y,X)=D\Ext^2_{\widehat{\Pi}}(X,Y)=0$.
\end{proof}

The following lemma is an analogue for doubletons of Lemma \ref{inductive step2}.  

\begin{lemma} \label{double-two-six} If $X,Y$ is a doubleton contained in $\Tors(w)$,
and $\Hom_\Pi(X,S_i)\ne 0$ and $X\not\simeq S_i$, 
then $\ell(s_iw)=\ell(w)+1$ and 
$F_i(X), F_i(Y)$ form a doubleton contained in $\Tors(s_iw)$.  
\end{lemma}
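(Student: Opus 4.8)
The statement is a doubleton analogue of Lemma \ref{inductive step2}, so the plan is to mimic that proof, using Lemma \ref{mystery lemma} to supply the extra vanishing of Hom-spaces that is needed. First I would establish $\ell(s_iw)=\ell(w)+1$ together with $I(s_iw)=I_iI(w)=I_i\otimes_\Pi I(w)$. Since $X\in\Tors(w)=\Fac I(w)$ and $\Hom_\Pi(X,S_i)\neq0$, we get $\Hom_\Pi(I(w),S_i)\neq0$, hence $I_iI(w)\neq I(w)$, and Lemma \ref{additivity} gives the length statement and the tensor identity for the ideals — this is verbatim the argument in Lemma \ref{inductive step2}(a).

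Next I would check that $F_i(X)$ and $F_i(Y)$ are honest $\Pi$-modules (not just complexes). For $X$ this follows exactly as in Lemma \ref{inductive step2}(b): since $X$ is a stone (by Theorem \ref{main of layer}, layers are stones), Proposition \ref{new spherical}(a) applied to $X\not\simeq S_i$ and $\Hom_\Pi(X,S_i)\neq0$ gives $\Hom_\Pi(S_i,X)=0$, and then Proposition \ref{new spherical}(b) gives $F_i(X)\simeq I_i\otimes_\Pi X$, a stone of $\Pi$. For $Y$ the key point is that Lemma \ref{mystery lemma} gives $\Hom_\Pi(X,Y)=\Hom_\Pi(Y,X)=0$; combined with $\Hom_\Pi(S_i,X)=0$ I want to deduce $\Hom_\Pi(S_i,Y)=0$ as well, so that Proposition \ref{new spherical}(b) applies to $Y$ too. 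Here is where I would spend the most care: a priori one only knows $S_i$ does not map into $X$, and the doubleton relation between $X$ and $Y$ must be leveraged. One approach is to use the two extensions $0\to X\to E\to Y\to 0$ and $0\to Y\to F\to X\to 0$ with $E,F$ layers (hence stones). If $\Hom_\Pi(S_i,Y)\neq 0$, then since $\Hom_\Pi(S_i,X)=0$ the map $S_i\to Y$ lifts along $F\twoheadrightarrow X$? No — better: from $0\to X\to E\to Y\to0$ and $\Hom_\Pi(S_i,X)=0$ we get an injection $\Hom_\Pi(S_i,E)\hookrightarrow\Hom_\Pi(S_i,Y)$, and applying $\Hom_\Pi(S_i,-)$ to $0\to Y\to F\to X\to 0$ shows $\Hom_\Pi(S_i,Y)\hookrightarrow\Hom_\Pi(S_i,F)$. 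If both $E$ and $F$ avoid $S_i$ in their socle-type behaviour one is done; alternatively, I expect the cleanest route is: $Y$ is a stone, so by Proposition \ref{new spherical}(a) either $\Hom_\Pi(S_i,Y)=0$ or $\Hom_\Pi(Y,S_i)=0$, and I would rule out the case $\Hom_\Pi(Y,S_i)\neq0$ and $\Hom_\Pi(S_i,Y)\neq0$ simultaneously while showing $\Hom_\Pi(S_i,Y)\neq0$ with $\Hom_\Pi(Y,S_i)=0$ contradicts the doubleton structure via $F_i$ applied in the derived category (using that $F_i$ is the spherical twist, and $\dim\Hom_\Pi(Y,S_i)$ controls the Tor needed for $F_i(Y)$ to be a module). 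Concretely, $F_i(Y)$ is a module iff $\Tor_1^{\widehat\Pi}(\widehat I_i,Y)=0$ iff $\Hom_{\widehat\Pi}(S_i,Y)=0$ (as in the displayed computation in the proof of Proposition \ref{new spherical}(b)), so I need precisely $\Hom_\Pi(S_i,Y)=0$, and I would prove this by the Euler-form bookkeeping: the extension $E$ being a layer forces $\langle[S_i],[Y]\rangle$-type constraints once $\Hom_\Pi(S_i,X)=0$ is known, analogously to Lemma \ref{mystery lemma}.

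Finally, with $F_i(X)\simeq I_i\otimes_\Pi X$ and $F_i(Y)\simeq I_i\otimes_\Pi Y$ both stones, I would verify they form a doubleton in $\Tors(s_iw)$. Membership in $\Tors(s_iw)$: the surjection $I(w)^{\oplus m}\twoheadrightarrow X$ yields $I(s_iw)^{\oplus m}=(I_i\otimes_\Pi I(w))^{\oplus m}\twoheadrightarrow I_i\otimes_\Pi X=F_i(X)$ (right-exactness of $I_i\otimes_\Pi-$, plus $I(s_iw)=I_iI(w)$ from step one), so $F_i(X)\in\Fac I(s_iw)=\Tors(s_iw)$, and likewise for $F_i(Y)$. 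For the doubleton property: $F_i$ is an autoequivalence of $\DDD^\bo(\fd\widehat\Pi)$, so it preserves $\dim\Ext^1$ and $\dim\Hom$ and sends short exact sequences $0\to X\to E\to Y\to0$, $0\to Y\to F\to X\to0$ (all terms modules, $F_i$ exact on them since no higher Tor) to short exact sequences $0\to F_i(X)\to F_i(E)\to F_i(Y)\to0$ etc., with $F_i(E),F_i(F)$ again stones, hence layers by Theorem \ref{main of layer}. Thus $\Ext^1_\Pi(F_i(X),F_i(Y))$ and $\Ext^1_\Pi(F_i(Y),F_i(X))$ are one-dimensional with layer extensions, which is exactly the doubleton condition. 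The main obstacle, as flagged, is the middle step — pinning down $\Hom_\Pi(S_i,Y)=0$ — and I expect the Euler-form argument in the style of Lemma \ref{mystery lemma}, exploiting that $E$ and $F$ are layers, to be the way to close it cleanly.
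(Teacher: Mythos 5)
Your overall architecture matches the paper's proof: derive the length statement as in Lemma \ref{inductive step2}(a), get $\Hom_\Pi(S_i,X)=0$ from Proposition \ref{new spherical}(a), show $\Hom_\Pi(S_i,Y)=0$ so that Proposition \ref{new spherical}(b) makes both $F_i(X)$ and $F_i(Y)$ stones (hence layers), use that $F_i$ is an autoequivalence of $\DDD^{\bo}(\fd\widehat{\Pi})$ to preserve the doubleton structure, and get membership in $\Tors(s_iw)$ by applying $I_i\otimes_\Pi-$ to a surjection $I(w)^{\oplus m}\to Y$ (this last point is exactly the paper's remark that the proof of Lemma \ref{inductive step2}(b) only needs $\Hom_\Pi(S_i,Y)=0$, not $\Hom_\Pi(Y,S_i)\neq 0$).

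However, the step you flag as ``the main obstacle'' --- proving $\Hom_\Pi(S_i,Y)=0$ --- is left genuinely open in your proposal, and none of the routes you sketch closes it. The Euler-form bookkeeping cannot work on its own: $\langle[S_i],[Y]\rangle$ only determines the alternating sum $\dim\Hom_\Pi(S_i,Y)-\dim\Ext^1_\Pi(S_i,Y)+\dim\Hom_\Pi(Y,S_i)$, so it does not pin down $\dim\Hom_\Pi(S_i,Y)$ individually; and Proposition \ref{new spherical}(a) applied to $Y$ only gives you a dichotomy, not the vanishing you need. The actual argument is a one-liner that you already have all the ingredients for: since $S_i$ is simple, the hypothesis $\Hom_\Pi(X,S_i)\neq 0$ gives a \emph{surjection} $X\twoheadrightarrow S_i$, and any nonzero map $S_i\to Y$ is an \emph{injection}; if both existed, the composite $X\twoheadrightarrow S_i\hookrightarrow Y$ would be a nonzero element of $\Hom_\Pi(X,Y)$, contradicting Lemma \ref{mystery lemma}. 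Hence $\Hom_\Pi(S_i,Y)=0$, and the rest of your argument goes through as written. Note that the correct input here is the hypothesis $\Hom_\Pi(X,S_i)\neq 0$ itself, not the derived fact $\Hom_\Pi(S_i,X)=0$ that you were trying to combine with $\Hom_\Pi(X,Y)=0$.
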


\begin{proof} Since $\Hom_\Pi(X,S_i)\ne 0$, we have $\ell(s_iw)=\ell(w)+1$ by
Lemma \ref{inductive step2} and $\Hom_\Pi(S_i,X)= 0$ by Proposition \ref{new spherical}(a).
Since $\Hom_\Pi(X,Y)=0$ by Lemma \ref{mystery lemma}, then $\Hom_\Pi(S_i,Y)= 0$ also.  
Thus, by Proposition \ref{new spherical}(b), $F_i(X)$ and $F_i(Y)$ are layer 
modules.  Since 
$F_i$ is an auto-equivalence of $\DDD^{\bo}(\fd\widehat{\Pi})$,
it follows that $F_i(X)$, $F_i(Y)$ still form a doubleton.   

$F_i(X)$ belongs to $\Tors(s_iw)$ by Lemma \ref{inductive step2}(b).  If $\Hom_\Pi(Y,S_i)\ne 0$, then
Lemma \ref{inductive step2} is also directly 
applicable to $Y$, and tells us that $F_i(Y)$ also belongs to
$\Tors(s_iw)$.  In fact, the proof of Lemma \ref{inductive step2}(b) is actually applicable
to $Y$ even if $\Hom_\Pi(Y,S_i)=0$ --- all that is really needed is that 
$\Hom_\Pi(S_i,Y)=0$, and we have this.  Thus, $F_i(Y)$ also belongs to 
$\Tors(s_iw)$.  
\end{proof}

The following lemma is an analogue for doubletons of Lemma \ref{repeat inductive step}.

\begin{lemma} \label{double-two-seven} If $X,Y$ is a doubleton, then there exists $v\in W$ such that
$F(v)(X)$ and $F(v)(Y)$ are simple modules. \end{lemma}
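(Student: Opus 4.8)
The plan is to mimic the proof of Lemma~\ref{repeat inductive step}, but now tracking a doubleton $X,Y$ instead of a single stone, using Lemma~\ref{double-two-six} as the inductive engine in place of Lemma~\ref{inductive step2}. First I would dispose of the trivial case: if both $X$ and $Y$ are already simple, take $v=e$. Otherwise, at least one of them, say $X$, is non-simple; then there is a vertex $i$ with $\Hom_\Pi(X,S_i)\neq 0$ and $X\not\simeq S_i$. Apply Lemma~\ref{double-two-six} with $w=e$ (so that $X,Y$ is a doubleton contained in $\Tors(e)=\mod\Pi$): this yields $\ell(s_i)=1$ and shows $F_i(X),F_i(Y)$ form a doubleton contained in $\Tors(s_i)$.

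Now I would iterate. Suppose after $k$ steps we have produced a reduced word $s_{i_k}\cdots s_{i_1}$ (so $\ell(s_{i_k}\cdots s_{i_1})=k$) such that $F_{i_k}\circ\cdots\circ F_{i_1}(X)$ and $F_{i_k}\circ\cdots\circ F_{i_1}(Y)$ form a doubleton contained in $\Tors(s_{i_k}\cdots s_{i_1})$. If both images are simple, we stop and set $v=s_{i_k}\cdots s_{i_1}$, which does the job since $F(v)=F_{i_k}\circ\cdots\circ F_{i_1}$. If not, one of them, say $Z:=F_{i_k}\circ\cdots\circ F_{i_1}(X)$, is non-simple; choose $i_{k+1}$ with $\Hom_\Pi(Z,S_{i_{k+1}})\neq 0$ and $Z\not\simeq S_{i_{k+1}}$, and apply Lemma~\ref{double-two-six} again (with $w=s_{i_k}\cdots s_{i_1}$ and the doubleton $Z, F_{i_k}\circ\cdots\circ F_{i_1}(Y)$) to conclude that $\ell(s_{i_{k+1}}w)=\ell(w)+1$, i.e.\ the extended word is still reduced, and that applying $F_{i_{k+1}}$ keeps us a doubleton inside $\Tors(s_{i_{k+1}}w)$. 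One subtle point worth a sentence: if at some stage the \emph{other} member of the pair became non-simple instead, the argument is symmetric since a doubleton is symmetric in its two members, so we may always pick whichever one is non-simple.

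The termination argument is the same as in Lemma~\ref{repeat inductive step}: at each step the length of the group element $s_{i_k}\cdots s_{i_1}$ strictly increases, and lengths in the finite group $W$ are bounded by $\ell(w_0)$, so the process must halt. It can only halt when both $F(v)(X)$ and $F(v)(Y)$ are simple, which is exactly the desired conclusion.

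I do not expect a genuine obstacle here; the lemma is essentially a bookkeeping consequence of Lemma~\ref{double-two-six} once one sets up the induction correctly. The only mild care needed is making sure the hypotheses of Lemma~\ref{double-two-six} are verified at each stage --- namely that the current pair really is a doubleton sitting inside the relevant $\Tors(w)$, which is precisely what the previous application of the lemma guarantees --- and handling the symmetric roles of the two members of the doubleton. So the "hard part" is really just stating the induction cleanly; there is no new homological input beyond what Lemmas~\ref{mystery lemma}, \ref{double-two-six}, and Proposition~\ref{new spherical} already provide.
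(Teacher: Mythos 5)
Your proof is correct and follows essentially the same route as the paper's: repeatedly apply Lemma~\ref{double-two-six} to whichever member of the doubleton is non-simple, with the length bound $\ell(w_0)$ forcing termination exactly as in Lemma~\ref{repeat inductive step}. The paper's version is terser but identical in substance.
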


\begin{proof}
Suppose that at least one of $X$ and $Y$ is not simple.  Without loss
of generality, suppose that $X$ is not simple.  Choose $S_i$
so that $\Hom_\Pi(X,S_i)\ne 0$.  Applying Lemma \ref{double-two-six}, 
we see that $F_i(X)$ and $F_i(Y)$ are a doubleton in $\Fac I_{s_i}$.  Assume that one of 
$F_i(X)$ and $F_i(Y)$ is not simple.  Repeat the previous procedure.  
As in the proof of Lemma \ref{repeat inductive step}, the procedure must terminate, at which 
point we have obtained a doubleton of simple modules.  
\end{proof}

\begin{proof}[Proof of Proposition \ref{doubleton hex converse}]   
Suppose that we have a doubleton $A,C$, with $B$ the extension
of $C$ by $A$.  We will establish 
that there exists a hexagon in weak order such that one
side of it is labelled $(A,B,C)$.  By Lemma \ref{double-two-seven}, there
exists $v\in W$ such that $F(v)(A)$, $F(v)(C)$ form a doubleton of 
simple modules, say $S_i, S_j$.  As in the proof of Theorem \ref{main of layer},
we conclude that $A,C$ are isomorphic to $D(I(v)/I(s_iv))$ and 
$D(I(v)/I(s_jv))$.  By Lemma \ref{dual-layer}, these are the labels of the 
Hasse arrows $v^{-1}w_0\to v^{-1}s_iw_0$ and $v^{-1}w_0\to v^{-1}s_jw_0$ which form the two top arrows of the desired hexagon.  Since the extension groups between $A$ and
$C$ are one-dimensional, the side of the hexagon whose arrows are labelled $(A,?,C)$, has $B$ as the label of its middle side.  
\end{proof}

It is immediate from the definition that if $A\ge B$ in the doubleton extension order, then $A$ is a subfactor of $B$. 
The converse does not hold in general.
In Section \ref{explicit}, we define conventions regarding modules over the preprojective algebra of type $D_n$.
In the notation of that section, in type $D_4$,
\[{\tiny\begin{array}{|cc|}\hline -2&{\begin{smallmatrix}-1\\ 1\end{smallmatrix}}\\ \hline -3& \\ \hline\end{array}}
\,\,\not\ge\,\,
{\tiny\begin{array}{|ccc|}\hline &2&3\\ \hline -2&{\begin{smallmatrix}-1\\ 1\end{smallmatrix}}&\\ \hline\end{array}}\,\,,\]
even though the first of these modules is a subfactor of the second.

We now show that the converse does hold in type $A_n$.  
We denote by $\SS$ the set of \newword{non-revisiting walks} on the double quiver $\overline{Q}$. By definition, these are walks in
$Q$ which follow a sequence of arrows either with or against the direction
of the arrow and which do not visit any vertex more than once.  We 
identify a walk and its reverse walk.

Let $I_{\cyc}$ denote the ideal of $\Pi$ generated by all 2-cycles and let $\overline{\Pi}:=\Pi/I_{\cyc}$.
To any $p\in \SS$, we can associate an indecomposable $\overline{\Pi}$-module $X_p$ called a \newword{string module}, and these exhaust the indecomposable $\overline{\Pi}$-modules, see \cite{WW}.  

\begin{theorem}\label{typeA} 
Suppose $\Pi$ is the preprojective algebra of type $A_n$.  
\begin{enumerate}[\rm(a)] 
\item The layer modules for $\Pi$ are exactly the indecomposable $\overline\Pi$-modules (which are exactly the string modules).
\item The doubleton extension order on layer modules is the opposite of subfactor order.
\end{enumerate}
\end{theorem}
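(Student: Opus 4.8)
\emph{Strategy.} The plan is to prove part~(a) first and bootstrap part~(b) from it, using throughout that layers are exactly bricks (Theorem~\ref{main of layer}) and that $\Layers\Pi$ and $\jirr W$ are equinumerous (Theorem~\ref{bijections}). For part~(a), since layers coincide with bricks the task is to identify $\brick\Pi$ with the set of indecomposable $\overline{\Pi}$-modules, i.e.\ (by \cite{WW}) with the string modules $X_p$, $p\in\SS$. First I would check $X_p\in\brick\Pi$ for every $p\in\SS$: a non-revisiting walk meets each vertex at most once, so $X_p$ is multiplicity-free ($\dim_ke_iX_p\le1$ for all $i$), and for any multiplicity-free module $M$ the restriction map $\End_\Pi(M)\to\prod_i\End_k(e_iM)=\prod_{i\,:\,e_iM\ne0}k$ is an injective homomorphism of $k$-algebras, so $\End_\Pi(M)$ is commutative and reduced; if $M$ is moreover indecomposable then $\End_\Pi(M)$ is local, hence a field, hence $k$ (a field that embeds over $k$ into a finite product of copies of $k$ equals $k$). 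Distinct elements of $\SS$ yield non-isomorphic string modules, so $\{X_p:p\in\SS\}$ is a family of $\#\SS$ distinct layers. To see it exhausts $\Layers\Pi$, I would match two counts: by Theorem~\ref{bijections}, $\#\Layers\Pi=\#\jirr W$, and an element of $W=S_{n+1}$ is join-irreducible in the weak order exactly when it covers a unique element, i.e.\ has exactly one descent, so $\#\jirr W=2^{n+1}-n-2$; on the other hand a non-revisiting walk is specified by its support interval $[i,j]$ and, for each of its $j-i$ edges, a choice of one of the two parallel arrows, so $\#\SS=\sum_{1\le i\le j\le n}2^{j-i}=2^{n+1}-n-2$ too. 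Hence $\{X_p:p\in\SS\}=\Layers\Pi$, which is (a).

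\emph{Part (b).} The implication ``$A\ge B$ in the doubleton order $\Rightarrow$ $A$ is a subfactor of $B$'' is the remark preceding the theorem, so only the converse needs proof: if a layer $A$ is a subfactor of a layer $B$, then $A\ge B$ in the doubleton order. Using (a), identify layers with string modules; the indecomposable subfactors of a string module $X_p$ are exactly the $X_r$ with $r$ a substring (contiguous subwalk) of $p$ \cite{WW}. So if $A=X_r$ is a subfactor of $B=X_p$, one may pass from $r$ up to $p$ by successively adjoining a single new vertex adjacent to an endpoint of the current walk; since the doubleton order is transitive, it suffices to prove the \textbf{one-step claim}: for $r\in\SS$ and a vertex $i\notin r$ adjacent to an endpoint of $r$, the pair $\{X_r,S_i\}$ is a doubleton of $\Pi$ whose two extensions are the two string modules $X_{r'},X_{r''}$ supported on $r\cup\{i\}$. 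Granting this, $X_r>X_{r'}$ and $X_r>X_{r''}$ in the doubleton order, and iterating up the chain gives $A=X_r>X_p=B$. For the one-step claim: attaching $i$ to the endpoint of $r$ through the two parallel arrows gives short exact sequences $0\to S_i\to X_{r'}\to X_r\to0$ and $0\to X_r\to X_{r''}\to S_i\to0$, with $X_{r'},X_{r''}$ layers by (a). As $S_i$ is not a composition factor of $X_r$, $\Hom_\Pi(X_r,S_i)=\Hom_\Pi(S_i,X_r)=0$; and as the layer $X_{r'}$ is $2$-spherical over $\widehat{\Pi}$, the Euler-form computation of Lemma~\ref{mystery lemma}, combined with $\Ext^j_{\widehat\Pi}=\Ext^j_\Pi$ on $\Pi$-modules for $j\le1$ and the duality $\Ext^2_{\widehat\Pi}(M,N)\simeq D\Hom_{\widehat\Pi}(N,M)$ (the $2$-Calabi--Yau property of $\widehat\Pi$), forces $\dim_k\Ext^1_\Pi(X_r,S_i)=1$, and the same duality gives $\dim_k\Ext^1_\Pi(S_i,X_r)=1$. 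Thus $X_{r'}$ and $X_{r''}$ are the unique non-split extensions in the two directions, so $\{X_r,S_i\}$ is a doubleton, which proves the claim and (b).

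\emph{Main obstacle.} The real work is the one-step claim in (b), for two reasons: the extension group must be computed over $\Pi$, not over the string algebra $\overline{\Pi}$ where it can be strictly smaller; and one genuinely needs part (a) in order to know that \emph{both} string modules on $r\cup\{i\}$ are layers, not merely the one that happens to be a substring module of $B$. The proof of (a), by contrast, is a matching of two combinatorial counts, with the only subtlety being that $\Pi$ is not a string algebra, so $\brick\Pi$ cannot be described by walks directly and has to be pinned down through the enumeration of $\Layers\Pi$.
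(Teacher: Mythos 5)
Your proposal is correct, but it reaches both parts by routes genuinely different from the paper's. For (a), the paper argues directly in the hard direction: the sum of all $2$-cycles is a \emph{central} element $x\in\Pi$, so on a brick $X$ multiplication by $x$ is a non-invertible endomorphism, hence zero, giving $I_{\cyc}X=0$ and thus that every layer is an indecomposable $\overline\Pi$-module; the converse (strings are stones) is dismissed as easy. You go the other way: you prove strings are bricks via the multiplicity-free endomorphism-ring embedding and then close the argument by matching the counts $\#\SS=\#\jirr W=2^{n+1}-n-2$ through Theorem~\ref{bijections}. Both work; the paper's centrality trick is shorter and needs no enumeration, while your version supplies an actual proof of the direction the paper leaves implicit (at the cost of leaning on the bijection with $\jirr W$ and the Eulerian count). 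For (b), the paper's proof is a terse two-case reduction ($A$ a submodule or quotient of $B$ gives one doubleton step, otherwise factor through an intermediate layer $C$ and apply it twice), which as literally stated needs the relevant complement to be indecomposable; your one-simple-at-a-time induction along contiguous subwalks, with the doubleton property verified by running the Euler-form computation of Lemma~\ref{mystery lemma} in reverse (using that the string extensions are stones by (a), the $2$-Calabi--Yau duality, and the vanishing of Hom in both directions), is a cleaner and more robust argument that sidesteps that issue entirely. You are also right to flag that the extension groups must be computed over $\Pi$ rather than $\overline\Pi$; your handling of this via $\Ext^1_{\widehat\Pi}=\Ext^1_\Pi$ on $\Pi$-modules is exactly how the paper treats it elsewhere.
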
 

\begin{proof} (a) For each $i\in Q_0$, let $x_i\in\Pi$ be the 2-cycle at the vertex $i$. It follows from relations of $\Pi$ that $x:=\sum_{i\in Q_0}x_i$ is a central element, and $I_{\rm cyc}$ is generated by $x$.
If $X$ is a $\Pi$-module, then multiplication by $x$ is a 
surjective morphism onto $I_{\rm cyc}X \subset X$.  Therefore, if $X$ is a brick, $I_{\rm cyc}X$ must be
zero.  Conversely, it is easy to see that any string module for 
$\overline \Pi$ is a brick.  

(b) As mentioned above, we need only show that if $A$ is a subfactor of $B$, then $A\ge B$.
It suffices to consider the case when $A$ is a submodule or a quotient module of $B$ such that $\dim_kA=\dim_kB-1$.
In this case, it is clear from the shape of string modules that there is a doubleton $A,S$ for some simple $\Pi$-module $S$ such that $B$ is one of its two extensions. Thus $A\ge B$.
\end{proof}

\begin{example}\label{non-revisiting for A3}
Let $n=3$. Then the Hasse quiver of $\SS$ with respect to the opposite of
subfactor order is the following:  
\[\xymatrix{
1\ar[dr]\ar[d]&&2\ar[dr]\ar[dl]\ar[dll]\ar[drr]&&3\ar[d]\ar[dl]\\
{\begin{smallmatrix}1&\\ &2\end{smallmatrix}}\ar[dr]\ar[d]&
{\begin{smallmatrix}&2\\ 1&\end{smallmatrix}}\ar[drr]\ar[drrr]&&
{\begin{smallmatrix}2&\\ &3\end{smallmatrix}}\ar[d]\ar[dlll]&
{\begin{smallmatrix}&3\\ 2&\end{smallmatrix}}\ar[d]\ar[dlll]\\
{\begin{smallmatrix}1&&\\ &2&\\ &&3\end{smallmatrix}}&
{\begin{smallmatrix}1&&3\\ &2&\end{smallmatrix}}&&
{\begin{smallmatrix}&2&\\ 1&&3\end{smallmatrix}}&
{\begin{smallmatrix}&&3\\ &2&\\ 1&&\end{smallmatrix}}
}\]
In light of Theorems~\ref{isom} and~\ref{typeA}, this quiver is the Hasse quiver of the forcing order on join-irreducible elements in type $A_3$.
Compare \cite[Figure~4]{congruence}.
\end{example}

\section{Combinatorial description of indecomposable $\tau$-rigid modules}\label{explicit}

Let $\Pi$ be a preprojective algebra of Dynkin type and $W$ the corresponding Weyl group.
A combinatorial description of join-irreducible elements in $W$ is well-known for type $A$ and $D$.
We refer to Sections 1.5, 2.1, 2.4, and 2.6 in \cite{BB} for type A, and Section~8.2 in \cite{BB} for type D.
(The description of join-irreducible elements is not given explicitly in \cite{BB}, but it is easily worked out from the combinatorial models developed there.)
On the other hand, recall from Theorem \ref{big diagram} that we have a bijection
\[J:\jirr W\to\IndtmRig\Pi\]
given by $J(w):=(\Pi/I(w))e_i$ for a unique arrow $w\to ws_i$ in the Hasse quiver of $W$ starting at $w$.

The main result of this section is to give a combinatorial description of the $\Pi$-module $J(w)$ for each join-irreducible element $w\in W$ in type $A$ or $D$.
See Theorem \ref{define X(w) for A} for type $A$ and Theorems~\ref{define X(w) for D2} and~\ref{define X(w) for D1} for type $D$.
It will be interesting to compare our results with Bongartz's description of bricks for type $A$ and $D$ \cite{Bon}.

\subsection{Type $A$}
Let $\Pi$ be a preprojective algebra of type $A_n$. It is given by a quiver
\[\xymatrix@R=.3em@C=3em{1\ar@<2pt>[r]^{x_1}&2\ar@<2pt>[r]^{x_2}\ar@<2pt>[l]^{y_2}&3\ar@<2pt>[r]^{x_3}\ar@<2pt>[l]^{y_3}&\ar@{.}[r]\ar@<2pt>[l]^{y_4}&\ar@<2pt>[r]^-{x_{n-2}}&n-1\ar@<2pt>[r]^-{x_{n-1}}\ar@<2pt>[l]^-{y_{n-1}}&n\ar@<2pt>[l]^-{y_n}}\]
with relations $x_1y_2=0$, $x_iy_{i+1}=y_{i}x_{i-1}$ for $2\le i\le n-1$ and $y_nx_{n-1}=0$.
We denote by $S_\ell$ the simple $\Pi$-module corresponding to the vertex $\ell$, and by $P_\ell$ the projective cover of $S_\ell$.

Let $W=\mathfrak{S}_{n+1}$ be the Weyl group of $\Pi$.  We use the convention that the product $ww'$ of elements $w,w'\in W$ is given by $(ww')(i)=w(w'(i))$ for $i\in\{1,\ldots,n+1\}$. The elements of $W$ are the permutations
\[w=[i_1,\ldots,i_{n+1}].\]
This is join-irreducible if and only if there exists $\ell\in\{1,\ldots,n\}$ such that
\[i_1<\cdots<i_\ell>i_{\ell+1}<\cdots<i_{n+1}.  \]
In this case, we say that $w$ is of \emph{type $\ell$}. There exists a unique arrow $w\to ws_\ell$ starting at $w$ in the Hasse quiver of $W$.
The number of join-irreducible elements of type $\ell$ is given by
\[{n+1\choose\ell}-1,\]
and therefore $\#\jirr W=2^{n+1}-n-2$.

As a quiver representation, $P_\ell$ is given by 
\[{\tiny\xymatrix@R1em@C2.5em{
\ell\ar[r]\ar[d]&\ell{-}1\ar[r]\ar[d]&\ell{-}2\ar[r]\ar[d]&\cdots\ar[r]&3\ar[r]\ar[d]&2\ar[r]\ar[d]&1\ar[d]\\
\ell{+}1\ar[r]\ar[d]&\ell\ar[r]\ar[d]&\ell{-}1\ar[r]\ar[d]&\cdots\ar[r]&4\ar[r]\ar[d]&3\ar[r]\ar[d]&2\ar[d]\\
\ell{+}2\ar[r]\ar[d]&\ell{+}1\ar[r]\ar[d]&\ell\ar[r]\ar[d]&\cdots\ar[r]&5\ar[r]\ar[d]&4\ar[r]\ar[d]&3\ar[d]\\
\raisebox{-2pt}[7pt][4pt]{\vdots}\ar[d]&\raisebox{-2pt}[7pt][4pt]{\vdots}\ar[d]&\raisebox{-2pt}[7pt][4pt]{\vdots}\ar[d]&\cdots&\raisebox{-2pt}[7pt][4pt]{\vdots}\ar[d]&\raisebox{-2pt}[7pt][4pt]{\vdots}\ar[d]&\raisebox{-2pt}[7pt][4pt]{\vdots}\ar[d]\\
n{-}1\ar[r]\ar[d]&n{-}2\ar[r]\ar[d]&n{-}3\ar[r]\ar[d]&\cdots\ar[r]&n{-}\ell{+}2\ar[r]\ar[d]&n{-}\ell{+}1\ar[r]\ar[d]&n{-}\ell\ar[d]\\
n\ar[r]&n{-}1\ar[r]&n{-}2\ar[r]&\cdots\ar[r]&n{-}\ell{+}3\ar[r]&n{-}\ell{+}2\ar[r]&n{-}\ell{+}1,
}}\]
where each number $i$ shows a $k$-vector space $k$ lying on the vertex $i$, and each arrow is the identity map of $k$.
Submodules (respectively, factor modules) of $P_\ell$ correspond bijectively to subquivers that are closed under successors (respectively, predecessors).

We represent $P_\ell$ in abbreviated form as an array of numbers in rows as follows:
\[P_\ell={\tiny\begin{array}{|ccccccc|}
\hline
\ell&\ell{-}1&\ell{-}2&\cdots&3&2&1\\ \hline
\ell{+}1&\ell&\ell{-}1&\cdots&4&3&2\\ \hline
\ell{+}2&\ell{+}1&\ell&\cdots&5&4&3\\ \hline
\vdots&\vdots&\vdots&\ddots&\vdots&\vdots&\vdots\\ \hline
n{-}1&n{-}2&n{-}3&\cdots&n{-}\ell{+}2&n{-}\ell{+}1&n{-}\ell\\ \hline
n&n{-}1&n{-}2&\cdots&n{-}\ell{+}3&n{-}\ell{+}2&n{-}\ell{+}1\\ \hline
\end{array}}\]
Submodules and factor modules are similarly represented by sub-arrays of $P_\ell$, as, for example, in the following theorem.

\begin{theorem}\label{define X(w) for A}
Let  $w=[i_1,\ldots,i_{n+1}]\in W$ be a join-irreducible element of type $\ell$.
Then $J(w)$ is a factor module of $P_\ell$ which has the form
\[J(w)={\tiny\begin{array}{|cccccccccc|}
\hline
\ell&\ell{-}1&\ell{-}2&\cdots&\cdots&\cdots&\cdots&i_{\ell{+}1}&&\\ \hline
\ell{+}1&\ell&\ell{-}1&\cdots&\cdots&\cdots&i_{\ell{+}2}&&&\\ \hline
\ell{+}2&\ell{+}1&\ell&\cdots&\cdots&i_{\ell{+}3}&&&&\\ \hline
\vdots&\vdots&\vdots&\vdots&\vdots&&&&&\\ \hline
n{-}1&\cdots&\cdots&i_{n}&&&&&&\\ \hline
n&\cdots&i_{n{+}1}&&&&&&&\\ \hline
\end{array}.}\]
In particular, any factor module of $P_\ell$ is indecomposable $\tau^-$-rigid.
\end{theorem}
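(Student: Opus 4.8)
The plan is to reduce the statement to an explicit identification of the submodule $I(w)e_\ell\subseteq P_\ell$, and then to deduce the last sentence by a counting argument.

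First I would record the reduction. By Theorem~\ref{big diagram} we have $J(w)=(\Pi/I(w))e_\ell$, where $w\to ws_\ell$ is the unique arrow of $\Hasse(W)$ starting at the join-irreducible element $w$ of type $\ell$. Since $\Pi e_\ell=P_\ell$, this gives $J(w)=P_\ell/(I(w)e_\ell)$, so $J(w)$ is automatically a factor module of $P_\ell$; the content is to pin down which one. Factor modules of $P_\ell$ correspond to the sub-arrays of the displayed array of $P_\ell$ that are closed under predecessors, i.e.\ to weakly decreasing sequences $\ell\ge a_1\ge a_2\ge\cdots\ge a_{n-\ell+1}\ge 0$, where $a_j$ is the number of boxes retained in row $j$. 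One checks directly that the sequence $a_j:=\max\{0,\,\ell+j-i_{\ell+j}\}$ prescribed by the theorem is weakly decreasing (using $i_{\ell+1}<\cdots<i_{n+1}$) and has $a_1\ge 1$ (using $i_{\ell+1}\le\ell$, which holds since otherwise positions $1,\dots,\ell$ of $w$ would carry $1,\dots,\ell$ and $w$ would have no descent at $\ell$). So it remains to prove that $I(w)e_\ell$ is the submodule of $P_\ell$ whose row-$j$ part consists exactly of the boxes with values $j,j+1,\dots,i_{\ell+j}-1$.

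For this I would compute $I(w)e_\ell$ directly. Fixing a reduced word $w=s_{p_1}\cdots s_{p_k}$, Lemma~\ref{additivity} gives $I(w)=I_{p_1}\cdots I_{p_k}$, so $I(w)e_\ell=I_{p_1}(\cdots(I_{p_k}P_\ell))$. For a submodule $N\subseteq P_\ell$ and a vertex $p$ one has $I_pN=\Pi(1-e_p)N$, the submodule of $N$ generated by the graded pieces of $N$ at the vertices $\ne p$; on the staircase this operation simply deletes from $N$ the boxes at vertex $p$ that are sources of the subquiver of $N$ (the $S_p$-part of the top of $N$). The computation is then a finite bookkeeping problem, which I would organize by a decreasing induction along the weak order within join-irreducibles of type $\ell$, or by induction on $n$ (splitting according to whether the value $n+1$ of $w$ sits in position $n+1$ or position $\ell$), at each stage using a reduced word adapted to the shape of $w$ — roughly, moving the values $i_{n+1},i_n,\dots,i_{\ell+1}$ in turn into positions $n+1,n,\dots,\ell+1$ — and following the effect of the successive ideal multiplications on the array. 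A convenient consistency check is the identity $\dim_k J(w)=\ell(w)$, which follows from counting the inversions of $w$ with larger index $\ell+r$ (there are $\ell+r-i_{\ell+r}=a_r$ of them). The hard part is exactly this verification: checking that the adapted word is reduced and that the successive multiplications by the $I_p$ cut the staircase down to precisely the claimed submodule; the rest is formal.

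Finally, for the ``in particular'' clause: a nonzero factor module of $P_\ell$ is $P_\ell/N$ for some submodule $N$ (i.e.\ a sub-array closed under successors), hence corresponds to a partition inside the $(n-\ell+1)\times\ell$ rectangle, and there are $\binom{n+1}{\ell}-1$ of these — which is exactly the number of join-irreducible elements of $W$ of type $\ell$, as recorded above. By Theorem~\ref{big diagram} the map $J$ is injective, so $w\mapsto J(w)$ is injective on type-$\ell$ join-irreducibles, and the computation just described exhibits each such $J(w)$ as a nonzero factor module of $P_\ell$; comparing cardinalities, the modules $J(w)$ with $w$ of type $\ell$ are all of the nonzero factor modules of $P_\ell$. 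Since each $J(w)$ lies in the image of $J\colon\jirr W\to\IndtmRig\Pi$, it is indecomposable $\tau^-$-rigid, and therefore so is every nonzero factor module of $P_\ell$ (the zero module being the trivial case).
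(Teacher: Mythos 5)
Your proposal follows essentially the same route as the paper: the adapted reduced word you describe (moving $i_{n+1},\dots,i_{\ell+1}$ into place) is exactly the factorization $w=x_{n+1}\cdots x_{\ell+1}$ of Lemma~\ref{expression for A}, and the paper likewise computes $I(w)e_\ell$ by applying the corresponding ideals one row at a time, with the ``in particular'' clause resting on the same count $\binom{n+1}{\ell}-1$ of both type-$\ell$ join-irreducibles and nonzero factor modules of $P_\ell$. The only difference is that you defer the row-by-row bookkeeping that the paper carries out (for the first two rows explicitly, then by iteration), so the plan is sound and matches the published argument.
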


Note that $i_m\le m$ holds for any $m\in\{\ell+1,\ldots,n+1\}$, and if $i_{m}=m$ holds, then the row starting at $m-1$ is empty.
For example, for $n=5$ and $w=[351246]$, then $\ell=2$ and
$J(w)={\tiny\begin{array}{|cc|}\hline 2&1\\ \hline 3&2\\ \hline 4&\\ \hline &\\ \hline\end{array}}=
{\tiny\begin{xy}
0;<3pt,0pt>:<0pt,3pt>::
(15,-5)*+{4}="44",
(15,0)*+{3}="45",
(15,5)*+{2}="46",
(20,0)*+{2}="55",
(20,5)*+{1}="56",
\ar@<0pt>"45";"44",
\ar@<0pt>"46";"45",
\ar@<0pt>"56";"55",

\ar@<0pt>"45";"55",
\ar@<0pt>"46";"56",
\end{xy}}$.

To prove this, we need the following easy observation.

\begin{lemma}\label{expression for A}
Let  $w=[i_1,\ldots,i_{n+1}]\in W$ be a join-irreducible element of type $\ell$.
Then we have a reduced expression $w=x_{n+1}x_{n}\cdots x_{\ell+2}x_{\ell+1}$,
where $x_m=s_{i_{m}}s_{i_{m}+1}\cdots s_{m-2}s_{m-1}$ for $m\in\{\ell+1,\ldots,n+1\}$.
\end{lemma}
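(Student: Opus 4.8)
The plan is to verify directly that the word $w=x_{n+1}x_n\cdots x_{\ell+2}x_{\ell+1}$ with $x_m=s_{i_m}s_{i_m+1}\cdots s_{m-1}$ evaluates to the permutation $[i_1,\dots,i_{n+1}]$ and then check that the total length is $\ell(w)$, so the expression is reduced. Recall our convention $(ww')(i)=w(w'(i))$, and that $s_k$ is the transposition $(k,k+1)$. The block $x_m=s_{i_m}s_{i_m+1}\cdots s_{m-1}$ is the cycle sending $m\mapsto i_m$ and, for each $i_m\le t<m$, sending $t\mapsto t+1$; it fixes everything outside $[i_m,m]$. (This is the standard fact that $s_a s_{a+1}\cdots s_{b-1}$ is the cycle $(b,b-1,\dots,a)$ in the sense of shifting the block $[a,b-1]$ up by one and dropping $b$ down to $a$; one should state and prove this one-liner first.)

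First I would process the blocks in the order they act, i.e.\ right to left: start from the identity, apply $x_{\ell+1}$, then $x_{\ell+2}$, and so on, tracking the image of each position. The key combinatorial point is that, since $w$ is join-irreducible of type $\ell$, the values $i_{\ell+1}<i_{\ell+2}<\cdots<i_{n+1}$ are exactly the values of $w$ on positions $\ell+1,\dots,n+1$, listed in increasing order of value (this is where the type-$\ell$ condition is used), and $\{i_1,\dots,i_\ell\}=\{1,\dots,n+1\}\setminus\{i_{\ell+1},\dots,i_{n+1}\}$ listed in increasing order on positions $1,\dots,\ell$. I would argue by downward induction on $m$ from $n+1$ to $\ell+1$ that after applying $x_{n+1}\cdots x_m$, positions $m,m+1,\dots,n+1$ already hold their final values $i_m,\dots,i_{n+1}$, while positions $1,\dots,m-1$ hold, in increasing order, the remaining values $\{1,\dots,n+1\}\setminus\{i_m,\dots,i_{n+1}\}$. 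Applying $x_{m-1}=s_{i_{m-1}}\cdots s_{m-2}$ to such a configuration pulls the value $i_{m-1}$ (which sits in position equal to its rank among the remaining values, namely position $i_{m-1}$ itself once one checks the bookkeeping) into position $m-1$ and shifts the others; this reestablishes the inductive hypothesis for $m-1$. Running the induction down to $\ell+1$ gives exactly $w=[i_1,\dots,i_{n+1}]$.

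For reducedness, I would compute the length of the proposed word and compare it with $\ell(w)=\operatorname{inv}(w)$. The block $x_m$ has length $m-i_m$, so the word has length $\sum_{m=\ell+1}^{n+1}(m-i_m)$. On the other hand, for a join-irreducible $w$ of type $\ell$, the inversions of $w$ are precisely the pairs $(p,q)$ with $p\le \ell<q$ and $i_p>i_q$; counting, for each $q\in\{\ell+1,\dots,n+1\}$, the number of $p\le\ell$ with $i_p>i_q$ gives exactly $(q-1)-\#\{q'>\ell: i_{q'}<i_q\}=q - i_q$ using that the values are split and sorted as above. Summing over $q$ matches $\sum_{m=\ell+1}^{n+1}(m-i_m)$, so the word is reduced.

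The main obstacle is purely the index bookkeeping in the inductive step: making sure that when $x_{m-1}$ acts, the value $i_{m-1}$ genuinely occupies position $i_{m-1}$ in the current configuration (so that $s_{i_{m-1}}\cdots s_{m-2}$ transports it to position $m-1$ without disturbing the already-finalized tail), and that the "remaining values in increasing order" property is preserved. This is where the join-irreducibility (the single descent at position $\ell$) is essential and must be invoked carefully; everything else is routine symmetric-group computation.
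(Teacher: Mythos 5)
Your argument is correct, but it takes a genuinely different route from the paper. You verify the identity $w=x_{n+1}\cdots x_{\ell+1}$ by a downward induction on $m$ applied to the partial products $u_m:=x_{n+1}\cdots x_m$, tracking the configuration (positions $m,\dots,n+1$ finalized with values $i_m,\dots,i_{n+1}$, positions $1,\dots,m-1$ carrying the remaining values in increasing order), and you then certify reducedness separately by matching the word length $\sum_m(m-i_m)$ against the inversion number of $w$. The crux of your induction --- that the value $i_{m-1}$ currently sits in position $i_{m-1}$, so that $x_{m-1}$ transports it to position $m-1$ without disturbing the finalized tail --- holds exactly because $i_{m-1}<i_m<\cdots<i_{n+1}$ forces every already-placed value to exceed $i_{m-1}$; this is the right place to invoke the type-$\ell$ hypothesis. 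Your per-column inversion count $q-i_q$ is also correct, though the intermediate expression $(q-1)-\#\{q'>\ell: i_{q'}<i_q\}$ as written does not evaluate to it; the clean computation is $\ell-\#\{p\le\ell: i_p<i_q\}=\ell-\bigl((i_q-1)-(q-\ell-1)\bigr)=q-i_q$. The paper instead inducts on the rank $n$: it sets $v:=x_{n+1}^{-1}w$, observes that $x_{n+1}^{-1}$ is the cycle $(n+1,n,\dots,i_{n+1})$, which fixes $i_{\ell+1},\dots,i_n$ and preserves the relative order of $i_1,\dots,i_n$, so that $v$ is again join-irreducible of type $\ell$ with $v(n+1)=n+1$, and then applies the inductive hypothesis inside $\mathfrak{S}_n$ (with a direct check in the base case $\ell=n$). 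The paper's proof is shorter but leaves the additivity of lengths (hence reducedness) implicit; yours is longer but makes reducedness explicit and is entirely self-contained symmetric-group bookkeeping. Both approaches are valid.
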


\begin{proof}
If $\ell=n$, then $w=[1,2,\cdots,i-1,i+1,\ldots,n+1,i]$ holds for some $i\in\{1,\ldots,n\}$. Thus $w=x_{n+1}$ holds clearly.
In the rest, assume $\ell\neq n$, and let $v:=x_{n+1}^{-1}w$. 
Since $x_{n+1}^{-1}$ is a cyclic permutation $(n+1,n,\ldots,i_{n+1}+1,i_{n+1})$, it preserves the total order on $\{i_1,\ldots,i_n\}$, and therefore $v$ is a join-irreducible element of type $\ell$ and clearly satisfies $v(n+1)=n+1$.
Since $x_{n+1}^{-1}$ fixes any element in $\{i_{\ell+1},\ldots,i_n\}$, we have $v(m)=i_m$ for any $m\in\{\ell+1,\ldots,n\}$ and $v(n+1)=n+1$.
Inductively on $n$, we have $v=x_{n}\cdots x_{\ell+1}$, and therefore $w=x_{n+1}x_{n}\cdots x_{\ell+1}$.
\end{proof}

Now we are ready to prove Theorem \ref{define X(w) for A}.

\begin{proof}[Proof of Theorem \ref{define X(w) for A}]
Clearly we have
\[I(x_{\ell+1})e_\ell=I_{i_{\ell+1}}\cdots I_{\ell}e_\ell={\tiny\begin{array}{|ccccccc|}
\hline
&&&&i_{\ell{+}1}{+}1&\cdots&1\\ \hline
\ell{+}1&\ell&\cdots&\cdots&\cdots&\cdots&2\\ \hline
\ell{+}2&\ell{+}1&\cdots&\cdots&\cdots&\cdots&3\\ \hline
\vdots&\vdots&\vdots&\vdots&\vdots&\vdots&\vdots\\ \hline
n&n{-}1&\cdots&\cdots&\cdots&\cdots&n{-}\ell{+}1\\ \hline
\end{array}}\]
and
\[I(x_{\ell+2}x_{\ell+1})e_\ell={\tiny\begin{array}{|ccccccc|}
\hline
&&&&i_{\ell{+}1}{+}1&\cdots&1\\ \hline
&&&i_{\ell{+}2}{+}1&\cdots&\cdots&2\\ \hline
\ell{+}2&\ell{+}1&\cdots&\cdots&\cdots&\cdots&3\\ \hline
\vdots&\vdots&\vdots&\vdots&\vdots&\vdots&\vdots\\ \hline
n&n{-}1&\cdots&\cdots&\cdots&\cdots&n{-}\ell{+}1\\ \hline
\end{array}.}\]
Repeating similar calculations, we have
\[I(w)e_\ell={\tiny\begin{array}{|cccccccc|}
\hline
&&&&&i_{\ell{+}1}{+}1&\cdots&1\\ \hline
&&&&i_{\ell{+}2}{+}1&\cdots&\cdots&2\\ \hline
&&&i_{\ell{+}3}{+}1&\cdots&\cdots&\cdots&3\\ \hline
&&\vdots&\vdots&\vdots&\vdots&\vdots&\vdots\\ \hline
&i_{n{+}1}{+}1&\cdots&\cdots&\cdots&\cdots&\cdots&n{-}\ell{+}1\\ \hline
\end{array}.}\]
Therefore $J(w)=(\Pi/I(w))e_{\ell}$ has the desired form.
\end{proof}

\begin{example}
Let $w=[n-\ell+2,n-\ell+3,\ldots,n+1,1,2,\ldots,n-\ell+1]$ be a join-irreducible element of type $\ell$.
Then we have a reduced expression
\[w=x_{n+1}x_{n}\cdots x_{\ell+2}x_{\ell+1}\ \mbox{ for }\ x_m=s_{m-\ell}\cdots s_{m-2}s_{m-1},\]
and the corresponding indecomposable $\tau^-$-rigid $\Pi$-module is $J(w)=P_\ell$.
\end{example}

\begin{example}
Consider type $A_3$. Below we show the expression given in Lemma \ref{expression for A} by using dots: $w=x_{n+1}\cdot x_{n}\cdot\cdots\cdot x_{\ell+2}\cdot x_{\ell+1}$.

We have the following 3 join-irreducible elements of type $1$.

\noindent
$J(2134)=J(s_1)={\tiny\begin{array}{|c|}\hline 1\\ \hline\end{array}}$,\ \ \ 
$J(3124)=J(s_2\cdot s_1)={\tiny\begin{array}{|c|}\hline 1\\ \hline 2\\ \hline\end{array}}$,\ \ \ 
$J(4123)=J(s_3\cdot s_2\cdot s_1)={\tiny\begin{array}{|c|}\hline 1\\ \hline 2\\ \hline 3\\ \hline\end{array}}$.

We have the following 5 join-irreducible elements of type $2$.

\noindent
$J(1324)=J(s_2)={\tiny\begin{array}{|c|}\hline 2\\ \hline\end{array}}$,\ \ \ 
$J(2314)=J(s_1s_2)={\tiny\begin{array}{|cc|}\hline 2&1\\ \hline\end{array}}$,\ \ \ 
$J(1423)=J(s_3\cdot s_2)={\tiny\begin{array}{|c|}\hline 2\\ \hline 3\\ \hline\end{array}}$,

\noindent
$J(2413)=J(s_3\cdot s_1s_2)={\tiny\begin{array}{|cc|}\hline 2&1\\ \hline 3&\\ \hline\end{array}}$,\ \ \ 
$J(3412)=J(s_2s_3\cdot s_1s_2)={\tiny\begin{array}{|cc|}\hline 2&1\\ \hline 3&2\\ \hline\end{array}}$.

We have the following 3 join-irreducible elements of type $3$.

\noindent
$J(1243)=J(s_3)={\tiny\begin{array}{|c|}\hline 3\\ \hline\end{array}}$,\ \ \ 
$J(1342)=J(s_2s_3)={\tiny\begin{array}{|cc|}\hline 3&2\\ \hline\end{array}}$,

\noindent
$J(2341)=J(s_1s_2s_3)={\tiny\begin{array}{|ccc|}\hline 3&2&1\\ \hline\end{array}}$.
\end{example}

\subsection{Type $D$}

Let $\Pi$ be a preprojective algebra of type $D_n$.
Then $\Pi$ is given by a quiver\\[-15pt]
\[\xymatrix@R=0.6em@C=3em{1\ar@<2pt>[rdd]^-{x_1}\\\\
&2\ar@<2pt>[r]^{x_2}\ar@<2pt>[ldd]^-{y'_2}\ar@<2pt>[luu]^-{y_2}&3\ar@<2pt>[r]^{x_3}\ar@<2pt>[l]^{y_3}&\ar@{..}[r]\ar@<2pt>[l]^{y_4}&\ar@<2pt>[r]^-{x_{n-3}}&n-2\ar@<2pt>[r]^-{x_{n-2}}\ar@<2pt>[l]^-{y_{n-2}}&n-1\ar@<2pt>[l]^-{y_{n-1}}\\\\
-1\ar@<2pt>[ruu]^-{x'_1}}\]
with relations $x_1y_2=0$, $x'_1y'_2=0$, $x_2y_3=y_2x_1+y'_2x'_1$, $x_iy_{i+1}=y_ix_{i-1}$ for $3\le i\le n-2$ and $y_{n-1}x_{n-2}=0$.
Let $S_\ell$ be the simple $\Pi$-module corresponding to the vertex $\ell$, and let $P_\ell$ be the projective cover of $S_\ell$.

The Weyl group $W$ of $\Pi$ is the group of automorphisms $w$ of the set $\{\pm1,\ldots,\pm n\}$ satisfying $w(-\ell)=-w(\ell)$ for any $\ell$ and $\#\{\ell\in\{1,\ldots,n\}\mid w(\ell)<0\}$ is even.
Setting $i_\ell=w(\ell)$ for $\ell\in\{1,\ldots,n\}$, we write $w\in W$ as a sequence
\[w=[i_1,\ldots,i_n]\in\{\pm1,\ldots,\pm n\}^n\]
satisfying the following two conditions.
\begin{itemize}
\item $|i_1|,\ldots,|i_n|$ is a permutation of $1,\ldots,n$.
\item The number of negative integers is even.
\end{itemize}
We often denote $-i$ by $\underline{i}$. The simple reflections in $W$ are given by
\[s_{-1}=[\underline{2},\underline{1},3,4,\ldots,n]\ \mbox{ and }\ s_\ell=[1,\ldots,\ell-1,\ell+1,\ell,\ell+2,\ldots,n]\]
with $\ell\in\{1,\ldots,n-1\}$. The length of $w=[i_1,\ldots,i_n]$ is given by
\[\ell(w)=\#\{1\le \ell<m\le n\mid i_\ell>i_{m}\}+\#\{1\le \ell<m\le n\mid -i_\ell>i_{m}\}.\]
For $\ell\in\{1,\ldots,n-1\}$, $\ell(ws_\ell)-\ell(w)$ is $1$ if $i_\ell<i_{\ell+1}$, and $-1$ otherwise.
Moreover $\ell(ws_{-1})-\ell(w)$ is $1$ if $-i_1<i_2$, and $-1$ otherwise.
Therefore an element $w=[i_1,\ldots,i_n]\in W$ is join-irreducible if and only if one of the following conditions is satisfied. \begin{itemize}
\item $i_1<\cdots<i_n$ and $-i_1>i_2$.
\item There exists $\ell\in\{1,\ldots,n-1\}$ such that $i_1<\cdots<i_\ell>i_{\ell+1}<\cdots<i_n$ and $-i_1<i_2$.
\end{itemize}
We say that $w$ is of \emph{type $-1$} (respectively, \emph{type $\ell$}) if the first (respectively, second) condition is satisfied. 
Then there exists a unique arrow $w\to ws_\ell$ starting at $w$ in the Hasse quiver of $W$.
The number of join-irreducible elements of type $\ell\neq\pm1$ (respectively, $1$, $-1$) is
\[2^{n-\ell}{n\choose\ell}-1\ \ \ (\mbox{respectively, $2^{n-1}-1$, $2^{n-1}-1$}),\]
and therefore $\#\jirr W=3^n-n2^{n-1}-n-1$.

As a quiver representation, $P_\ell$ for $\ell=\pm1$ is given by
\[{\tiny\xymatrix@R1em@C2.5em{
\pm1\ar[d]&&&&&\\
2\ar[r]\ar[d]&\mp1\ar[d]&&&&\\
3\ar[r]\ar[d]&2\ar[r]\ar[d]&\pm1\ar[d]&&&\\
\raisebox{-2pt}[8pt][3pt]{\vdots}\ar[d]&\raisebox{-2pt}[8pt][3pt]{\vdots}\ar[d]&\raisebox{-2pt}[8pt][3pt]{\vdots}\ar[d]&\raisebox{-2pt}{$\ddots$}&&\\
n{-}3\ar[r]\ar[d]&n{-}4\ar[r]\ar[d]&n{-}5\ar[r]\ar[d]&\cdots\ar[r]&\pm({-}1)^{n}\ar[d]&\\
n{-}2\ar[r]\ar[d]&n{-}3\ar[r]\ar[d]&n{-}4\ar[r]\ar[d]&\cdots\ar[r]&2\ar[r]\ar[d]&\mp({-}1)^{n}\ar[d]&\\
n{-}1\ar[r]&n{-}2\ar[r]&n{-}3\ar[r]&\cdots\ar[r]&3\ar[r]&2\ar[r]&\pm({-}1)^n
}}
\]
where each number $i$ shows a $k$-vector space $k$ lying on the vertex $i$, and each arrow is the identity map of $k$.
Again, submodules (respectively, factor modules) of $P_\ell$ correspond bijectively to subquivers that are closed under successors (respectively, predecessors).

We represent $P_\ell$ in abbreviated form as an array of numbers in rows as follows:
\[P_\ell={\tiny\begin{array}{|ccccccc|}
\hline
\pm1&&&&&&\\ \hline
2&\mp1&&&&&\\ \hline
3&2&\pm1&&&&\\ \hline
\vdots&\vdots&\vdots&\ddots&&&\\ \hline
n{-}3&n{-}4&n{-}5&\cdots&\pm(-1)^{n}&&\\ \hline
n{-}2&n{-}3&n{-}4&\cdots&2&\mp(-1)^{n}&\\ \hline
n{-}1&n{-}2&n{-}3&\cdots&3&2&\pm(-1)^n\\ \hline
\end{array}.}\]
Submodules and factor modules of $P_\ell$ are again represented by subarrays.

\begin{theorem}\label{define X(w) for D2}
Let $w=[i_1,\ldots,i_n]\in W$ be a join-irreducible element of type $\ell=\pm1$.
Then $J(w)$ is a factor module of $P_\ell$ which has the form
\[J(w)={\tiny\begin{array}{|cccccccc|}
\hline
i'_2&&&&&&&\\ \hline
2&i'_3&&&&&&\\ \hline
3&\cdots&i'_4&&&&&\\ \hline
\vdots&\vdots&\vdots&\ddots&&&&\\ \hline
n{-}2&n{-}3&\cdots&\cdots&i'_{n{-}1}&&&\\ \hline
n{-}1&n{-}2&\cdots&i'_n&&&&\\ \hline
\end{array}.}\]
where $i'_m:=\max\{i_m,(-1)^m\ell\}$ for $m\in\{2,\ldots,n\}$.
\end{theorem}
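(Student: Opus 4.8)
The plan is to imitate the type~$A$ argument (Lemma~\ref{expression for A} together with the proof of Theorem~\ref{define X(w) for A}): first produce a convenient reduced expression for $w$, and then compute the ideal $I(w)e_\ell$ one factor of the reduced word at a time, reading off the array of $J(w)=(\Pi/I(w))e_\ell=P_\ell/I(w)e_\ell$ at the end. Throughout, $\ell\in\{1,-1\}$ is the unique index with $ws_\ell<w$, and the two cases $\ell=1$ and $\ell=-1$ are treated simultaneously with the sign conventions of the statement. Alternatively, one may prove the case $\ell=-1$ and then apply the Dynkin-diagram automorphism of $D_n$ exchanging the two fork vertices: it fixes $\Pi$ and $W$, preserves the weak order and the ideals $I(w)$ up to the induced algebra automorphism, exchanges the two kinds of join-irreducible element, and carries $P_{-1}$ to $P_1$.

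The first step is the analogue of Lemma~\ref{expression for A}: for $w=[i_1,\ldots,i_n]$ join-irreducible of type~$\pm1$ there is a reduced expression
\[
w=x_n x_{n-1}\cdots x_3 x_2,
\]
in which each factor is a ``descending run'' of simple reflections ending at $s_{m-1}$ --- for $3\le m\le n$ an ordinary run $s_p s_{p+1}\cdots s_{m-1}$ with $p\ge1$ when the value $i_m$ is positive, and a run passing through the fork (hence involving $s_{-1}$) when $i_m$ is negative --- and $x_2$ is a short word in $s_1,s_{-1}$ determined by $i_1$ and $i_2$. As in type~$A$, this is proved by induction on $n$ by stripping off the leftmost factor $x_n$, which controls position~$n$: one checks that $x_n^{-1}w$ fixes $\pm n$, restricts to a join-irreducible element of type~$\pm1$ in the parabolic $\langle s_{-1},s_1,\ldots,s_{n-2}\rangle\simeq W(D_{n-1})$, and that lengths add (using Lemma~\ref{additivity}), so the expression is reduced. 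The point needing care is that the largest value $i_n$ need not be $+n$ but may be the value $n$ carrying a minus sign, which occurs precisely in the ``deeply negative'' configurations where many of the $i_m$ are negative; in that situation $x_n$ must also route through the fork. The small-rank cases serve as the base of the induction and can be checked by hand.

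With the reduced word in hand I would compute $I(w)e_\ell$ iteratively, writing $I(w)=I(x_n)\cdots I(x_2)$ and displaying the arrays $I(x_2)e_\ell$, $I(x_3)\,(I(x_2)e_\ell)$, $I(x_4x_3)\,(I(x_2)e_\ell)$, and so on, exactly as in the proof of Theorem~\ref{define X(w) for A}. An ordinary run $I(x_m)=I_pI_{p+1}\cdots I_{m-1}$ ``clips'' the currently visible array from below, the new bottom row being governed by $i'_m=\max\{i_m,(-1)^m\ell\}$; the $\max$ and the alternating sign are both forced by the fork. Concretely, the description of $P_\ell$ recalled before the theorem shows that its $m$-th row terminates at the fork vertex $(-1)^{m-1}\ell$, so a value $i_m$ that is too negative cannot be reached and the row is capped instead at the fork vertex $(-1)^m\ell$ one row later --- this is the $\max$ --- while the sign alternates because consecutive passes near the fork alternate between the vertices $1$ and $-1$. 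The fork factor $x_2$ is handled as in Proposition~\ref{layer poly}, by computing $I(x_2)e_\ell$ through an explicit calculation for the preprojective algebra of type $D_3$ (equivalently $A_3$) at the fork and then transporting the result along $I(x_n\cdots x_3)\otimes_\Pi-$ (a functor on modules since lengths add, by Lemma~\ref{additivity}).

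I expect the main obstacle to be the bookkeeping at the fork: after each multiplication one must track which of the two fork vertices $1$ or $-1$ currently sits at the end of each row of the array, so that the case $\ell=\pm1$ and the parity-dependent cap $(-1)^m\ell$ come out with the correct signs; and one must check that the reduced-expression induction really does survive the deeply negative configurations, where the value sequence dips below the fork and the $\max$ is genuinely active --- this is the new phenomenon absent from type~$A$. Once the sign conventions at the fork are pinned down, the remainder is a routine iteration of the kind already carried out for type~$A$.
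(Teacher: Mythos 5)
Your plan is essentially the paper's proof: the paper first establishes exactly the reduced expression $w=x_nx_{n-1}\cdots x_3x_2$ you describe (Lemma~\ref{expression for D2}, proved by induction on $n$ by stripping off $x_n$, with the ``deeply negative'' case $i_n<0$ handled separately since it forces $w=[(-1)^{n-1}n,\underline{n-1},\ldots,\underline{1}]$), and then computes $I(w)e_\ell$ factor by factor exactly as in the proof of Theorem~\ref{define X(w) for A}. The fork bookkeeping you flag is precisely what the formula $x_m=s_{(-1)^m\ell}s_2\cdots s_{m-1}$ for $i_m<2$ encodes, and it produces the cap $i'_m=\max\{i_m,(-1)^m\ell\}$ as you predict.
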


Note that $i'_m\le m$ holds for any $m\in\{2,\ldots,n\}$, and if $i'_{m}=m$ holds, then the row starting at $m-1$ is empty.
For example, for $n=6$ and $w=[5\underline{3}\underline{1}246]$, then $\ell=1$ and
$J(w)={\tiny\begin{array}{|ccccc|}\hline 1&&&&\\ \hline 2&-1&&&\\ \hline 3&2&&&\\ \hline 4&&&&\\ \hline &&&&\\ \hline\end{array}}={\tiny\begin{xy}
0;<4pt,0pt>:<0pt,3pt>::
(15,-7.5)*+{4}="44",
(15,-2.5)*+{3}="45",
(15,2.5)*+{2}="46",
(15,7.5)*+{1}="47",
(20,-2.5)*+{2}="55",
(20,2.5)*+{-1}="56",
\ar@<0pt>"45";"44",
\ar@<0pt>"46";"45",
\ar@<0pt>"47";"46",
\ar@<0pt>"56";"55",

\ar@<0pt>"45";"55",
\ar@<0pt>"46";"56",
\end{xy}}$.

To prove this, we need the following observation.

\begin{lemma}\label{expression for D2}
Let  $w=[i_1,\ldots,i_{n}]\in W$ be a join-irreducible element of type $\ell=\pm1$.
Then we have a reduced expression $w=x_{n}x_{n-1}\cdots x_3x_2$, where
\[x_m=\left\{\begin{array}{ll}
s_{i_{m}}s_{i_{m}+1}\cdots s_{m-2}s_{m-1}&\mbox{if $i_{m}\ge2$}\\
s_{(-1)^m\ell}s_{2}\cdots s_{m-2}s_{m-1}&\mbox{if $i_{m}<2$}
\end{array}\right.\ \mbox{ for }\ m\in\{2,\ldots,n\}.\]
\end{lemma}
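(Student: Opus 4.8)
The plan is to imitate the proof of Lemma~\ref{expression for A}, arguing by induction on $n$. Set $v:=x_n^{-1}w$; then the assertion $w=x_nx_{n-1}\cdots x_2$ (reduced) will follow once we know that $v$ is a join-irreducible element of type $\ell$ in a Weyl group of type $D_{n-1}$ to which the inductive hypothesis applies, and that $\ell(w)=\ell(x_n)+\ell(v)$. For the base case $n=4$ one checks the claim directly, or runs the same reduction once more into the rank-three parabolic $\langle s_{-1},s_1,s_2\rangle$, which is a Weyl group of type $A_3$.

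For the inductive step I would first write $x_n$ out in one-line (signed permutation) notation. When $i_n\ge2$ the element $x_n=s_{i_n}s_{i_n+1}\cdots s_{n-1}$ is an ordinary permutation: it fixes every value outside the block $\{i_n,i_n+1,\dots,n\}$ and its mirror block $\{-n,\dots,-i_n\}$, and rotates these blocks, so that $x_n^{-1}$ sends $i_n\mapsto n$ and $i_n+k\mapsto i_n+k-1$ for $1\le k\le n-i_n$, and likewise with signs reversed. When $i_n<2$, one first records the numerical fact that, because $|i_1|,\dots,|i_n|$ is a permutation of $1,\dots,n$ and the number of negative entries is even, a join-irreducible element of type $\ell=\pm1$ with $i_n<2$ necessarily satisfies $i_n=(-1)^n\ell$; one then computes $x_n=s_{(-1)^n\ell}s_2s_3\cdots s_{n-1}$ as a signed permutation passing once through the branch vertex, and reads off the corresponding explicit description of $x_n^{-1}$.

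With $x_n^{-1}$ in hand, there are three points to verify. First, $x_n^{-1}(i_n)=n$, so $v=x_n^{-1}w$ fixes $n$ (hence $\pm n$) and therefore lies in the standard parabolic subgroup $\langle s_{-1},s_1,\dots,s_{n-2}\rangle$, a Weyl group of type $D_{n-1}$ (of type $A_3$ when $n=4$). Second, since in types $\pm1$ every entry $i_m$ with $2\le m\le n-1$ lies strictly below $i_n$, a somewhat involved but elementary case analysis shows that $x_n^{-1}$ restricts to an order-preserving bijection on the set $\{i_1,\dots,i_{n-1}\}$ of values of $w$ at the first $n-1$ positions, with $v(m)=i_m$ whenever $i_m\ge2$ and $v(m)<2$ whenever $i_m<2$; hence $v$, read off on its first $n-1$ positions, is again a join-irreducible element of type $\ell$, and — because the recipe defining $x_m$ for $2\le m\le n-1$ uses $i_m$ only through its exact value in the range $i_m\ge2$, and otherwise only through the inequality $i_m<2$ — the factors $x_{n-1},\dots,x_2$ attached to $w$ coincide with those attached to $v$. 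The inductive hypothesis applied to $v$ then gives $v=x_{n-1}x_{n-2}\cdots x_2$, so $w=x_nx_{n-1}\cdots x_2$. Third, reducedness: one checks that $x_n$ is a reduced word as written and that $\ell(w)=\ell(x_n)+\ell(v)$, for instance by comparing the inversion counts of $w$ and $v$ via the length formula $\ell(w)=\#\{\ell<m:i_\ell>i_m\}+\#\{\ell<m:-i_\ell>i_m\}$ recalled above.

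The main obstacle is the case $i_n<2$, where the signed bookkeeping through the fork of the $D_n$ diagram is delicate: one needs the parity identity $i_n=(-1)^n\ell$ even to recognise the shape of $x_n$; the order-preservation statement for $x_n^{-1}$ on $\{i_1,\dots,i_{n-1}\}$ must be checked separately and is complicated by entries $i_m$ that are negative with large absolute value; and the split of $\ell(w)$ into $\ell(x_n)+\ell(v)$ is less transparent than in type $A$. One must also treat the small-rank degenerations ($D_3\simeq A_3$, and formally $D_2$) that appear as the recursion bottoms out.
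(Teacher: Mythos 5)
Your plan is correct and follows essentially the same route as the paper's proof: induction on $n$ with $v:=x_n^{-1}w$, checking that $v$ fixes $n$, is again join-irreducible of type $\ell$ with the sign pattern needed so that the factors $x_{n-1},\dots,x_2$ attached to $v$ agree with those attached to $w$, and that lengths add. The parity identity $i_n=(-1)^n\ell$ for $i_n<2$ that you single out is indeed correct (and is implicitly what the paper uses in the boundary case $i_n=1$); the paper's only shortcut is observing that $i_n<0$ forces $w=[(-1)^{n-1}n,\underline{n-1},\dots,\underline{1}]$, reducing that case to one explicit computation.
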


\begin{proof}
The case $n=2$ can be checked directly. In the rest, we assume $n\ge3$.
Assume that the assertion holds for $n-1$.
Again,  let $v:=x_{n}^{-1}w$.

Assume $i_n>0$. 
Then $v$ is obtained from $w$ by replacing $i_n$ with $n$ and then, for each $i_j$ with $i_n<|i_j|$ replacing $i_j$ by an entry with the same sign but absolute value $|i_j|-1$.
In particular, $v$ is a join-irreducible element of type $\ell$ and satisfies $v(n)=n$.
Fix $m\in\{2,\ldots,n-1\}$. If $i_m>0$, then $v(m)=i_m$ holds, and if $i_m<0$, then $v(m)<0$ holds.
By our assumption on induction, we have $v=x_{n-1}\cdots x_2$, and therefore $w=x_nx_{n-1}\cdots x_2$.

Assume $i_n<0$. In this case, we have $w=[(-1)^{n-1}n,\underline{n-1},\underline{n-2},\ldots,\underline{1}]$ and $\ell=(-1)^{n-1}$. Thus $x_{n}^{-1}=[\underline{n},\underline{1},2,\ldots,n-1]$ and $v=[(-1)^{n-1}(n-1),\underline{n-2},\ldots,\underline{2},1,n]$ hold.
Therefore $v$ is a join-irreducible element of type $\ell$ and satisfies $v(n)=n$.
Since $v(m)<2$ holds for any $m\in\{2,\ldots,n-1\}$, we have $v=x_{n-1}\cdots x_2$ by our assumption on induction, and therefore $w=x_nx_{n-1}\cdots x_2$.
\end{proof}

Now we are ready to prove Theorem \ref{define X(w) for D2}.

\begin{proof}[Proof of Theorem \ref{define X(w) for D2}]
Using Lemma \ref{expression for D2}, one can calculate $I(w)$ as in the proof of Theorem \ref{define X(w) for A}, and we obtain the desired assertion.
\end{proof}

\begin{example}
Let $w=[n,\underline{n-1},\underline{n-2},\ldots,\underline{2},(-1)^n]$ be a join-irreducible element of type $1$.
Then we have a reduced expression
\[w=x_nx_{n-1}\cdots x_{3}x_{2}\ \mbox{ for }\ x_m=s_{(-1)^m}s_2s_3\cdots s_{m-1},\]
and the corresponding indecomposable $\tau^-$-rigid $\Pi$-module is $J(w)=P_1$.

Similarly, let $w=[\underline{n},\underline{n-1},\underline{n-2},\ldots,\underline{2},(-1)^{n+1}]$ be a join-irreducible element of type $-1$. Then we have a reduced expression
\[w=x_nx_{n-1}\cdots x_{3}x_{2}\ \mbox{ for }\ x_m=s_{(-1)^{m+1}}s_2s_3\cdots s_{m-1},\]
and the corresponding indecomposable $\tau^-$-rigid $\Pi$-module is $J(w)=P_{-1}$.
\end{example}

\begin{example}
Consider type $D_4$. Below we show the expression given in Lemma \ref{expression for D2} by using dots: $w=x_{n}\cdot x_{n-1}\cdot\cdots\cdot x_{3}\cdot x_{2}$.

We have the following 7 join-irreducible elements of type $1$.

\noindent
$J(2134)=J(s_1)={\tiny\begin{array}{|c|}\hline 1\\ \hline\end{array}}$,\ \ \ 
$J(3124)=J(s_2\cdot s_1)={\tiny\begin{array}{|c|}\hline 1\\ \hline 2\\ \hline\end{array}}$,

\noindent
$J(3\underline{2}\underline{1}4)=J(s_{-1}s_2\cdot s_1)={\tiny\begin{array}{|cc|}\hline 1&\\ \hline 2&-1\\ \hline\end{array}}$,\ \ \ $J(4123)=J(s_3\cdot s_2\cdot s_1)={\tiny\begin{array}{|c|}\hline 1\\ \hline 2\\ \hline 3\\ \hline\end{array}}$,

\noindent
$J(4\underline{2}\underline{1}3)=J(s_3\cdot s_{-1}s_2\cdot s_1)={\tiny\begin{array}{|cc|}\hline 
1&\\ \hline 2&-1\\ \hline 3&\\ \hline\end{array}}$,\ \ \ 
$J(4\underline{3}\underline{1}2)=J(s_2s_3\cdot s_{-1}s_2\cdot s_1)={\tiny\begin{array}{|cc|}\hline 1&\\ \hline 2&-1\\ \hline 3&2\\ \hline\end{array}}$,

\noindent
$J(4\underline{3}\underline{2}1)=J(s_1s_2s_3\cdot s_{-1}s_2\cdot s_1)={\tiny\begin{array}{|ccc|}\hline 1&&\\ \hline 2&-1&\\ \hline 3&2&1\\ \hline\end{array}}$.

We have the following 7 join-irreducible elements of type $-1$.

\noindent
$J(\underline{2}\underline{1}34)=J(s_{-1})={\tiny\begin{array}{|c|}\hline -1\\ \hline\end{array}}$,\ \ \ 
$J(\underline{3}\underline{1}24)=J(s_2\cdot s_{-1})={\tiny\begin{array}{|c|}\hline -1\\ \hline 2\\ \hline\end{array}}$,

\noindent
$J(\underline{3}\underline{2}14)=J(s_1s_2\cdot s_{-1})={\tiny\begin{array}{|cc|}\hline -1&\\ \hline2&1\\ \hline\end{array}}$,\ \ \ $J(\underline{4}\underline{1}23)=J(s_3\cdot s_2\cdot s_{-1})={\tiny\begin{array}{|c|}\hline -1\\ \hline 2\\ \hline 3\\ \hline\end{array}}$,

\noindent
$J(\underline{4}\underline{2}13)=J(s_3\cdot s_1s_2\cdot s_{-1})={\tiny\begin{array}{|cc|}\hline -1&\\ \hline 2&1\\ \hline 3&\\ \hline\end{array}}$,\ \ \ 
$J(\underline{4}\underline{3}12)=J(s_2s_3\cdot s_1s_2\cdot s_{-1})={\tiny\begin{array}{|cc|}\hline -1&\\ \hline 2&1\\ \hline 3&2\\ \hline\end{array}}$,

\noindent
$J(\underline{4}\underline{3}\underline{2}\underline{1})=J(s_{-1}s_2s_3\cdot s_1s_2\cdot s_{-1})={\tiny\begin{array}{|ccc|}\hline -1&&\\ \hline 2&1&\\ \hline 3&2&-1\\ \hline\end{array}}$.
\end{example}

In the rest of this section, let $w$ be a join-irreducible element $i_1<\cdots<i_\ell>i_{\ell+1}<\cdots<i_n$ of type $\ell\neq\pm1$.
Note that all integers in $i_2,\ldots,i_\ell$ must be positive, and therefore $w$ can be recovered from the latter part $i_{\ell+1},\ldots,i_n$.

We need the following preparation on the structure of $P_\ell$.

\begin{lemma}
Let $\alpha$ and $\beta$ be scalars satisfying $\alpha+\beta=1$.
As a quiver representation, $P_\ell$ with $\ell\neq\pm1$ is given by Figure \ref{Pell}
where each number $i$ shows a $k$-vector space $k$ lying on the vertex $i$ if $i\ge-1$ and $-i$ if $i\le-2$.
Each unlabelled arrow is the identity map of $k$, and
each arrow labelled by a scalar $\gamma$ is a linear map multiplying by $\gamma$.
\end{lemma}

\begin{proof}
Since all relations of $\Pi$ are satisfied, this gives a $\Pi$-module $X$.
It is easy to check that $X$ is generated by $\ell$ in the upper left corner.
Thus we have a surjective morphism $\pi:P_\ell\to X$ of $\Pi$-modules.
On the other hand, it is well-known that the Loewy length of any indecomposable projective $\Pi$-module is equal to $h-1$,
where $h$ is the Coxeter number. For type $D_n$, we have $h-1=2n-3$.
Since the length of the path from $\ell$ in the left corner to $-\ell$ in the lower right corner is $2n-4$,
the above $\pi$ must be an isomorphism.
\end{proof}

\begin{example}
Let $n=6$. Then $P_2$ is given by the following quiver representations,
where the left one is the case $(\alpha,\beta)=(0,1)$, and the right one is the case $(\alpha,\beta)=(1,0)$.
\begin{equation*}
{\tiny\begin{xy}
0;<6pt,0pt>:<0pt,5pt>::
(15,10)*+{5}="43",
(15,15)*+{4}="44",
(15,20)*+{3}="45",
(15,25)*+{2}="46",
(20,10)*+{4}="53",
(20,15)*+{3}="54",
(20,20)*+{2}="55",
(20,26)*+{1}="56",
(20,24)*+{{-}1}="56-",
(25,10)*+{3}="63",
(25,15)*+{2}="64",
(25,21)*+{{-}1}="65",
(25,19)*+{1}="65-",
(25,25)*+{{-}2}="66",
(30,10)*+{2}="73",
(30,16)*+{1}="74",
(30,14)*+{{-}1}="74-",
(30,20)*+{{-}2}="75",
(30,25)*+{{-}3}="76",
(35,11)*+{{-}1}="83-",
(35,9)*+{1}="83",
(35,15)*+{{-}2}="84",
(35,20)*+{{-}3}="85",
(35,25)*+{{-}4}="86",
(40,10)*+{{-}2}="93",
(40,15)*+{{-}3}="94",
(40,20)*+{{-}4}="95",
(40,25)*+{{-}5}="96",
\ar@<0pt>"44";"43",
\ar@<0pt>"45";"44",
\ar@<0pt>"46";"45",
\ar@<0pt>"54";"53",
\ar@<0pt>"55";"54",
\ar@<0pt>"56-";"55",
\ar@<0pt>"64";"63",
\ar@<0pt>"65-";"64",
\ar@<0pt>"66";"65",
\ar@<0pt>"74-";"73",
\ar@<0pt>"75";"74",
\ar@<0pt>"76";"75",
\ar@<0pt>"84";"83-",
\ar@<0pt>"85";"84",
\ar@<0pt>"86";"85",
\ar@<0pt>"94";"93",
\ar@<0pt>"95";"94",
\ar@<0pt>"96";"95",

\ar@<0pt>"43";"53",
\ar@<0pt>"44";"54",
\ar@<0pt>"45";"55",
\ar@<0pt>"46";"56",
\ar@<0pt>"46";"56-",
\ar@<0pt>"53";"63",
\ar@<0pt>"54";"64",
\ar@<0pt>"55";"65",
\ar@<0pt>"55";"65-",
\ar@<0pt>"56";"66",
\ar@<0pt>_{-1}"56-";"66",
\ar@<0pt>"63";"73",
\ar@<0pt>"64";"74",
\ar@<0pt>"64";"74-",
\ar@<0pt>"65";"75",
\ar@<0pt>_{-1}"65-";"75",
\ar@<0pt>"66";"76",
\ar@<0pt>"73";"83",
\ar@<0pt>"73";"83-",
\ar@<0pt>"74";"84",
\ar@<0pt>_{-1}"74-";"84",
\ar@<0pt>"75";"85",
\ar@<0pt>"76";"86",
\ar@<0pt>_{-1}"83";"93",
\ar@<0pt>"83-";"93",
\ar@<0pt>"84";"94",
\ar@<0pt>"85";"95",
\ar@<0pt>"86";"96",
\end{xy}
\ \ \ \ \ \begin{xy}
0;<6pt,0pt>:<0pt,5pt>::
(15,10)*+{5}="43",
(15,15)*+{4}="44",
(15,20)*+{3}="45",
(15,25)*+{2}="46",
(20,10)*+{4}="53",
(20,15)*+{3}="54",
(20,20)*+{2}="55",
(20,26)*+{{-}1}="56",
(20,24)*+{1}="56-",
(25,10)*+{3}="63",
(25,15)*+{2}="64",
(25,21)*+{1}="65",
(25,19)*+{{-}1}="65-",
(25,25)*+{{-}2}="66",
(30,10)*+{2}="73",
(30,16)*+{{-}1}="74",
(30,14)*+{1}="74-",
(30,20)*+{{-}2}="75",
(30,25)*+{{-}3}="76",
(35,11)*+{1}="83-",
(35,9)*+{{-}1}="83",
(35,15)*+{{-}2}="84",
(35,20)*+{{-}3}="85",
(35,25)*+{{-}4}="86",
(40,10)*+{{-}2}="93",
(40,15)*+{{-}3}="94",
(40,20)*+{{-}4}="95",
(40,25)*+{{-}5}="96",
\ar@<0pt>"44";"43",
\ar@<0pt>"45";"44",
\ar@<0pt>"46";"45",
\ar@<0pt>"54";"53",
\ar@<0pt>"55";"54",
\ar@<0pt>"56-";"55",
\ar@<0pt>"64";"63",
\ar@<0pt>"65-";"64",
\ar@<0pt>^{-1}"66";"65",
\ar@<0pt>"74-";"73",
\ar@<0pt>^{-1}"75";"74",
\ar@<0pt>"76";"75",
\ar@<0pt>^{-1}"84";"83-",
\ar@<0pt>"85";"84",
\ar@<0pt>"86";"85",
\ar@<0pt>"94";"93",
\ar@<0pt>"95";"94",
\ar@<0pt>"96";"95",

\ar@<0pt>"43";"53",
\ar@<0pt>"44";"54",
\ar@<0pt>"45";"55",
\ar@<0pt>"46";"56",
\ar@<0pt>"46";"56-",
\ar@<0pt>"53";"63",
\ar@<0pt>"54";"64",
\ar@<0pt>"55";"65",
\ar@<0pt>"55";"65-",
\ar@<0pt>^{-1}"56";"66",
\ar@<0pt>"56-";"66",
\ar@<0pt>"63";"73",
\ar@<0pt>"64";"74",
\ar@<0pt>"64";"74-",
\ar@<0pt>^{-1}"65";"75",
\ar@<0pt>"65-";"75",
\ar@<0pt>"66";"76",
\ar@<0pt>"73";"83",
\ar@<0pt>"73";"83-",
\ar@<0pt>^{-1}"74";"84",
\ar@<0pt>"74-";"84",
\ar@<0pt>"75";"85",
\ar@<0pt>"76";"86",
\ar@<0pt>"83";"93",
\ar@<0pt>^{-1}"83-";"93",
\ar@<0pt>"84";"94",
\ar@<0pt>"85";"95",
\ar@<0pt>"86";"96",
\end{xy}}\end{equation*}
\end{example}

We write the quiver $P_\ell$ of Figure~\ref{Pell} in abbreviated form as the following array of numbers.
\begin{equation}\label{Pell array}{\tiny\begin{array}{|ccccccccccccc|}
\hline
\ell&\ell{-}1&\cdots&2&{\begin{smallmatrix}1\\ {-}1\end{smallmatrix}}&{-}2&\cdots&1{-}\ell&{-}\ell&{-}\ell{-}1&\cdots&2{-}n&1{-}n\\ \hline
\ell{+}1&\ell&\cdots&3&2&{\begin{smallmatrix}1\\ {-}1\end{smallmatrix}}&\cdots&2{-}\ell&1{-}\ell&{-}\ell&\cdots&3{-}n&2{-}n\\ \hline
\vdots&\vdots&\ddots&\ddots&\ddots&\ddots&\ddots&\ddots&\ddots&\ddots&
\ddots&\vdots&\vdots\\ \hline
n{-}2&n{-}3&\cdots&\ell&\ell{-}1&\ell{-}2&\cdots&{\begin{smallmatrix}1\\ {-}1\end{smallmatrix}}&{-}2&{-}3&\cdots&{-}\ell&{-}\ell{-}1\\ \hline
n{-}1&n{-}2&\cdots&\ell{+}1&\ell&\ell{-}1&\cdots&2&{\begin{smallmatrix}1\\ {-}1\end{smallmatrix}}&{-}2&\cdots&1{-}\ell&{-}\ell\\ \hline
\end{array}.}
\end{equation}

The description of factor $P_{\pm1}$ was no more complicated than the analogous description in type $A$.
However, for $\ell>1$, the description of factor modules of $P_\ell$ is much more complicated in type $D$.
For example, consider the direct sum $k^2$ of two copies of $k$ corresponding to $-2$ in the first row and $2$ in the second row.
Then subspaces of $k^2$ generate distinct submodules of $P_\ell$.
Fortunately, to describe $J(w)$ for $w$ join-irreducible, we only need the following special class of factor modules.

\begin{definition}\label{successor-closed}
Let $S$ be a subarray (that is, an arbitrary subset of the entries) of the array \eqref{Pell array}.
We say that $S$ is \newword{predecessor-closed} if it is closed under predecessors in the quiver Figure~\ref{Pell}.

Now we fix scalars $\alpha$ and $\beta$ satisfying $\alpha+\beta=1$.
We say that $S$ is
\newword{$(\alpha,\beta)$-predecessor-closed} if it is closed under predecessors in the subquiver of Figure~\ref{Pell} obtained by removing all arrows indexed by the scalar $0$.
An $(\alpha,\beta)$-predecessor-closed subarray $S$ gives a factor module of $P_\ell$ in a natural way.
Clearly, if $(\alpha,\beta)\neq(1,0),(0,1)$, then $S$ is $(\alpha,\beta)$-predecessor-closed if and only if it is predecessor-closed.
\end{definition}

For $m\in\{2,\ldots,n-1\}$, let $C(m,j)$ be the following subset of numbers in the row of \eqref{Pell array} starting at $m$.
\[{\footnotesize C(m,j):=\left\{\begin{array}{ll}
\emptyset&j>m,\\
\begin{array}{|cccccc|}
\hline m&m{-}1&\cdots&j{+}2&j{+}1&j\\ \hline
\end{array}&m\ge j\ge 1,\\
\begin{array}{|cccccc|}
\hline m&m{-}1&\cdots&3&2&{-}1\\ \hline
\end{array}&j={-}1,\\
\begin{array}{|cccccc|}
\hline 
m&m{-}1&\cdots&3&2&{\begin{smallmatrix}1\\ {-}1\end{smallmatrix}}\\ \hline
\end{array}&j={-}2,\\
\begin{array}{|ccccccccccc|}
\hline 
m&m{-}1&\cdots&3&2&{\begin{smallmatrix}1\\ {-}1\end{smallmatrix}}&{-}2&{-}3&\cdots&j{+}2&j{+}1\\ \hline
\end{array}&j\le {-}3.
\end{array}\right.}\]
We simply write $C(m,j)$ as $\begin{array}{|ccccc|}
\hline m&m{-}1&m{-}2&\cdots&j'\\ \hline
\end{array}$, where $j':=j$ if $j\ge-1$ and $j':=j+1$ if $j\le -2$.

Let $S(w)$ be the subarray of the array \eqref{Pell array} given by
\begin{equation} \label{S def}
S(w)={\tiny\begin{array}{|c|}\hline C(\ell,i_{\ell{+}1})\\ \hline C(\ell{+}1,i_{\ell{+}2})\\ \hline C(\ell{+}2,i_{\ell{+}3})\\ \hline\vdots\\ \hline C(n{-}2,i_{n{-}1})\\ \hline C(n{-}1,i_n)\\ \hline\end{array}}
={\tiny\begin{array}{|ccccccccc|}
\hline
\ell&\ell{-}1&\ell{-}2&\cdots&\cdots&\cdots&\cdots&i'_{\ell{+}1}&\\ \hline
\ell{+}1&\ell&\ell{-}1&\cdots&\cdots&\cdots&i'_{\ell{+}2}&&\\ \hline
\ell{+}2&\ell{+}1&\ell&\cdots&\cdots&i'_{\ell{+}3}&&&\\ \hline
\vdots&\vdots&\vdots&\vdots&\vdots&&&&\\ \hline
n{-}2&\cdots&\cdots&i'_{n{-}1}&&&&&\\ \hline
n{-}1&\cdots&i'_{n}&&&&&&\\ \hline
\end{array}.}\end{equation}
Note that $i'_m\le m$ holds for any $m\in\{\ell+1,\ldots,n\}$, and if $i'_{m}=m$ holds, then the row starting at $m-1$ is empty.

\begin{theorem}\label{define X(w) for D1}
Let $w=[i_1,\ldots,i_n]\in W$ be a join-irreducible element of type $\ell\neq\pm1$, and $S(w)$ subarray of the array \eqref{Pell array} described in \eqref{S def}.
\begin{enumerate}[\rm(a)]
\item If $\{1,2\}\not\subset\{|i_{\ell+1}|,\ldots,|i_n|\}$, then $S(w)$ is predecessor-closed.
\item If $\{1,2\}\subset\{|i_{\ell+1}|,\ldots,|i_n|\}$, then $S(w)$ is $(1,0)$ or $(0,1)$-predecessor-closed.
\end{enumerate}
In either case, we have a factor module of $P_\ell$ corresponding to $S(w)$.
\begin{enumerate}[\rm(c)]
\item $J(w)$ is the factor module of $P_\ell$ corresponding to $S(w)$.
\end{enumerate}
\end{theorem}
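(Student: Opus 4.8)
The plan is to follow exactly the strategy that proved Theorems~\ref{define X(w) for A} and~\ref{define X(w) for D2}: first produce an explicit reduced expression for the join-irreducible element $w$; then, using Lemma~\ref{additivity}, factor $I(w)$ along that word and compute $I(w)e_\ell$ as an explicit successor-closed subarray of $P_\ell=\Pi e_\ell$ by truncating one row-segment at a time; and finally identify $J(w)=(\Pi/I(w))e_\ell=P_\ell/I(w)e_\ell$ with the complementary predecessor-closed subarray, which should turn out to be precisely $S(w)$ of \eqref{S def}. Parts (a) and (b) are the statement that $S(w)$ genuinely does define a factor module of $P_\ell$, and part (c) is the identification of that factor module with $J(w)$.

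First I would prove the analogue of Lemmas~\ref{expression for A} and~\ref{expression for D2}: for $w=[i_1,\dots,i_n]$ join-irreducible of type $\ell\neq\pm1$ there is a reduced expression $w=x_nx_{n-1}\cdots x_{\ell+1}$, where $x_m$ is the product of simple reflections that ``reads off'' the $m$-th entry, i.e.\ $x_m=s_{i_m}s_{i_m+1}\cdots s_{m-2}s_{m-1}$ when $i_m\ge 1$, and when $i_m<1$ the word instead runs $s_{\pm1}s_2s_3\cdots s_{m-1}$ (passing, when $i_m\le-3$, through the doubled branch at vertex $1$) — in each case exactly matching the shape of $C(m,i_{m+1})$. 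The proof is by induction on $n$: set $v:=x_n^{-1}w$; since $x_n^{-1}$ is (a signed version of) the cycle moving $\pm i_n$ into position $n$, it preserves the linear order among the remaining entries, so $v$ is again join-irreducible of type $\ell$, satisfies $v(n)=n$, and has the prescribed entries in positions $2,\dots,n-1$; the induction hypothesis then gives $v=x_{n-1}\cdots x_{\ell+1}$. Additivity of length along the word (checked using the descent rules for $\ell(ws_k)-\ell(w)$ recalled before the statement) makes it reduced, hence $I(w)=I(x_n)\cdots I(x_{\ell+1})$ by Lemma~\ref{additivity}.

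Next I would compute $I(w)e_\ell$ by multiplying through the word, starting from $I(x_{\ell+1})e_\ell=I_{i_{\ell+1}}\cdots I_\ell\,e_\ell$, which deletes (and shifts the labels of) the initial segment $C(\ell,i_{\ell+1})$ from the top row of $P_\ell$; multiplying by $I(x_{\ell+2})$ then removes $C(\ell+1,i_{\ell+2})$ from the next row, and so on down the array \eqref{Pell array}, just as in the displayed calculation in the proof of Theorem~\ref{define X(w) for A}. Each step is a routine truncation once one has the explicit action of $I_i$ on $P_\ell$ from \eqref{Pell}. The conclusion is that $P_\ell/I(w)e_\ell$ has underlying subarray $S(w)$, which gives (c) once (a) and (b) tell us $S(w)$ really corresponds to a factor module.

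The main obstacle, and the genuinely new feature beyond type $A$ and beyond $P_{\pm1}$, is the fork of $P_\ell$ at the vertices $\pm1$: at the doubled spots of \eqref{Pell} the maps labelled $\alpha,\beta$ mean a subarray need not be predecessor-closed unless one specializes $(\alpha,\beta)$, and the sign twists (the $-1$'s in \eqref{Pell}) must be tracked. I would split into the two cases of the theorem. If $\{1,2\}\not\subset\{|i_{\ell+1}|,\dots,|i_n|\}$, then in each row $S(w)$ stops before it can straddle both branches at the fork, and a direct check on the staircase shape of \eqref{S def} — using $i'_m\le m$ and the unimodality $i_1<\cdots<i_\ell>i_{\ell+1}<\cdots<i_n$ — shows $S(w)$ is predecessor-closed in the full quiver, giving (a). If $\{1,2\}\subset\{|i_{\ell+1}|,\dots,|i_n|\}$, one must determine which of the two branches at $\pm1$ is entered; this is forced by the signs of the entries passing through $2$ and $1$, and one shows the resulting subarray is closed under predecessors after the arrows labelled $0$ in the corresponding $(1,0)$- or $(0,1)$-specialization are deleted, which is (b). The careful sign and branch bookkeeping here — both in this combinatorial step and in matching it to the ideal computation of the previous paragraph — is the part I expect to require the most attention; with (a) and (b) in hand, (c) follows from the iterated ideal computation verbatim as in the proof of Theorem~\ref{define X(w) for D2}.
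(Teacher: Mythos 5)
Your overall strategy --- produce a reduced expression $w=x_n\cdots x_{\ell+1}$ adapted to the rows of $S(w)$, compute $I(w)e_\ell$ factor by factor, and identify $J(w)$ with the complementary subarray --- is exactly the paper's. However, two of the concrete steps you propose break down precisely where the negative entries appear, which is where the content of the theorem lies. First, your word $x_m$ is determined by $i_m$ alone ($s_{i_m}\cdots s_{m-1}$ for $i_m\ge 1$, and ``$s_{\pm1}s_2\cdots s_{m-1}$ through the branch'' otherwise). The correct word (Lemma~\ref{reduced expression for type D}, formula \eqref{xm}) is governed not by $i_m$ but by $j_m:=i_m+\#\{m'\mid m\le m'\le n,\ |i_{m'}|\le|i_m|\}$, which depends on the whole tail of the signed permutation: when $j_m<-1$ the word begins at $s_{-j_m}$ (not at an index readable from $i_m$), and when $j_m=0$ the choice between $s_1$ and $s_{-1}$ is the parity $\epsilon_m=(-1)^{\#\{m'\ge m\,:\,i_{m'}<0\}}$. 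For instance, for $w=[\underline{3}4\underline{5}126]$ in $D_6$ one has $i_3=-5$ but $j_3=-2$ and $x_3=s_2s_{-1}s_1s_2$, which no rule of the form you state produces. Relatedly, the paper's induction peels off $x_{\ell+1}$ by \emph{right} multiplication, reducing to a join-irreducible of type $\ell+1$, rather than peeling off $x_n$ by left multiplication as in Lemmas~\ref{expression for A} and~\ref{expression for D2}; your proposed induction would at minimum have to be restated with the $j_m$-indexed words before it could be checked.

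Second, and more seriously, the assertion that ``multiplying by $I(x_{\ell+2})$ then removes $C(\ell+1,i_{\ell+2})$ from the next row, and so on, just as in type $A$'' is false once some tail entry is negative. In that case $I(x_{\ell+1})e_\ell$ truncates the first row only down to position $j_{\ell+1}-1$, and each subsequent factor $x_m$ with $\ell+1<m\le m_0$ (where $m_0=j_{\ell+1}-i_{\ell+1}+\ell$) cuts the \emph{first} row by one further entry before it stabilizes at $i_{\ell+1}$; only after $m_0$ does one check that $s_{-i_{\ell+1}}$ no longer occurs and the row is left alone. In the example above, three factors $x_3,x_4,x_5$ all contribute to the first row of $I(w)e_2$. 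This sub-induction over $m\le m_0$ (carried out in three cases according to the sign of $j_{\ell+1}$) is the main body of the paper's proof of (c) and has no analogue in the type-$A$ computation you invoke. Your treatment of (a) and (b) is fine --- the paper dismisses these as easily checked --- but as written your plan for (c) would not close.
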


For example, let $n=6$ and $w=[\underline{3}4\underline{5}126]$ and $w'=[34\underline{5}\underline{2}16]$. Then
\[S(w)={\tiny\begin{array}{|cccccc|}\hline 2&{\begin{smallmatrix}1\\ -1\end{smallmatrix}}&-2&-3&-4&\\ \hline 3&2&1&&&\\ \hline 4&3&2&&&\\ \hline &&&&&\\ \hline\end{array}}\ \mbox{ and }\ 
S(w')={\tiny\begin{array}{|cccccc|}\hline 2&{\begin{smallmatrix}1\\ -1\end{smallmatrix}}&-2&-3&-4&\\ \hline 3&2&{\begin{smallmatrix}1\\ -1\end{smallmatrix}}&&&\\ \hline 4&3&2&1&&\\ \hline &&&&&\\ \hline\end{array}}\]
Thus $S(w)$ is $(0,1)$-predecessor-closed and $S(w')$ is $(1,0)$-predecessor-closed, and
\[J(w)={\tiny\begin{xy}
0;<5pt,0pt>:<0pt,4pt>::
(15,-5)*+{4}="44",
(15,0)*+{3}="45",
(15,5)*+{2}="46",
(20,-5)*+{3}="54",
(20,0)*+{2}="55",
(20,6)*+{1}="56",
(20,4)*+{{-}1}="56-",
(25,-5)*+{2}="64",
(25,-1)*+{1}="65-",
(25,5)*+{{-}2}="66",
(30,5)*+{{-}3}="76",
(35,5)*+{{-}4}="86",
\ar@<0pt>"45";"44",
\ar@<0pt>"46";"45",
\ar@<0pt>"55";"54",
\ar@<0pt>"56-";"55",
\ar@<0pt>"65-";"64",

\ar@<0pt>"44";"54",
\ar@<0pt>"45";"55",
\ar@<0pt>"46";"56",
\ar@<0pt>"46";"56-",
\ar@<0pt>"54";"64",
\ar@<0pt>"55";"65-",
\ar@<0pt>"56";"66",
\ar@<0pt>_{-1}"56-";"66",
\ar@<0pt>"66";"76",
\ar@<0pt>"76";"86",
\end{xy}}\ \mbox{ and }\ 
J(w')={\tiny\begin{xy}
0;<5pt,0pt>:<0pt,4pt>::
(15,-5)*+{4}="44",
(15,0)*+{3}="45",
(15,5)*+{2}="46",
(20,-5)*+{3}="54",
(20,0)*+{2}="55",
(20,6)*+{{-}1}="56",
(20,4)*+{1}="56-",
(25,-5)*+{2}="64",
(25,1)*+{1}="65",
(25,-1)*+{{-}1}="65-",
(25,5)*+{{-}2}="66",
(30,5)*+{{-}3}="76",
(30,-6)*+{1}="74",
(35,5)*+{{-}4}="86",
\ar@<0pt>"45";"44",
\ar@<0pt>"46";"45",
\ar@<0pt>"55";"54",
\ar@<0pt>"56-";"55",
\ar@<0pt>"65-";"64",
\ar@<0pt>^{-1}"66";"65",

\ar@<0pt>"44";"54",
\ar@<0pt>"45";"55",
\ar@<0pt>"46";"56",
\ar@<0pt>"46";"56-",
\ar@<0pt>"54";"64",
\ar@<0pt>"55";"65-",
\ar@<0pt>"55";"65",
\ar@<0pt>^{-1}"56";"66",
\ar@<0pt>"56-";"66",
\ar@<0pt>"64";"74",
\ar@<0pt>"66";"76",
\ar@<0pt>"76";"86",
\end{xy}}.\]

To prove Theorem \ref{define X(w) for D1}, we need some preparation.
Let $w=[i_1,\ldots,i_n]\in W$ be a join-irreducible element of type $\ell$. 
For each $m\in\{\ell+1,\ldots,n\}$, let
\[j_m:=i_m+\#\{m'\mid m\le m'\le n,\ |i_{m'}|\le|i_m|\},\]
Clearly $i_m>0$ implies $j_m=i_m+1>1$, and $i_m<0$ implies $j_m\le0$. 
In the latter case, we have $j_m=-\#\{m'\mid1\le m'\le\ell,\ |i_{m'}|\le |i_m|\}$.
Therefore $j_{\ell+1}\le j_{\ell+2}\le\cdots\le j_n$ holds.

\begin{lemma}\label{reduced expression for type D}
Let $w=[i_1,\ldots,i_n]\in W$ be a join-irreducible element of type $\ell\neq\pm1$.
Then we have a reduced expression $w=x_nx_{n-1}\cdots x_{\ell+2}x_{\ell+1}$, where $x_m$ for $m\in\{\ell+1,\ldots,n\}$ is given by
\begin{equation}\label{xm}
x_m=\left\{\begin{array}{ll}
s_{i_m}s_{i_m+1}\cdots s_{m-1}&\mbox{if $j_m>0$,}\\
s_{\epsilon_m}s_{2}s_{3}\cdots s_{m-1}\ \mbox{ for }\ \epsilon_m:=(-1)^{\#\{ m'\mid m\le m'\le n,\ i_{m'}<0\}}&\mbox{if $j_m=0$,}\\
s_{-1}s_{1}s_{2}s_{3}\cdots s_{m-1}&\mbox{if $j_m=-1$,}\\
s_{-j_m}s_{-j_m-1}\cdots s_3s_2s_{-1}s_1s_2s_3\cdots s_{m-1}&\mbox{if $j_m<-1$.}
\end{array}\right.\end{equation}
\end{lemma}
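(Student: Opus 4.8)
The plan is to prove the lemma by induction on $n$, following the pattern of Lemmas~\ref{expression for A} and~\ref{expression for D2}. Since $w$ has type $\ell$ with $2\le\ell\le n-1$, the base case is $\ell=n-1$, where the product $x_nx_{n-1}\cdots x_{\ell+1}$ reduces to the single factor $x_n$ (its latter part being just $i_n$); one then checks directly that the word written for $x_n$ equals $w$ and is reduced. As $j_n=i_n+1$, this splits into the four cases $i_n\ge1$ ($j_n>0$), $i_n=-1$ ($j_n=0$), $i_n=-2$ ($j_n=-1$) and $i_n\le-3$ ($j_n<-1$); in each, $w$ has a unique explicit form and a direct inversion count against the length formula of Section~\ref{w-o-prelim} confirms reducedness.

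For the inductive step ($\ell\le n-2$) we put $v:=x_n^{-1}w$. The heart of the argument is to compute $x_n^{-1}$ as an explicit signed permutation of $\{\pm1,\dots,\pm n\}$ in each of the four cases above. In every case $x_n^{-1}$ is ``rotation-like'': it sends $i_n$ to $n$, fixes the entries of absolute value smaller than $|i_n|$, shifts the absolute values of the larger entries down by one (keeping sign), and effects the sign changes forced by the leading factors $s_{-1}$, $s_1$ (and $s_{-j_n}s_{-j_n-1}\cdots$) present when $i_n<0$. From this one reads off the facts needed to run the induction:
\begin{enumerate}[\rm(1)]
\item $v(n)=n$, so $v$ lies in the standard parabolic subgroup of $W$ of type $D_{n-1}$, to which the induction hypothesis applies;
\item $v$ is again join-irreducible of type $\ell$ --- the relative order $v(1)<\dots<v(\ell)>v(\ell+1)<\dots<v(n)$ and the inequality $-v(1)<v(2)$ survive, and its number of negative entries remains even;
\item for $m\in\{\ell+1,\dots,n-1\}$ the parameter $j_m$ computed from $v$ equals the one computed from $w$, since applying $x_n^{-1}$ changes $i_m$ and the counting set $\{m'\mid m\le m'\le n,\ |i_{m'}|\le|i_m|\}$ in compensating ways; hence the factors $x_{\ell+1},\dots,x_{n-1}$ built from $v$ coincide with those built from $w$.
\end{enumerate}
Granting (1)--(3), the induction hypothesis gives the reduced expression $v=x_{n-1}\cdots x_{\ell+1}$, whence $w=x_nv=x_nx_{n-1}\cdots x_{\ell+1}$ as group elements.

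It remains to see that this expression is reduced, i.e.\ that the length $\ell(w)$ equals the total number of simple reflections appearing in it. Using the four forms of $x_m$, the number of letters in $x_m$ is a simple function of $i_m$ (and of $\ell$, $m$); summing over $m$ and comparing with $\ell(w)$ as given by the length formula of Section~\ref{w-o-prelim} (a count of inversions among the $i_p$ and among the $-i_p$) yields the claim. Equivalently, one can check reducedness incrementally, verifying that each successive left multiplication by the letters of $x_n^{-1}$ drops the length by one, via the criteria for $\ell(ws_k)-\ell(w)$ and $\ell(ws_{-1})-\ell(w)$ applied to $w^{-1}$.

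I expect the main obstacle to be the bookkeeping in item (3) --- together with the parity statement in (2) and the identification of $\epsilon_m$ --- and especially the case $j_n<-1$, where $x_n=s_{-j_n}s_{-j_n-1}\cdots s_3s_2\,s_{-1}s_1\,s_2s_3\cdots s_{n-1}$ is the longest of the four forms, so that $x_n^{-1}$ flips several signs at once and one must verify carefully that the delicate interplay between $i_m$ and the counting set in the definition of $j_m$ is exactly what keeps the $j_m$ invariant (the same corrections reappear later as the subarrays $C(m,j)$ used in Theorem~\ref{define X(w) for D1}). Beyond organizing the four cases and tracking signs, no conceptual difficulty is anticipated.
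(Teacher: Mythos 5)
Your route is genuinely different from the paper's. The paper does not induct on $n$ here: it strips the \emph{rightmost} factor, setting $v:=wx_{\ell+1}^{-1}$, writes $v$ down explicitly as the reordering of $\pm i_1,i_2,\dots,i_\ell,\pm i_{\ell+1}$ followed by $i_{\ell+2},\dots,i_n$, shows $v$ is (the identity or) join-irreducible of type $\ell+1$ in the \emph{same} group, and inducts on the number of factors; the only thing to verify at each step is $wx_{\ell+1}^{-1}=v$ and $\ell(w)-\ell(x_{\ell+1})=\ell(v)$, and the later factors never need to be compared. Your plan strips the \emph{leftmost} factor $x_n$ and descends to the parabolic of type $D_{n-1}$, which is closer in spirit to Lemmas~\ref{expression for A} and~\ref{expression for D2} but forces you to prove that every remaining factor $x_{\ell+1},\dots,x_{n-1}$ is unchanged when recomputed from $v$. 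That is where your item (3) is false as stated.

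Note first that $i_n<0$ forces $i_{\ell+1}<\cdots<i_n<0$. The problem occurs when $i_n=-1$, so that $x_n=s_{-1}s_2\cdots s_{n-1}$ and $x_n^{-1}$ sends $-2\mapsto +1$. If $i_{n-1}=-2$, then $v_{n-1}=+1$: the sign flips, and $j_{n-1}$ jumps from $0$ (computed from $w$) to $2$ (computed from $v$), so the counting sets do \emph{not} change in compensating ways. The induction survives only because two different branches of \eqref{xm} happen to give the same word: for $w$ one uses the branch $j_{n-1}=0$ with $\epsilon_{n-1}=(-1)^2=+1$, for $v$ the branch $j^v_{n-1}>0$ with $v_{n-1}=1$, and both yield $s_1s_2\cdots s_{n-2}$. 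Relatedly, for $m<n-1$ with $j_m=0$ (which, when $i_n<0$, forces $i_n=-1$ and $i_{n-1}=-2$), the exponent defining $\epsilon_m$ drops by $2$ --- position $n$ leaves the range \emph{and} position $n-1$ changes sign --- so $\epsilon_m$ is preserved only by this parity cancellation, which your justification of (3) (addressed only to the set $\{m'\mid |i_{m'}|\le |i_m|\}$) does not cover. The correct invariant is therefore not ``$j_m$ is preserved'' but ``the word $x_m$ is preserved,'' and proving it needs this extra case analysis; in the remaining cases ($i_n>0$, or $i_n\le -2$) your claims (1)--(3) do hold and the induction, together with a letter-count against the inversion formula for $\ell(w)$, goes through.
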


\begin{proof}
Since $i_1<i_2$ and $-i_1<i_2$, we have $0<i_2<\cdots<i_\ell$.
Let $h_1<\cdots<h_{\ell+1}$ be the reordering of
\begin{itemize}
\item $i_1,i_2,\ldots,i_\ell,i_{\ell+1}$ if $j_{\ell+1}>0$ or ($j_{\ell+1}=0$ and $i_1>0$),
\item $-i_1,i_2,\ldots,i_\ell,-i_{\ell+1}$ if $j_{\ell+1}<0$ or ($j_{\ell+1}=0$ and $i_1<0$),
\end{itemize}
and let $v:=[h_1,\ldots,h_{\ell+1},i_{\ell+2},\ldots,i_n]\in W$. It is easy to check that $v$ is either an identity or join-irreducible of type $\ell+1$. We will show
\begin{equation}\label{from w to v}
wx_{\ell+1}^{-1}=v\ \mbox { and }\ \ell(w)-\ell(x_{\ell+1})=\ell(v).
\end{equation}
Then the assertion follows inductively.

Let $t$ be the unique integer satisfying $h_t=|i_{\ell+1}|$.

Assume $j_{\ell+1}>0$ and so $i_{\ell+1}>0$. Since all positive integers smaller than $i_{\ell+1}$ appear in $|i_1|,\ldots,|i_\ell|$ we have $t=i_{\ell+1}$. Thus \eqref{from w to v} follows from $x_{\ell+1}=s_{t}s_{t+1}\cdots s_{\ell}$.

Assume $j_{\ell+1}=0$ and so $i_{\ell+1}<0$. Since all positive integers smaller than $|i_{\ell+1}|$ appear in $|i_{\ell+2}|,\ldots,|i_n|$, we have $t=1$. If $i_1>0$, then $\epsilon_{\ell+1}=1$ holds, and \eqref{from w to v} follows from $x_{\ell+1}=s_{1}s_{2}s_{3}\cdots s_{\ell}$.
If $i_1<0$, then $\epsilon_{\ell+1}=-1$ holds, and \eqref{from w to v} follows from $x_{\ell+1}=s_{-1}s_{2}s_{3}\cdots s_{\ell}$.

Assume $j_{\ell+1}<0$ and so $i_{\ell+1}<0$. Then $t=1-j_{\ell+1}$ holds, and \eqref{from w to v} follows easily from
$x_{\ell+1}=s_{t-1}s_{t-2}\cdots s_3s_2s_{-1}s_1s_2s_3\cdots s_{\ell}$.
\end{proof}

Now we are ready to prove Theorem \ref{define X(w) for D1}.

\begin{proof}[Proof of Theorem \ref{define X(w) for D1}]
(a) and (b) are easily checked.

(c) Choose scalars $\alpha$ and $\beta$ such that $S(w)$ is $(\alpha,\beta)$-predecessor-closed.
Using the quiver representation of $P_\ell$ given in Figure~\ref{Pell} and the reduced expression of $w$ given in Lemma \ref{reduced expression for type D}, we calculate $I(w)e_\ell$.
Dividing into three cases, we show that the first row of $J(w)$ coincides with that of $S(w)$.

Assume $j_{\ell+1}>0$. By \eqref{xm}, the first row of $I(x_{\ell+1})e_\ell$ is given by
\[\left\{\begin{array}{rl}
\begin{array}{|ccccccccc|}
\hline i_{\ell+1}{-}1&i_{\ell+1}{-}2&\cdots&2&{\begin{smallmatrix}1\\ {-}1\end{smallmatrix}}&{-}2&\cdots&2{-}n&1{-}n\\ \hline
\end{array}&\mbox{if $i_{\ell+1}\ge2$,}\\
\begin{array}{|cccccc|}
\hline {-}1&{-}2&{-}3&\cdots&2{-}n&1{-}n\\ \hline
\end{array}&\mbox{if $i_{\ell+1}=1$.}
\end{array}\right.\]
Since $0<i_{\ell+1}<i_{\ell+2}<\cdots<i_n$ holds, both $s_{i_{\ell+1}-1}$ (when $i_{\ell+1}\ge2$) and $s_{-1}$ (when $i_{\ell+1}=1$) do not appear in $x_nx_{n-1}\cdots x_{\ell+2}$ by \eqref{xm}.
Thus the first row of $I(w)e_\ell$ coincides with that of $I(x_{\ell+1})e_\ell$,
and the first row of $J(w)$ coincides with that of $S(w)$.

Assume $j_{\ell+1}<0$. By \eqref{xm}, the first row of $I(x_{\ell+1})e_\ell$ is given by
\[\begin{array}{|cccccc|}
\hline 
j_{\ell+1}{-}1&j_{\ell+1}{-}2&j_{\ell+1}{-}3&\cdots&2{-}n&1{-}n\\ \hline
\end{array}.\]
Let $m_0:=\max\{m\mid \ell+1\le m\le n,\ |i_m|\le|i_{\ell+1}|\}$, which equals $j_{\ell+1}-i_{\ell+1}+\ell$.
We show that, for any $\ell+1\le m\le m_0$, the first row of $I(x_mx_{m-1}\cdots x_{\ell+1})e_\ell$ is
\[\begin{array}{|ccccc|}
\hline 
j_{\ell+1}{-}m{+}\ell&j_{\ell+1}{-}m{+}\ell{-}1&\cdots&2{-}n&1{-}n\\ \hline
\end{array}.\]
The case $m=\ell+1$ was shown above. Assume that this is the case for $m-1(<m_0)$.
It suffices to show that $s_{-j_{\ell+1}+m-1-\ell}$ appears in $x_{m}$, and that $s_{-j_{\ell+1}+m-\ell}$ does not appear in the left side of $s_{-j_{\ell+1}+m-1-\ell}$ in $x_{m}$. Since $-j_{\ell+1}\le \ell$ holds, we have
\[2\le -j_{\ell+1}+m-1-\ell\le m-1.\]
If $j_{m}\ge0$, then the assertion follows from \eqref{xm} and
\[i_{m}\le i_{m_0}-m_0+m\le -i_{\ell+1}-m_0+m-1=-j_{\ell+1}+m-1-\ell.\]
If $j_{m}<0$, then the assertion follows from \eqref{xm} and $-j_{m}\le -j_{\ell+1}<-j_{\ell+1}+m-1-\ell$.
Inductively, we have shown that the first row of $I(x_{m_0}x_{m_0-1}\cdots x_{\ell+1})e_\ell$ is
\begin{equation}\label{desired 1st}
\begin{array}{|ccccc|}
\hline 
i_{\ell+1}&i_{\ell+1}{-}1&\cdots&2{-}n&1{-}n\\ \hline
\end{array}\end{equation}
since $j_{\ell+1}-m_0+\ell=i_{\ell+1}$.
Since $0<-i_{\ell+1}<i_{m_0+1}<\cdots<i_n$ holds, $s_{-i_{\ell+1}}$ does not appear in $x_nx_{n-1}\cdots x_{m_0+1}$.
Thus the first row of $I(w)e_\ell$ coincides with \eqref{desired 1st}, and the first row of $J(w)$ coincides with that of $S(w)$. 

Assume $j_{\ell+1}=0$. By \eqref{xm}, the first row of $I(x_{\ell+1})e_\ell$ is given by
\[\begin{array}{|cccccc|}
\hline {-}\epsilon_{\ell+1}&{-}2&{-}3&\cdots&2{-}n&1{-}n\\ \hline
\end{array}.\]
By a similar argument as in the case $j_{\ell+1}<0$, one can check that the first row of $J(w)$ coincides with that of $S(w)$.

We have completed to show that the first row of $J(w)$ coincides with that of $S(w)$.
All rows of $I(x_{\ell+1})e_\ell$ except the first one coincide with those of $P_\ell$ since $s_{\ell+1}$ does not appear in $x_{\ell+1}$.
Repeating the same calculation, all rows of $J(w)$ coincide with that of $S(w)$.
\end{proof}

\begin{example}
Let $w=[(-1)^{n-\ell},2,\ldots,\ell,\underline{n},\underline{n-1},\ldots,\underline{\ell+1}]$ be a join-irreducible element of type $\ell$.
Then we have a reduced expression
\[w=x_nx_{n-1}\cdots x_{\ell+2}x_{\ell+1}\ \mbox{ for }\ x_m=s_\ell\cdots s_3s_2s_{-1}s_1s_2s_3\cdots s_{m-1},\]
and the corresponding indecomposable $\tau^-$-rigid $\Pi$-module is $J(w)=P_\ell$.
\end{example}

\begin{example}
Consider type $D_4$. Below we show the expression given in Lemma \ref{reduced expression for type D} by using dots: $w=x_{n}\cdot x_{n-1}\cdot\cdots\cdot x_{\ell+2}\cdot x_{\ell+1}$.

We have the following 7 join-irreducible elements of type $3$.

\noindent
$J(1243)=J(s_3)={\tiny\begin{array}{|c|}\hline 3\\ \hline\end{array}}$,\ \ \ 
$J(1342)=J(s_2s_3)={\tiny\begin{array}{|cc|}\hline 3&2\\ \hline\end{array}}$,

\noindent
$J(2341)=J(s_1s_2s_3)={\tiny\begin{array}{|ccc|}\hline 3&2&1\\ \hline\end{array}}$,\ \ \ 
$J(\underline{2}34\underline{1})=J(s_{-1}s_2s_3)={\tiny\begin{array}{|ccc|}\hline 3&2&-1\\ \hline\end{array}}$,

\noindent
$J(\underline{1}34\underline{2})=J(s_{-1}s_1s_2s_3)={\tiny\begin{array}{|ccc|}\hline 3&2&{\begin{smallmatrix}1\\ -1\end{smallmatrix}}\\ \hline\end{array}}$,\ \ \ 
$J(\underline{1}24\underline{3})=J(s_2s_{-1}s_1s_2s_3)={\tiny\begin{array}{|cccc|}\hline 3&2& {\begin{smallmatrix}1\\ -1\end{smallmatrix}}&-2\\ \hline\end{array}}$,

\noindent
$J(\underline{1}23\underline{4})=J(s_3s_2s_{-1}s_1s_2s_3)={\tiny\begin{array}{|ccccc|}\hline 3&2& {\begin{smallmatrix}1\\ -1\end{smallmatrix}}&-2&-3\\ \hline\end{array}}$.

We have the following 23 join-irreducible elements of type $2$.
They are predecessor-closed unless otherwise specified.

\noindent
$J(1324)=J(s_2)={\tiny\begin{array}{|c|}\hline 2\\ \hline\end{array}}$,\ \ \ 
$J(\underline{2}3\underline{1}4)=J(s_{-1}s_2)={\tiny\begin{array}{|cc|}\hline 2&-1\\ \hline\end{array}}$,

\noindent
$J(2314)=J(s_1s_2)={\tiny\begin{array}{|cc|}\hline 2&1\\ \hline\end{array}}$,\ \ \ 
$J(1423)=J(s_3\cdot s_2)={\tiny\begin{array}{|c|}\hline 2\\ \hline 3\\ \hline\end{array}}$,

\noindent
$J(2413)=J(s_3\cdot s_1s_2)={\tiny\begin{array}{|cc|}\hline 2&1\\ \hline 3&\\ \hline\end{array}}$,\ \ \ 
$J(\underline{2}4\underline{1}3)=J(s_3\cdot s_{-1}s_2)={\tiny\begin{array}{|cc|}\hline 2&-1\\ \hline 3&\\ \hline\end{array}}$,

\noindent
$J(\underline{1}3\underline{2}4)=J(s_{-1}s_1s_2)={\tiny\begin{array}{|cc|}\hline 2&{\begin{smallmatrix}1\\ -1\end{smallmatrix}}\\ \hline\end{array}}$,\ \ \ 
$J(\underline{1}4\underline{2}3)=J(s_3\cdot s_{-1}s_1s_2)={\tiny\begin{array}{|cc|}\hline 2&{\begin{smallmatrix}1\\ -1\end{smallmatrix}}\\ \hline 3&\\ \hline\end{array}}$,

\noindent
$J(\underline{1}2\underline{3}4)=J(s_2s_{-1}s_1s_2)={\tiny\begin{array}{|ccc|}\hline 2& {\begin{smallmatrix}1\\ -1\end{smallmatrix}}&-2\\ \hline\end{array}}$,

\noindent
$J(\underline{3}4\underline{1}2)=J(s_2s_3\cdot s_{-1}s_2)={\tiny\begin{array}{|cc|}\hline 2&-1\\ \hline 3&2\\ \hline\end{array}}$ (this is $(0,1)$-predecessor closed),

\noindent
$J(3412)=J(s_2s_3\cdot s_1s_2)={\tiny\begin{array}{|cc|}\hline 2&1\\ \hline 3&2\\ \hline\end{array}}$ (this is $(1,0)$-predecessor closed),

\noindent
$J(34\underline{2}\underline{1})=J(s_{-1}s_2s_3\cdot s_1s_2)={\tiny\begin{array}{|ccc|}\hline 2&{\begin{smallmatrix}1\\ -1\end{smallmatrix}}&\\ \hline 3&2&-1\\ \hline\end{array}}$ (this is $(1,0)$-predecessor closed),

\noindent
$J(\underline{3}4\underline{2}1)=J(s_1s_2s_3\cdot s_{-1}s_2)={\tiny\begin{array}{|ccc|}\hline 2&{\begin{smallmatrix}1\\ -1\end{smallmatrix}}&\\ \hline 3&2&1\\ \hline\end{array}}$  (this is $(0,1)$-predecessor closed),

\noindent
$J(\underline{1}2\underline{4}3)=J(s_3\cdot s_2s_{-1}s_1s_2)={\tiny\begin{array}{|cccc|}\hline 2&{\begin{smallmatrix}1\\ -1\end{smallmatrix}}&-2&-3\\ \hline 3&&&\\ \hline\end{array}}$,

\noindent
$J(\underline{1}4\underline{3}2)=J(s_2s_3\cdot s_{-1}s_1s_2)={\tiny\begin{array}{|ccc|}\hline 2&{\begin{smallmatrix}1\\ -1\end{smallmatrix}}&-2\\ \hline 3&2&\\ \hline\end{array}}$,

\noindent
$J(24\underline{3}\underline{1})=J(s_{-1}s_2s_3\cdot s_{-1}s_1s_2)={\tiny\begin{array}{|ccc|}\hline 2&{\begin{smallmatrix}1\\ -1\end{smallmatrix}}&-2\\ \hline 3&2&-1\\ \hline\end{array}}$,

\noindent
$J(\underline{1}3\underline{4}2)=J(s_2s_3\cdot s_2s_{-1}s_1s_2)={\tiny\begin{array}{|cccc|}\hline 2&{\begin{smallmatrix}1\\ -1\end{smallmatrix}}&-2&-3\\ \hline 3&2&&\\ \hline\end{array}}$,

\noindent
$J(\underline{2}4\underline{3}1)=J(s_1s_2s_3\cdot s_{-1}s_1s_2)={\tiny\begin{array}{|ccc|}\hline 2&{\begin{smallmatrix}1\\ -1\end{smallmatrix}}&-2\\ \hline 3&2&1\\ \hline\end{array}}$,

\noindent
$J(23\underline{4}\underline{1})=J(s_{-1}s_2s_3\cdot s_2s_{-1}s_1s_2)={\tiny\begin{array}{|cccc|}\hline 2&{\begin{smallmatrix}1\\ -1\end{smallmatrix}}&-2&-3\\ \hline 3&2&-1&\\ \hline\end{array}}$,

\noindent
$J(14\underline{3}\underline{2})=J(s_{-1}s_1s_2s_3\cdot s_{-1}s_1s_2)={\tiny\begin{array}{|ccc|}\hline 2&{\begin{smallmatrix}1\\ -1\end{smallmatrix}}&-2\\ \hline 3&2&{\begin{smallmatrix}1\\ -1\end{smallmatrix}}\\ \hline\end{array}}$,

\noindent
$J(\underline{2}3\underline{4}1)=J(s_1s_2s_3\cdot s_2s_{-1}s_1s_2)={\tiny\begin{array}{|cccc|}\hline 2&{\begin{smallmatrix}1\\ -1\end{smallmatrix}}&-2&-3\\ \hline 3&2&1&\\ \hline\end{array}}$,

\noindent
$J(13\underline{4}\underline{2})=J(s_{-1}s_1s_2s_3\cdot s_2s_{-1}s_1s_2)={\tiny\begin{array}{|cccc|}\hline 2&{\begin{smallmatrix}1\\ -1\end{smallmatrix}}&-2&-3\\ \hline 3&2&{\begin{smallmatrix}1\\ -1\end{smallmatrix}}&\\ \hline\end{array}}$,

\noindent
$J(12\underline{4}\underline{3})=J(s_2s_{-1}s_1s_2s_3\cdot s_2s_{-1}s_1s_2)={\tiny\begin{array}{|cccc|}\hline 2&{\begin{smallmatrix}1\\ -1\end{smallmatrix}}&-2&-3\\ \hline 3&2&{\begin{smallmatrix}1\\ -1\end{smallmatrix}}&-2\\ \hline\end{array}}$.
\end{example}

\begin{landscape}
\begin{figure}
{\tiny\begin{xy}
0;<7.8pt,0pt>:<0pt,11pt>::
(0,0)*+{n{-}1}="11",
(0,5)*+{n{-}2}="12",
(0,10)*+{n{-}3}="12.5",
(0,15)*+{\raisebox{-2pt}[7pt][4pt]{\vdots}}="13",
(0,20)*+{\ell{+}2}="14",
(0,25)*+{\ell{+}1}="15",
(0,30)*+{\ell}="16",
(5,0)*+{n{-}2}="21",
(5,5)*+{n{-}3}="22",
(5,10)*+{n{-}4}="22.5",
(5,15)*+{\raisebox{-2pt}[7pt][4pt]{\vdots}}="23",
(5,20)*+{\ell{+}1}="24",
(5,25)*+{\ell}="25",
(5,30)*+{\ell{-}1}="26",
(10,0)*+{\cdots}="31",
(10,5)*+{\cdots}="32",
(10,10)*+{\cdots}="32.5",
(10,15)*+{\cdots}="33",
(10,20)*+{\cdots}="34",
(10,25)*+{\cdots}="35",
(10,30)*+{\cdots}="36",
(15,0)*+{n{-}\ell{+}1}="41",
(15,5)*+{n{-}\ell}="42",
(15,10)*+{n{-}\ell{-}1}="42.5",
(15,15)*+{\raisebox{-2pt}[7pt][4pt]{\vdots}}="43",
(15,20)*+{4}="44",
(15,25)*+{3}="45",
(15,30)*+{2}="46",
(20,0)*+{n{-}\ell}="51",
(20,5)*+{n{-}\ell{-}1}="52",
(20,10)*+{n{-}\ell{-}2}="52.5",
(20,15)*+{\raisebox{-2pt}[7pt][4pt]{\vdots}}="53",
(20,20)*+{3}="54",
(20,25)*+{2}="55",
(21,31)*+{1}="56",
(19,29)*+{{-}1}="56-",
(25,0)*+{n{-}\ell{-}1}="61",
(25,5)*+{n{-}\ell{-}2}="62",
(25,10)*+{n{-}\ell{-}3}="62.5",
(25,15)*+{\raisebox{-2pt}[7pt][4pt]{\vdots}}="63",
(25,20)*+{2}="64",
(26,26)*+{{-}1}="65",
(24,24)*+{1}="65-",
(25,30)*+{{-}2}="66",
(30,0)*+{n{-}\ell{-}2}="71",
(30,5)*+{n{-}\ell{-}3}="72",
(30,10)*+{n{-}\ell{-}4}="72.5",
(30,15)*+{\raisebox{-2pt}[7pt][4pt]{\vdots}}="73",
(31,21)*+{1}="74",
(29,19)*+{{-}1}="74-",
(30,25)*+{{-}2}="75",
(30,30)*+{{-}3}="76",
(35,0)*+{n{-}\ell{-}3}="81",
(35,5)*+{n{-}\ell{-}4}="82",
(35,10)*+{n{-}\ell{-}5}="82.5",
(36,17)="83",
(35,15)*+{\raisebox{-2pt}[7pt][4pt]{\vdots}}="83middle",
(34,16)="83-",
(35,20)*+{{-}2}="84",
(35,25)*+{{-}3}="85",
(35,30)*+{{-}4}="86",
(40,0)*+{\cdots}="91",
(40,5)*+{\cdots}="92",
(40,10)*+{\cdots}="92.5",
(40,15)*+{\cdots}="93",
(40,20)*+{\cdots}="94",
(40,25)*+{\cdots}="95",
(40,30)*+{\cdots}="96",
(45,0)*+{4}="c1",
(45,5)*+{3}="c2",
(45,10)*+{2}="c2.5",
(46,14)="c3",
(45,15)*+{\raisebox{-2pt}[7pt][4pt]{\vdots}}="c3middle",
(44,13)="c3-",
(45,20)*+{{-}n{+}\ell{+}4}="c4",
(45,25)*+{{-}n{+}\ell{+}3}="c5",
(45,30)*+{{-}n{+}\ell{+}2}="c6",
(50,0)*+{3}="d1",
(50,5)*+{2}="d2",
(51,11)*+{\pm1}="d2.5",
(49,9)*+{\mp1}="d2.5-",
(50,15)*+{\raisebox{-2pt}[7pt][4pt]{\vdots}}="d3",
(50,20)*+{{-}n{+}\ell{+}3}="d4",
(50,25)*+{{-}n{+}\ell{+}2}="d5",
(50,30)*+{{-}n{+}\ell{+}1}="d6",
(55,0)*+{2}="e1",
(56,6)*+{\mp1}="e2",
(54,4)*+{\pm1}="e2-",
(55,10)*+{{-}2}="e2.5",
(55,15)*+{\raisebox{-2pt}[7pt][4pt]{\vdots}}="e3",
(55,20)*+{{-}n{+}\ell{+}2}="e4",
(55,25)*+{{-}n{+}\ell{+}1}="e5",
(55,30)*+{{-}n{+}\ell}="e6",
(61,1)*+{\pm1}="f1",
(59,-1)*+{\mp1}="f1-",
(60,5)*+{{-}2}="f2",
(60,10)*+{{-}3}="f2.5",
(60,15)*+{\raisebox{-2pt}[7pt][4pt]{\vdots}}="f3",
(60,20)*+{{-}n{+}\ell{+}1}="f4",
(60,25)*+{{-}n{+}\ell}="f5",
(60,30)*+{{-}n{+}\ell{-}1}="f6",
(65,0)*+{{-2}}="g1",
(65,5)*+{{-}3}="g2",
(65,10)*+{{-}4}="g2.5",
(65,15)*+{\vdots}="g3",
(65,20)*+{{-}n{+}\ell}="g4",
(65,25)*+{{-}n{+}\ell{-}1}="g5",
(65,30)*+{{-}n{+}\ell{-}2}="g6",
(70,0)*+{\cdots}="h1",
(70,5)*+{\cdots}="h2",
(70,10)*+{\cdots}="h2.5",
(70,15)*+{\cdots}="h3",
(70,20)*+{\cdots}="h4",
(70,25)*+{\cdots}="h5",
(70,30)*+{\cdots}="h6",
(75,0)*+{1{-}\ell}="a1",
(75,5)*+{{-}\ell}="a2",
(75,10)*+{{-}\ell{-}1}="a2.5",
(75,15)*+{\raisebox{-2pt}[7pt][4pt]{\vdots}}="a3",
(75,20)*+{4{-}n}="a4",
(75,25)*+{3{-}n}="a5",
(75,30)*+{2{-}n}="a6",
(80,0)*+{{-}\ell}="b1",
(80,5)*+{{-}\ell{-}1}="b2",
(80,10)*+{{-}\ell{-}2}="b2.5",
(80,15)*+{\raisebox{-2pt}[7pt][4pt]{\vdots}}="b3",
(80,20)*+{3{-}n}="b4",
(80,25)*+{2{-}n}="b5",
(80,30)*+{1{-}n}="b6",
\ar@<0pt>"12";"11",
\ar@<0pt>"12.5";"12",
\ar@<0pt>"13";"12.5",
\ar@<0pt>"14";"13",
\ar@<0pt>"15";"14",
\ar@<0pt>"16";"15",
\ar@<0pt>"22";"21",
\ar@<0pt>"22.5";"22",
\ar@<0pt>"23";"22.5",
\ar@<0pt>"24";"23",
\ar@<0pt>"25";"24",
\ar@<0pt>"26";"25",
\ar@<0pt>"42";"41",
\ar@<0pt>"42.5";"42",
\ar@<0pt>"43";"42.5",
\ar@<0pt>"44";"43",
\ar@<0pt>"45";"44",
\ar@<0pt>"46";"45",
\ar@<0pt>"52";"51",
\ar@<0pt>"52.5";"52",
\ar@<0pt>"53";"52.5",
\ar@<0pt>"54";"53",
\ar@<0pt>"55";"54",
\ar@<0pt>^(0.55)\alpha"56";"55",
\ar@<0pt>_(0.4)\beta"56-";"55",
\ar@<0pt>"62";"61",
\ar@<0pt>"62.5";"62",
\ar@<0pt>"63";"62.5",
\ar@<0pt>"64";"63",
\ar@<0pt>^(0.55)\alpha"65";"64",
\ar@<0pt>_(0.4)\beta"65-";"64",
\ar@<0pt>^\beta"66";"65",
\ar@<0pt>_{-\alpha}"66";"65-",
\ar@<0pt>"72";"71",
\ar@<0pt>"72.5";"72",
\ar@<0pt>"73";"72.5",
\ar@<0pt>^(0.55)\alpha"74";"73",
\ar@<0pt>_(0.4)\beta"74-";"73",
\ar@<0pt>^\beta"75";"74",
\ar@<0pt>_{-\alpha}"75";"74-",
\ar@<0pt>"76";"75",
\ar@<0pt>"82";"81",
\ar@<0pt>"82.5";"82",
\ar@<0pt>"83middle";"82.5",
\ar@<0pt>^(0.6)\beta"84";"83",
\ar@<0pt>_(0.7){-\alpha}"84";"83-",
\ar@<0pt>"85";"84",
\ar@<0pt>"86";"85",
\ar@<0pt>"c2";"c1",
\ar@<0pt>"c2.5";"c2",
\ar@<0pt>^(0.55)\alpha"c3";"c2.5",
\ar@<0pt>_(0.4)\beta"c3-";"c2.5",
\ar@<0pt>"c4";"c3middle",
\ar@<0pt>"c5";"c4",
\ar@<0pt>"c6";"c5",
\ar@<0pt>"d2";"d1",
\ar@<0pt>^(0.55)\alpha"d2.5";"d2",
\ar@<0pt>_(0.4)\beta"d2.5-";"d2",
\ar@<0pt>^\beta"d3";"d2.5",
\ar@<0pt>_{-\alpha}"d3";"d2.5-",
\ar@<0pt>"d4";"d3",
\ar@<0pt>"d5";"d4",
\ar@<0pt>"d6";"d5",
\ar@<0pt>^(0.55)\alpha"e2";"e1",
\ar@<0pt>_(0.4)\beta"e2-";"e1",
\ar@<0pt>^\beta"e2.5";"e2",
\ar@<0pt>_{-\alpha}"e2.5";"e2-",
\ar@<0pt>"e3";"e2.5",
\ar@<0pt>"e4";"e3",
\ar@<0pt>"e5";"e4",
\ar@<0pt>"e6";"e5",
\ar@<0pt>^\beta"f2";"f1",
\ar@<0pt>_{-\alpha}"f2";"f1-",
\ar@<0pt>"f2.5";"f2",
\ar@<0pt>"f3";"f2.5",
\ar@<0pt>"f4";"f3",
\ar@<0pt>"f5";"f4",
\ar@<0pt>"f6";"f5",
\ar@<0pt>"g2";"g1",
\ar@<0pt>"g2.5";"g2",
\ar@<0pt>"g3";"g2.5",
\ar@<0pt>"g4";"g3",
\ar@<0pt>"g5";"g4",
\ar@<0pt>"g6";"g5",
\ar@<0pt>"a2";"a1",
\ar@<0pt>"a2.5";"a2",
\ar@<0pt>"a3";"a2.5",
\ar@<0pt>"a4";"a3",
\ar@<0pt>"a5";"a4",
\ar@<0pt>"a6";"a5",
\ar@<0pt>"b2";"b1",
\ar@<0pt>"b2.5";"b2",
\ar@<0pt>"b3";"b2.5",
\ar@<0pt>"b4";"b3",
\ar@<0pt>"b5";"b4",
\ar@<0pt>"b6";"b5",

\ar@<0pt>"11";"21",
\ar@<0pt>"12";"22",
\ar@<0pt>"12.5";"22.5",
\ar@<0pt>"14";"24",
\ar@<0pt>"15";"25",
\ar@<0pt>"16";"26",
\ar@<0pt>"21";"31",
\ar@<0pt>"22";"32",
\ar@<0pt>"22.5";"32.5",
\ar@<0pt>"24";"34",
\ar@<0pt>"25";"35",
\ar@<0pt>"26";"36",
\ar@<0pt>"31";"41",
\ar@<0pt>"32";"42",
\ar@<0pt>"32.5";"42.5",
\ar@<0pt>"34";"44",
\ar@<0pt>"35";"45",
\ar@<0pt>"36";"46",
\ar@<0pt>"41";"51",
\ar@<0pt>"42";"52",
\ar@<0pt>"42.5";"52.5",
\ar@<0pt>"44";"54",
\ar@<0pt>"45";"55",
\ar@<0pt>"46";"56",
\ar@<0pt>"46";"56-",
\ar@<0pt>"51";"61",
\ar@<0pt>"52";"62",
\ar@<0pt>"52.5";"62.5",
\ar@<0pt>"54";"64",
\ar@<0pt>"55";"65",
\ar@<0pt>"55";"65-",
\ar@<0pt>"56";"66",
\ar@<0pt>_{-1}"56-";"66",
\ar@<0pt>"61";"71",
\ar@<0pt>"62";"72",
\ar@<0pt>"62.5";"72.5",
\ar@<0pt>"64";"74",
\ar@<0pt>"64";"74-",
\ar@<0pt>"65";"75",
\ar@<0pt>_{-1}"65-";"75",
\ar@<0pt>"66";"76",
\ar@<0pt>"71";"81",
\ar@<0pt>"72";"82",
\ar@<0pt>"72.5";"82.5",
\ar@<0pt>"74";"84",
\ar@<0pt>_{-1}"74-";"84",
\ar@<0pt>"75";"85",
\ar@<0pt>"76";"86",
\ar@<0pt>"81";"91",
\ar@<0pt>"82";"92",
\ar@<0pt>"82.5";"92.5",
\ar@<0pt>"84";"94",
\ar@<0pt>"85";"95",
\ar@<0pt>"86";"96",
\ar@<0pt>"91";"c1",
\ar@<0pt>"92";"c2",
\ar@<0pt>"92.5";"c2.5",
\ar@<0pt>"94";"c4",
\ar@<0pt>"95";"c5",
\ar@<0pt>"96";"c6",
\ar@<0pt>"c1";"d1",
\ar@<0pt>"c2";"d2",
\ar@<0pt>"c2.5";"d2.5",
\ar@<0pt>"c2.5";"d2.5-",
\ar@<0pt>"c4";"d4",
\ar@<0pt>"c5";"d5",
\ar@<0pt>"c6";"d6",
\ar@<0pt>"d1";"e1",
\ar@<0pt>"d2";"e2",
\ar@<0pt>"d2";"e2-",
\ar@<0pt>"d2.5";"e2.5",
\ar@<0pt>_{-1}"d2.5-";"e2.5",
\ar@<0pt>"d4";"e4",
\ar@<0pt>"d5";"e5",
\ar@<0pt>"d6";"e6",
\ar@<0pt>"e1";"f1",
\ar@<0pt>"e1";"f1-",
\ar@<0pt>"e2";"f2",
\ar@<0pt>_{-1}"e2-";"f2",
\ar@<0pt>"e2";"f2",
\ar@<0pt>"e2.5";"f2.5",
\ar@<0pt>"e4";"f4",
\ar@<0pt>"e5";"f5",
\ar@<0pt>"e6";"f6",
\ar@<0pt>"f1";"g1",
\ar@<0pt>_{-1}"f1-";"g1",
\ar@<0pt>"f2";"g2",
\ar@<0pt>"f2.5";"g2.5",
\ar@<0pt>"f4";"g4",
\ar@<0pt>"f5";"g5",
\ar@<0pt>"f6";"g6",
\ar@<0pt>"g1";"h1",
\ar@<0pt>"g2";"h2",
\ar@<0pt>"g2.5";"h2.5",
\ar@<0pt>"g4";"h4",
\ar@<0pt>"g5";"h5",
\ar@<0pt>"g6";"h6",
\ar@<0pt>"h1";"a1",
\ar@<0pt>"h2";"a2",
\ar@<0pt>"h2.5";"a2.5",
\ar@<0pt>"h4";"a4",
\ar@<0pt>"h5";"a5",
\ar@<0pt>"h6";"a6",
\ar@<0pt>"a1";"b1",
\ar@<0pt>"a2";"b2",
\ar@<0pt>"a2.5";"b2.5",
\ar@<0pt>"a4";"b4",
\ar@<0pt>"a5";"b5",
\ar@<0pt>"a6";"b6",
\end{xy}}
\caption{The indecomposable projective $\Pi$-module $P_\ell$}\label{Pell}
\end{figure}
\end{landscape}


\begin{thebibliography}{10}  
\bibitem[AIR]{AIR} T. Adachi, O. Iyama, I. Reiten, \emph{$\tau$-tilting theory}, Compos. Math. 150 (2014), no. 3, 415--452.
\bibitem[AM]{AM} T. Aihara, Y. Mizuno, \emph{Classifying tilting complexes over preprojective algebras of Dynkin type}, Algebra Number Theory 11 (2017), no. 6, 1287--1315.
\bibitem[A]{A} C. Amiot, \emph{Cluster categories for algebras of global dimension 2 and quivers with potential}, Ann. Inst. Fourier (Grenoble) 59 (2009), no. 6, 2525--2590.
\bibitem[AIRT]{AIRT} C. Amiot, O. Iyama, I. Reiten, G. Todorov, \emph{Preprojective algebras and c-sortable words}, Proc. Lond. Math. Soc. (3) 104  (2012),  no. 3, 513--539.
\bibitem[AS]{AS} M. Auslander, S. O. Smalo, \emph{Almost split sequences in subcategories}, J. Algebra 69 (1981), no. 2, 426--454.
\bibitem[BKT]{BKT} P. Baumann, J. Kamnitzer, P. Tingley, \emph{Affine Mirkovi\'c-Vilonen polytopes}, Publ. Math. Inst. Hautes \'Etudes Sci. 120 (2014), 113--205.
\bibitem[BGL]{BGL} D. Baer, W. Geigle, H. Lenzing, \emph{The preprojective algebra of a tame hereditary Artin algebra}, Comm. Algebra 15 (1987), no. 1-2, 425--457.
\bibitem[Bol]{Bol} B. Bolten, \emph{Spiegelungsfunktoren f\"ur pr\"aprojektive Algebren}, Diplomarbeit (2010), Bonn.
\bibitem[Bon]{Bon} J. Bongartz, \emph{Endotrivial modules over preprojective algebras}, Master's Thesis (2015), Bonn.
\bibitem[BIRS]{BIRS} A. Buan, O. Iyama, I. Reiten, J. Scott, \emph{Cluster structures for 2-Calabi--Yau categories and unipotent groups}, Compos. Math. 145 (2009), no. 4, 1035--1079.
\bibitem[BB]{BB} A. Bj\"orner and F. Brenti, \emph{Combinatorics of Coxeter groups}, Graduate Texts in Mathematics, 231. Springer, New York, 2005.
\bibitem[CLM]{bounded}
N. Caspard, C.  Le Conte de Poly-Barbut and M. Morvan, \emph{Cayley lattices of finite Coxeter groups are bounded},
Adv. in Appl. Math. 33 (2004), no.~1, 71--94.
\bibitem[CH]{CH} W. Crawley-Boevey, M. P. Holland, \emph{Noncommutative deformations of Kleinian singularities}, Duke Math. J. 92 (1998), no. 3, 605--635.
\bibitem[D]{Day} A. Day,
\emph{Congruence normality: the characterization of the doubling class of convex sets},
Algebra Universalis 31 (1994), no. 3, 397--406.
\bibitem[DIJ]{DIJ} L. Demonet, O. Iyama, G. Jasso, \emph{$\tau$-tilting finite algebras, bricks and $g$-vectors}, to appear in Int. Math. Res. Not., arXiv:1503.00285.
\bibitem[DIRRT]{paperB} L. Demonet, O. Iyama, I. Reiten, N. Reading, H. Thomas, \emph{Lattice theory of torsion classes}, arXiv:1711.01785.
\bibitem[DR]{DR} V. Dlab, C. M. Ringel, \emph{The preprojective algebra of a modulated graph}, Representation theory, II (Proc. Second Internat. Conf., Carleton Univ., Ottawa, Ont., 1979), pp. 216--231, Lecture Notes in Math., 832, Springer, Berlin-New York, 1980.
\bibitem[GLS]{GLS} C. Geiss, B. Leclerc, J. Schr\"oer, \emph{Kac-Moody groups and cluster algebras}, Adv. Math. 228 (2011), no. 1, 329--433.
\bibitem[HHKU]{HHKU} D. Happel, S. Hartlieb, O. Kerner, L. Unger, \emph{On perpendicular categories of stones over quiver algebras}, Comment. Math. Helv. 71 (1996), no. 3, 463--474.
\bibitem[IR]{IR} O. Iyama, I. Reiten, \emph{Fomin-Zelevinsky mutation and tilting modules over Calabi--Yau algebras}, Amer. J. Math. 130 (2008), no. 4, 1087--1149.
\bibitem[IRTT]{IRTT} O. Iyama, I. Reiten, H. Thomas, G. Todorov, \emph{Lattice structure of torsion classes for path algebras}, Bull. Lond. Math. Soc. 47 (2015), no. 4, 639--650.
\bibitem[KS]{KS} M. Kashiwara, Y. Saito, \emph{Geometric construction of crystal bases}, Duke Math. J. 89 (1997), no. 1, 9--36.
\bibitem[KL]{KL} O. Kerner, F. Lukas, \emph{Regular stones of wild hereditary algebras},
J. Pure Appl. Algebra 93 (1994), no. 1, 15--31.
\bibitem[Ke]{Ke1} B. Keller, \emph{Calabi--Yau triangulated categories}, Trends in representation theory of algebras and related topics, 467--489, EMS Ser. Congr. Rep., Eur. Math. Soc., Z\"urich, 2008.
\bibitem[Ki]{K} Y. Kimura, \emph{Tilting theory of preprojective algebras and $c$-sortable elements}, J. Algebra 503 (2018), 186--221.
\bibitem[Le]{L} B. Leclerc, \emph{Cluster structures on strata of flag varieties}, Adv. Math. 300 (2016), 190--228.
\bibitem[Lu]{Lu} G. Lusztig, \emph{Quivers, perverse sheaves, and quantized enveloping algebras}, J. Amer. Math. Soc. 4 (1991), no. 2, 365--421.
\bibitem[Ma]{Ma} F. Marks, \emph{Homological embeddings for preprojective algebras}, Math. Z. 285 (2017), no. 3-4, 1091--1106.
\bibitem[Mi]{Mi} Y. Mizuno, \emph{Classifying $\tau$-tilting modules over preprojective algebras of Dynkin type}, Math. Z. 277 (2014) no. 3--4, 665--690.
\bibitem[N]{N} H. Nakajima, \emph{Instantons on ALE spaces, quiver varieties, and Kac-Moody algebras}, Duke Math. J. 76 (1994), no. 2, 365--416.
\bibitem[ORT]{ORT} S. Oppermann, I. Reiten, H. Thomas,
\emph{Quotient closed subcategories of quiver representations}, Compos. Math. 151 (2015), no. 3, 568--602.
\bibitem[R1]{congruence}
N.~Reading,
\emph{Lattice congruences of the weak order.}
Order 21 (2004) no.~4, 315--344. 
\bibitem[R2]{regions} N. Reading, \emph{Lattice theory of the poset of regions}. Lattice theory: special topics and applications. Vol. 2, 399--487, Birkh\"auser/Springer, Cham, 2016.
\bibitem[R3]{regions2} N. Reading, \emph{Finite Coxeter groups and the weak order}. Lattice theory: special topics and applications. Vol. 2, 489--561, Birkh\"auser/Springer, Cham, 2016.
 \bibitem[Ri]{Ri} C. M. Ringel, \emph{Representations of K-species and bimodules},
J. Algebra 41 (1976), no. 2, 269--302.
\bibitem[ST]{ST} P. Seidel, R. Thomas, \emph{Braid group actions on derived categories of coherent sheaves}, Duke Math. J.  108  (2001),  no. 1, 37--108. 
\bibitem[SY1]{SY1} Y. Sekiya, K. Yamaura, \emph{Tilting theoretical approach to moduli spaces over preprojective algebras}, Algebr. Represent. Theory 16 (2013), no. 6, 1733--1786.
\bibitem[SY2]{SY2} Y. Sekiya, K. Yamaura, private communications.
\bibitem[S]{S} S. O. Smal\o, \emph{Torsion theories and tilting modules}, Bull. London Math. Soc. 16 (1984), no. 5, 518--522.
\bibitem[WW]{WW} B. Wald, J. Waschb\"usch, \emph{Tame biserial algebras}, J. Algebra  95  (1985), no. 2, 480--500. 
\end{thebibliography}
\end{document}